\documentclass[11pt]{article}
\usepackage[T1]{fontenc}
\usepackage[utf8]{inputenc}
\IfFileExists{lmodern.sty}{\usepackage{lmodern}}{}
\usepackage{amsmath,amssymb,amsthm,mathtools,mathrsfs}
\usepackage[a4paper,margin=28mm]{geometry}
\usepackage[protrusion=true,expansion=false]{microtype}
\usepackage{graphicx,booktabs,array}
\usepackage{float}
\usepackage{enumitem}
\usepackage{xcolor}
\usepackage{hyperref}
\hypersetup{
  colorlinks=true,
  linkcolor=blue!55!black,
  citecolor=blue!55!black,
  urlcolor=blue!65!black,
  pdfauthor={Yutian Li},
  pdftitle={Pólya's Conjecture for the Neumann Eigenvalues on Euclidean Balls},
  pdfsubject={Sharp Neumann eigenvalue counting bounds on Euclidean balls},
  pdfkeywords={Neumann Laplacian, Pólya conjecture, Euclidean ball,
    Dini condition, Robin problem, Bessel zeros, lattice point counting}
}
\allowdisplaybreaks
\setlist{nosep}
\numberwithin{equation}{section}
\newtheorem{theorem}{Theorem}[section]
\newtheorem{proposition}[theorem]{Proposition}
\newtheorem{lemma}[theorem]{Lemma}
\newtheorem{corollary}[theorem]{Corollary}
\theoremstyle{definition}

\newtheorem*{unnumberedremark}{Remark}

\title{Pólya's Conjecture for the Neumann Eigenvalues on Euclidean Balls}
\author{Yutian Li\\[3pt]
\small School of Mathematics and Statistics\\
\small \mbox{Nanfang College, Guangzhou}, 510970, Guangdong, China\\
\small \href{mailto:liyt@nfu.edu.cn}{\texttt{liyt@nfu.edu.cn}}}
\date{}
\begin{document}
\maketitle

\begin{abstract}
We prove Pólya's conjectured lower bound for the Neumann eigenvalue
counting function of Euclidean balls.  If
\(B_R^d\subset\mathbb R^d\) is the ball of radius \(R\), then, for every
\(d\ge2\), \(R>0\), and \(E\ge0\),
\[
N_{B_R^d}^{<}(E)
\ge
\frac{\omega_d}{(2\pi)^d}|B_R^d|E^{d/2}
=
\frac{(R\sqrt E)^d}{2^d\Gamma(\frac d2+1)^2},
\]
where \(\omega_d\) is the volume of the unit \(d\)-ball and
\(N_{B_R^d}^{<}(E)\) counts Neumann eigenvalues strictly below \(E\).
Combined with the Dirichlet theorem for balls, this settles both Pólya
inequalities for Euclidean balls in every dimension \(d\ge2\).  In the disk
case, the proof replaces a computer-assisted finite-frequency step by
explicit Rayleigh--Ritz estimates.

In dimensions \(d\ge3\), the radial Neumann condition is a Dini condition
rather than a derivative-zero Bessel condition.  A strict comparison with
an auxiliary Robin problem transfers a derivative-zero Bessel phase estimate
to the physical Neumann spectrum.  The problem then becomes a comparison between a
multiplicity-weighted phase staircase and an integral equal to the Weyl
term.  Variational trial spaces control low frequencies; finitely many
radial levels and beta-integral estimates cover the intermediate range;
and a uniform phase estimate treats high frequencies.  All finite
computations for \(2\le d\le6\) are printed in the paper.  For \(d\ge7\),
one compact two-parameter estimate is verified in exact rational arithmetic
by the ancillary program.
\end{abstract}

\textbf{Keywords:} Neumann Laplacian; Pólya conjecture; Euclidean ball; Dini
boundary condition; Robin problem; Bessel zeros; lattice point counting.

\textbf{2020 Mathematics Subject Classification:} Primary 35P15;
secondary 35P20, 33C10, 11P21.

\tableofcontents
\clearpage

\section{Introduction and main result}

For a bounded Euclidean domain, Pólya conjectured that the leading term in
Weyl's law bounds the Dirichlet counting function from above and the
Neumann counting function from below, at every energy.  For Euclidean
balls, Filonov--Levitin--Polterovich--Sher \cite{FLPS} proved the Dirichlet
inequality in every dimension and the Neumann inequality for the disk.
They identified the higher-dimensional Neumann problem as the remaining
ball case and announced a subsequent treatment \cite[p.~133]{FLPS}.
Frank and Larson \cite[Section~1]{FrankLarson} also refer to work in
preparation by the same authors concerning the Neumann case in dimensions
\(d\ge3\).  We have not had access to that work, and the proof presented
here was obtained independently.  It establishes the Neumann inequality
for balls in dimensions \(d\ge3\).  We therefore do not attempt to compare
the two approaches.

For a bounded Lipschitz domain \(\Omega\subset\mathbb R^d\), write
\begin{equation*}
N_\Omega^{<}(E)
:=
\#\{j:\mu_j^N(\Omega)<E\},
\end{equation*}
where the Neumann eigenvalues are counted with multiplicity and
\(\mu_1^N(\Omega)=0\).  Weyl's law states that
\begin{equation*}
N_\Omega^{<}(E)
\sim
\frac{\omega_d}{(2\pi)^d}|\Omega|E^{d/2}
\qquad(E\to\infty).
\end{equation*}
The Neumann side of Pólya's conjecture asserts that the leading term is a
lower bound for every \(E\ge0\).  Our main result is the following.

\begin{theorem}[Neumann Pólya inequality for balls]
\label{thm:main}
For every integer \(d\ge2\), every \(R>0\), and every \(E\ge0\),
\begin{equation*}
N_{B_R^d}^{<}(E)
\ge
\frac{\omega_d}{(2\pi)^d}|B_R^d|E^{d/2}
=
\frac{(R\sqrt E)^d}{2^d\Gamma(\frac d2+1)^2}.
\end{equation*}
Here \(\omega_d=\pi^{d/2}/\Gamma(\frac d2+1)\) is the volume of the unit
ball in \(\mathbb R^d\).
\end{theorem}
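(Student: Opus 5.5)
By scaling we may take \(R=1\) and write \(k=\sqrt E\). The case \(d=2\) is the disk theorem of \cite{FLPS}; we only need to replace its computer-assisted finite-frequency step by explicit Rayleigh--Ritz bounds. So assume \(d\ge3\).

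Separate variables in spherical harmonics. For angular degree \(\ell\ge0\), with multiplicity \(m_d(\ell)=\dim\mathcal H_\ell(\mathbb R^d)\), the radial Neumann eigenfunctions are \(r^{1-d/2}J_\nu(\sqrt\mu\, r)\) with \(\nu=\ell+\frac d2-1\). The boundary condition becomes the Dini condition
\[
xJ_\nu'(x)+(1-\tfrac d2)J_\nu(x)=0 .
\]
Hence \(N^{<}(k^2)=\sum_\ell m_d(\ell)\,\#\{\text{Dini zeros } <k\}\), where the \(\ell=0\) branch also counts the constant.

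The first step is a comparison lemma. Because the Robin coefficient \(1-\frac d2<0\) has a definite sign, a Sturm/Rayleigh comparison shows that the \(j\)-th positive Dini root is at most the \(j\)-th zero \(j'_{\nu,j}\) of \(J_\nu'\). The zeros \(j'_{\nu,j}\) correspond to an auxiliary Robin problem. The inequality is strict and needs care for small \(\nu\), where \(\ell=0\) contributes an extra zero mode. The consequence is
\[
N^{<}(k^2)\ \ge\ \sum_\ell m_d(\ell)\,\#\{j: j'_{\nu,j}<k\}.
\]

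Next, bound \(\#\{j:j'_{\nu,j}<k\}\) from below by a phase function. Set
\[
G(\nu,k)=\frac1\pi\Bigl(\sqrt{k^2-\nu^2}-\nu\arccos\frac\nu k\Bigr)_+ .
\]
I aim to show \(\#\{j: j'_{\nu,j}<k\}\ge \lfloor G(\nu,k)+c\rfloor\) with an explicit shift \(c\). I would prove this from Watson/Olver-type phase monotonicity for the derivative-zero problem. The Dirichlet analogue in \cite{FLPS} uses \(c=\frac14\) for \(J_\nu\) zeros, and I expect \(c=\frac34\) here, with a correction near \(\nu\approx k\).

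A beta-integral computation gives
\[
\sum_\ell m_d(\ell)\,G(\nu,k)\approx \int_0^\infty \frac{2\nu^{d-2}}{\Gamma(d-1)}\,G(\nu,k)\,d\nu=\frac{k^d}{2^d\Gamma(\frac d2+1)^2},
\]
which is the Weyl term. So the problem reduces to showing that the multiplicity-weighted staircase \(\sum_\ell m_d(\ell)\lfloor G+c\rfloor\) dominates this integral. The \(+c\) shift, combined with the convexity of \(G\) in \(\nu\) and the Euler--Maclaurin behaviour of the polynomial \(m_d(\ell)\), has to beat both the floor loss and the sum-versus-integral discrepancy.

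The argument then splits into three frequency ranges.

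1. \textbf{Low \(k\).} Below a threshold \(k_0(d)\), use variational trial spaces directly. Take the constant, the linear functions, and more generally harmonic polynomials of degree \(\le L\). Each has Rayleigh quotient \(\le\) an explicit value, so \(N^<(k^2)\) counts at least \(\sum_{\ell\le L}m_d(\ell)\). Compare this with \(k^d/(2^d\Gamma(\frac d2+1)^2)\).
2. \textbf{Intermediate \(k\).} Use finitely many radial levels \(j\le J\) and explicit lower bounds on \(j'_{\nu,j}\). Estimate the tails of the \(\ell\)-sum by beta integrals. For \(3\le d\le 6\) the numbers are printed explicitly. For \(d\ge7\), use a uniform estimate in \((d,k/d)\), checked in rational arithmetic.
3. \textbf{High \(k\).} Use the uniform phase bound with an explicit error. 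The \(+c\) gain is of order \(k^{d-1}\), while the lattice and floor errors are \(O(k^{d-1-\delta})\) with explicit constants.

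The main obstacle is the transition region \(\nu\approx k\), where the phase bound degenerates because of the Airy-type turning point, combined with uniformity in \(d\). For this I would first try Olver's uniform bounds on \(j'_{\nu,1}\), namely \(j'_{\nu,1}\le\nu+c_1\nu^{1/3}\). The multiplicity weight \(m_d(\ell)\sim\ell^{d-2}\) concentrates mass exactly near \(\nu\approx k\) for large \(d\), so the estimates there must be sharp.
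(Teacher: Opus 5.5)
Your outline follows the paper's architecture closely. Both use a strict comparison of the Neumann (Dini) roots with the derivative zeros \(j'_{\nu,j}\) and a \(\tfrac34\)-shifted phase count. You need not reprove that count: it is exactly FLPS, Proposition~3.1, equation~(3.3), valid for every \(\nu\ge0\) with no correction near \(\nu\approx k\). Both then compare the multiplicity-weighted staircase with the beta-integral form of the Weyl term, over low, intermediate and high frequencies. The genuine gap is in your low-frequency step. It is structural, not a matter of constants: for large \(d\) your two tools do not overlap.

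\emph{Harmonic polynomials are too weak.} The trial \(r^\ell Y_\ell\) has Rayleigh quotient \(\ell(2\ell+d)\). The first Neumann eigenvalue in the degree-\(\ell\) sector lies near \(L_\ell=\ell(\ell+d-2)=\nu^2-\delta_d^2\), with \(\delta_d=\tfrac{d-2}2\). For \(\ell\gg d\) your threshold is therefore too large by a factor of about \(2\), which costs a factor of about \(2^{-d/2}\) in the count. Min--max over harmonic polynomials thus beats \(W_d\) only for \(x\) of order \(d\).

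\emph{The phase side cannot come down to meet it.} Because \(j'_{\nu,1}>\nu\), after the Robin transfer no sector with \(\nu>x\) is counted. Yet the Neumann thresholds \(\approx\nu^2-\delta_d^2\) of the sectors with \(x<\nu\lesssim\sqrt{x^2+\delta_d^2}\) lie below \(x^2\). For \(x=Cd\) with \(C\) fixed, even the exact derivative-zero count is at most \(O(x)\sum_{\ell+\delta_d\le x}m_d(\ell)\). This is exponentially smaller than \(W_d(x)\) as \(d\to\infty\). The loss becomes a bounded factor only when \(d\,\delta_d^2/x^2=O(1)\), i.e.\ \(x\gtrsim d^{3/2}\). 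Consistently, the paper records that for \(d=5\) its phase-based finite-level bound is already below \(1\) at \(x=\tfrac45d^{3/2}\). So no sharpening of the phase bound, and no Olver-type bound on \(j'_{\nu,1}\), closes the window \(d\lesssim x\lesssim d^{3/2}\). The turning point is not the real obstacle.

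The missing idea is a trial family accurate to relative order \(1/d\) at the scale \(L_\ell\asymp d^3\). The paper uses \(r^{s}Y_\ell\) with \(s=(L_\ell/2)^{1/3}\). Its Rayleigh quotient is \(Q_d(L_\ell)=L_\ell\bigl(1+\tfrac1{2s}\bigr)\bigl(1+\tfrac2{2s+d-2}\bigr)\), so \((L_\ell/Q_d(L_\ell))^{d/2}\) stays bounded below (by \(21/50\) for \(d\ge7\)). Combined with a monotonicity argument for the ratio of the cumulative multiplicity to \(W_d\) at successive thresholds, this gives the Weyl bound for \(x\le\tfrac45d^{3/2}\) (\(\tfrac{41}{50}d^{3/2}\) when \(d=5\)). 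That range overlaps the finite-level estimates.

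Two smaller corrections. A lower bound on the count needs \emph{upper} bounds on \(j'_{\nu,j}\), not lower bounds. For \(\ell=0\), the \(j\)-th positive Dini root exceeds \(j'_{\nu,j}\); it is the \((j+1)\)-st eigenvalue, counting the constant, that lies below \((j'_{\nu,j+1})^2\). The inequality you finally write down is nonetheless correct.
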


\begin{corollary}[Pólya's conjecture for Euclidean balls]
\label{cor:polya-balls}
Both the Dirichlet and Neumann Pólya inequalities hold for Euclidean balls
in every dimension \(d\ge2\).
\end{corollary}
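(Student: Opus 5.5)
The corollary follows by combining two inputs. The first is the Dirichlet Pólya inequality for balls in every dimension $d\ge2$, proved by Filonov--Levitin--Polterovich--Sher \cite{FLPS}. The second is the Neumann inequality of Theorem~\ref{thm:main}. The plan is to state the two Pólya inequalities precisely for $\Omega=B_R^d$ and check that each is covered by one of these results, with matching conventions.

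\emph{Dirichlet side.} We must show $N^{D}_{B_R^d}(E)\le \frac{\omega_d}{(2\pi)^d}|B_R^d|E^{d/2}$ for all $E\ge0$. Here $N^D$ counts Dirichlet eigenvalues $\lambda_j\le E$ with multiplicity. The convention matters: the Dirichlet counting function is taken non-strict, i.e.\ with $\le E$, which is the stronger form of the upper bound. Scaling reduces the claim to $R=1$, since both sides depend only on $R\sqrt E$. For $R=1$ this is exactly the main Dirichlet theorem of \cite{FLPS}, valid in all dimensions $d\ge2$. So I will only have to quote it and confirm that it is proved for the non-strict count and for every $d\ge2$, not just $d=2$.

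\emph{Neumann side.} We must show $N^{<}_{B_R^d}(E)\ge \frac{\omega_d}{(2\pi)^d}|B_R^d|E^{d/2}$. This is verbatim Theorem~\ref{thm:main}, which uses the strict count, again the stronger form of the lower bound. The identity $\frac{\omega_d}{(2\pi)^d}\omega_d R^d E^{d/2}=\frac{(R\sqrt E)^d}{2^d\Gamma(d/2+1)^2}$ follows from $\omega_d=\pi^{d/2}/\Gamma(\frac d2+1)$. For $d=2$ this case is already covered by \cite{FLPS}, but the theorem covers it independently.

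There is no real obstacle here. The only care needed is that the strict versus non-strict counting conventions match the formulation of Pólya's conjecture, since each of our conventions is the stronger one on its side. All the genuine difficulty sits inside Theorem~\ref{thm:main}.
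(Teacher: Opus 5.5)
Your proposal is correct and follows the paper's proof exactly: the paper cites \cite[Theorem~1.2]{FLPS} for the Dirichlet inequality and Theorem~\ref{thm:main} for the Neumann inequality. The convention check you flag needs no further confirmation from \cite{FLPS}, because the Weyl term is continuous in $E$ and the spectrum is discrete, so for each inequality the strict and non-strict forms are equivalent by letting $E$ approach from the appropriate side.
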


\begin{proof}
The Dirichlet inequality is \cite[Theorem~1.2]{FLPS}; the Neumann inequality
is Theorem \ref{thm:main}.
\end{proof}

If
\(
N_\Omega^{\le}(E):=\#\{j:\mu_j^N(\Omega)\le E\},
\)
then \(N_\Omega^{\le}(E)\ge N_\Omega^{<}(E)\), so the Neumann conclusion
also holds with the non-strict counting convention.  We use the strict count
because the Robin comparison in Proposition \ref{prop:strict-robin} is
strict.  For the disk, the explicit Rayleigh--Ritz estimates in Section
\ref{sec:d2} replace the finite computer-assisted step in \cite{FLPS}.  In
eigenvalue form, Theorem \ref{thm:main} is equivalent to
\begin{equation*}
\mu_{n+1}^N(B_R^d)
\le
\frac{4\pi^2n^{2/d}}{(\omega_d|B_R^d|)^{2/d}},
\qquad n\ge1;
\end{equation*}
see Corollary \ref{cor:eigenvalue-form}.  Thus the Weyl-sharp constant gives
a pointwise eigenvalue bound at every index.

On the unit ball, set
\begin{equation*}
\mathcal N_d^{<}(x)
:=
N_{B_1^d}^{<}(x^2),
\qquad
W_d(x)
:=
\frac{x^d}{2^d\Gamma(\frac d2+1)^2}.
\end{equation*}
It is enough to prove \(\mathcal N_d^{<}(x)\ge W_d(x)\) for \(x\ge0\);
the scaling to \(B_R^d\) is carried out in Section
\ref{sec:scaling}.

\subsection{Background and the obstruction}

Pólya formulated the conjecture in 1954 \cite{Polya1954} and later proved
both inequalities for tiling domains, with an additional regularity
assumption in the Neumann case that Kellner removed
\cite{Polya,Kellner}.  The Li--Yau and Kröger inequalities give the sharp
semiclassical constants after one integration in the energy variable
\cite{LiYau,Kroger,FrankLarson}; the counting-function inequality is the
endpoint before this averaging.  A ball does not tile Euclidean space, so
Pólya's argument is unavailable.  Nor does the Neumann result follow from
the Dirichlet theorem: the interlacing inequality
\(\mu_{k+1}^N\le\lambda_k^D\) combines with the Dirichlet bound in the
wrong direction.

There is also a specifically higher-dimensional obstruction.  In the
angular sector \(\ell\), separation of variables gives the radial condition
\begin{equation*}
kJ_{\nu}'(k)-\frac{d-2}{2}J_{\nu}(k)=0,
\qquad
\nu=\ell+\frac{d-2}{2},
\end{equation*}
where \(J_\nu\) is the Bessel function of the first kind.  This is a Dini
condition, reducing to \(J_\nu'(k)=0\) only when \(d=2\).  The
derivative-zero phase estimate used in the disk therefore does not apply to
the physical radial spectrum directly.

Uniform enclosures for Bessel phases were subsequently obtained in
\cite{FLPSPhase}.  Two-term Weyl remainder estimates for balls and
spherical shells \cite{GJWY} imply, for each fixed Neumann ball, the Pólya
lower bound above a sufficiently large frequency; they do not cover every
energy or provide a threshold uniform in the dimension.  Further related
results include the planar convex-domain estimate of
\cite{FilonovConvex}, the Dirichlet theorem for annuli
\cite{FLPSAnnuli}, improvements for balls and cylinders in \cite{GMWZ},
and Pólya-type or quantitative inequalities in other geometries
\cite{FreitasMaoSalavessa,HeWang,JiangLin}.

\subsection{Strategy of the proof}

The argument has three steps.

\paragraph{Strict Robin transfer.}
We place the physical problem in a Robin family whose endpoint has the
derivative-zero condition.  Strict monotonicity of every radial branch
gives
\begin{equation*}
\lambda_{\ell,k}^{N}
<
\bigl(j'_{\nu,k}\bigr)^2
\qquad(\ell\ge0,\ k\ge1).
\end{equation*}
Proposition \ref{prop:strict-robin} proves this comparison variationally.
Its strictness is essential at threshold frequencies and transfers the
Bessel-phase lower bound of \cite[Proposition~3.1,
equation~(3.3)]{FLPS} to \(\mathcal N_d^{<}\).

\paragraph{Discrete--continuous reduction.}
Let \(\Xi_x\) be the decreasing inverse of the Bessel action and let
\(H_d\) denote cumulative angular multiplicity.  Summation by radial level
gives the two expressions
\begin{equation*}
\mathcal N_d^{<}(x)
\ge
P_d(x)
=
\sum_{n\ge1}H_d\!\left(\Xi_x(n-\tfrac34)\right),
\qquad
W_d(x)
=
\frac2{(d-1)!}\int_0^{x/\pi}\Xi_x(t)^{d-1}\,dt.
\end{equation*}
Thus, after the Robin transfer, no further spectral input is needed.  The
problem is to prove that a \(3/4\)-shifted staircase dominates the exact
integral representing the Weyl term.

\paragraph{The critical scale and the frequency ranges.}
For \(d\ge5\), write \(x=ad^{3/2}\).  For the radial levels used below, the
inverse-phase estimate
\begin{equation*}
\Xi_x\!\left(n-\tfrac34\right)\ge x-\alpha_nx^{1/3},
\qquad
\alpha_n=
\left(\frac{n-\frac34}{c_0}\right)^{2/3},
\qquad
c_0=\frac{2\sqrt2}{3\pi}.
\end{equation*}
At this scale the loss is \(O(d^{1/2})\), hence a relative
\(O(d^{-1})\) error.  Raising the angular cutoff to a power comparable to
\(d\) turns each fixed radial level into a nonzero limiting contribution.
Indeed, the finite-level lower bound of \eqref{eq:L.2.5} satisfies, for
fixed \(M\) and \(a\ge4/5\),
\begin{equation*}
L_{d,M}(a)
\xrightarrow[d\to\infty]{}
\frac{\sqrt{2\pi}}{a}
\left(1-\frac1{24a^2}\right)
\sum_{n=1}^{M}\exp\!\left(-\alpha_n a^{-2/3}\right).
\end{equation*}
This limit motivates the decomposition; the proof itself establishes
uniform finite-dimensional inequalities.  Power trials control low
frequencies, finite-level and beta-integral estimates cover the
intermediate range, and a uniform phase estimate treats high frequencies.
The dimensions \(d=2,3,4\) use sharper variants adapted to their angular
multiplicities.  The exact overlapping ranges are recorded in Table
\ref{tab:frequency-ranges} after the common estimates.

\subsection{Further consequences}

Two auxiliary results merit separate mention.  In dimension two, a shifted
floor-sum estimate and explicit
Rayleigh--Ritz bounds replace the computer-assisted finite-frequency step
of \cite{FLPS}; see Section \ref{sec:d2}.  In dimension three, Theorem
\ref{thm:d3-quarter} proves a weighted quarter-shift inequality for the
inverse Bessel action at every \(x\ge5\).  The final section also records
the pointwise eigenvalue bound of Corollary \ref{cor:eigenvalue-form} and a
Neumann-bracketing transfer to domains that tile a ball, including
half-balls and orthant sectors; see Corollary \ref{cor:ball-tiles}.

\subsection{Organization and exact computation}

Section \ref{sec:spectral-reduction} proves the strict Robin transfer and
reduces the theorem to the weighted phase sum.  Section
\ref{sec:common-estimates} develops the estimates used in several
dimensions.  Sections \ref{sec:d2}--\ref{sec:dge5} treat, respectively,
\(d=2\), \(d=3\), \(d=4\), and \(d\ge5\).  Section
\ref{sec:scaling} restores the radius and derives the consequences of the
counting-function inequality.

Apart from standard Bessel identities, the only special-function
inequality imported into the proof is the phase bound of
\cite[Proposition~3.1, equation~(3.3)]{FLPS}.  Every finite calculation for
\(2\le d\le6\) is printed in the paper.  For \(d\ge7\), Proposition
\ref{supp:aggregate-proposition} reduces one compact two-parameter estimate
to finite lists of exact-rational Taylor coefficients and remainder bounds.
The ancillary program verifies these lists using integer and rational
arithmetic.  Decimal output is diagnostic only and is not used in any
inequality.

\section{Spectral reduction}
\label{sec:spectral-reduction}

\subsection{Separation of variables}

Throughout this section assume \(d\ge3\).  We work on the unit ball
and use the frequency variable \(x=\sqrt E\).  Put

\begin{equation}
\delta_d:=\frac{d-2}{2},
\qquad
\nu_{d,\ell}:=\ell+\delta_d.
\label{eq:I.1}
\end{equation}
The multiplicity of the spherical harmonics of degree \(\ell\) is

\begin{equation}
\kappa_{d,\ell}
:=
\binom{\ell+d-1}{d-1}
-
\binom{\ell+d-3}{d-1},
\qquad \ell=0,1,2,\ldots,
\label{eq:I.2}
\end{equation}
where a binomial coefficient is zero when its upper entry is smaller than
its lower entry.  A regular radial solution of positive frequency \(k\) in
this sector is

\begin{equation}
u(r)=r^{-\delta_d}J_{\nu_{d,\ell}}(kr).
\label{eq:I.3}
\end{equation}
Consequently, the Neumann boundary condition is

\begin{equation}
kJ_{\nu_{d,\ell}}'(k)-\delta_d J_{\nu_{d,\ell}}(k)=0.
\label{eq:I.4}
\end{equation}
This is a Dini equation, not \(J_{\nu_{d,\ell}}'(k)=0\), unless \(d=2\).  We use the standard Bessel identities and zero conventions of \cite{DLMF,Watson}.

\begin{proposition}[Strict Robin transfer]
\label{prop:strict-robin}
For every integer \(d\ge3\), every angular degree \(\ell\ge0\), and
every radial index \(k\ge1\), the Neumann eigenvalue is strictly
smaller than the corresponding \(\gamma=\delta_d\) Robin eigenvalue:
\[
\lambda_{\ell,k}^{N}
<
\lambda_{\ell,k}^{\operatorname{Robin}(\delta_d)}
=
(j'_{\nu_{d,\ell},k})^2.
\]
The indexing includes the zero mode when \((\ell,k)=(0,1)\).
\end{proposition}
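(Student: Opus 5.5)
Fix $d\ge3$ and $\ell\ge0$, and write $\nu=\nu_{d,\ell}$, $\delta=\delta_d$. The plan is to embed the physical radial Neumann problem in a one-parameter Robin family on the sector-$\ell$ radial space, and to show that each radial eigenvalue branch is strictly increasing in the Robin parameter. The radial quadratic form in sector $\ell$ is
\[
q_\gamma(u)=\int_0^1\Bigl(|u'|^2+\frac{\ell(\ell+d-2)}{r^2}|u|^2\Bigr)r^{d-1}\,dr+\gamma|u(1)|^2,
\]
taken on the weighted space $L^2(r^{d-1}dr)$. Its form domain is independent of $\gamma$, and its eigenvalues $\lambda_{\ell,k}(\gamma)$ are simple, since the problem is a limit-point/regular Sturm--Liouville problem at $r=0$ with one boundary condition at $r=1$. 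The value $\gamma=0$ gives the Neumann condition $u'(1)=0$. For $\gamma=\delta$, the boundary condition $u'(1)+\delta u(1)=0$ applied to $u=r^{-\delta}J_\nu(kr)$ becomes
\[
kJ_\nu'(k)-\delta J_\nu(k)+\delta J_\nu(k)=kJ_\nu'(k)=0.
\]
So $\lambda_{\ell,k}(\delta)=(j'_{\nu,k})^2$, provided the zero indexing matches. When $\nu>0$, $k=0$ does not give a nontrivial eigenfunction and the zeros of $J_\nu'$ are enumerated from $j'_{\nu,1}>0$. When $\ell=0$ we have $\nu=\delta$, and the $(0,1)$ Neumann mode is the constant with eigenvalue $0$; we compare it with $(j'_{\delta,1})^2>0$.

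\textbf{Monotonicity.} The min--max principle on the fixed form domain gives that $\gamma\mapsto\lambda_{\ell,k}(\gamma)$ is nondecreasing. For strictness I would use analytic perturbation theory: the family is of type (B) and the eigenvalues are simple, so the Hellmann--Feynman formula holds,
\[
\frac{d}{d\gamma}\lambda_{\ell,k}(\gamma)=\frac{|u_\gamma(1)|^2}{\|u_\gamma\|^2},
\]
where $u_\gamma$ is the eigenfunction. This derivative is strictly positive, because $u_\gamma(1)=0$ together with $u_\gamma'(1)=-\gamma u_\gamma(1)=0$ would force $u_\gamma\equiv0$ by uniqueness for the ODE at the regular point $r=1$. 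Integrating over $[0,\delta]$ gives the strict inequality.

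\textbf{Avoiding perturbation theory.} Alternatively, a direct variational argument works. Take the span $V$ of the first $k$ Robin($\delta$) eigenfunctions. Every $u\in V$ satisfies
\[
q_0(u)=q_\delta(u)-\delta|u(1)|^2\le\lambda_{\ell,k}(\delta)\|u\|^2,
\]
so $\lambda_{\ell,k}(0)\le\lambda_{\ell,k}(\delta)$. If equality held, the maximizing vector $u\in V$ (which, after a standard step, can be taken to be a $q_0$-eigenfunction) would have $u(1)=0$. Being an eigenfunction for both the Neumann and the Robin condition, it would then satisfy $u(1)=u'(1)=0$, a contradiction.

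\textbf{Main obstacle.} The delicate points are that the index $k$ labels the same branch on both sides, and the exact equality case in the min--max argument. I would handle both by using simplicity and continuity of the branches in $\gamma$: the branches never cross, so ordering by size is the same as following a branch. I would also check the sign convention carefully, namely that $\gamma=+\delta>0$ corresponds to the Dirichlet-ward direction, so that the comparison goes $\lambda^N<\lambda^{\mathrm{Robin}}$ as stated.
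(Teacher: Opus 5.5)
Your main argument is the same as the paper's: a type-(B) Robin family on a $\gamma$-independent form domain, simple eigenvalues, the Hellmann--Feynman derivative $|u(1)|^2/\|u\|^2$, and ODE uniqueness at the regular endpoint $r=1$ to show that this numerator cannot vanish. Your direct min--max variant, which compares $\gamma=0$ with $\gamma=\delta_d$ on the span of the first $k$ Robin eigenfunctions, is also correct and avoids perturbation theory.

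Two points need the explicit treatment the paper gives them. First, for $d=3$, $\ell=0$ the endpoint $r=0$ is limit-circle, not limit-point or regular. What selects the regular branch $r^{-\delta_d}J_{\nu}(kr)$ is the finite-energy form domain, in which $r^{-1}$ has infinite energy. Second, the index match $\lambda^{\mathrm{Robin}}_{\ell,k}=(j'_{\nu,k})^2$ requires ruling out zero and negative Robin eigenvalues. Zero must be excluded using the zero-energy regular solution $r^\ell$, for which $u'(1)+\delta_d u(1)=\nu_{d,\ell}>0$; the Bessel formula degenerates at $k=0$, so it cannot settle this. Negative eigenvalues are excluded because $\mathfrak q_{\ell,\delta_d}\ge0$.
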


\begin{proof}

In \(L^2((0,1),r^{d-1}dr)\), use the Friedrichs form domain

\begin{equation*}
\mathfrak V_{d,\ell}:=
\left\{
u\in AC_{\mathrm{loc}}(0,1]:
\int_0^1
\left(
|u'|^2+
\left(1+\frac{\ell(\ell+d-2)}{r^2}\right)|u|^2
\right)r^{d-1}\,dr<\infty
\right\}.
\end{equation*}
This is the Friedrichs realization selecting the regular solution at the
singular endpoint \(r=0\); the trace \(u(1)\) is well defined.  The only
limit-circle case relevant here is \(d=3,\ell=0\).  There the singular
branch behaves as \(u(r)\sim r^{2-d-\ell}=r^{-1}\).  Although it is
square-integrable against \(r^{d-1}dr\), its derivative has infinite
form energy:
\[
\int_0^1 |u'(r)|^2r^2\,dr=\infty.
\]
Thus the form domain again selects the regular branch.  On this common
domain define

\begin{equation}
\begin{aligned}
\mathfrak q_{\ell,\gamma}[u]
:={}&
\int_0^1
\left(
|u'(r)|^2+
\frac{\ell(\ell+d-2)}{r^2}|u(r)|^2
\right)r^{d-1}\,dr
+\gamma |u(1)|^2 .
\end{aligned}
\label{eq:I.5}
\end{equation}
The case \(\gamma=0\) is Neumann.  The case
\(\gamma=\delta_d\) has boundary condition

\begin{equation}
u'(1)+\delta_d u(1)=0,
\label{eq:I.6}
\end{equation}
which becomes \(J_{\nu_{d,\ell}}'(k)=0\) after \eqref{eq:I.3} is substituted.

Every fixed-\(\ell\) radial realization associated with the common closed
form domain has compact resolvent and simple eigenvalues.  Index them increasingly by

\begin{equation*}
0=\lambda_{0,1}(0)<\lambda_{0,2}(0)<\cdots,
\qquad
0<\lambda_{\ell,1}(0)<\lambda_{\ell,2}(0)<\cdots
\quad(\ell\ge1).
\end{equation*}
The forms constitute a type-(B) analytic family in the Robin parameter
\cite[Chapter VII, \S4]{Kato}.
The min--max principle first gives monotonicity in \(\gamma\); simplicity
and the Hellmann--Feynman formula give, for every eigenbranch,

\begin{equation}
\frac{d}{d\gamma}\lambda_{\ell,k}(\gamma)
=
\frac{|u_{\ell,k,\gamma}(1)|^2}
{\displaystyle\int_0^1
|u_{\ell,k,\gamma}(r)|^2r^{d-1}\,dr}
>0.
\label{eq:I.7}
\end{equation}
The numerator cannot vanish: the Robin condition would then also give
\(u'(1)=0\), forcing the ODE solution to vanish identically.

It remains to identify the Robin spectrum.  Since
\(\delta_d>0\), the form \(\mathfrak q_{\ell,\delta_d}\) is nonnegative,
so it has no negative eigenvalue.  At zero energy the regular solution is
\(u(r)=r^\ell\), and
\[
u'(1)+\delta_du(1)=\ell+\delta_d=\nu_{d,\ell}>0.
\]
Hence zero is not a Robin eigenvalue.  The positive Robin eigenvalues are
therefore exactly \(\{(j'_{\nu_{d,\ell},k})^2:k\ge1\}\).  Thus

\begin{equation}
\lambda_{\ell,k}^{N}
<
\lambda_{\ell,k}^{\operatorname{Robin}(\delta_d)}
=
(j'_{\nu_{d,\ell},k})^2.
\label{eq:I.8}
\end{equation}
The indexing includes the zero mode in the \(\ell=0\) Neumann
sector; it lies strictly below the first Robin eigenvalue.
\end{proof}

\subsection{A phase lower bound}

For \(x>0\), define

\begin{equation}
G_x(z)
:=
\frac{\sqrt{x^2-z^2}-z\arccos(z/x)}{\pi},
\qquad 0\le z\le x,
\label{eq:I.9}
\end{equation}
and set \(G_x(z)=0\) for \(z>x\).

\begin{proposition}[Filonov--Levitin--Polterovich--Sher
{\cite[Proposition 3.1, equation (3.3)]{FLPS}}]
\label{prop:flps-phase}
Let \(\nu\ge0\) be real and \(x>0\).  With \(j'_{\nu,k}\) denoting the
\(k\)-th positive zero of \(J_\nu'\), except for the convention
\(j'_{0,1}=0\), one has
\begin{equation}
\#\{k:j'_{\nu,k}\le x\}
\ge
\left\lfloor G_x(\nu)+\frac34\right\rfloor .
\label{eq:I.10}
\end{equation}
For \(\nu>x\), the right-hand side is zero by the extension of \(G_x\).
\end{proposition}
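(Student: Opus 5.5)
The plan is to prove \eqref{eq:I.10} by a Prüfer-angle (phase) argument for the Bessel equation, keeping careful track of the phase accumulated up to the turning point \(z=\nu\). The case \(\nu>x\) is trivial, since the right-hand side is then \(0\). The case \(\nu=0\) follows from the same argument, using the convention \(j'_{0,1}=0\) to account for the extra count. So assume \(0<\nu\le x\).

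\textbf{Setting up the phase.} Write \(J_\nu(z)=\rho(z)\sin\theta(z)\) and \(zJ_\nu'(z)=\rho(z)\,q(z)\cos\theta(z)\). Here \(q>0\) is an amplitude weight chosen so that \(q(z)\approx\sqrt{z^2-\nu^2}\) for \(z>\nu\) and \(q\) stays positive across the turning point; for example \(q(z)=\sqrt{z^2-\nu^2+c\,\nu^{2/3}}\) with a regularising constant \(c\). From Bessel's equation \((zJ_\nu')'=(\nu^2/z-z)J_\nu\) one obtains
\[
\theta'(z)=\frac{q}{z}\cos^2\theta+\frac{z^2-\nu^2}{zq}\sin^2\theta+\frac{q'}{q}\sin\theta\cos\theta .
\]
Because \(\rho q\ne0\), the zeros of \(J_\nu'\) in \((0,x]\) are exactly the points where \(\theta\in\frac\pi2+\pi\mathbb Z\). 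Moreover \(\theta'>0\) at such points, since there \(\theta'=(z^2-\nu^2)/(zq)\) and \(J_\nu'\) has no zeros below \(\nu\). Hence
\[
\#\{k:j'_{\nu,k}\le x\}\ \ge\ \Bigl\lfloor \tfrac{\theta(x)-\theta(0^+)}{\pi}+\tfrac12\Bigr\rfloor ,
\]
where the normalisation \(\theta(0^+)=\pi/2\) comes from \(J_\nu/(zJ_\nu')\to 1/\nu\cdot\) (small). It therefore suffices to prove
\[
\theta(x)-\theta(\nu)\ \ge\ \pi G_x(\nu)-\text{(error)},
\qquad
\theta(\nu)-\theta(0^+)\ \ge\ \pi/4+\text{(error)},
\]
with the errors combining into something nonnegative.

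\textbf{The region \(z\ge\nu\).} With the choice \(q=\sqrt{z^2-\nu^2}\) the first two terms of \(\theta'\) combine exactly to \(\sqrt{z^2-\nu^2}/z\), whose integral from \(\nu\) to \(x\) is precisely \(\pi G_x(\nu)\). The cross term \((q'/q)\sin\theta\cos\theta\) oscillates. I would control it in one of two ways: by an integration-by-parts (averaging) argument, or more cleanly by using \(u=\sqrt z J_\nu\), whose Liouville–Green phase has a monotone correction of definite sign, since \(\nu^2-\tfrac14\ge\) the relevant sign for the comparison. The aim is a one-sided bound of the form \(\theta(x)-\theta(\nu)\ge\pi G_x(\nu)-\epsilon\), where \(\epsilon\) is absorbed by slack from the turning point.

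\textbf{Turning point (main obstacle).} The hardest step is the phase gained on \((0,\nu]\), which must be at least about \(\pi/4\), the source of the \(3/4=\tfrac12+\tfrac14\) shift. I would first compare with the Airy model near \(z=\nu\). The Airy phase at the turning point is \(\pi/4\) beyond the non-oscillatory region, and Sturm comparison with the Langer-transformed equation \(w''+\zeta w=0\), \(\zeta=\zeta(z/\nu)\), gives a phase dominating the Airy phase monotonically. In this step I would use the known monotonicity of the Langer error term for Bessel functions. If that is too delicate, the fallback is a direct route: check \(\theta(\nu)\ge\pi/4\) for each \(\nu\) via the sign of \(J_\nu(\nu)\) and \(J_\nu'(\nu)\), together with Watson's inequality \(J_\nu(\nu)/J_\nu'(\nu)<\) an explicit bound. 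Combining the two regions and taking floors gives \eqref{eq:I.10}.
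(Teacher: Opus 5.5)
The paper does not prove this proposition. It imports it from \cite{FLPS} as its only nonstandard special-function input. Judged on its own, your sketch has a genuine gap, and the bookkeeping around that gap is wrong.

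\textbf{Normalization.} For $\nu>0$, both $J_\nu$ and $J_\nu'$ are positive near $0$. Hence $\tan\theta(0^+)=q(0^+)/\nu$ and $\theta(0^+)\in(0,\pi/2)$, not $\pi/2$. Since $J_\nu'$ has no zeros on $(0,\nu]$, $\theta$ stays in $(-\pi/2,\pi/2)$ there. The zero count on $(0,x]$ is therefore exactly $\lfloor\theta(x)/\pi+\tfrac12\rfloor$, and the target is $\theta(x)\ge\pi G_x(\nu)+\pi/4$. Your formula with $\theta(0^+)=\pi/2$ would instead require $\theta(x)\ge\pi G_x(\nu)+3\pi/4$, which fails for large $x$.

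\textbf{The split has no content.} The division into ``$\pi/4$ from $(0,\nu]$'' and ``$\pi G_x(\nu)$ from $[\nu,x]$'' carries no information. The value $\theta(\nu)=\arctan\bigl(q(\nu)J_\nu(\nu)/(\nu J_\nu'(\nu))\bigr)$ is fixed by the arbitrary choice of $q(\nu)$. It equals $0$ for the weight $q=\sqrt{z^2-\nu^2}$ that you use on $[\nu,x]$. Two different weights define two different angles, and you give no matching between them. Your regularized weight $\sqrt{z^2-\nu^2+c\nu^{2/3}}$ is, moreover, not real near $0$ once $\nu$ is large.

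\textbf{Where the $\pi/4$ actually comes from.} With $q=\sqrt{z^2-\nu^2}$ one has exactly $\theta(x)=\pi G_x(\nu)+\int_\nu^x\frac{z}{z^2-\nu^2}\sin\theta\cos\theta\,dz$. So the entire extra $\pi/4$ comes from this sign-indefinite cross term on $z>\nu$, essentially in the Airy layer $z-\nu\asymp\nu^{1/3}$. That is exactly the term your plan leaves uncontrolled. The case $\nu=0$, which has no turning point, is not ``the same argument'' either.

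\textbf{The missing idea: exact one-sided comparison.} The inequality is asymptotically sharp, so it needs an exact one-sided comparison rather than an error estimate. At the zeros, $\theta(j'_{\nu,k})-\pi G_{j'_{\nu,k}}(\nu)=(k-\tfrac12)\pi-\pi G_{j'_{\nu,k}}(\nu)$. McMahon's expansion shows this tends to $\pi/4$ from above, with excess about $3/(8j'_{\nu,k})$ for fixed $\nu$. There is no uniform slack to absorb an oscillatory loss $\epsilon$: any remainder scheme would have to beat a margin tending to zero, uniformly in $\nu$ and $k$. A bound at the single point $z=\nu$ cannot do this, and you do not say which Langer-type monotonicity would. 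The Liouville--Green phase of $\sqrt z\,J_\nu$ also does not count zeros of $J_\nu'$ directly, and the sign of $\nu^2-\frac14$ flips at $\nu=\frac12$.

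\textbf{A route with the right structure.} Use the modulus--phase pair $J_\nu'=N_\nu\cos\phi_\nu$, $Y_\nu'=N_\nu\sin\phi_\nu$, normalized by $\phi_\nu(0^+)=\pi/2$. Then $\phi_\nu(j'_{\nu,k})=(k-\tfrac12)\pi$, including $j'_{0,1}=0$. The Wronskian gives $N_\nu^2\phi_\nu'=2(z^2-\nu^2)/(\pi z^3)$, so
\[
\frac{d}{dz}\bigl(\phi_\nu(z)-\pi G_z(\nu)\bigr)
=\frac{\sqrt{z^2-\nu^2}}{z}\Bigl(\frac{2\sqrt{z^2-\nu^2}}{\pi z^2N_\nu^2(z)}-1\Bigr).
\]
It therefore suffices to show $\pi z^2N_\nu^2(z)\ge2\sqrt{z^2-\nu^2}$ for $z>\nu$. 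This is a statement about the nonoscillating modulus, not an oscillating cross term, and can be reached, for example, through Nicholson-type integral representations. Once it holds, $z\mapsto\phi_\nu(z)-\pi G_z(\nu)$ decreases on $(\nu,\infty)$ to its limit $\pi/4$. This gives $G_{j'_{\nu,k}}(\nu)\le k-\tfrac34$ for every $k$, and \eqref{eq:I.10} follows because $G_x(\nu)$ is increasing in $x$.
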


The proposition applies to every integer or half-integer
\(\nu_{d,\ell}\) arising here; when \(d\ge3\), these orders satisfy
\(\nu_{d,\ell}\ge1/2\).

Combining \eqref{eq:I.2}, \eqref{eq:I.8}, and \eqref{eq:I.10} gives a lower
bound directly for the strict Neumann count:

\begin{equation}
\mathcal N_d^{<}(x)
:=
\#\{j:\mu_j^N(B_1^d)<x^2\}
\ge
P_d(x),
\label{eq:I.11}
\end{equation}
where

\begin{equation}
P_d(x)
:=
\sum_{\ell\ge0}\kappa_{d,\ell}
\left\lfloor
G_x(\ell+\delta_d)+\frac34
\right\rfloor .
\label{eq:I.12}
\end{equation}
If a derivative zero equals \(x\), the corresponding Neumann
eigenfrequency is already strictly below \(x\).  Thus \eqref{eq:I.11}
remains valid for the strict counting function.

\subsection{The discrete level sum and the Weyl term}

Let \(\Xi_x:[0,x/\pi]\to[0,x]\) be the decreasing inverse of \(G_x\), and
extend it by zero for \(t\ge x/\pi\).  Set

\begin{equation}
t_n=n-\frac34,\qquad n=1,2,\ldots .
\label{eq:I.13}
\end{equation}
Then

\begin{equation}
P_d(x)=\sum_{n\ge1}H_d(\Xi_x(t_n)),
\label{eq:I.14}
\end{equation}
where

\begin{equation}
H_d(r):=\sum_{\ell+\delta_d\le r}\kappa_{d,\ell}.
\label{eq:I.15}
\end{equation}
Indeed,
\[
\left\lfloor G_x(\nu)+\frac34\right\rfloor
=
\sum_{n\ge1}\mathbf 1_{\{t_n\le G_x(\nu)\}},
\qquad
G_x(\nu)\ge t\ \Longleftrightarrow\ \nu\le\Xi_x(t).
\]
Summing first in \(\nu=\ell+\delta_d\) gives \eqref{eq:I.14}.  Both
inequalities are weak, so equality points are counted on both sides.
Terms with \(t_n>x/\pi\) vanish because \(\Xi_x(t_n)=0\) and
\(H_d(0)=0\) for \(d\ge3\).

If
\(L=\lfloor r-\delta_d\rfloor\ge0\), telescoping \eqref{eq:I.2} gives

\begin{equation}
H_d(r)
=
\binom{L+d-1}{d-1}
+
\binom{L+d-2}{d-1}.
\label{eq:I.16}
\end{equation}
For \(r<\delta_d\), \(H_d(r)=0\).

For later use, denote the integer values of this function by
\begin{equation}
\mathsf A_d(m):=\sum_{\ell=0}^{m}\kappa_{d,\ell}
=
\binom{m+d-1}{d-1}
+
\binom{m+d-2}{d-1}
=H_d(m+\delta_d).
\label{eq:L.1.3}
\end{equation}
The target Weyl term in frequency variables is

\begin{equation}
W_d(x)
:=
\frac{x^d}{2^d\Gamma(d/2+1)^2}.
\label{eq:I.17}
\end{equation}
It has the exact inverse-action representation

\begin{equation}
W_d(x)
=
\frac2{(d-1)!}
\int_0^{x/\pi}\Xi_x(t)^{d-1}\,dt.
\label{eq:I.18}
\end{equation}
The high-frequency estimate below proves that the discrete sum \eqref{eq:I.14}
strictly exceeds the integral \eqref{eq:I.18} above an explicit threshold.
Section \ref{sec:d3} proves the sharper all-frequency comparison when \(d=3\).
From this point onward, the proof concerns only the weighted phase sum
\(P_d\); no further spectral input is required.

\section{Common estimates for the weighted phase sum}
\label{sec:common-estimates}

\subsection{The phase function and its moments}

For later use we collect the elementary identities for the action in one
place.  Direct differentiation of \eqref{eq:I.9} gives
\begin{equation}
G_x'(z)
=-\frac1\pi\arccos\frac zx,
\qquad
G_x''(z)
=\frac1{\pi\sqrt{x^2-z^2}}>0
\quad(0<z<x).
\label{eq:D3.5}
\end{equation}
Consequently, \(G_x\) is strictly decreasing, convex, and
\(1/2\)-Lipschitz.  It is also increasing in the scale \(x\): for fixed
\(z<x\),

\begin{equation}
\frac{\partial G_x(z)}{\partial x}
=
\frac{\sqrt{x^2-z^2}}{\pi x}>0.
\label{eq:D3.6}
\end{equation}

We shall use the two exact integrals

\begin{equation}
\int_0^xG_x(z)\,dz=\frac{x^2}{8},
\qquad
\int_0^x2zG_x(z)\,dz=\frac{2x^3}{9\pi}.
\label{eq:D3.7}
\end{equation}
For the first identity, scale \(z=xu\), use
\(\int_0^1\sqrt{1-u^2}\,du=\pi/4\), and integrate
\(\int_0^1u\arccos u\,du\) by parts.  For the second identity,

\begin{equation}
\int_0^1u\sqrt{1-u^2}\,du=\frac13
\label{eq:D3.8}
\end{equation}
and integration by parts gives

\begin{equation}
\int_0^1u^2\arccos u\,du
=
\frac13\int_0^1\frac{u^3}{\sqrt{1-u^2}}\,du
=\frac29.
\label{eq:D3.9}
\end{equation}
Substitution in \eqref{eq:I.9} proves \eqref{eq:D3.7}.

For \(d\ge3\), the \(\ell=0\) order is
\(\nu_{d,0}=\delta_d>0\).  The convention \(j'_{0,1}=0\) is used only in
the disk section.

\subsection{Power trials}

The same elementary trial functions will be used in dimensions four and
higher.  Let \(L_\ell=\ell(\ell+d-2)\).

\begin{lemma}[Power-trial bound]
\label{lem:power-trial}
For \(d\ge2\) and \(\ell\ge1\), set
\[
s_\ell=\left(\frac{L_\ell}{2}\right)^{1/3},
\qquad
Q_d(L_\ell)
=\frac{s_\ell^2(2s_\ell+1)(2s_\ell+d)}{2s_\ell+d-2}.
\]
Then the first Neumann eigenvalue in the degree-\(\ell\) sector satisfies
\[
\lambda_{\ell,1}^N<Q_d(L_\ell).
\]
Moreover, \(Q_d(L)\) is strictly increasing for \(L>0\).
\end{lemma}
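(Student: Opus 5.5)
The plan is to apply the min--max principle in the fixed-$\ell$ radial problem of Proposition \ref{prop:strict-robin} with $\gamma=0$, using the single power trial $u(r)=r^{s}$ with $s=s_\ell>0$. Since $\ell\ge1$, we have $L_\ell>0$ and hence $s_\ell>0$. The first Neumann eigenvalue $\lambda_{\ell,1}^N$ is the minimum of $\mathfrak q_{\ell,0}[u]/\int_0^1|u|^2r^{d-1}\,dr$ over nonzero $u\in\mathfrak V_{d,\ell}$, so any admissible trial bounds it from above.

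First I check that $r^s\in\mathfrak V_{d,\ell}$. The integrand in the form-domain condition is $O(r^{2s+d-3})$ near $0$, and $2s+d-3>-1$ because $s>0$ and $d\ge2$. Next I compute the three integrals:
\[
\int_0^1 |u'|^2r^{d-1}dr=\frac{s^2}{2s+d-2},\qquad
\int_0^1 \frac{L_\ell}{r^2}|u|^2r^{d-1}dr=\frac{L_\ell}{2s+d-2},\qquad
\int_0^1|u|^2r^{d-1}dr=\frac1{2s+d}.
\]
The Rayleigh quotient is therefore $(s^2+L_\ell)(2s+d)/(2s+d-2)$. The choice $L_\ell=2s^3$ makes $s^2+L_\ell=s^2(2s+1)$, which is exactly $Q_d(L_\ell)$. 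This gives $\lambda_{\ell,1}^N\le Q_d(L_\ell)$.

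For strictness, equality in min--max would force $r^{s}$ to be an eigenfunction, that is, a regular solution $r^{-\delta_d}J_{\nu_{d,\ell}}(kr)$ of the radial ODE. The function $r^{s}$ is not of this form. One way to see this is that plugging $r^{s}$ into $-(r^{d-1}u')'+L_\ell r^{d-3}u=\lambda r^{d-1}u$ gives $(L_\ell-s(s+d-2))r^{s+d-3}=\lambda r^{s+d-1}$. This identity can only hold if $\lambda=0$ and $L_\ell=s(s+d-2)$. But $\lambda=0$ is impossible, since $\lambda_{\ell,1}^N>0$ for $\ell\ge1$ (the first eigenvalue is positive by the quoted indexing), and in any case $s(s+d-2)\ne 2s^3$ generically. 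This yields the strict inequality $\lambda_{\ell,1}^N<Q_d(L_\ell)$.

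For monotonicity, $s=(L/2)^{1/3}$ is strictly increasing in $L$, so it suffices to show that $s\mapsto Q$ is strictly increasing for $s>0$. The logarithmic derivative is
\[
\frac{2}{s}+\frac{2}{2s+1}-\frac{4}{(2s+d)(2s+d-2)}.
\]
Since $d\ge2$, we have $(2s+d)(2s+d-2)\ge 2s(2s+2)$, so the negative term has magnitude at most $1/(s(s+1))<2/s$. The logarithmic derivative is therefore positive. I expect this last step, handling the decreasing factor $(2s+d)/(2s+d-2)$ uniformly in $d$, to be the only point needing care, together with justifying strictness.
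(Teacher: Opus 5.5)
Your proposal is correct and is essentially the paper's proof: it uses the same trial $r^{s_\ell}$ (times a spherical harmonic), the same Rayleigh quotient simplified via $L_\ell=2s_\ell^3$, and the same logarithmic-derivative monotonicity check (the paper's elasticity $d\log Q_d/d\log L$ is exactly $\tfrac13\, s\,d\log Q/ds$). The only variation is in the strictness step: the paper notes that the trial violates the natural Neumann condition, since its normal derivative at $r=1$ is $s_\ell Y_{\ell,j}\neq0$, whereas you show it violates the interior radial ODE because the eigenvalue would be $Q_d(L_\ell)>0$; both arguments are valid, and your ``generically'' remark is unnecessary.
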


\begin{proof}
For a normalized spherical harmonic \(Y_{\ell,j}\), radial integration gives
\[
\|r^\gamma Y_{\ell,j}\|_2^2=\frac1{2\gamma+d},
\qquad
\|\nabla(r^\gamma Y_{\ell,j})\|_2^2
=\frac{\gamma^2+L_\ell}{2\gamma+d-2}.
\]
Thus the Rayleigh quotient at \(\gamma=s_\ell\) is \(Q_d(L_\ell)\).
Equality in the variational bound would make the trial function a Neumann
eigenfunction, but its normal derivative at \(r=1\) is
\(s_\ell Y_{\ell,j}\ne0\).  The inequality is therefore strict.

For \(L=2s^3\), logarithmic differentiation gives
\[
\frac{d\log Q_d}{d\log L}
=\frac13\left(
2+\frac{2s}{2s+1}+\frac{2s}{2s+d}
-\frac{2s}{2s+d-2}\right)>0,
\]
which proves the monotonicity.
\end{proof}

\subsection{A shifted floor-sum estimate}

Used in dimensions two and three, the following \(3/4\)-shifted convex
floor lemma sharpens \cite[Theorem 6.1]{FLPS}: the same block decomposition
retains the needed integral over \([0,M]\) and the term \(M/4\).

\begin{lemma}[Convex \(3/4\)-shifted floor sum]
\label{lem:retained-floor}

Let \(f:[0,b]\to[0,\infty)\) be strictly decreasing, convex, and
\(1/2\)-Lipschitz, with

\begin{equation}
f(b)=0,
\qquad
f(0)\ge\frac14.
\label{eq:D3.25}
\end{equation}
Let

\begin{equation}
M=\left\lfloor f^{-1}\!\left(\frac14\right)\right\rfloor+1,
\label{eq:D3.26}
\end{equation}
and assume \(M\le b\).  Then

\begin{equation}
\sum_{r=0}^{\lfloor b\rfloor}
\left\lfloor f(r)+\frac34\right\rfloor
\ge
\int_0^M f(s)\,ds+\frac M4.
\label{eq:D3.27}
\end{equation}
\end{lemma}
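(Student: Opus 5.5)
The plan is to compare the floor sum with the integral one unit interval at a time, and then to repair the intervals where this fails by grouping them into blocks, as in the proof of \cite[Theorem~6.1]{FLPS}. Since \(f\) is decreasing and \(M-1\le f^{-1}(1/4)<M\), we have \(f(r)\ge\frac14\) for the integers \(0\le r\le M-1\) and \(f(M)<\frac14\). Hence every term with \(r\le M-1\) is at least \(1\). Every term with \(r\ge M\) is nonnegative and can be dropped; the hypothesis \(M\le b\) guarantees that all indices \(r\le M-1\) occur in the sum. It therefore suffices to prove
\[
\sum_{r=0}^{M-1}\Bigl\lfloor f(r)+\tfrac34\Bigr\rfloor
\;\ge\;\sum_{r=0}^{M-1}\Bigl(\int_r^{r+1}f(s)\,ds+\tfrac14\Bigr).
\]

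Write \(m_r:=\lfloor f(r)+\frac34\rfloor\ge1\), so that \(m_r-\frac34\le f(r)<m_r+\frac14\). Because \(f\) is decreasing, \(\int_r^{r+1}f\le \frac12\bigl(f(r)+f(r+1)\bigr)\) by convexity, and this is at most \(f(r)\). Thus the unit-interval inequality \(m_r\ge\int_r^{r+1}f+\frac14\) holds automatically whenever \(f(r)\le m_r-\frac14\), or more precisely whenever \(\frac12(f(r)+f(r+1))\le m_r-\frac14\). The only bad intervals are those where \(f(r)\) and \(f(r+1)\) both lie near the top of the window \([m_r-\frac34,m_r+\frac14)\), with an excess of at most \(\frac12\) per interval.

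To absorb the excess I would partition \(\{0,\dots,M-1\}\) into maximal blocks \(\{p,\dots,q\}\) on which \(m_r\equiv k\) is constant. The values \(k\) strictly decrease from block to block, since \(f\) is decreasing. On a block, \(f\) falls from below \(k+\frac14\) at \(r=p\) to at least \(k-\frac34\) at \(r=q\), and it drops below \(k-\frac34\) by \(r=q+1\), except possibly in the final block, where \(k=1\) and the exit value is \(f(M)<\frac14\). By convexity, \(f\) on \([p,q+1]\) lies below the chord joining \((p,f(p))\) and \((q+1,f(q+1))\). This gives
\[
\int_p^{q+1}f\le\tfrac12(q+1-p)\bigl(f(p)+f(q+1)\bigr)<\tfrac12(q+1-p)\bigl((k+\tfrac14)+(k-\tfrac34)\bigr)=(q+1-p)\bigl(k-\tfrac14\bigr).
\]
This is exactly the block version of the required inequality, \((q+1-p)k\ge\int_p^{q+1}f+\frac{q+1-p}{4}\). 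Summing over blocks then yields \eqref{eq:D3.27}.

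I expect the main obstacle to be the endpoint bookkeeping in this chord estimate. At the right end of a block the bound \(f(q+1)<k-\frac34\) is available only because the next integer has a smaller floor value, and in the last block one must use \(f(M)<\frac14\) directly. At the left end one must use \(f(p)<k+\frac14\) from the definition of \(m_p\). If some block has a jump of \(f\) larger than allowed, I would instead bound each block's integral by the trapezoid on its first interval plus the chord on the remainder, using the \(\frac12\)-Lipschitz bound \(f(p)-f(p+1)\le\frac12\). The role of the \(\frac12\)-Lipschitz hypothesis is to keep every nonempty window of width \(1\) populated by at least two integers whenever \(f\) crosses it, so that no level \(k\) is skipped. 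A skipped level would break the identification of consecutive blocks and the final telescoping.
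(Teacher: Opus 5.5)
Your proof is correct, and it is the paper's block-and-chord argument. Your maximal runs \(\{p,\dots,q\}\) with \(m_r\equiv k\) are exactly the paper's blocks, and your chord bound on \([p,q+1]\) is its estimate \eqref{eq:D3.31}.

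The difference lies only in the bookkeeping, and it works in your favour. The paper sums over closed blocks \([i,j]\) with trapezoidal half-weights at the shared endpoints. That forces it to know two things:
\begin{itemize}
\item the floor value at each right endpoint is exactly \(\lfloor f(j)+\frac34\rfloor=n-1\);
\item consecutive \(M_k\) are distinct.
\end{itemize}
Both come from the \(\frac12\)-Lipschitz bound. Your half-open runs carry full weight \(k\) and need only \(f(p)<k+\frac14\) and \(f(q+1)<k-\frac34\). The second follows from maximality of the run, or from \(m_M=0\) for the last run. So your summation never uses the Lipschitz hypothesis.

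For that reason your final paragraph should be dropped. A skipped level does no harm to your summation, and no fallback estimate is needed. The last run also does not need \(k=1\), since \(f(M)<\frac14\le k-\frac34\) for every \(k\ge1\). (It is true that \(k=1\) there, because \(f(M-1)\le f(M)+\frac12<\frac34\), but the argument does not depend on it.)
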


Appendix \ref{supp:retained-tail-proof} gives the block-and-telescoping proof.

For the finite-level estimates, put

\begin{equation*}
c_0:=\frac{2\sqrt2}{3\pi},
\qquad
\alpha_n:=\left(\frac{n-\frac34}{c_0}\right)^{2/3}
\quad(n\ge1).
\end{equation*}
Appendix \ref{supp:constant-proofs} collects the rational enclosures for
\(\alpha_1,\ldots,\alpha_4\), \(\pi\), and the required radicals.

\subsection{A uniform high-frequency estimate}

Put

\begin{equation}
p:=d-1\ge2,
\qquad
A_p:=\frac{p(p-1)(p-2)}{24}.
\label{eq:II.1}
\end{equation}
We prove

\begin{equation}
P_d(x)>W_d(x)
\quad\text{for}\quad
x\ge X_p:=(24p)^{3/2}.
\label{eq:II.2}
\end{equation}

\subsubsection{A continuous lower envelope for the angular multiplicity}

In \eqref{eq:I.16}, put

\begin{equation}
w:=L+\frac p2.
\label{eq:II.3}
\end{equation}

If \(p=2m\), direct factorization gives

\begin{equation}
p!H_d(r)
=
2w^2\prod_{j=1}^{m-1}(w^2-j^2).
\label{eq:II.4}
\end{equation}

If \(p=2m+1\), it gives
\nopagebreak

\begin{equation}
p!H_d(r)
=
2w\prod_{j=1}^{m}
\left(w^2-\left(j-\frac12\right)^2\right).
\label{eq:II.5}
\end{equation}
The empty product in \eqref{eq:II.4} covers \(p=2\), when \(H_3(r)=w^2\).
The sums of the squared offsets are exactly

\begin{equation}
\sum_{j=1}^{m-1}j^2=A_p
\quad(p=2m),
\qquad
\sum_{j=1}^{m}\left(j-\frac12\right)^2=A_p
\quad(p=2m+1).
\label{eq:II.6}
\end{equation}
Every factor in \eqref{eq:II.4}--\eqref{eq:II.5} is nonnegative.  Applying

\begin{equation}
\prod_i(1-u_i)\ge1-\sum_i u_i,
\qquad 0\le u_i\le1,
\label{eq:II.7}
\end{equation}
therefore yields

\begin{equation}
H_d(r)\ge\frac2{p!}(w^p-A_pw^{p-2}).
\label{eq:II.8}
\end{equation}
For \(s\ge0\), define

\begin{equation}
\psi_p(s):=
\begin{cases}
s^2,&p=2,\\
\max\{0,s^p-A_ps^{p-2}\},&p\ge3,
\end{cases}
\qquad
\phi_p(r):=\psi_p((r-\tfrac12)_+).
\label{eq:II.9}
\end{equation}
The function \(\psi_p\) is nondecreasing and convex.  For \(p\ge3\), on
the part where its polynomial branch is nonnegative,

\begin{equation}
\begin{aligned}
\frac d{ds}(s^p-A_ps^{p-2})
&=s^{p-3}\bigl(ps^2-(p-2)A_p\bigr)>0,\\
\frac {d^2}{ds^2}(s^p-A_ps^{p-2})
&=s^{p-4}
\bigl(p(p-1)s^2-(p-2)(p-3)A_p\bigr)>0.
\end{aligned}
\label{eq:II.10}
\end{equation}
At the transition \(s^2=A_p\), the right derivative is positive, so
taking the positive part preserves convexity.

Writing \(r=L+\delta_d+u\), \(0\le u<1\), gives

\begin{equation}
r-\frac12=w-1+u<w.
\label{eq:II.11}
\end{equation}
Monotonicity of \(\psi_p\), \eqref{eq:II.8}, and \(H_d\ge0\) imply the envelope

\begin{equation}
H_d(r)\ge\frac2{p!}\phi_p(r).
\label{eq:II.12}
\end{equation}
This also holds for \(r<\delta_d\).  Indeed, both sides vanish for \(p=2\);
for \(p\ge3\),

\begin{equation}
(r-\tfrac12)_+<\frac{p-2}{2}\le\sqrt{A_p},
\label{eq:II.13}
\end{equation}
where

\begin{equation*}
A_p-\frac{(p-2)^2}{4}
=
\frac{(p-2)(p-3)(p-4)}{24}\ge0.
\end{equation*}
Using \eqref{eq:I.14}, we have reduced the problem to

\begin{equation}
P_d(x)
\ge
\frac2{p!}
\sum_{k\ge0}\phi_p(\Xi_x(k+\tfrac14)).
\label{eq:II.14}
\end{equation}

\subsubsection{Quarter-shift quadrature}

\begin{lemma}[Retained-node quadrature]
\label{lem:retained-node-quadrature}
Let \(f:[0,\infty)\to[0,\infty)\) be decreasing, convex, and compactly
supported.  For every integer \(K\ge0\),
\begin{equation}
\sum_{j\ge0}f\left(j+\frac14\right)
\ge
\sum_{j=0}^{K-1}f\left(j+\frac14\right)
+\int_{K+1/4}^{\infty}f(t)\,dt
+\frac12f\left(K+\frac14\right),
\label{eq:retained-node-quadrature}
\end{equation}
where the finite sum is empty when \(K=0\).
\end{lemma}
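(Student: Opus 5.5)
The finite sum over \(j<K\) appears on both sides, so the plan is to cancel it and prove only the tail inequality
\[
\sum_{j\ge K}f\left(j+\tfrac14\right)\ge\int_{K+1/4}^\infty f(t)\,dt+\tfrac12 f\left(K+\tfrac14\right).
\]
Write \(a_j:=j+\tfrac14\) for the nodes. The sums are finite because \(f\) has compact support. A function on \([0,\infty)\) that is decreasing, convex and compactly supported is nonnegative, and it must vanish identically beyond some point \(b\). Both sides vanish for nodes \(a_j\ge b\), and convexity makes \(f\) continuous on \((0,\infty)\), so all the integrals are ordinary Riemann integrals.

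The key step is the trapezoidal rule for convex functions. On each unit interval \([a_j,a_{j+1}]\), convexity places the graph of \(f\) below its chord, so
\[
\int_{a_j}^{a_{j+1}}f(t)\,dt\le\tfrac12\bigl(f(a_j)+f(a_{j+1})\bigr).
\]
We sum this over \(j=K,\dots,J-1\), where \(J\) is chosen with \(a_J\ge b\), so that \(f(a_j)=0\) for every \(j\ge J\). This gives
\[
\int_{a_K}^{a_J}f\le\tfrac12 f(a_K)+\sum_{j=K+1}^{J-1}f(a_j)+\tfrac12 f(a_J),
\]
and the last term is zero. The integral over \([a_J,\infty)\) also vanishes. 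Adding \(\tfrac12 f(a_K)\) to both sides turns the right-hand side into \(\sum_{j\ge K}f(a_j)\), which is exactly the tail inequality above. Adding back the common finite sum over \(j<K\) then gives \eqref{eq:retained-node-quadrature}.

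The case \(K=0\) fits the same argument with the nodes starting at \(1/4\); note that the integral over \([0,1/4)\) is not claimed on the right. Monotonicity of \(f\) is not needed beyond guaranteeing that \(f\) vanishes past \(b\). The only delicate point is a possible jump of \(f\) at \(t=0\), which convexity allows only at the endpoint. Every interval in the argument starts at \(1/4\) or later, so the chord bound applies to a continuous function and this causes no trouble. I expect no real obstacle: the whole content of the lemma is the convex trapezoid inequality together with the half-weight retained at the first node.
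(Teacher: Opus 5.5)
Your proof is correct and follows the paper's own argument. The paper likewise applies the convex trapezoidal (chord) bound on the tail mesh \(K+\tfrac14, K+\tfrac54, K+\tfrac94,\ldots\), uses compact support so that the terminal endpoint and the remaining integral vanish, and then adds back the first \(K\) nodes.
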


\begin{proof}
Apply the trapezoidal rule to the tail mesh
\(K+1/4,K+5/4,K+9/4,\ldots\).  Compact support gives
\[
\int_{K+1/4}^{\infty}f(t)\,dt
\le
\frac12f\left(K+\frac14\right)
+\sum_{j\ge K+1}f\left(j+\frac14\right).
\]
Adding the first \(K\) nodes and rearranging proves
\eqref{eq:retained-node-quadrature}.
\end{proof}

The case without retained nodes gives the form used in the high-frequency
argument.

\begin{corollary}[Quarter-shift quadrature]
\label{lem:quarter-quadrature}
Under the hypotheses of Lemma \ref{lem:retained-node-quadrature},
\begin{equation}
\sum_{k\ge0}f(k+\tfrac14)
\ge
\int_0^\infty f(t)\,dt
+\frac12f(\tfrac14)-\frac14f(0).
\label{eq:II.15}
\end{equation}
\end{corollary}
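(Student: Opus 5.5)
The plan is to take $K=0$ in Lemma \ref{lem:retained-node-quadrature}, which gives
\[
\sum_{k\ge0}f(k+\tfrac14)\ge\int_{1/4}^\infty f(t)\,dt+\tfrac12 f(\tfrac14).
\]
It then remains to replace $\int_{1/4}^\infty$ by $\int_0^\infty$. This amounts to showing
\[
\int_0^{1/4}f(t)\,dt\le\tfrac14 f(0).
\]

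Since $f$ is decreasing, $f(t)\le f(0)$ for every $t\in[0,\tfrac14]$. Integrating over an interval of length $1/4$ gives the required bound. Substituting it into the $K=0$ inequality yields \eqref{eq:II.15}.

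Convexity and compact support are used only through Lemma \ref{lem:retained-node-quadrature}. Convexity makes the trapezoidal rule overestimate the integral on each tail cell, and compact support makes the tail sum and the integral finite. Nonnegativity of $f$ keeps every term meaningful.

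No step is a real obstacle. The only point to check is that the bound on $[0,\tfrac14]$ goes in the right direction. A convexity-based chord bound such as $\int_0^{1/4}f\le\tfrac18\bigl(f(0)+f(\tfrac14)\bigr)$ would be sharper, but the cruder monotone bound is exactly what the stated form requires.
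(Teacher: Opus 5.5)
Your proposal is correct and is essentially the paper's proof: take $K=0$ in Lemma~\ref{lem:retained-node-quadrature}, then use the monotone bound $\int_0^{1/4}f(t)\,dt\le f(0)/4$ to extend the integral down to $0$. Your remark that the sharper chord bound $\tfrac18\bigl(f(0)+f(\tfrac14)\bigr)$ is also available matches the paper, which uses that refinement later in \eqref{eq:L.5.3}.
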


\begin{proof}
Take \(K=0\) in \eqref{eq:retained-node-quadrature} and use
\(\int_0^{1/4}f(t)\,dt\le f(0)/4\).
\end{proof}

By \eqref{eq:D3.5}, the decreasing inverse of \(G_x\) satisfies

\begin{equation}
\Xi_x''(t)
=
-\frac{G_x''(\Xi_x(t))}{G_x'(\Xi_x(t))^3}>0.
\label{eq:II.18}
\end{equation}
The zero extension of \(\Xi_x\) is still convex: its derivative jumps upward
from \(-2\) to \(0\) at \(x/\pi\).  Since \(\phi_p\) is nondecreasing and
convex,

\begin{equation}
f_x(t):=\phi_p(\Xi_x(t))
\label{eq:II.19}
\end{equation}
is nonnegative, decreasing, convex, and compactly supported.  Corollary
\ref{lem:quarter-quadrature}
therefore gives

\begin{equation}
\begin{aligned}
\sum_{k\ge0}\phi_p(\Xi_x(k+\tfrac14))
\ge{}&
\int_0^\infty\phi_p(\Xi_x(t))\,dt\\
&+\frac12\phi_p(\Xi_x(\tfrac14))
-\frac14\phi_p(x).
\end{aligned}
\label{eq:II.20}
\end{equation}

\subsubsection{Exact inverse moments}

For \(q\ge0\), put

\begin{equation}
I_q(x):=\int_0^{x/\pi} \Xi_x(t)^q\,dt.
\label{eq:II.21}
\end{equation}
Changing variables \(t=G_x(r)\), integrating by parts, and then using the
beta integral gives

\begin{equation}
\begin{aligned}
I_q(x)
&=\frac1\pi\int_0^x r^q\arccos(r/x)\,dr\\
&=C_qx^{q+1},
\end{aligned}
\label{eq:II.22}
\end{equation}
where

\begin{equation}
C_q
:=
\frac{\Gamma((q+2)/2)}
{2(q+1)\sqrt\pi\,\Gamma((q+3)/2)}.
\label{eq:II.23}
\end{equation}
The Gamma duplication formula gives

\begin{equation}
\frac2{p!}I_p(x)
=
\frac{x^{p+1}}
{2^{p+1}\Gamma((p+3)/2)^2}
=W_d(x).
\label{eq:II.24}
\end{equation}

We use two coefficient estimates.  First,

\begin{equation}
\frac p2C_{p-1}
=
\frac1{4\pi}
B\!\left(\frac{p+1}{2},\frac12\right)
<
\frac1{2\pi}
<
\frac16.
\label{eq:II.25}
\end{equation}
The beta integral is strictly smaller than
\(\int_0^1(1-t)^{-1/2}dt=2\), and \(\pi>3\).

Second, for \(q\ge1\),

\begin{equation}
C_q<q^{-3/2}.
\label{eq:II.26}
\end{equation}
Indeed,
\(\arccos u\le(\pi/\sqrt2)\sqrt{1-u}\) on \([0,1]\), whence

\begin{equation}
C_q
\le
\frac1{\sqrt2}B(q+1,\tfrac32)
\le
\frac1{\sqrt2}
\int_0^\infty e^{-qv}v^{1/2}\,dv
=
\frac{\sqrt\pi}{2\sqrt2\,q^{3/2}}
<
q^{-3/2}.
\label{eq:II.27}
\end{equation}
Here we used \((1-v)^q\le e^{-qv}\).
For the asserted arccosine bound, write \(u=\cos\theta\) and use
\(\theta\le\pi\sin(\theta/2)
=(\pi/\sqrt2)\sqrt{1-u}\) for \(0\le\theta\le\pi/2\).

\subsubsection{The first quarter level}

For \(0\le h\le x\), \eqref{eq:D3.5} and
\(\arccos(1-v)\ge\sqrt{2v}\) give

\begin{equation}
\begin{aligned}
G_x(x-h)
&=
\frac1\pi\int_{x-h}^x\arccos(s/x)\,ds\\
&\ge
\frac{2\sqrt2}{3\pi\sqrt x}h^{3/2}.
\end{aligned}
\label{eq:II.28}
\end{equation}
At \(h=x^{1/3}\), the right side is
\(2\sqrt2/(3\pi)>1/4\).  The elementary bounds
\(\sqrt2>7/5\) and \(\pi<22/7\) prove the last inequality.  Since
\(G_x\) decreases,

\begin{equation}
\Xi_x(\tfrac14)\ge x-x^{1/3}
\qquad(x\ge1).
\label{eq:II.29}
\end{equation}

\subsubsection{Estimating the remainder}

For \(s=(r-\tfrac12)_+\), the mean-value theorem gives
\(r^p-s^p\le(p/2)r^{p-1}\).  For \(p\ge3\),
\(\phi_p(r)\ge s^p-A_ps^{p-2}\) on both branches and
\(s^{p-2}\le r^{p-2}\); hence

\begin{equation}
r^p-\phi_p(r)
\le
\frac p2r^{p-1}+A_pr^{p-2},
\label{eq:II.30}
\end{equation}
where the \(A_p\)-term is absent for \(p=2\), for which the first
inequality suffices.  After integration,

\begin{equation}
I_p(x)
-
\int_0^\infty\phi_p(\Xi_x(t))\,dt
\le
\frac p2I_{p-1}(x)+A_pI_{p-2}(x).
\label{eq:II.31}
\end{equation}

Assume for the moment that

\begin{equation}
x-x^{1/3}-\frac12>\sqrt{A_p},
\label{eq:II.32}
\end{equation}
and put

\begin{equation*}
b:=x^{1/3}+\frac12.
\end{equation*}
By \eqref{eq:II.29},
\(\Xi_x(\tfrac14)-\tfrac12\ge x-b>\sqrt{A_p}\), so \(\psi_p\) is on its
polynomial branch.  Convexity of \(s\mapsto s^p\) yields

\begin{equation}
\phi_p(\Xi_x(\tfrac14))
\ge
x^p-px^{p-1}\left(x^{1/3}+\frac12\right)-A_px^{p-2}.
\label{eq:II.33}
\end{equation}
Also \(\phi_p(x)\le x^p\).  Set

\begin{equation}
S_p(x):=\sum_{k\ge0}\phi_p(\Xi_x(k+\tfrac14)).
\label{eq:II.34}
\end{equation}
Combining \eqref{eq:II.20}, \eqref{eq:II.22}, \eqref{eq:II.31}, and \eqref{eq:II.33} gives

\begin{equation}
\begin{aligned}
S_p(x)-I_p(x)
\ge{}&
\left(\frac14-\frac p2C_{p-1}\right)x^p\\
&-\frac p2x^{p-2/3}
-\frac p4x^{p-1}\\
&-\frac{A_p}{2}x^{p-2}
-A_pC_{p-2}x^{p-1}.
\end{aligned}
\label{eq:II.35}
\end{equation}
Define the positive coefficient

\begin{equation}
a_p:=\frac14-\frac p2C_{p-1}>0.
\label{eq:II.36}
\end{equation}
Equation \eqref{eq:II.35} gives \(S_p(x)>I_p(x)\) under the one-variable
condition

\begin{equation}
a_p>
\frac{p}{2x^{2/3}}
+
\frac{p/4+A_pC_{p-2}}{x}
+
\frac{A_p}{2x^2}.
\label{eq:II.37}
\end{equation}
Condition \eqref{eq:II.37} also implies \eqref{eq:II.32}.  Since
\(a_p<1/4\), its first right-hand term gives \(x^{2/3}>2p\).  With
\(y=x^{1/3}>\sqrt{2p}\) and \(p\ge3\),

\begin{equation}
x-x^{1/3}-\frac12
=y^3-y-\frac12
>\frac{y^3}{2}
>\sqrt2\,p^{3/2}
>\sqrt{A_p}.
\label{eq:II.38}
\end{equation}
Here \(y>\sqrt{2p}\ge\sqrt6\) gives
\(y^3/2>3y>y+1/2\).  For \(p=2\), it gives \(x^{2/3}>4\) and
\(x-x^{1/3}-1/2>0=\sqrt{A_p}\).  Since the right side of \eqref{eq:II.37} is
strictly decreasing from \(+\infty\) to \(0\), it defines a unique
positive cutoff \(X_p^\sharp\) by equality, and
\nopagebreak

\begin{equation}
P_d(x)>W_d(x)
\qquad(x>X_p^\sharp).
\label{eq:II.39}
\end{equation}

Now assume \(x\ge X_p:=(24p)^{3/2}\).  Then \eqref{eq:II.32} also follows directly:
\(x^{2/3}\ge24p\ge48\) gives
\(x>2x^{1/3}+1\), while

\begin{equation*}
\sqrt{A_p}
\le\frac{p^{3/2}}{\sqrt{24}}
<
\frac{(24p)^{3/2}}2
\le\frac x2.
\end{equation*}
By \eqref{eq:II.25}, the leading coefficient exceeds \(1/12\).  We bound
each of the four losses in \eqref{eq:II.35} by \(x^p/48\).  For the first
three,

\begin{equation}
\frac{p}{2x^{2/3}}\le\frac1{48},
\qquad
\frac{p}{4x}\le\frac1{48},
\qquad
\frac{A_p}{2x^2}\le\frac1{48},
\label{eq:II.40}
\end{equation}
because \(x^{2/3}\ge24p\), \(X_p\ge12p\), and

\begin{equation}
24A_p=p(p-1)(p-2)<X_p^2.
\label{eq:II.41}
\end{equation}

For the fourth loss, when \(p\ge3\), \eqref{eq:II.26} gives

\begin{equation}
\begin{aligned}
48A_pC_{p-2}
&\le
\frac{2p(p-1)}{\sqrt{p-2}}\\
&\le
(24p)^{3/2}
\le x.
\end{aligned}
\label{eq:II.42}
\end{equation}
After division by \(p^{3/2}\), the middle inequality follows from

\begin{equation*}
\frac{p-1}{\sqrt{p(p-2)}}
\le\frac2{\sqrt3}
<
\frac{24^{3/2}}2.
\end{equation*}

When \(p=2\), both \(A_p\)-losses vanish.

Thus the negative terms in \eqref{eq:II.35} have absolute value at most
\(x^p/12\), whereas its leading positive term is strictly larger than
\(x^p/12\).  Therefore

\begin{equation}
S_p(x)>I_p(x).
\label{eq:II.43}
\end{equation}

Finally, \eqref{eq:II.14}, \eqref{eq:II.24}, and \eqref{eq:II.43} prove

\begin{equation}
P_d(x)
\ge
\frac2{p!}S_p(x)
>
\frac2{p!}I_p(x)
=W_d(x)
\qquad(x\ge X_p).
\label{eq:II.44}
\end{equation}
Equation \eqref{eq:II.44} proves \eqref{eq:II.2}, including \(x=X_p\);
together with \eqref{eq:I.11}, it transfers the strict phase bound to
\(\mathcal N_d^{<}\).

\begin{proposition}[High-frequency estimate from \(5d^{3/2}\)]
\label{prop:scalar-tail}
For every integer \(d\ge5\), \eqref{eq:II.37} holds strictly at
\(x=5d^{3/2}\), and consequently

\begin{equation}
\mathcal N_d^{<}(x)>W_d(x)
\qquad(d\ge5,\ x\ge5d^{3/2}).
\label{eq:L.6.2}
\end{equation}
\end{proposition}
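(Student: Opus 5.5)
Since the right side of \eqref{eq:II.37} is strictly decreasing in \(x\), it suffices to verify \eqref{eq:II.37} at the single point \(x_0=5d^{3/2}=5(p+1)^{3/2}\). Then \eqref{eq:II.39} gives \(P_d(x)>W_d(x)\) for all \(x\ge x_0\), since \(x_0>X_p^\sharp\). Combined with \eqref{eq:I.11}, this yields \eqref{eq:L.6.2}. The plan is therefore to bound \(a_p\) from below and each term on the right of \eqref{eq:II.37} from above, explicitly in \(p=d-1\ge4\), and then compare.

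For the lower bound on \(a_p\), I will use the exact identity \(\frac p2C_{p-1}=\frac1{4\pi}B(\frac{p+1}2,\frac12)\) from \eqref{eq:II.25}. Sharper than the crude bound there, the beta function decreases in its first argument and \(B(\frac{p+1}2,\frac12)\le\sqrt{2\pi/(p+\frac12)}\) approximately. A clean rigorous route is Gautschi's inequality, which gives \(B(\frac{p+1}2,\frac12)=\sqrt\pi\,\Gamma(\frac{p+1}2)/\Gamma(\frac p2+1)<\sqrt{2\pi/p}\). Hence \(a_p>\frac14-\frac{1}{2\sqrt{2\pi p}}\). This bound is smallest at \(p=4\), where it is about \(0.25-0.0997\approx0.150\).

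For the right side, the first term is \(p/(2x_0^{2/3})=p/(2\cdot5^{2/3}(p+1))<1/(2\cdot5^{2/3})\approx0.171\). This alone already exceeds \(a_p\)'s lower bound near \(0.15\) at \(p=4\), so the loss must be tracked more carefully; this is the main obstacle. I would therefore not use crude bounds. Instead I would treat \(\frac{p}{p+1}\) exactly: at \(p=4\) it gives \(0.8\cdot0.171\approx0.137\). For the third term I would use the exact value \(A_pC_{p-2}\), or the bound \eqref{eq:II.26} \(C_{p-2}<(p-2)^{-3/2}\). This gives \((p/4+A_pC_{p-2})/x_0\lesssim\bigl(p/4+p(p-1)\sqrt{p-2}/24\bigr)/(5(p+1)^{3/2})\), which is \(O(1/5\cdot 1/24)\) asymptotically, at most about \(0.01\)–\(0.03\). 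The last term \(A_p/(2x_0^2)\le p^3/(48\cdot25(p+1)^3)<1/1200\).

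Asymptotically the inequality reads \(\frac14>\frac1{2\cdot5^{2/3}}+\frac1{120}+o(1)\approx0.179\), so there is a comfortable margin for large \(p\). For small \(p\) the margin is tight: at \(p=4\) the comparison is roughly \(0.150\) versus \(0.137+0.03\). I would split the argument in two. For \(p\ge p_*\) (say \(p_*=10\) or \(20\)), I would use the monotone bounds above, each of which is monotone in \(p\) with explicit limits, and check a single inequality at \(p_*\). For \(4\le p<p_*\), I would verify \eqref{eq:II.37} directly using exact values: \(C_{p-1}\) and \(C_{p-2}\) are rational multiples of \(1/\pi\) or of \(1\) through the Gamma ratios, \(A_p\) is rational, and \(x_0^{2/3}=5^{2/3}(p+1)\) needs only rational enclosures for \(5^{2/3}\) and \(\pi\), as in Appendix \ref{supp:constant-proofs}. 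If \(p=4\) fails even exactly, I would fall back to the sharper bound \eqref{eq:II.35} with the term \(-\frac p2x^{p-2/3}\) replaced by the exact form \(-\frac p2 x^{p-1}b\), but I expect the exact values to suffice.
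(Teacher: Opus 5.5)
Your proof is correct, but it follows a different route from the paper's. The paper (Proposition \ref{supp:uniform-tail}) writes the deficit at $x=5(p+1)^{3/2}$ as $F_p=\frac14-u_p-v_p-b_p-c_p-e_p$, where $u_p=\frac p2C_{p-1}$, $b_p=p/(2\cdot5^{2/3}(p+1))$, and $c_p,v_p,e_p$ are the remaining losses. It never bounds these terms separately for large $p$. Instead it proves $F_{p+2}>F_p$ in each parity class. The exact gain $u_p-u_{p+2}=u_p/(p+2)$, together with the log-convexity \emph{lower} bound $u_p>1/(2\sqrt{2\pi(p+1)})$, dominates the increments of $b_p$, $v_p$ and $e_p$; this is checked by a few polynomial positivity inequalities. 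Meanwhile $c_p$ decreases. Only the two base cases $F_4,F_5$ are then evaluated exactly. You use the opposite, \emph{upper}, bound $u_p<1/(2\sqrt{2\pi p})$ and crude suprema of the losses to settle all large $p$ at once, and leave finitely many $p$ to exact evaluation. Your version avoids the increment inequalities but needs nine to sixteen exact base cases instead of two. The paper's induction is more economical and never has to locate a crossover $p_*$.

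A few points need fixing when you write it out.
\begin{itemize}
\item The bound from \eqref{eq:II.26} is $A_pC_{p-2}<p(p-1)/(24\sqrt{p-2})$, not $p(p-1)\sqrt{p-2}/24$. Since $(p-2)(p+1)^3-p^2(p-1)^2=3p^3-4p^2-5p-2>0$, this gives $v_p<1/120$.
\item Because $b_p$ and this bound for $v_p$ \emph{increase} with $p$, your single check at $p_*$ must use their limits $1/(2\cdot5^{2/3})$ and $1/120$, and $1/1200$ for $e_p$. Only the decreasing $c_p$ and the increasing lower bound for $a_p$ may be evaluated at $p_*$. With this, $p_*=10$ fails (about $0.187$ against $0.194$), $p_*=13$ works narrowly, and $p_*=20$ works comfortably (about $0.205$ against $0.191$).
\item Your rough figures at $p=4$ are misleading. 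Exactly, $a_4=5/32=0.15625$, while the right side of \eqref{eq:II.37} is about $0.15611$, so the margin is only about $1.4\times10^{-4}$. Neither Gautschi's bound nor \eqref{eq:II.26} survives at $p=4$. You need the exact values $u_4=3/32$ and $C_2=2/(9\pi)$, together with sharp enclosures such as $5^{2/3}>731/250$ and a lower bound for $\sqrt5$.
\item In general the factor $(p+1)^{3/2}$ in $x_0$ also requires rational enclosures of $\sqrt{p+1}$, not just of $5^{2/3}$ and $\pi$.
\end{itemize}
Since the paper shows $F_p$ increases along each parity class from $F_4,F_5>0$, all your finite checks will succeed, but with margins as small as about $1.4\times10^{-4}$.
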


\begin{proof}
Proposition \ref{supp:uniform-tail} proves \eqref{eq:II.37} at
\(x=5d^{3/2}\) by induction in the two parity classes of \(d-1\).  Its
right-hand side decreases strictly with \(x\), so \eqref{eq:II.37} holds
for all \(x\ge5d^{3/2}\); then \eqref{eq:II.39} and \eqref{eq:I.11} give
\eqref{eq:L.6.2}.
\end{proof}

\subsection{A lower bound from finitely many radial levels}

With \(c_0\) and \(\alpha_n\) as above, put

\begin{equation*}
\beta_d:=\frac{(d-1)(d-2)(d-3)}{24d^3}.
\end{equation*}
Define the dimension factor

\begin{equation}
\mathcal D_d:=
\frac{2\sqrt\pi\,\Gamma(\frac{d+2}{2})}
{\sqrt d\,\Gamma(\frac{d+1}{2})}.
\label{eq:L.2.D}
\end{equation}

\begin{lemma}[Bounds for the dimension factor]
\label{lem:dimension-factor}
For every integer \(d\ge5\),
\begin{equation}
 \frac{\mathcal D_{d+2}}{\mathcal D_d}
 =\frac{\sqrt{d(d+2)}}{d+1}<1,
 \qquad
 \mathcal D_d>
 \sqrt{2\pi\left(1+\frac1{2d}\right)}>\frac52,
 \qquad
 \mathcal D_d<\frac83.
\label{eq:L.dimension-factor-bounds}
\end{equation}
\end{lemma}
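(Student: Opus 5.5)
The plan is to reduce everything to the Gamma ratio \(g(d):=\Gamma(\frac{d+2}{2})/\Gamma(\frac{d+1}{2})\), so that \(\mathcal D_d=2\sqrt\pi\,g(d)/\sqrt d\). For the ratio identity, apply \(\Gamma(z+1)=z\Gamma(z)\) to both Gammas: \(g(d+2)=\frac{(d+2)/2}{(d+1)/2}\,g(d)=\frac{d+2}{d+1}g(d)\). Hence
\[
\frac{\mathcal D_{d+2}}{\mathcal D_d}=\frac{d+2}{d+1}\sqrt{\frac{d}{d+2}}=\frac{\sqrt{d(d+2)}}{d+1},
\]
and this is \(<1\) because \(d(d+2)=(d+1)^2-1\). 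Thus \(\mathcal D_d\) is strictly decreasing along each parity class.

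For the lower bound, it suffices to show \(g(d)^2>\frac d2\bigl(1+\frac1{2d}\bigr)=\frac{2d+1}{4}\), since then \(\mathcal D_d^2=4\pi g(d)^2/d>2\pi(1+\frac1{2d})\). I would use the decreasing sequence \(\mathcal D_d\) together with its limit. Stirling, or the standard asymptotic \(\Gamma(z+\frac12)/\Gamma(z)\sim\sqrt z\), gives \(g(d)\sim\sqrt{d/2}\), so \(\mathcal D_d\to\sqrt{2\pi}\) along each parity class. Monotonicity alone only yields \(\mathcal D_d>\sqrt{2\pi}\), which is weaker than what is claimed. So instead I would work with the sequence \(u_d:=g(d)^2/(2d+1)\) and show that it decreases to \(1/4\). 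We have
\[
\frac{u_{d+2}}{u_d}=\frac{(d+2)^2(2d+1)}{(d+1)^2(2d+5)}.
\]
Comparing \((d+2)^2(2d+1)\) with \((d+1)^2(2d+5)\): the expansions are \(2d^3+9d^2+12d+4\) and \(2d^3+9d^2+12d+5\). The ratio is therefore \(<1\), and \(u_d\) strictly decreases to its limit \(\lim g(d)^2/(2d)=1/4\). This gives \(u_d>1/4\), i.e. the claimed strict lower bound. The constant check \(\sqrt{2\pi}>5/2\) follows from \(2\pi>6.25\), using \(\pi>3.14\).

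For the upper bound, monotonicity along each parity class means it suffices to check \(d=5\) and \(d=6\). Directly, \(\mathcal D_5=2\sqrt\pi\,\Gamma(7/2)/(\sqrt5\,\Gamma(3))=\frac{15\pi}{8\sqrt5}\) and \(\mathcal D_6=2\sqrt\pi\cdot 6/(\sqrt6\cdot\frac{15}{8}\sqrt\pi)=\frac{32}{5\sqrt6}\). Squaring against \(64/9\) gives \(225\pi^2/320<64/9\), which reduces to \(\pi^2<10.11\) and is valid with \(\pi<22/7\). Similarly \(1024/150<64/9\) holds because \(6.827<7.11\).

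The main obstacle is the strict lower bound \(\mathcal D_d>\sqrt{2\pi(1+1/(2d))}\). The sharp factor \(1+\frac1{2d}\) forces the auxiliary sequence \(u_d\), whose ratio differs from 1 only at order \(d^{-3}\). Rigorously justifying the limit \(g(d)^2/d\to1/2\) is routine (Wendel's inequality or Stirling).
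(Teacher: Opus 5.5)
Your proof is correct and follows the paper's argument essentially step for step. Your auxiliary sequence \(u_d=g(d)^2/(2d+1)\) is exactly \(E_d^2/4\) for the paper's \(E_d=\mathcal D_d/\sqrt{2\pi(1+1/(2d))}\), with the same two-step ratio \(1-1/((d+1)^2(2d+5))\) along each parity class and the same limit argument; the upper bound is likewise reduced, via the recurrence, to the closed forms \(\mathcal D_5=15\pi/(8\sqrt5)\) and \(\mathcal D_6=32/(5\sqrt6)\).
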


\begin{proof}
The recurrence follows from the Gamma functional equation, and the standard
Gamma-ratio limit gives \(\mathcal D_d\to\sqrt{2\pi}\).  Set
\[
 E_d:=\frac{\mathcal D_d}
 {\sqrt{2\pi(1+1/(2d))}}.
\]
Then
\[
 \left(\frac{E_{d+2}}{E_d}\right)^2
 =1-\frac1{(d+1)^2(2d+5)}<1,
 \qquad E_d\longrightarrow1,
\]
 so \(E_d>1\).  Equation \eqref{eq:Const.pi} gives the second strict
inequality in \eqref{eq:L.dimension-factor-bounds}.  The recurrence reduces
the upper bound to \(d=5,6\); substitution in \eqref{eq:L.2.D}, using
\eqref{eq:Const.pi}--\eqref{eq:Const.rad}, gives
\[
 \mathcal D_5=\frac{15\pi}{8\sqrt5}<\frac83,
 \qquad
 \mathcal D_6=\frac{32}{5\sqrt6}<\frac{196}{75}<\frac83.
\]
\end{proof}

We use the strict spectral reduction, the action \(G_x\), and the
dimension-\(d\) angular envelope from Section \ref{sec:spectral-reduction}.

For \(0\le h\le x\),

\begin{equation*}
G_x(x-h)
=\frac1\pi\int_{x-h}^{x}\arccos(s/x)\,ds
\ge c_0\frac{h^{3/2}}{\sqrt x}.
\end{equation*}
If \(\alpha_nx^{1/3}\le x\), taking \(h=\alpha_nx^{1/3}\) gives

\begin{equation}
\Xi_x\left(n-\frac34\right)
\ge x-\alpha_n x^{1/3}.
\label{eq:L.2.1}
\end{equation}

Define

\begin{equation}
r_{d,n}(a):=
1-\frac{\alpha_n}{a^{2/3}d}
-\frac{1}{2ad^{3/2}},
\label{eq:L.2.3}
\end{equation}
and use the truncated summand

\begin{equation}
T_{d,n}(a):=
\begin{cases}
r_{d,n}(a)^{d-3}
\left(r_{d,n}(a)^2-\dfrac{\beta_d}{a^2}\right),
&
r_{d,n}(a)>\dfrac{\sqrt{\beta_d}}a,\\[1ex]
0,&
r_{d,n}(a)\le\dfrac{\sqrt{\beta_d}}a.
\end{cases}
\label{eq:L.2.4}
\end{equation}
The piecewise definition is essential, since positive-part notation is
ambiguous when \(r_{d,n}<0\) and \(d-3\) is even.

The level-set identity \eqref{eq:I.14}, together with the strict Robin
comparison and the phase estimate, gives

\begin{equation}
\mathcal N_d^{<}(x)
\ge P_d(x)
=
\sum_{n\ge1}
H_d\!\left(\Xi_x(n-\tfrac34)\right).
\label{eq:L.2.layer}
\end{equation}

Now put \(x=ad^{3/2}\).  By \eqref{eq:L.2.1}, the variable \(s=r-\tfrac12\)
in \eqref{eq:II.12}, for the \(n\)-th radial level, is at least

\begin{equation*}
ad^{3/2}-\alpha_n(ad^{3/2})^{1/3}-\frac12
=ad^{3/2}r_{d,n}(a).
\end{equation*}
Moreover

\begin{equation*}
\frac{(d-1)(d-2)(d-3)}{24}
=\beta_dd^3,
\end{equation*}
and the duplication formula gives the exact prefactor identity

\begin{equation*}
\frac{
2\cdot2^d\Gamma(\frac d2+1)^2
}{
(d-1)!\,d^{3/2}
}
=\mathcal D_d.
\end{equation*}
If the \(n\)-th term is nonzero in \eqref{eq:L.2.4}, then

\begin{equation*}
ad^{3/2}r_{d,n}(a)>
d^{3/2}\sqrt{\beta_d}
=
\sqrt{\frac{(d-1)(d-2)(d-3)}{24}}.
\end{equation*}
In particular,
\(\alpha_nx^{1/3}<x-\tfrac12<x\).  Since

\begin{equation*}
n-\frac34=c_0\alpha_n^{3/2}
<c_0x<\frac{x}{\pi}=G_x(0),
\end{equation*}
where \(2\sqrt2/3<1\), the inverse in \eqref{eq:L.2.1} is defined.  The
preceding inequality also shows that the resulting cutoff satisfies the
hypothesis of \eqref{eq:II.12}.

Applying \eqref{eq:II.12} to the nonzero terms and using zero for the others
yields, for every \(M\ge1\),

\begin{equation}
\frac{\mathcal N_d^{<}(a d^{3/2})}
{W_d(a d^{3/2})}
\ge
\frac{P_d(a d^{3/2})}{W_d(a d^{3/2})}
\ge
L_{d,M}(a):=
\frac{\mathcal D_d}{a}\sum_{n=1}^{M}T_{d,n}(a).
\label{eq:L.2.5}
\end{equation}

\subsection{A beta-integral lower bound}

For \(d\ge4\), the following convex quadrature reduction holds under the
nonvanishing conditions \(\mathrm{(A1)}\)--\(\mathrm{(A2)}\) below; it is
used first for \(d=4\) and then uniformly for \(d\ge5\).

For fixed \(d\ge4\) and \(a>0\), put

\begin{equation*}
\rho_0:=1-\frac{1}{2ad^{3/2}},
\qquad
\varkappa:=\frac{1}{d\,a^{2/3}}
\left(\frac{3\pi}{2\sqrt2}\right)^{2/3},
\qquad
q:=\frac{\beta_d}{a^2},
\end{equation*}
and

\begin{equation*}
\rho(t):=\rho_0-\varkappa t^{2/3}.
\end{equation*}
Define
\nopagebreak

\begin{equation}
g(t):=
\begin{cases}
\rho(t)^{d-3}(\rho(t)^2-q),
&\rho(t)>\sqrt q,\\
0,&\rho(t)\le\sqrt q.
\end{cases}
\label{eq:L.5.1}
\end{equation}
The map \(t\mapsto\rho(t)\) is decreasing and convex, while the
positive-root extension of

\begin{equation*}
\rho\longmapsto \rho^{d-3}(\rho^2-q)
\end{equation*}
is nonnegative, nondecreasing, and convex.  Hence \(g\) is nonnegative,
decreasing, convex, and compactly supported.

The definitions of \(\varkappa\) and \(\alpha_n\) give the node identity
\[
g\left(n-\frac34\right)=T_{d,n}(a)\qquad(n\ge1).
\]
Compact support allows \(M\) in \eqref{eq:L.2.5} beyond the final nonzero
node, so
\begin{equation}
\frac{\mathcal N_d^{<}(a d^{3/2})}
{W_d(a d^{3/2})}
\ge
\frac{\mathcal D_d}{a}
\sum_{j\ge0}g\left(j+\frac14\right).
\label{eq:L.full-node-sum}
\end{equation}

Lemma \ref{lem:retained-node-quadrature} with \(K=0\) gives

\begin{equation}
\sum_{j\ge0}g\left(j+\frac14\right)
\ge
\int_{1/4}^{\infty}g(t)\,dt
+\frac12g\left(\frac14\right).
\label{eq:L.5.2}
\end{equation}

The same lemma with \(K=2\) gives
\[
\sum_{j\ge0}g\left(j+\frac14\right)
\ge
g\left(\frac14\right)+g\left(\frac54\right)
+\frac12g\left(\frac94\right)
+\int_{9/4}^{\infty}g(t)\,dt.
\]
Hermite--Hadamard on \([9/4,17/4]\), whose midpoint is \(13/4\),
gives
\[
\int_{9/4}^{17/4}g(t)\,dt
\ge2g\left(\frac{13}{4}\right).
\]
Discarding the remaining nonnegative tail, define
\begin{equation}
\begin{gathered}
 (\omega_{2,n})_{n=1}^4=(1,1,0,0),\qquad
 (\omega_{3,n})_{n=1}^4=(1,1,\tfrac12,0),\qquad
 (\omega_{4,n})_{n=1}^4=(1,1,\tfrac12,2),\\
 \widehat L_{d,M}(a):=
 \frac{\mathcal D_d}{a}\sum_{n=1}^4
 \omega_{M,n}T_{d,n}(a)
 \qquad(M=2,3,4).
\end{gathered}
\label{eq:L.retained-definition}
\end{equation}
Then \eqref{eq:L.full-node-sum} gives the retained-tail bound
\begin{equation}
\frac{\mathcal N_d^{<}(a d^{3/2})}
{W_d(a d^{3/2})}
\ge \widehat L_{d,4}(a)
\ge \widehat L_{d,3}(a)
\ge \widehat L_{d,2}(a).
\label{eq:L.retained-four}
\end{equation}
The weight \(2\) on \(T_{d,4}\) is the Hermite--Hadamard lower bound for the
tail integral over \([9/4,17/4]\).

Convexity on \([0,1/4]\) also gives

\begin{equation*}
\int_0^{1/4}g(t)\,dt
\le
\frac18\left(g(0)+g(1/4)\right).
\end{equation*}
Hence
\nopagebreak

\begin{equation}
\sum_{j\ge0}g\left(j+\frac14\right)
\ge
\int_0^\infty g(t)\,dt
-\frac18g(0)+\frac38g(1/4).
\label{eq:L.5.3}
\end{equation}
Extending the nonzero polynomial to \(0\le\rho\le\rho_0\) decreases its
integral because it is negative on \(0<\rho<\sqrt q\).  Thus, with

\begin{equation}
J_m:=
\frac{3}{2\varkappa^{3/2}}
\rho_0^{m+3/2}B(m+1,3/2),
\label{eq:L.5.4}
\end{equation}

\begin{equation}
\int_0^\infty g(t)\,dt
\ge J_{d-1}-qJ_{d-3}.
\label{eq:L.5.5}
\end{equation}

Let

\begin{equation*}
\rho_1:=\rho_0-\frac{\alpha_1}{a^{2/3}d},
\qquad
f_0:=\rho_0^{d-3}(\rho_0^2-q),
\qquad
f_1:=\rho_1^{d-3}(\rho_1^2-q).
\end{equation*}
The endpoint corrections require

\begin{equation}
\mathrm{(A1)}\quad \rho_0>\sqrt q,
\qquad
\mathrm{(A2)}\quad \rho_1>\sqrt q.
\label{eq:L.beta.active}
\end{equation}
These conditions are verified separately at each application; without
\(\mathrm{(A1)}\), the sign of the \(f_0\)-correction is unavailable.

Equations \eqref{eq:L.2.5} and
\eqref{eq:L.5.3}--\eqref{eq:L.5.5} give

\begin{equation}
\frac{\mathcal N_d^{<}(a d^{3/2})}
{W_d(a d^{3/2})}
\ge \mathfrak B_d(a),
\label{eq:L.5.6}
\end{equation}
where

\begin{equation}
\mathfrak B_d(a):=
\frac{\mathcal D_d}{a}
\left(
J_{d-1}-qJ_{d-3}
-\frac18f_0+\frac38f_1
\right).
\label{eq:L.5.7}
\end{equation}
Define

\begin{equation*}
\mathfrak h_d:=
\frac{\sqrt2\,d\,
\Gamma(\frac{d+2}{2})\Gamma(d)}
{\Gamma(\frac{d+1}{2})\Gamma(d+\frac32)}
\end{equation*}
and

\begin{equation*}
\vartheta_d(a):=
\frac{(d-3)(d^2-\frac14)}
{24d^3a^2\rho_0^2}.
\end{equation*}
Then

\begin{equation}
\mathfrak B_d(a)=
\mathfrak h_d \rho_0^{d+1/2}(1-\vartheta_d(a))
+\frac{\mathcal D_d}{8a}(3f_1-f_0).
\label{eq:L.5.8}
\end{equation}

\begin{table}[H]
\centering
\small
\begin{tabular}{@{}l l p{0.58\linewidth}@{}}
\toprule
Dimension & Junctions & Estimates used\\ \midrule
\(d=2\) & \(x=20/3\) &
low-frequency variational bound \(\to\) disk phase estimate\\
\(d=3\) & \(x=5,14\) &
low-frequency variational bound \(\to\) finitely many radial levels
\(\to\) high-frequency phase estimate\\
\(d=4\) & \(x=38/5,26,42\) &
low-frequency variational bound \(\to\) finitely many radial levels and
beta-integral estimate \(\to\) high-frequency estimate\\
\(d\ge5\) & \(a=a_d^\ast,5/2,5\) &
low-frequency variational bound \(\to\) finite-level estimate
\(\to\) beta-integral estimate \(\to\) high-frequency estimate\\
\bottomrule
\end{tabular}
\caption{Overlapping frequency ranges used in the proof.}
\label{tab:frequency-ranges}
\end{table}
Here \(a_5^\ast=41/50\) and \(a_d^\ast=4/5\) for \(d\ge6\), as proved in
Theorem \ref{thm:low-frequency-trial}.  The ranges overlap at every
junction; in dimension two, the endpoint \(x=20/3\) belongs to the
variational range.

\section{Dimension two}
\label{sec:d2}

Although the disk case was proved in \cite{FLPS}, we give a proof adapted to
strict counting in which explicit Rayleigh--Ritz estimates replace the
computer-assisted finite-frequency step.  Its shifted floor-sum estimate is
reused in dimension three.  Since \(\delta_2=0\), the planar phase transfer
remains non-strict.

\subsection{The unit-disk statement and spectrum}

On the unit disk, the notation of the introduction reduces the target to

\begin{equation}
\mathcal N_2^{<}(x)\geq W_2(x)=\frac{x^2}{4},
\qquad x\geq0.
\label{eq:D2.1.2}
\end{equation}

Separate variables on \(B_1^2\).  For a positive eigenvalue
\(\rho^2\), a regular angular mode of order \(m\geq0\) has radial part
\(J_m(\rho r)\).  The Neumann condition is

\begin{equation*}
J_m'(\rho)=0.
\end{equation*}
The zero eigenvalue is simple: for \(m\geq1\), the regular harmonic solution
\(r^m\) has nonzero normal derivative at \(r=1\).  Since \(J_0'=-J_1\), the
spectrum, as a multiset, is

\begin{equation}
0,\qquad
j_{1,k}^2\quad(k\geq1),\qquad
(j'_{m,k})^2\quad(m,k\geq1).
\label{eq:D2.1.3}
\end{equation}
The radial \(m=0\) branch is simple.  Every \(m\geq1\) branch has angular
weight two, from the sine and cosine modes.  Thus

\begin{equation}
\mathcal N_2^{<}(x)=
\mathbf 1_{\{x>0\}}
+\#\{k:j_{1,k}<x\}
+2\sum_{m\geq1}\#\{k:j'_{m,k}<x\}.
\label{eq:D2.1.4}
\end{equation}
Here and below \(j'_{m,k}\), \(m\geq1\), means the \(k\)-th positive zero of
\(J_m'\).  In particular, the formal equality \(J_m'(0)=0\) for \(m\geq2\)
does not create another zero Neumann eigenmode.

For the phase argument, define also the non-strict count

\begin{equation}
\mathcal N_2^{\le}(x):=\#\{j:\mu_j^N(B_1^2)\leq x^2\}.
\label{eq:D2.1.5}
\end{equation}

\subsection{The high-frequency estimate}

\subsubsection{The Bessel-phase count}

We use the action \(G_x\) defined in \eqref{eq:I.9} and the convention
\[
j'_{0,1}=0,\qquad j'_{0,k+1}=j_{1,k}\quad(k\geq1).
\]
Proposition \ref{prop:flps-phase} therefore applies to all planar angular
orders, including \(\nu=0\).

\begin{proposition}[High-frequency phase estimate for the disk]
\label{prop:disk-tail}
For every \(x>20/3\),
\begin{equation}
{\mathcal N_2^{<}(x)\geq\frac{x^2}{4}.}
\label{eq:D2.2.35}
\end{equation}
\end{proposition}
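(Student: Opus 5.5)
The plan is to derive the strict count from a non-strict one and then apply the shifted floor-sum Lemma \ref{lem:retained-floor} to the action \(G_y\).

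\emph{Reduction to a non-strict count.} Since \(\delta_2=0\), the Robin transfer gives no strictness. Instead I will prove
\[
\mathcal N_2^{\le}(y)\ge \frac{y^2}{4}\qquad\text{for every } y>\tfrac{20}{3}.
\]
Given \(x>20/3\), every \(y\in(20/3,x)\) satisfies \(\mathcal N_2^{<}(x)\ge\mathcal N_2^{\le}(y)\ge y^2/4\). Letting \(y\uparrow x\) then gives \eqref{eq:D2.2.35}, because the right-hand side is continuous. This is exactly why the proposition uses the open range \(x>20/3\).

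\emph{Phase sum.} By \eqref{eq:D2.1.4} with the convention \(j'_{0,1}=0\), \(j'_{0,k+1}=j_{1,k}\), and by Proposition \ref{prop:flps-phase},
\[
\mathcal N_2^{\le}(y)\ge 2\sum_{m\ge0}\Bigl\lfloor G_y(m)+\tfrac34\Bigr\rfloor-\Bigl\lfloor \tfrac y\pi+\tfrac34\Bigr\rfloor .
\]
By \eqref{eq:D3.5}, \(f=G_y\) on \([0,y]\) is strictly decreasing, convex and \(1/2\)-Lipschitz, with \(f(y)=0\) and \(f(0)=y/\pi\ge1/4\). I expect \(M\le b=y\) to hold easily in this range. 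Lemma \ref{lem:retained-floor} then gives the sum bound \(\int_0^M G_y+M/4\), where \(M=\lfloor\Xi_y(1/4)\rfloor+1\). Using \(\int_0^yG_y=y^2/8\) from \eqref{eq:D3.7}, it suffices to show
\[
\frac M2-\Bigl\lfloor\frac y\pi+\frac34\Bigr\rfloor\ \ge\ 2\int_M^y G_y(z)\,dz .
\]
On \([M,y]\) we have \(G_y\le G_y(M)\le 1/4\), and \(G_y\) is convex and vanishes at \(y\), so the tail integral is at most \((y-M)/8\). Moreover \(M>\Xi_y(1/4)\ge y-h_y\), where \(c_0h_y^{3/2}/\sqrt y=1/4\) by \eqref{eq:II.28}. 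This gives \(h_y=(4c_0)^{-2/3}y^{1/3}\approx0.886\,y^{1/3}\). The target is therefore the elementary inequality
\[
\frac{3M}{4}\ \ge\ \frac y4+\Bigl\lfloor\frac y\pi+\frac34\Bigr\rfloor .
\]
Its margin grows like \((\tfrac12-\tfrac1\pi)y\approx0.18\,y\) against a loss of order \(y^{1/3}\). So it holds for all \(y\) beyond an explicit threshold (a few tens) by a one-variable monotonicity argument.

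\emph{Main obstacle: the window between \(20/3\) and that threshold.} Here the crude bounds fail; at \(y=20/3\) the naive version of the last inequality is off by roughly a factor of two. I would split this window into finitely many intervals on which the integers \(M\) and \(\lfloor y/\pi+3/4\rfloor\) are constant, or bounded below by their values at the left endpoint, using monotonicity of \(\Xi_y\) in \(y\) from \eqref{eq:D3.6}. On each interval I would then compare with \(y^2/4\) at the right endpoint.

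Where the retained integral bound is too lossy, I would drop Lemma \ref{lem:retained-floor}. Instead I would count directly the nonzero floors \(\lfloor G_y(m)+3/4\rfloor\), using the exact values of \(G_y(m)\) at rational endpoints, enclosed by the rational bounds of Appendix \ref{supp:constant-proofs}. This yields an explicit lower bound for the staircase on each interval. These finitely many checks, with the rational enclosures of \(\pi\) and the radicals, are the step I expect to consume most of the work.
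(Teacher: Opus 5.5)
Your reduction to $\mathcal N_2^{\le}$ and your staircase coincide with the paper's. Since $S_1=S_0-f_0$, your $2S_0-f_0$ with $f_0=\lfloor y/\pi+\tfrac34\rfloor$ is exactly $S_0+S_1$ in Appendix~\ref{supp:disk-analytic-tail}. Your large-$y$ argument is also sound: its continuous margin $y(\tfrac12-\tfrac1\pi)-\tfrac34(4c_0)^{-2/3}y^{1/3}-\tfrac34$ turns positive near $y\approx12.7$.

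The gap is the window $(20/3,13]$, where the proof is only announced, and the first mechanism you propose there fails on part of it. Let $y_6\approx7.74$ be defined by $G_{y_6}(6)=\tfrac14$. For $\tfrac94\pi\le y<y_6$ one has $M=6$ and $f_0=3$. Your bound $2\int_0^MG_y+\tfrac M2-f_0$ then collapses to $2\int_0^6G_y<2\int_0^yG_y=y^2/4$, whatever tail estimate is used. So one application of Lemma~\ref{lem:retained-floor} followed by subtracting the integer $f_0$ is intrinsically too weak there. At $y=20/3$ itself your integer inequality actually holds ($\tfrac92>\tfrac53+2$), so the failure is not where you place it.

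Elsewhere in the window the frozen-integer check is very tight. For example, on $[\tfrac{13}4\pi,y_9)$ it requires $y_9<11$, while $y_9\approx10.95$. Also, $f_0$ enters with a minus sign, so on each subinterval it must be bounded above, not below as you wrote. Your last fallback, a direct count of the floors, would probably succeed: on $[\tfrac94\pi,y_6]$ the nondecreasing staircase is already $17>y_6^2/4$. But it needs rational enclosures of $\arccos$ that the appendix does not supply, and it is precisely the part you have not carried out.

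The idea that removes the window is not to pay the integer $f_0$ at all. The paper applies Lemma~\ref{lem:retained-floor} a second time, to $G_y(\cdot+1)$ on $[0,y-1]$ with $M=\beta-1$, giving $S_1\ge\int_1^\beta G_y+\tfrac{\beta-1}4$. The $m=0$ term then costs only $\int_0^1G_y+\tfrac14<\tfrac y\pi+\tfrac{13}{50\pi y}$ by \eqref{eq:D2.2.26}, instead of $f_0$. That saves up to $\tfrac34$, exactly in the regime where your bound breaks.

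The paper also replaces your chord estimate of the tail by Lemma~\ref{lem:disk-terminal-integral}, $\int_\beta^yG_y\le\tfrac25(y-\beta)G_y(\beta)$. With both changes, $\mathcal N_2^{\le}(y)>\tfrac{y^2}4+\tfrac7{10}L-y(\tfrac1\pi+\tfrac15)-\tfrac{13}{50\pi y}$ with $L=\Xi_y(\tfrac14)$. The crude bound $L>\tfrac{3y}4$ needs only one evaluation at $y=20/3$ plus the monotonicity \eqref{eq:D3.6}. It already yields the margin $y(\tfrac{13}{40}-\tfrac1\pi)-\tfrac{13}{50\pi y}>0$ for every $y\ge20/3$, with no case check and no $y^{1/3}$ asymptotics.

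Both changes are essential. With your chord bound the leading coefficient becomes $\tfrac5{16}-\tfrac1\pi<0$. With $f_0$ in place of $\int_0^1G_y+\tfrac14$, the additive loss of $\tfrac34$ is only overcome for $y\gtrsim110$.
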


Appendix \ref{supp:disk-analytic-tail} proves Proposition
\ref{prop:disk-tail}; the finite variational range below supplies \(x=20/3\).
\subsection{Rayleigh--Ritz bounds at low frequency}
\label{sec:disk-ritz-body}

For each angular order \(m\geq0\), let \(\lambda_{m,k}\) denote the
eigenvalues in that radial sector, in increasing order and without angular
multiplicity.  Thus

\begin{equation*}
\lambda_{0,1}=0,\qquad
\lambda_{0,k+1}=j_{1,k}^2\quad(k\geq1),
\qquad
\lambda_{m,k}=(j'_{m,k})^2\quad(m\geq1).
\end{equation*}
The Neumann quadratic form in the order-\(m\) sector is

\begin{equation}
\mathfrak q_m[f]
:=\int_0^1
\left(
|f'(r)|^2+\frac{m^2}{r^2}|f(r)|^2
\right)r\,dr,
\qquad
\|f\|_m^2:=\int_0^1|f(r)|^2r\,dr.
\label{eq:D2.3.1}
\end{equation}
Trial functions need only lie in the form domain; the Neumann condition at
\(r=1\) is the natural boundary condition and is not imposed on the trials.
If \(V=\operatorname{span}\{e_1,\ldots,e_d\}\) is a trial space, put

\begin{equation*}
A_{ij}:=\mathfrak q_m[e_i,e_j],
\qquad
B_{ij}:=\langle e_i,e_j\rangle_m.
\end{equation*}
The Gram matrix \(B\) is positive definite.  If
\(\theta_1\leq\cdots\leq\theta_d\) are the generalized eigenvalues of
\(A v=\theta Bv\), the Rayleigh--Ritz form of the min--max principle gives

\begin{equation}
\lambda_{m,k}\leq\theta_k
\qquad(1\leq k\leq d).
\label{eq:D2.3.2}
\end{equation}
For \(m\geq1\), the single trial \(f(r)=r^m\) gives

\begin{equation*}
\frac{\mathfrak q_m[r^m]}{\|r^m\|_m^2}=2m(m+1).
\end{equation*}
Consequently,

\begin{equation}
(j'_{1,1})^2\leq4,\qquad
(j'_{2,1})^2\leq12,\qquad
(j'_{3,1})^2\leq24.
\label{eq:D2.3.3}
\end{equation}
For \(e_i(r)=r^{p_i}\), where \(p_i=m+2i\) and \(i=0,1,\ldots\), direct
integration gives

\begin{equation}
B_{ij}=\frac1{p_i+p_j+2},
\qquad
A_{ij}=\frac{p_ip_j+m^2}{p_i+p_j}.
\label{eq:D2.3.4}
\end{equation}
In the sole exceptional case \(m=i=j=0\), the second quotient in \eqref{eq:D2.3.4} is
interpreted as \(A_{00}=0\).

\begin{proposition}[Disk Ritz bounds]
\label{prop:disk-ritz}
\begin{equation}
j_{1,1}^2<20,\qquad (j'_{1,2})^2<32,\qquad
(j'_{4,1})^2<32.
\label{eq:D2.Ritz}
\end{equation}
\end{proposition}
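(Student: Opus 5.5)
The plan is to apply the Rayleigh--Ritz principle \eqref{eq:D2.3.2} in the appropriate angular sector with the power trials $e_i(r)=r^{p_i}$, $p_i=m+2i$, whose matrices are given in closed rational form by \eqref{eq:D2.3.4}. To obtain \emph{strict} inequalities without computing generalized eigenvalues, I will use an inertia criterion. If $V$ is a trial space with matrices $A,B$ and the symmetric rational matrix $A-\Lambda B$ has at least $k$ negative eigenvalues, then $V$ contains a $k$-dimensional subspace on which $\mathfrak q_m[f]<\Lambda\|f\|_m^2$ for all $f\ne0$. Min--max then gives $\lambda_{m,k}<\Lambda$ directly. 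The number of negative eigenvalues can be read off exactly, in rational arithmetic, from an $LDL^{\mathsf T}$ factorization or from the signs of the leading principal minors (Sylvester/Jacobi). So each claim reduces to a finite exact sign computation.

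\emph{The bound $(j'_{4,1})^2<32$.} Take $m=4$ and $V=\operatorname{span}\{r^4,r^6\}$. From \eqref{eq:D2.3.4},
\[
B=\begin{pmatrix}1/10&1/12\\1/12&1/14\end{pmatrix},\qquad
A=\begin{pmatrix}4&4\\4&13/3\end{pmatrix},
\]
so that
\[
A-32B=\begin{pmatrix}4/5&4/3\\4/3&43/21\end{pmatrix}.
\]
Its determinant is $\tfrac{4}{5}\cdot\tfrac{43}{21}-\tfrac{16}{9}<0$, so there is exactly one negative eigenvalue, and hence $\lambda_{4,1}<32$. The one-term trial alone only gives $2m(m+1)=40$, so the second power is genuinely needed.

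\emph{The bound $(j'_{1,2})^2<32$.} Take $m=1$; here two negative eigenvalues are needed. With $\{r,r^3\}$ one finds
\[
A-32B=\begin{pmatrix}-7&-13/3\\-13/3&-7/3\end{pmatrix},
\]
whose determinant is $(147-169)/9<0$. That gives only one negative eigenvalue, so the two-term space is insufficient. I will therefore enlarge to $\{r,r^3,r^5\}$ and verify, by an exact $LDL^{\mathsf T}$ elimination of the $3\times3$ rational matrix $A-32B$, that two pivots are negative. Since the true value is about $28.4$, the three-term Ritz value should fall well below $32$.

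\emph{The bound $j_{1,1}^2<20$.} Note $j_{1,1}^2=\lambda_{0,2}$, in the $m=0$ sector, which contains the constant zero mode. With $\{1,r^2\}$ the Ritz values are exactly $0$ and $24$, which is insufficient. I will use $\{1,r^2,r^4\}$, with the convention $A_{00}=0$, and check that $A-20B$ has two negative eigenvalues, again by exact elimination. The first pivot is $-20B_{00}=-10<0$, and the remaining $2\times2$ Schur complement must be shown to have one negative eigenvalue, i.e.\ a negative determinant or a negative diagonal entry after a further pivot. The true value is about $14.7$, so the three-term space should suffice with margin.

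The main obstacle is only that the smallest spaces fail for the first two bounds, so one must choose the right dimension and then carry out the $3\times3$ rational elimination carefully. If a three-term space unexpectedly fell short for $j_{1,1}^2$, I would first add $r^6$; the method is unchanged, and each step is an exact fraction computation.
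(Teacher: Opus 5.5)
Your proposal is correct and follows the paper's route. You use the same power-trial spaces $\{1,r^2,r^4\}$, $\{r,r^3,r^5\}$, $\{r^4,r^6\}$ with Rayleigh--Ritz, and differ only in how you count Ritz values below the threshold: you use the inertia of $A-\Lambda B$, whereas the paper uses the explicit roots of $\det(A-\lambda B)$ for $m=0,4$, and for $m=1$ the sign of $\det(A_1-32B_1)=\det B_1\prod(\theta_i-32)$ together with $\theta_1\le4$. Your pending $3\times3$ checks do go through: the leading principal minors of $A-20B$ for $m=0$ are $-10,-5/3,35/54$, and those of $A-32B$ for $m=1$ are $-7,-22/9,29/675$, each giving exactly two negative eigenvalues.
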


Appendix \ref{supp:disk-ritz} contains the proof.  Together with
\eqref{eq:D2.3.3}, these bounds yield the counts below.  Because the Ritz
inequalities are strict, the corresponding modes are counted even when
\(x^2\) equals the trial threshold.

\subsection{Completion of the finite range}

The preceding bounds and the angular weights in \eqref{eq:D2.1.3} give the
following lower counts, each including the constant eigenfunction.

\begingroup
\small
\setlength{\tabcolsep}{2pt}
\begin{tabular}{@{}>{\raggedright\arraybackslash}p{0.19\textwidth}>{\raggedright\arraybackslash}p{0.32\textwidth}>{\raggedright\arraybackslash}p{0.19\textwidth}>{\raggedright\arraybackslash}p{0.19\textwidth}@{}}
\toprule
Frequency & Additional modes below \(x\) & Count & Weyl bound \\
\midrule
\(0<x\leq2\) & none needed & \(1\) & \(x^2/4\leq1\) \\
\(2<x\leq2\sqrt3\) & \(j'_{1,1}\), weight \(2\) & \(3\) & \(x^2/4\leq3\) \\
\(2\sqrt3<x\leq2\sqrt5\) & also \(j'_{2,1}\), weight \(2\) & \(5\) & \(x^2/4\leq5\) \\
\(2\sqrt5<x\leq2\sqrt6\) & also \(j_{1,1}\), weight \(1\) & \(6\) & \(x^2/4\leq6\) \\
\(2\sqrt6<x\leq2\sqrt8\) & also \(j'_{3,1}\), weight \(2\) & \(8\) & \(x^2/4\leq8\) \\
\(2\sqrt8<x\leq2\sqrt{12}\) & also \(j'_{4,1},j'_{1,2}\), total weight \(4\) & \(12\) & \(x^2/4\leq12\) \\
\bottomrule
\end{tabular}
\endgroup

At the possible equality thresholds \(x=2,2\sqrt3,2\sqrt6\), the previously
counted \(1,3,6\) modes already match \(x^2/4\); hence every threshold is
valid for the strict count.  At \(x=0\), both sides vanish.  Therefore

\begin{equation}
\mathcal N_2^{<}(x)\geq\frac{x^2}{4}
\quad\text{for }0\leq x\leq2\sqrt{12}.
\label{eq:D2.4.1}
\end{equation}

Finally,
\nopagebreak

\begin{equation}
\left(\frac{20}{3}\right)^2
=\frac{400}{9}<48=(2\sqrt{12})^2.
\label{eq:D2.4.2}
\end{equation}
Thus the finite range \eqref{eq:D2.4.1} contains the endpoint \(x=20/3\) and overlaps
the strict high-frequency estimate \eqref{eq:D2.2.35}.  Together they prove
\eqref{eq:D2.1.2} for every
\(x\geq0\).

\section{Dimension three}
\label{sec:d3}

\subsection{A weighted quarter-shift inequality}

Let \(G_x\) and its decreasing inverse \(\Xi_x\) be as in
\eqref{eq:I.9} and before \eqref{eq:I.13}; in particular,
\(\Xi_x(0)=x\) and \(\Xi_x(x/\pi)=0\).

Define

\begin{equation}
q_\ell(x)
:=
\left\lfloor
G_x\!\left(\ell+\frac12\right)+\frac34
\right\rfloor,
\qquad \ell=0,1,\ldots,
\label{eq:D3.10}
\end{equation}
where \(G_x(z)\) is understood to be zero when \(z\ge x\).  This extension
does not add any term because \(\lfloor 3/4\rfloor=0\).

For a nonnegative \(r\), if \(m=\lfloor r+1/2\rfloor\), then

\begin{equation}
m^2=\sum_{\ell=0}^{m-1}(2\ell+1),
\label{eq:D3.11}
\end{equation}
and

\begin{equation}
\ell\le m-1
\quad\Longleftrightarrow\quad
\ell+\frac12\le r.
\label{eq:D3.12}
\end{equation}
At equality, both sides of \eqref{eq:D3.12} are true.  Taking
\(r=\Xi_x(n-3/4)\), using the inverse relation

\begin{equation}
\ell+\frac12\le \Xi_x(t)
\quad\Longleftrightarrow\quad
t\le G_x\!\left(\ell+\frac12\right),
\label{eq:D3.13}
\end{equation}
and interchanging two finite nonnegative sums yields the exact identity

\begin{equation}
\begin{aligned}
\sum_{\substack{n\ge1\\ n-3/4\le x/\pi}}
\left\lfloor
\Xi_x\!\left(n-\frac34\right)+\frac12
\right\rfloor^2
&=
\sum_{\ell\ge0}(2\ell+1)
\#\left\{n\ge1:
n-\frac34\le G_x\!\left(\ell+\frac12\right)
\right\}\\
&=
{\sum_{\ell\ge0}(2\ell+1)q_\ell(x)}.
\end{aligned}
\label{eq:D3.14}
\end{equation}
Indeed, for \(y\ge0\),

\begin{equation}
\#\{n\ge1:n-3/4\le y\}
=\lfloor y+3/4\rfloor.
\label{eq:D3.15}
\end{equation}
Finally, Tonelli's theorem and \eqref{eq:D3.13} give the corresponding
integral representation

\begin{equation}
\begin{aligned}
\int_0^{x/\pi}\Xi_x(t)^2\,dt
&=
\int_0^{x/\pi}\int_0^{\Xi_x(t)}2z\,dz\,dt\\
&=
\int_0^x2z
\left|\{t\in[0,x/\pi]:t\le G_x(z)\}\right|\,dz\\
&=
\int_0^x2zG_x(z)\,dz
=\frac{2x^3}{9\pi}.
\end{aligned}
\label{eq:D3.16}
\end{equation}
Thus the desired quarter-shift inequality is equivalent to

\begin{equation}
\sum_{\ell\ge0}(2\ell+1)q_\ell(x)
\ge
\int_0^x2zG_x(z)\,dz.
\label{eq:D3.17}
\end{equation}

\begin{lemma}[Weighted unit-cell bound]
\label{lem:unit-cell}
If \(H:[0,1]\to[0,\infty)\) is decreasing, then
\begin{equation}
\int_0^1 2sH(s)\,ds\le\int_0^1H(s)\,ds.
\label{eq:D3.65}
\end{equation}
\end{lemma}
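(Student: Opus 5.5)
The inequality says that the weight \(2s\), which has mean one on \([0,1]\), moves mass toward larger \(s\), where a decreasing \(H\) is smaller. I would prove it by writing the difference as a single integral against \(H\),
\[
\int_0^1 H(s)\,ds-\int_0^1 2sH(s)\,ds=\int_0^1(1-2s)H(s)\,ds ,
\]
and showing that this quantity is nonnegative.

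The weight \(1-2s\) is antisymmetric about \(s=\tfrac12\). Substituting \(s\mapsto 1-s\) on the half \([\tfrac12,1]\) folds the integral onto \([0,\tfrac12]\):
\[
\int_0^1(1-2s)H(s)\,ds=\int_0^{1/2}(1-2s)\bigl(H(s)-H(1-s)\bigr)\,ds .
\]
On \([0,\tfrac12]\) we have \(1-2s\ge0\). We also have \(s\le 1-s\), so monotonicity gives \(H(s)-H(1-s)\ge0\). The integrand is therefore nonnegative, which proves \eqref{eq:D3.65}.

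An alternative is a layer-cake argument. Since \(H\) is decreasing and nonnegative, write \(H(s)=\int_0^\infty\mathbf 1_{\{H(s)>\tau\}}\,d\tau\). Each superlevel set is an interval \([0,c_\tau)\) or \([0,c_\tau]\) with \(c_\tau\in[0,1]\). For such an interval,
\[
\int_0^{c}2s\,ds=c^2\le c=\int_0^{c}1\,ds .
\]
Tonelli's theorem then lets us integrate this inequality over \(\tau\).

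Integrability is not an issue. A decreasing function on \([0,1]\) is measurable, and both integrals take values in \([0,\infty]\). If \(H(0)=\infty\) is allowed, the layer-cake version handles it directly. Otherwise \(H\) is bounded by \(H(0)\), so both sides are finite and the folding argument applies as written. I do not expect a real obstacle here; the only care needed is to justify the change of variables, or the Tonelli interchange, for a possibly discontinuous monotone \(H\), and both are standard for Lebesgue integrals.
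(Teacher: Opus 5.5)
Your folding argument is correct and is essentially the paper's proof. The paper pairs \(s\) with \(1-s\) to write \(\int_0^1(2s-1)H(s)\,ds=\int_0^{1/2}(1-2s)\bigl(H(1-s)-H(s)\bigr)\,ds\le0\), which is your identity with the sign reversed; your layer-cake alternative is also valid but not needed.
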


\begin{proof}
Pair \(s\) with \(1-s\):
\begin{equation}
\begin{aligned}
\int_0^1(2s-1)H(s)\,ds
&=
\int_0^{1/2}
(1-2s)\bigl(H(1-s)-H(s)\bigr)\,ds\\
&\le0,
\end{aligned}
\label{eq:D3.66}
\end{equation}
because \(1-2s\ge0\) and \(H(1-s)\le H(s)\).
\end{proof}

\begin{lemma}[Joint quarter-shift estimate]
\label{lem:d3-joint}
For every \(x\ge14\), the weighted angular count
satisfies
\begin{equation}
\sum_{\ell\ge0}(2\ell+1)q_\ell(x)
\ge \frac{2x^3}{9\pi}.
\label{eq:D3.joint}
\end{equation}
\end{lemma}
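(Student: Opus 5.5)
The plan is to compare the two sides of \eqref{eq:D3.joint} one unit cell in the angular variable at a time. We use Lemma \ref{lem:unit-cell} to strip the weight \(2z\) down to the constant \(2\ell+1\), and then Abel summation to reduce the weighted comparison to unweighted tail comparisons of the type controlled by Lemma \ref{lem:retained-floor}.

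\emph{Step 1 (removing the weight cellwise).} By \eqref{eq:D3.7} and \eqref{eq:D3.16}, the right-hand side is \(\int_0^x 2zG_x(z)\,dz\), with \(G_x\) extended by zero. On the cell \([\ell,\ell+1]\) write \(z=\ell+s\), so that \(2z=(2\ell+1)+(2s-1)\). Since \(H(s)=G_x(\ell+s)\) is decreasing and nonnegative, Lemma \ref{lem:unit-cell} gives \(\int_0^1(2s-1)H(s)\,ds\le0\). Hence
\[
\int_0^x 2zG_x(z)\,dz\le\sum_{\ell\ge0}(2\ell+1)\,g_\ell,
\qquad
g_\ell:=\int_\ell^{\ell+1}G_x(z)\,dz,
\]
and it suffices to prove \(\sum_\ell(2\ell+1)(q_\ell-g_\ell)\ge0\).

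\emph{Step 2 (Abel summation).} Put \(T_k:=\sum_{\ell\ge k}(q_\ell-g_\ell)\); this is a finite sum because both terms vanish for \(\ell\ge x\). Then
\[
\sum_\ell(2\ell+1)(q_\ell-g_\ell)=T_0+2\sum_{k\ge1}T_k.
\]
So it is enough to show \(T_k\ge0\) for every \(k\), or at least that the negative \(T_k\) are dominated by the positive ones. For each \(k\), apply Lemma \ref{lem:retained-floor} to \(f_k(r):=G_x(k+\tfrac12+r)\) on \([0,b_k]\), where \(b_k=x-k-\tfrac12\). The hypotheses hold by \eqref{eq:D3.5}: \(f_k\) is strictly decreasing, convex and \(1/2\)-Lipschitz. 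This yields
\[
\sum_{\ell\ge k}q_\ell\ge\int_{k+1/2}^{k+1/2+M_k}G_x+\frac{M_k}{4},
\]
while \(\sum_{\ell\ge k}g_\ell=\int_k^xG_x\). The deficit therefore consists of the half-cell \(\int_k^{k+1/2}G_x\le\tfrac12G_x(k)\), plus the tail \(\int_{k+1/2+M_k}^xG_x\) where \(G_x<1/4\). We must show that \(M_k/4\) beats this deficit.

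\emph{Step 3 (the main obstacle).} Near the turning point \(k\approx x\), both the half-cell and the tail are \(O(1)\), while \(M_k\approx x-k\). The inequality \(T_k\ge0\) should then follow from \eqref{eq:II.28}-type bounds \(G_x(x-h)\approx c_0h^{3/2}x^{-1/2}\), which make the tail where \(G_x<1/4\) have length \(O(x^{1/3})\) and total mass \(O(x^{1/3})\).

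For small \(k\) the half-cell loss \(\tfrac12G_x(k)\approx x/(2\pi)\) is far larger than \(M_k/4\approx x/4\)? No: \(x/(2\pi)<x/4\), so the linear gain should actually dominate. I expect the decisive estimate to be
\[
\frac{M_k}{4}\ge \frac12G_x(k)+\int_{\{G_x<1/4\}}G_x
\qquad\text{uniformly in }k,
\]
using \(G_x(k)\le(x-k)/\pi\cdot\)(slope bound) and \(M_k\ge x-k-O(x^{1/3})\). If this uniform inequality fails for a few \(k\) in the middle range, the fallback is to retain the strictly positive \(T_k\) from the turning region to absorb them in \(T_0+2\sum T_k\). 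The restriction \(x\ge14\) enters precisely here: it ensures \(O(x^{1/3})\) losses are below the linear gain. The few boundary cases near \(x=14\) would then be checked with the explicit rational constants of Appendix \ref{supp:constant-proofs}, using the monotonicity \eqref{eq:D3.6} of \(G_x\) in \(x\) to pass from finitely many test frequencies to intervals.
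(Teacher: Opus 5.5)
There is a genuine gap, in Step~3. Steps 1 and 2 are sound as far as they go. Lemma~\ref{lem:unit-cell} on integer cells does give $\int_0^x2zG_x\le\sum_\ell(2\ell+1)g_\ell$, and $T_0+2\sum_{k\ge1}T_k$ is the same Abel rearrangement the paper uses. With $\xi=\Xi_x(\tfrac14)$, $m=\lfloor\xi+\tfrac12\rfloor$ and $B=m+\tfrac12$, Lemma~\ref{lem:retained-floor} gives $T_k\ge\frac{m-k}4-\int_k^{k+1/2}G_x-\int_B^xG_x$, but only for $0\le k\le m-1$.

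Your ``decisive estimate'' uniform in $k$ is false, so the termwise plan cannot work.
\begin{itemize}
\item For $k\ge m$ the lemma does not apply, since $G_x(k+\tfrac12)<\tfrac14$. In fact $T_k=-\int_k^xG_x<0$, because $q_\ell=0$ for $\ell\ge m$ by \eqref{eq:D3.48} while $g_\ell>0$. Such $k$ always exist, since $m\le\xi+\tfrac12<x-\tfrac12$.
\item For $k=m-1$ the gain is only $M_k/4=\tfrac14$. The half-cell loss is at least $\tfrac12G_x(m-\tfrac12)\ge\tfrac18$. The tail $\int_B^xG_x$ recurs in every $T_k$ and grows like $x^{1/3}$: by \eqref{eq:II.28} it is at least $\tfrac{2c_0}{5}h^{5/2}x^{-1/2}$, roughly $0.09\,x^{1/3}$ for large $x$.
\item Already at $x=14$, where $m=12$, your lower bound for $T_{11}$ is negative.
\end{itemize}
Your picture of the turning region is also off. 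There $M_k=m-k$ is $O(1)$ or zero, not $\approx x-k$, and the tail is not $O(1)$. The positive $T_k$ are those with small $k$, where $\tfrac12G_x(k)\le\frac{x-k}{2\pi}<\frac{x-k}4$ does work. The negative ones sit at and beyond the turning point, which is the reverse of what your fallback assumes.

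What is missing is the aggregation, and it is the heart of the paper's proof: the quarter gains must be summed before they are compared with the losses. Use weights $1,2,2,\dots$, together with $T_k=-\int_k^xG_x$ for $k\ge m$.
\begin{itemize}
\item The quarter terms sum to $m^2/4$.
\item The half-cell losses sum to $\int_0^{1/2}G_x+2\sum_{k=1}^{m}\int_k^{k+1/2}G_x\le\int_0^BG_x\le x^2/8$, using $2\int_k^{k+1/2}G_x\le\int_{k-1/2}^{k+1/2}G_x$.
\item All tail terms, including those from $k\ge m$, combine into $\int_B^x(2\lfloor r\rfloor+1)G_x\,dr\le\frac{xh}4-\frac{h^2}6+\frac h8$ with $h=x-B$, by the chord bound \eqref{eq:D3.69}.
\end{itemize}
So the tail costs order $x^{4/3}$ in total, and it is paid from the quadratic surplus $m^2/4-x^2/8$.

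It then remains to prove the explicit inequality $D(x,h)-\frac h8>0$ for all $x\ge14$. Here $D$ is from \eqref{eq:D3.73}, and $0<h<\frac9{10}x^{1/3}$ by \eqref{eq:D3.80}. The expression decreases in $h$. At $h=\frac9{10}y$ with $y=x^{1/3}$ it equals $P(y)-\frac{9y}{80}$, with $P$ from \eqref{eq:D3.83}; this is about $0.31$ at $y=\frac{12}5$ and increasing in $y$. That margin is small against $x^2/8=24.5$ at $x=14$. The constants therefore have to be explicit and uniform in $x$; $O(x^{1/3})$ heuristics plus a finite check near $x=14$ will not do.

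The paper runs the same bookkeeping on the half-integer cells $[k+\tfrac12,k+\tfrac32]$. It applies Lemma~\ref{lem:unit-cell} only after aggregation, to bound the weight mismatch by $\int_0^BG_x\le x^2/8$. It bounds the tail by $\int_B^x2rG_x$ and obtains $D(x,h)>0$ directly.
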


\begin{proof}
Fix \(x\ge14\), abbreviate \(g=G_x\), and let

\begin{equation}
\xi=\Xi_x\!\left(\frac14\right),
\qquad
m=\left\lfloor \xi+\frac12\right\rfloor,
\qquad
B=m+\frac12.
\label{eq:D3.46}
\end{equation}
Thus

\begin{equation}
m-\frac12\le\xi<m+\frac12.
\label{eq:D3.47}
\end{equation}
It follows from \eqref{eq:D3.10} and \eqref{eq:D3.47} that

\begin{equation}
q_\ell(x)=0\qquad(\ell\ge m),
\label{eq:D3.48}
\end{equation}
where equality at the threshold \(G_x(m-1/2)=1/4\), if it occurs, is
included in \(q_{m-1}\).

We first verify the endpoint hypothesis for the shifted applications of
Lemma \ref{lem:retained-floor}.  From

\begin{equation}
G_x(z)=\frac1\pi\int_z^x\arccos\frac{s}{x}\,ds
\label{eq:D3.49}
\end{equation}
one has, for \(x\ge5\),

\begin{equation}
G_x(x-1)
\le
\frac1\pi\arccos\!\left(1-\frac1x\right)
\le
\frac1\pi\arccos\frac45
<
\frac14.
\label{eq:D3.50}
\end{equation}
The last inequality follows from

\begin{equation}
\frac45>\frac1{\sqrt2}=\cos\frac\pi4.
\label{eq:D3.51}
\end{equation}
As \(G_x\) is decreasing, \eqref{eq:D3.50} gives

\begin{equation}
\xi<x-1,
\qquad
B=m+\frac12<x.
\label{eq:D3.52}
\end{equation}
For \(0\le j\le m-1\), set

\begin{equation}
h_j(s)=g\!\left(j+\frac12+s\right),
\qquad
0\le s\le b_j:=x-j-\frac12.
\label{eq:D3.53}
\end{equation}
The function \(h_j\) is strictly decreasing, convex, and
\(1/2\)-Lipschitz, and \(h_j(b_j)=0\).  Moreover,
\nopagebreak

\begin{equation}
j+\frac12\le m-\frac12\le\xi,
\label{eq:D3.54}
\end{equation}
so \(h_j(0)\ge1/4\).  Write

\begin{equation}
\xi+\frac12=m+\alpha,
\qquad 0\le\alpha<1.
\label{eq:D3.55}
\end{equation}
Then the integer in \eqref{eq:D3.26} is exactly

\begin{equation}
\begin{aligned}
\left\lfloor
h_j^{-1}\!\left(\frac14\right)
\right\rfloor+1
&=
\left\lfloor \xi-j-\frac12\right\rfloor+1\\
&=
\left\lfloor m-j-1+\alpha\right\rfloor+1
=m-j.
\end{aligned}
\label{eq:D3.56}
\end{equation}
This remains true when \(\alpha=0\).  Finally, \eqref{eq:D3.52} gives

\begin{equation}
m-j<x-j-\frac12=b_j,
\label{eq:D3.57}
\end{equation}
so all hypotheses of Lemma \ref{lem:retained-floor} hold.

Define the remaining sums

\begin{equation}
R_j:=\sum_{\ell\ge j}q_\ell(x).
\label{eq:D3.58}
\end{equation}
Lemma \ref{lem:retained-floor} and \eqref{eq:D3.56} give

\begin{equation}
R_j
\ge
\int_{j+1/2}^{m+1/2}g(r)\,dr+\frac{m-j}{4}
\qquad(0\le j\le m-1).
\label{eq:D3.59}
\end{equation}
Since \eqref{eq:D3.48} makes every sum finite,

\begin{equation}
\sum_{\ell\ge0}(2\ell+1)q_\ell
=
R_0+2\sum_{j=1}^{m-1}R_j.
\label{eq:D3.60}
\end{equation}
Here \(q_\ell\) occurs once in \(R_0\) and twice in
\(R_1,\ldots,R_\ell\), giving \(2\ell+1\).  Summing
\eqref{eq:D3.59}, the quarter-shift terms satisfy

\begin{equation}
\frac m4
+2\sum_{j=1}^{m-1}\frac{m-j}{4}
=
\frac m4+\frac12\sum_{r=1}^{m-1}r
=\frac{m^2}{4}.
\label{eq:D3.61}
\end{equation}
The integral coefficients are constant on successive unit intervals.
Thus \eqref{eq:D3.59}--\eqref{eq:D3.61} yield

\begin{equation}
\sum_{\ell\ge0}(2\ell+1)q_\ell(x)
\ge
\frac{m^2}{4}
+
\sum_{k=0}^{m-1}(2k+1)
\int_{k+1/2}^{k+3/2}g(r)\,dr.
\label{eq:D3.62}
\end{equation}
The joint term \(m^2/4\) will absorb the \(x^2/8\) unit-cell loss.

Let

\begin{equation}
I(x):=\int_0^x2rg(r)\,dr.
\label{eq:D3.63}
\end{equation}
The loss between \(I(x)\) and the integral part on the right of \eqref{eq:D3.62} is

\begin{equation}
\begin{aligned}
\mathcal L={}&
\int_0^{1/2}2rg(r)\,dr\\
&+
\sum_{k=0}^{m-1}
\int_{k+1/2}^{k+3/2}
2\left(r-k-\frac12\right)g(r)\,dr\\
&+
\int_B^x2rg(r)\,dr.
\end{aligned}
\label{eq:D3.64}
\end{equation}

Since \(2r\le1\) on \([0,1/2]\), Lemma \ref{lem:unit-cell} applied on
each unit cell gives

\begin{equation}
\begin{aligned}
&\int_0^{1/2}2rg(r)\,dr
+
\sum_{k=0}^{m-1}
\int_{k+1/2}^{k+3/2}
2\left(r-k-\frac12\right)g(r)\,dr\\
&\qquad\le
\int_0^{1/2}g(r)\,dr
+
\sum_{k=0}^{m-1}\int_{k+1/2}^{k+3/2}g(r)\,dr\\
&\qquad=
\int_0^B g(r)\,dr
\le
\int_0^xg(r)\,dr
=\frac{x^2}{8}.
\end{aligned}
\label{eq:D3.67}
\end{equation}

For the remaining interval, set

\begin{equation}
h=x-B>0.
\label{eq:D3.68}
\end{equation}
Since \(B>\xi\), strict decrease gives \(g(B)<1/4\).  Convexity and
\(g(x)=0\) place the graph below the chord joining
\((B,g(B))\) and \((x,0)\).  Therefore

\begin{equation}
g(r)
\le
g(B)\frac{x-r}{x-B}
<
\frac{x-r}{4h}
\qquad(B\le r<x).
\label{eq:D3.69}
\end{equation}
Integrating the weak form of \eqref{eq:D3.69} gives

\begin{equation}
\begin{aligned}
\int_B^x2rg(r)\,dr
&\le
\frac1{4h}\int_B^x2r(x-r)\,dr\\
&=
\frac1{4h}
\int_0^h2(x-s)s\,ds\\
&=
\frac{xh}{4}-\frac{h^2}{6}.
\end{aligned}
\label{eq:D3.70}
\end{equation}
Combining \eqref{eq:D3.64}, \eqref{eq:D3.67}, and \eqref{eq:D3.70},

\begin{equation}
\mathcal L
\le
\frac{x^2}{8}+\frac{xh}{4}-\frac{h^2}{6}.
\label{eq:D3.71}
\end{equation}
The additional discrete term in \eqref{eq:D3.62} is \(m^2/4\).  Since

\begin{equation}
m=B-\frac12=x-h-\frac12,
\label{eq:D3.72}
\end{equation}
the difference between this term and the upper bound in
\eqref{eq:D3.71} is

\begin{equation}
\begin{aligned}
D(x,h)
&:=
\frac{m^2}{4}
-
\left(
\frac{x^2}{8}+\frac{xh}{4}-\frac{h^2}{6}
\right)\\
&=
\frac{x^2}{8}-\frac{3xh}{4}-\frac{x}{4}
+\frac{5h^2}{12}+\frac h4+\frac1{16}.
\end{aligned}
\label{eq:D3.73}
\end{equation}

It remains to prove \(D(x,h)>0\).

Put
\nopagebreak

\begin{equation}
\tau=x-\xi.
\label{eq:D3.74}
\end{equation}
For \(0\le v\le1\),

\begin{equation}
\arccos(1-v)\ge\sqrt{2v}.
\label{eq:D3.75}
\end{equation}
Indeed, if \(\theta=\arccos(1-v)\), then
\(v=1-\cos\theta\le\theta^2/2\).  Using \eqref{eq:D3.49}, \eqref{eq:D3.75}, and
\(G_x(\xi)=1/4\),

\begin{equation}
\begin{aligned}
\frac14
=G_x(x-\tau)
&=
\frac1\pi\int_0^\tau
\arccos\!\left(1-\frac{s}{x}\right)\,ds\\
&\ge
\frac1\pi\int_0^\tau\sqrt{\frac{2s}{x}}\,ds
=
\frac{2\sqrt2}{3\pi}\frac{\tau^{3/2}}{\sqrt x}.
\end{aligned}
\label{eq:D3.76}
\end{equation}
Consequently,

\begin{equation}
h=x-B<x-\xi=\tau
\le
\gamma x^{1/3},
\qquad
\gamma:=
\left(\frac{3\pi}{8\sqrt2}\right)^{2/3}.
\label{eq:D3.77}
\end{equation}
The upper rational bound for \(\pi\) gives

\begin{equation}
\gamma^3=\frac{9\pi^2}{128}
<
\frac{4356}{6272}
<
\frac{729}{1000}
=\left(\frac9{10}\right)^3,
\label{eq:D3.78}
\end{equation}
where the second strict inequality is the integer comparison

\begin{equation}
4\,356\,000<4\,572\,288.
\label{eq:D3.79}
\end{equation}
Thus

\begin{equation}
0<h<\frac9{10}x^{1/3}.
\label{eq:D3.80}
\end{equation}
For \(x\ge14\), one has \(x^{1/3}\le x/5\), since
\(x^2\ge14^2>5^3\).  Throughout the range \eqref{eq:D3.80},

\begin{equation}
\begin{aligned}
\frac{\partial D}{\partial h}
&=
-\frac{3x}{4}+\frac{5h}{6}+\frac14\\
&<
-\frac{3x}{4}
+\frac56\frac9{10}\frac{x}{5}
+\frac14\\
&=
-\frac{3x}{5}+\frac14<0.
\end{aligned}
\label{eq:D3.81}
\end{equation}
Hence \(D(x,h)\) is bounded below by its value at
\((9/10)x^{1/3}\).  With \(y=x^{1/3}\),

\begin{equation}
D(x,h)>
P(y),
\label{eq:D3.82}
\end{equation}
where

\begin{equation}
P(y)
=
\frac{y^6}{8}
-\frac{27y^4}{40}
-\frac{y^3}{4}
+\frac{27y^2}{80}
+\frac{9y}{40}
+\frac1{16}.
\label{eq:D3.83}
\end{equation}
Because

\begin{equation}
14>\left(\frac{12}{5}\right)^3=\frac{1728}{125},
\label{eq:D3.84}
\end{equation}
we have \(y>12/5\).  Exact substitution gives
\nopagebreak

\begin{equation}
P\!\left(\frac{12}{5}\right)>\frac12.
\label{eq:D3.85}
\end{equation}
Moreover,

\begin{equation}
P'(y)
=
\frac34y^2
\left(y^3-\frac{18}{5}y-1\right)
+\frac{27y}{40}+\frac9{40}.
\label{eq:D3.86}
\end{equation}
The function in parentheses is increasing for \(y\ge12/5\), because

\begin{equation}
3y^2-\frac{18}{5}>0,
\label{eq:D3.87}
\end{equation}
and at the left endpoint it equals

\begin{equation}
\left(\frac{12}{5}\right)^3
-\frac{18}{5}\frac{12}{5}-1
=\frac{523}{125}>0.
\label{eq:D3.88}
\end{equation}
Thus \(P'(y)>0\) for \(y\ge12/5\), and \eqref{eq:D3.82}--\eqref{eq:D3.88} imply

\begin{equation}
D(x,h)>0
\qquad(x\ge14).
\label{eq:D3.89}
\end{equation}
Equations \eqref{eq:D3.62}, \eqref{eq:D3.71}, and \eqref{eq:D3.89} now give

\begin{equation}
\sum_{\ell\ge0}(2\ell+1)q_\ell(x)
\ge I(x)
=\frac{2x^3}{9\pi}
\qquad(x\ge14).
\label{eq:D3.90}
\end{equation}

\end{proof}

\subsubsection{\texorpdfstring{Finite radial-level estimates on \(5\le x\le14\)}{Finite radial-level estimates on 5<= x<=14}}

For the remaining interval, we use the normalized lower bound
\eqref{eq:L.2.5}.  In dimension three,

\begin{equation}
\mathcal D_3=\frac{\pi\sqrt3}{2},\qquad
\beta_3=0,\qquad
e_3:=\frac1{6\sqrt3}.
\label{eq:D3.finite.1}
\end{equation}

Write \(x=3\sqrt3\,a\), \(t=a^{-1/3}\), and

\begin{equation}
r_{3,n}(t):=
1-\frac{\alpha_n}{3}t^2-e_3t^3,\qquad
\mathcal P_{3,M}(t):=
\mathcal D_3t^3\sum_{n=1}^{M}r_{3,n}(t)^2.
\label{eq:D3.finite.2}
\end{equation}
When the displayed radial factors are positive,
\eqref{eq:L.2.5} gives

\begin{equation}
\frac{P_3(3\sqrt3\,a)}{W_3(3\sqrt3\,a)}
\ge\mathcal P_{3,M}(a^{-1/3}).
\label{eq:D3.finite.3}
\end{equation}

\begin{proposition}[Estimate on the intermediate range]
\label{prop:d3-finite}
For \(5\le x\le14\),
\begin{equation}
P_3(x)>\frac{2x^3}{9\pi}.
\label{eq:D3.finite}
\end{equation}
\end{proposition}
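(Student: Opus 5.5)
We apply \eqref{eq:D3.finite.3} with a small fixed number of radial levels and check \(\mathcal P_{3,M}(t)>1\) on the whole parameter range by a monotone interval argument. In the variable \(t=a^{-1/3}=(3\sqrt3/x)^{1/3}\), the interval \(5\le x\le14\) corresponds to
\[
t\in[t_-,t_+],\qquad t_-=(3\sqrt3/14)^{1/3}\approx0.718,\qquad t_+=(3\sqrt3/5)^{1/3}\approx1.013 .
\]
I would first replace \(t_\pm\) by rational endpoints with \(t_-'\le t_-\) and \(t_+'\ge t_+\), justified by cubing and the enclosures of Appendix \ref{supp:constant-proofs}. The target \(P_3(x)>2x^3/(9\pi)=W_3(x)\) is then equivalent to
\[
\frac{P_3(x)}{W_3(x)}>1 \quad\text{on } [t_-',t_+'].
\]

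The key structural observation is that each \(r_{3,n}(t)=1-\tfrac{\alpha_n}{3}t^2-e_3t^3\) is strictly decreasing in \(t>0\), while the prefactor \(\mathcal D_3t^3\) is increasing. Since \(\beta_3=0\), the truncated summand is \(T_{3,n}=r_{3,n}^2\) when \(r_{3,n}>0\) and \(T_{3,n}=0\) otherwise, so it is nonincreasing in \(t\). Hence on any subinterval \([\tau_i,\tau_{i+1}]\),
\[
\mathcal P_{3,M}(t)\ \ge\ \mathcal D_3\,\tau_i^3\sum_{n=1}^{M}T_{3,n}(\tau_{i+1}).
\]
Here only levels with \(r_{3,n}(\tau_{i+1})>0\) are kept, and the others are dropped as zero. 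I would use \(M=2\) near \(t_+\), since there \(r_{3,2}\) is tiny or negative. Near \(t_-\), that is large \(x\), I would use \(M=3\) or \(4\), because the second and third levels then contribute substantially; there the margin is comfortable, since at \(t\approx0.72\) one already has \(\mathcal D_3t^3r_{3,1}^2\) noticeably supplemented.

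The checks then reduce to finitely many rational inequalities. For each node I would supply rational lower bounds for \(\mathcal D_3=\pi\sqrt3/2\) and rational upper bounds for \(\alpha_n\) and \(e_3\), all taken from Appendix \ref{supp:constant-proofs}. I would then verify the displayed lower bound exceeds \(1\) by exact fraction arithmetic, recording a short table of nodes as the paper does elsewhere.

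The main obstacle is the right end \(x=5\), that is \(t\approx1.013\). There a rough estimate gives
\[
r_{3,1}\approx0.60,\qquad \mathcal D_3t^3r_{3,1}^2\approx1.01,
\]
so the margin is only about one percent. This forces fine subintervals near \(t_+\) and sharp rational enclosures, especially an accurate upper bound for \(\alpha_1=(1/(4c_0))^{2/3}\approx0.885\) and a good lower bound for \(\pi\). If the margin proves too thin, I would use \(x=5\) itself as the endpoint rather than a rounded \(t_+'\), possibly computing \(r_{3,1}(t_+)\) directly from \(t_+^3=3\sqrt3/5\). As a fallback, I would shift the junction slightly, since the low-frequency variational range is expected to reach a little past \(x=5\). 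Strictness of the final comparison follows because every node inequality is strict.
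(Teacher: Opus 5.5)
Your proposal is correct. It uses the same reduction as the paper, namely \eqref{eq:D3.finite.3}, in its truncated form $L_{3,M}$ from \eqref{eq:L.2.5} since $\beta_3=0$; the difference lies entirely in how the resulting one-variable inequality is certified.

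The paper uses only three $t$-boxes ($M=2,3,4$). It first checks that every included $r_{3,n}$ is positive (table \eqref{eq:S3.2}), so that the untruncated $\mathcal P_{3,M}$ is legitimate. It then expands $\mathcal P_{3,M}(t)-1$ into a degree-nine polynomial and rounds the coefficients outward to get $Q_M$. Convexity of $H_M=Q_M'/t^2$ shows that $Q_M$ is increasing, or first increasing and then decreasing, so a handful of endpoint values and derivative signs suffice.

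Your bracket $\mathcal D_3\tau_i^3\sum_n T_{3,n}(\tau_{i+1})$, an increasing prefactor times a nonincreasing truncated sum, has three advantages:
\begin{itemize}
\item it needs no expansion and no derivative analysis;
\item it handles nonpositive radial factors automatically;
\item because you evaluate the factored $r_{3,n}$ directly, it loses almost nothing to rounding.
\end{itemize}
By contrast, the paper's coefficientwise rounding shrinks the true margin $\mathcal P_{3,2}(1.013)-1\approx0.009$ to the certified $3/1000$.

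The price is the number of nodes. Near $t_+$ the function is almost flat, and the bracket undershoots its minimum on a subinterval by a relative amount of roughly $3h/t$. With a margin of about $0.009$ you therefore need steps of at most a few thousandths there. The whole range then takes a few dozen subintervals, not a short table.

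Your diagnosis of where the difficulty lies is accurate. The value $\mathcal P_{3,2}(t_+)\approx1.009$ is the only thin spot. At $x=14$, two levels give only about $0.94$, three give about $1.03$, and four about $1.05$. The fallback of moving the junction beyond $x=5$ is therefore unnecessary, and it would not prove the proposition as stated anyway.
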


\begin{proof}
Proposition \ref{supp:d3-phase} supplies

\begin{equation}
\begin{array}{c|c}
\text{\(a\)-interval}&\text{lower bound}\\ \hline
[5/(3\sqrt3),17/10]&\mathcal P_{3,2}>1\\
{}[17/10,23/10]&\mathcal P_{3,3}>1\\
{}[23/10,14/(3\sqrt3)]&\mathcal P_{3,4}>1 .
\end{array}
\label{eq:D3.finite.4}
\end{equation}
Since \(x=3\sqrt3\,a\), these closed intervals cover exactly
\(5\le x\le14\).
\end{proof}

\begin{theorem}[Weighted quarter-shift inequality in dimension three]
\label{thm:d3-quarter}
For every \(x\ge 5\),

\begin{equation}
\sum_{\substack{n\ge1\\ n-3/4\le x/\pi}}
\left\lfloor
\Xi_x\!\left(n-\frac34\right)+\frac12
\right\rfloor^2
\ge
\int_0^{x/\pi}\Xi_x(t)^2\,dt
=
\frac{2x^3}{9\pi}.
\label{eq:D3.4}
\end{equation}
\end{theorem}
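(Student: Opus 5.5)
The plan is to assemble the inequality from the two ranges already treated, $x\ge14$ and $5\le x\le14$, through the exact identities \eqref{eq:D3.14} and \eqref{eq:D3.16}.

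\textbf{Step 1 (the equality).} The equality on the right of \eqref{eq:D3.4} is \eqref{eq:D3.16}. By \eqref{eq:D3.14}, the left side of \eqref{eq:D3.4} equals $\sum_{\ell\ge0}(2\ell+1)q_\ell(x)$. Hence the theorem is equivalent to \eqref{eq:D3.17} for every $x\ge5$.

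\textbf{Step 2 (large $x$).} For $x\ge14$, \eqref{eq:D3.17} is exactly Lemma~\ref{lem:d3-joint}.

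\textbf{Step 3 (intermediate $x$).} For $5\le x\le14$, I will identify the weighted sum with $P_3(x)$ and then invoke Proposition~\ref{prop:d3-finite}.
\begin{itemize}
\item In dimension three, $\delta_3=\tfrac12$ and $\kappa_{3,\ell}=2\ell+1$.
\item Definition \eqref{eq:I.12} therefore reads $P_3(x)=\sum_\ell(2\ell+1)\lfloor G_x(\ell+\tfrac12)+\tfrac34\rfloor=\sum_\ell(2\ell+1)q_\ell(x)$.
\item The convention that $G_x$ vanishes beyond $x$ agrees in \eqref{eq:I.12} and \eqref{eq:D3.10}, so the two sums agree term by term.
\end{itemize}
Proposition~\ref{prop:d3-finite} gives $P_3(x)>2x^3/(9\pi)$ on $[5,14]$, which is strict and hence suffices.

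One could alternatively see this through \eqref{eq:I.14} and \eqref{eq:I.16}. There $H_3(r)=\lfloor r+\tfrac12\rfloor^2$, with $L=\lfloor r-\tfrac12\rfloor$ and $H_3=(L+1)^2$ for $r\ge\tfrac12$, and $H_3=0$ otherwise. This reproduces the left side of \eqref{eq:D3.4} directly.

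\textbf{Main obstacle.} The only real care needed is bookkeeping: matching the truncation $n-\tfrac34\le x/\pi$ in \eqref{eq:D3.4} with the sum in \eqref{eq:I.14}. Terms with $t_n>x/\pi$ vanish there, since $\Xi_x(t_n)=0$ and $H_3(0)=0$. The junction $x=14$ is covered by both ranges, and nothing else is needed.
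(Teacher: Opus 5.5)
Your proposal is correct and matches the paper's proof: the identities \eqref{eq:D3.14} and \eqref{eq:D3.16} reduce the theorem to \eqref{eq:D3.17}. Proposition~\ref{prop:d3-finite} then covers $5\le x\le14$, and Lemma~\ref{lem:d3-joint} covers $x\ge14$. The paper uses the identification $P_3(x)=\sum_{\ell}(2\ell+1)q_\ell(x)$ tacitly, and you make it explicit correctly.
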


Equality points in \eqref{eq:D3.4} are included; for example,
\(\Xi_x(n-3/4)=\ell+1/2\) contributes the full angular multiplicity
\(2\ell+1\).

\begin{proof}
The identities \eqref{eq:D3.14} and \eqref{eq:D3.16} reduce the assertion
to \eqref{eq:D3.17}.  Proposition \ref{prop:d3-finite} proves it on
\(5\le x\le14\), and Lemma \ref{lem:d3-joint} proves it for \(x\ge14\).
\end{proof}

Since \(\mathfrak h_3=24\sqrt2/35<1\), the beta estimate cannot replace
the high-frequency argument; the endpoint quarter-cell term in Lemma
\ref{lem:retained-floor} remains essential.

\subsection{Completion in dimension three}

\subsubsection{\texorpdfstring{Frequencies \(x\ge5\)}{Frequencies x>=5}}

For \(d=3\), one has

\begin{equation}
\delta_d=\frac12,
\qquad
\kappa_{3,\ell}=2\ell+1.
\label{eq:D3.IV.1}
\end{equation}
Consequently, \eqref{eq:I.11}--\eqref{eq:I.12} gives

\begin{equation}
\mathcal N_3^{<}(x)
\ge
\sum_{\ell\ge0}(2\ell+1)
\left\lfloor
G_x(\ell+\tfrac12)+\frac34
\right\rfloor .
\label{eq:D3.IV.2}
\end{equation}
The exact identity \eqref{eq:D3.14} and Theorem
\ref{thm:d3-quarter} give

\begin{equation}
\mathcal N_3^{<}(x)
\ge
\frac{2x^3}{9\pi}
\qquad(x\ge5).
\label{eq:D3.IV.3}
\end{equation}
The endpoint \(x=5\) is included: the phase theorem counts a derivative
zero non-strictly, while \eqref{eq:I.8} places the corresponding Neumann
eigenvalue strictly below the same threshold.

\subsubsection{\texorpdfstring{Frequencies \(0\le x\le5\)}{Frequencies 0<= x<=5}}

Let \(Y_\ell\) be a spherical harmonic of degree \(\ell\) on \(S^2\), and
put

\begin{equation}
u_\ell(r,\theta)=r^\ell Y_\ell(\theta).
\label{eq:D3.IV.4}
\end{equation}
Green's identity and homogeneity give its exact Rayleigh quotient:

\begin{equation}
\frac{\displaystyle\int_{B_1^3}|\nabla u_\ell|^2}
{\displaystyle\int_{B_1^3}|u_\ell|^2}
=
\ell(2\ell+3).
\label{eq:D3.IV.5}
\end{equation}
Different spherical degrees are orthogonal both for the \(L^2\) form and
for the Dirichlet form.  Hence the space formed by all harmonic
polynomials of degrees \(0,\ldots,L\) has dimension

\begin{equation}
M_L=\sum_{\ell=0}^L(2\ell+1)=(L+1)^2
\label{eq:D3.IV.6}
\end{equation}
and maximum Rayleigh quotient \(L(2L+3)\).  The min--max principle yields
\nopagebreak

\begin{equation}
\mu_{M_L}^N(B_1^3)\le L(2L+3).
\label{eq:D3.IV.7}
\end{equation}
For \(L=0,1,2\), this gives the following piecewise-constant counting
bound:

\begin{equation}
\begin{array}{c|c|c}
\text{frequency range}&\text{strict lower count}&\text{Weyl upper bound}\\
\hline
0<x\le\sqrt5
&\mathcal N_3^{<}(x)\ge1
&2x^3/(9\pi)<1\\
\sqrt5<x\le\sqrt{14}
&\mathcal N_3^{<}(x)\ge4
&2x^3/(9\pi)<4\\
\sqrt{14}<x\le5
&\mathcal N_3^{<}(x)\ge9
&2x^3/(9\pi)<9.
\end{array}
\label{eq:D3.IV.8}
\end{equation}
The three inequalities in the last column follow by integer comparison from
\(\pi>25/8\), using \(\sqrt{14}<4\) in the middle case:

\begin{equation}
\frac{2(\sqrt5)^3}{9\pi}<1,
\qquad
\frac{2(\sqrt{14})^3}{9\pi}<4,
\qquad
\frac{2\cdot5^3}{9\pi}<9.
\label{eq:D3.IV.9}
\end{equation}
All junctions are included.  At \(x=\sqrt5\), the preceding row uses
only the constant eigenfunction.  At \(x=\sqrt{14}\), the degree-one
bound \(5<x^2\) is already strict.  At \(x=5\), the degree-two bound
\(14<x^2\) is strict.  Finally,
\(\mathcal N_3^{<}(0)=0=W_3(0)\).

Equations \eqref{eq:D3.IV.3} and \eqref{eq:D3.IV.8} prove

\begin{equation}
\mathcal N_3^{<}(x)\ge\frac{2x^3}{9\pi}
\qquad(x\ge0).
\label{eq:D3.IV.10}
\end{equation}
This proves the dimension-three unit-ball result.

\section{Dimension four}
\label{sec:d4}

The four-dimensional argument is direct; the disk estimate does not control
the quadratic angular moment arising from the four-dimensional
multiplicities.  Here

\begin{equation}
\delta_4=1,\qquad
\kappa_{4,\ell}=(\ell+1)^2,\qquad
W_4(x)=\frac{x^4}{64},\qquad x=8a.
\label{eq:D4.1}
\end{equation}

\subsection{Low-frequency power trials}

For \(d=4\), Lemma \ref{lem:power-trial} reads
\[
L_\ell=\ell(\ell+2),\qquad
s_\ell=(L_\ell/2)^{1/3},\qquad
Q_4(L_\ell)=
\frac{s_\ell^2(2s_\ell+1)(2s_\ell+4)}{2s_\ell+2}.
\]
The following four rational enclosures suffice:
\begin{equation}
\begin{array}{c|c|c|c}
\ell&L_\ell&s_\ell<&Q_4(L_\ell)<\\ \hline
1&3&23/20&1099791/172000<7\\
2&8&8/5&24192/1625<15\\
3&15&197/100&3805571731/148500000<26\\
4&24&23/10&159229/4125<39.
\end{array}
\label{eq:D4.2}
\end{equation}
The cube-root bounds follow by cubing.  Since \(Q_4\) is increasing, the
strict sectorwise inequalities in Lemma \ref{lem:power-trial} and
\[
\mathsf A_4(1)=5,\quad \mathsf A_4(2)=14,\quad
\mathsf A_4(3)=30,\quad \mathsf A_4(4)=55
\]
give
\begin{equation}
\mathcal N_4^{<}(x)\ge
\begin{cases}
1,&0<x^2<7,\\
5,&7\le x^2<15,\\
14,&15\le x^2<26,\\
30,&26\le x^2<39,\\
55,&39\le x^2\le(38/5)^2.
\end{cases}
\label{eq:D4.3}
\end{equation}
These counts dominate the Weyl term because
\begin{equation}
\frac{7^2}{64}<1,\qquad
\frac{15^2}{64}<5,\qquad
\frac{26^2}{64}<14,\qquad
\frac{39^2}{64}<30,\qquad
W_4(38/5)=\frac{130321}{2500}<55.
\label{eq:D4.6}
\end{equation}
At \(x=0\), both sides vanish.  We have therefore proved

\begin{equation}
\mathcal N_4^{<}(x)\ge W_4(x)
\qquad(0\le x\le38/5).
\label{eq:D4.7}
\end{equation}

\subsection{Intermediate frequencies}

For \(d=4\), the common normalized quantities reduce to

\begin{equation}
\mathcal D_4=\frac83,\qquad
\beta_4=\frac1{256},\qquad
r_{4,n}(a)
=1-\frac{\alpha_n}{4a^{2/3}}-\frac1{16a}.
\label{eq:D4.8}
\end{equation}
Recall that \(x=8a\).  The three consecutive closed \(a\)-intervals below
map exactly to the frequency interval \(38/5\le x\le26\).

Define the truncated expression
\nopagebreak

\begin{equation}
\mathcal P_{4,M}(a)
:=\frac8{3a}\sum_{n=1}^{M}
r_{4,n}(a)
\left(r_{4,n}(a)^2-\frac1{256a^2}\right).
\label{eq:D4.9}
\end{equation}
The radial margins in \eqref{eq:S4.2} give \(r_{4,n}>0\) on every interval
used below.  If
\(0<r_{4,n}\le1/(16a)\), its summand in \eqref{eq:D4.9} is
nonpositive while the truncated summand \(T_{4,n}\) in
\eqref{eq:L.2.4} is zero.  If \(r_{4,n}>1/(16a)\), the two summands
agree.  Therefore the piecewise convention always gives

\begin{equation}
L_{4,M}(a)\ge\mathcal P_{4,M}(a).
\label{eq:D4.10}
\end{equation}

\begin{proposition}[Four-dimensional phase estimate]
\label{prop:d4-phase}
The following bounds hold:

\begin{equation}
\begin{array}{c|c}
\text{\(a\)-interval}&\text{lower bound}\\ \hline
[19/20,11/10]&\mathcal P_{4,2}>1\\
{}[11/10,17/10]&\mathcal P_{4,3}>1\\
{}[17/10,13/4]&\mathcal P_{4,4}>1 .
\end{array}
\label{eq:D4.11}
\end{equation}
\end{proposition}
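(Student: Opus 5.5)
We need to prove that \(\mathcal P_{4,M}(a)>1\) on each of the three closed \(a\)-intervals in \eqref{eq:D4.11}. The plan is to treat each interval by a monotonicity-plus-endpoint argument, using rational enclosures of \(\alpha_1,\dots,\alpha_4\) from Appendix \ref{supp:constant-proofs}. Substituting \(t=a^{-1/3}\) turns each radial factor into a polynomial, \(r_{4,n}=1-\tfrac{\alpha_n}{4}t^2-\tfrac1{16}t^3\). Each summand then becomes
\[
\tfrac83\,t^3\,r_{4,n}\bigl(r_{4,n}^2-t^6/256\bigr),
\]
which is a polynomial in \(t\) whose coefficients are polynomial in \(\alpha_n\).

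First we fix signs. On each interval we check the radial margins \(r_{4,n}(a)>1/(16a)\) for all \(n\le M\); this is the content of \eqref{eq:S4.2}. Each \(r_{4,n}\) is increasing in \(a\), so it suffices to check the left endpoint, using the upper enclosure of \(\alpha_n\). Under these margins every summand is positive. A monotone lower bound then follows: replace each \(\alpha_n\) by its rational upper bound, which lowers \(r_{4,n}\) and hence lowers the summand.

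Next we subdivide each interval \([a_0,a_1]\) into finitely many rational subintervals \([a',a'']\). On each subinterval we bound the sum from below by a product of endpoint values. The factor \(\tfrac{8}{3a}\) is evaluated at \(a''\), the largest \(a\). The factors \(r_{4,n}\) and \(r_{4,n}^2-1/(256a^2)\) are both increasing in \(a\), so they are evaluated at \(a'\). This gives
\[
\mathcal P_{4,M}(a)\ \ge\ \frac{8}{3a''}\sum_{n=1}^M r_{4,n}(a')\Bigl(r_{4,n}(a')^2-\frac1{256a'^2}\Bigr),
\]
with \(a'^{-2/3}\) replaced by a rational upper bound. We then need a single exact rational inequality \(>1\) per subinterval. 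The required rational bounds for \(a'^{-2/3}\) are obtained by cubing, as in \eqref{eq:D4.2}.

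The main obstacle is that the crude estimate, with \(1/a\) taken at one end and \(r\) at the other, may lose too much near the left endpoints. This is most acute at \(a=19/20\) for \(M=2\) and at the junctions \(11/10\) and \(17/10\), where the margin above \(1\) is presumably thin. If the loss is too large, we refine the subdivision near those points.

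Alternatively, we can prove directly that \(t\mapsto t^{-3}\mathcal P_{4,M}\) behaves well. Writing \(\mathcal P_{4,M}=\tfrac83 t^3\sum_n r_n(r_n^2-t^6/256)\), one can show this polynomial in \(t\) is unimodal on the relevant \(t\)-range. That reduces each interval to its two endpoints plus a derivative-sign check, all verified in exact rational arithmetic. The closing step records that the three closed intervals cover \(19/20\le a\le 13/4\), that is \(38/5\le x\le 26\). Combined with \eqref{eq:D4.10} and \eqref{eq:L.2.5}, this gives \(\mathcal N_4^{<}(x)>W_4(x)\) on this range.
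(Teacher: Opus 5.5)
Your sign step fails as stated. You assert that \(r_{4,n}(a)>1/(16a)\) for all \(n\le M\) on each band, and that this is what \eqref{eq:S4.2} records. It is not: \eqref{eq:S4.2} only gives \(r_{4,3}>1/25\) on \([11/10,17/10]\). At \(a=11/10\) one has \(r_{4,3}\approx0.0445\), while \(1/(16a)=1/17.6\approx0.0568\); the two cross near \(a\approx1.12\).

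So the third summand \(r_{4,3}\bigl(r_{4,3}^2-1/(256a^2)\bigr)\) of \(\mathcal P_{4,3}\) is negative at the left end of its band, and ``every summand is positive'' is false. Your justification of the subinterval bound then breaks for this term. A product of a positive increasing factor and a negative increasing factor need not increase. Also, replacing \(8/(3a)\) by the smaller \(8/(3a'')\) makes a negative term larger, not smaller. This is exactly why the paper keeps the untruncated summands in \(\mathcal P_{4,M}\), asks only \(r_{4,n}>0\), and absorbs the sign issue in \eqref{eq:D4.10}.

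The repair is short. Put \(s=r(r^2-q)\) with \(q=1/(256a^2)\). Then \(ds/da=r'(3r^2-q)-rq'>0\) and \(ds/dr=3r^2-q>0\) as soon as \(r>\sqrt{q/3}=1/(16\sqrt3\,a)\). The margin \(1/25\) exceeds this on the whole \(M=3\) band, since \(16\sqrt3\cdot\tfrac{11}{10}>30\). Hence each summand is still increasing in \(a\) and in \(r\), your upward rounding of \(\alpha_n\) is legitimate, and \(8/(3a'')\) may be applied to the whole sum, which is positive, rather than termwise. Alternatively, discard the negative term, whose weighted contribution is below \(5\cdot10^{-4}\).

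Once repaired, your route is valid but different from the paper's. Your bound is exactly \(\mathcal P_{4,M}(a)\ge\mathcal P_{4,M}(a')\,a'/a''\) on \([a',a'']\), i.e.\ monotonicity of \(a\,\mathcal P_{4,M}(a)\). Each step therefore needs \(a''/a'<\mathcal P_{4,M}(a')\).

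The tightest values are not at the junctions \(11/10\) and \(17/10\), where \(\mathcal P_{4,M}>1.04\). They are at the outer ends: \(\mathcal P_{4,2}(19/20)\approx1.024\) and \(\mathcal P_{4,4}(13/4)\approx1.017\). This forces steps of about \(2\cdot10^{-2}\) near \(a=19/20\) and a few dozen exact rational checks in all, none of which you exhibit.

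The paper never subdivides. It majorizes \((a\,d/da)^2\mathcal P_{4,M}\) by an explicit rational polynomial \(\mathscr U_M(t)\). It proves \(\mathscr U_M<0\) on each \(t\)-band by a fourth-derivative sign argument; for \(M=3\) this needs a split at \(t=9/10\) and a tangent-line bound. That gives strict concavity in \(\log a\), after which only the six endpoint values in \eqref{eq:S4.9}, each above \(61/60\), are needed.

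Your subdivision route is more elementary but computation-heavy; the paper's buys a short certificate at the cost of the curvature analysis. Your ``unimodal'' alternative is essentially the paper's strategy. There, certifying unimodality is the whole difficulty, and you do not supply a certificate.
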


The proof is given in Proposition \ref{supp:d4-phase}.

Thus \eqref{eq:L.2.5}, \eqref{eq:D4.10}, and
\eqref{eq:D4.11} give

\begin{equation}
\mathcal N_4^{<}(x)>W_4(x)
\qquad(38/5\le x\le26).
\label{eq:D4.12}
\end{equation}

\subsection{A beta-integral estimate}

The common beta lower bound \eqref{eq:L.5.6} applies.  Put

\begin{equation}
\begin{gathered}
\mathfrak h_4=\frac{2048\sqrt2}{945\pi},\qquad
\sigma_4=\frac92,\qquad c_4=\frac1{16},\qquad
\mathcal K_4=\frac{21}{2048},\\
\rho_0=1-\frac1{16a},\qquad
\rho_1=1-\frac1{16a}-\frac{\alpha_1}{4a^{2/3}},
\qquad q=\frac1{256a^2}.
\end{gathered}
\label{eq:D4.13}
\end{equation}
Using \(\alpha_1<887/1000\) and
\((13/4)^{2/3}>219/100\), both evaluation points give nonzero summands
for \(13/4\le a\le21/4\), since

\begin{equation}
\rho_1>\frac78>\frac1{52}\ge\sqrt q,
\label{eq:D4.14}
\end{equation}
while \(\rho_0>\rho_1\).  This verifies the common nonvanishing conditions
\(\mathrm{(A1)}\)--\(\mathrm{(A2)}\).  The elementary radical and
\(\pi\) bounds give

\begin{equation}
\frac{39}{40}<\mathfrak h_4<1,
\qquad
0<
\frac{\mathcal K_4}{(a-c_4)^2}
<\frac1{96}.
\label{eq:D4.15}
\end{equation}
Here the upper bound for \(\mathfrak h_4\) follows from
\(\sqrt2<99/70\) and \(\pi>333/106\).

Define

\begin{equation}
\widehat{\mathfrak B}_4(a):=
\mathfrak h_4\left(1-\frac9{32a}\right)
-\frac{21}{2048(a-\frac1{16})^2}
+\frac1{3a}
\left(
3\rho_1^3-\rho_0^3-\frac3{256a^2}
\right).
\label{eq:D4.16}
\end{equation}
Then

\begin{equation}
\mathfrak B_4(a)>\widehat{\mathfrak B}_4(a).
\label{eq:D4.17}
\end{equation}

Set \(u=\sigma_4 c_4/a\) and
\(\theta=\mathcal K_4/(a-c_4)^2\).  Bernoulli's inequality,
\(0<\mathfrak h_4<1\), and \(0<\theta<1\) give

\begin{equation*}
\mathfrak h_4\rho_0^{\sigma_4}(1-\theta)
-\bigl(\mathfrak h_4-\mathfrak h_4u-\theta\bigr)
\ge
\theta(1-\mathfrak h_4+\mathfrak h_4u)>0.
\end{equation*}
The discarded \(q\)-part is
\(q(3-3\rho_1+\rho_0)>0\).  This proves \eqref{eq:D4.17}.

Since \(\mathfrak h_4<1\), this is only a middle-frequency estimate; the
common estimate takes over at \(x=42\).

Direct differentiation gives

\begin{equation}
\widehat{\mathfrak B}_4''(a)
=\frac1{a^3}
\left[
-2\mathfrak h_4\sigma_4 c_4+\frac13\Psi^{(4)}(u_0,v)
\right]
-\frac{6\mathcal K_4}{(a-c_4)^4}
-\frac1{\,a^5}\frac{36\mathcal D_4\beta_4}{8},
\label{eq:D4.18}
\end{equation}
where \(u_0=c_4/a\), \(v=\alpha_1/(4a^{2/3})\), and

\begin{equation}
\begin{aligned}
\Psi^{(4)}(u_0,v)={}&
4-36u_0-40v+72u_0^2+176u_0v+70v^2\\
&-40u_0^3-154u_0^2v-130u_0v^2-36v^3.
\end{aligned}
\label{eq:D4.19}
\end{equation}
Equation \eqref{eq:D4.19} is \eqref{eq:L.5.19} with \(n=3\) and
\(\mathcal D_4/8=1/3\).

On the entire beta interval,

\begin{equation}
\frac1{85}<u_0<\frac1{50},
\qquad
\frac9{125}<v<\frac{11}{100}.
\label{eq:D4.20}
\end{equation}
The second inequality uses
\(\alpha_1>22/25\),
\((21/4)^{2/3}<303/100\), and the two bounds already used in
\eqref{eq:D4.14}.  Both partial derivatives of \(\Psi^{(4)}\) are negative
on this rectangle; for example, after discarding their negative terms,

\begin{equation*}
\partial_{u_0}\Psi^{(4)}
<-36+\frac{144}{50}+\frac{176\cdot11}{100}<0,
\qquad
\partial_v\Psi^{(4)}
<-40+\frac{176}{50}+\frac{140\cdot11}{100}<0.
\end{equation*}
It follows that

\begin{equation}
\Psi^{(4)}(u_0,v)
<\Psi^{(4)}\left(\frac1{85},\frac9{125}\right)
<\frac65.
\label{eq:D4.21}
\end{equation}
Using \(\mathfrak h_4>39/40\) in the square bracket of
\eqref{eq:D4.18} gives

\begin{equation}
-2\mathfrak h_4\sigma_4 c_4+\frac13\Psi^{(4)}
<-\frac{351}{640}+\frac25
=-\frac{19}{128}<0.
\label{eq:D4.22}
\end{equation}
All remaining terms in \eqref{eq:D4.18} are negative, so
\(\widehat{\mathfrak B}_4\) is strictly concave.  Finally,
\(\alpha_1<887/1000\), \(\mathfrak h_4>39/40\), and the rational
power bounds

\begin{equation*}
\left(\frac{13}{4}\right)^{2/3}>\frac{219}{100},
\qquad
\left(\frac{21}{4}\right)^{2/3}>3
\end{equation*}
give, by direct substitution in \eqref{eq:D4.16},

\begin{equation}
\begin{aligned}
\widehat{\mathfrak B}_4(13/4)-1
&>\frac1{500},\\
\widehat{\mathfrak B}_4(21/4)-1
&>\frac1{200}.
\end{aligned}
\label{eq:D4.23}
\end{equation}
Concavity, \eqref{eq:D4.17}, and the common beta lower bound therefore
prove

\begin{equation}
\mathcal N_4^{<}(x)>W_4(x)
\qquad(26\le x\le42).
\label{eq:D4.24}
\end{equation}

\subsection{High frequencies}

For \(d=4\), \(p=3\), the quantities in the common high-frequency estimate
satisfy

\begin{equation*}
A_3=\frac14,\qquad
C_2=\frac{2}{9\pi},\qquad
C_1=\frac18,\qquad
a_3=\frac14-\frac1{3\pi}.
\end{equation*}
The bounds \(\pi>25/8\) and \(42^{1/3}>347/100\) give

\begin{equation*}
a_3>\frac{43}{300}
\end{equation*}
and

\begin{equation*}
\begin{aligned}
\frac{3}{2\cdot42^{2/3}}
+\frac{3/4+(1/4)C_1}{42}
+\frac{1/4}{2\cdot42^2}
&<
\frac{124}{995}
+\frac{25}{1344}
+\frac1{14112}\\
&<
\frac{43}{300}.
\end{aligned}
\end{equation*}
Here \(347^3=41\,781\,923<42\,000\,000\), and the final comparison is
a cross-multiplication.  Since the right side of \eqref{eq:II.37}
decreases in \(x\),

\begin{equation}
P_4(x)>W_4(x)
\quad\text{and hence}\quad
\mathcal N_4^{<}(x)>W_4(x)
\qquad(x\ge42).
\label{eq:II.45}
\end{equation}

\subsection{Completion of the four-dimensional proof}

Equations \eqref{eq:D4.7}, \eqref{eq:D4.12},
\eqref{eq:D4.24}, and \eqref{eq:II.45} cover, respectively,
\([0,38/5]\), \([38/5,26]\), \([26,42]\), and \([42,\infty)\),
including their joins.  Hence

\begin{equation}
\mathcal N_4^{<}(x)\ge\frac{x^4}{64}
\qquad(x\ge0).
\label{eq:D4.25}
\end{equation}

\section{\texorpdfstring{Dimensions \(d\ge5\)}{Dimensions d>=5}}
\label{sec:dge5}

Set \(a=xd^{-3/2}\).  We treat successively the variational range
\(0\le a\le a_d^\ast\), the finite-level range
\(a_d^\ast\le a\le5/2\), the beta-integral range \(5/2\le a\le5\), and
the high-frequency range \(a\ge5\).

\subsection{The low-frequency variational estimate}

\begin{theorem}[Low-frequency variational estimate]
\label{thm:low-frequency-trial}
Let
\[
a_d^\ast:=
\begin{cases}
41/50,&d=5,\\
4/5,&d\ge6.
\end{cases}
\]
Then
\begin{equation}
\mathcal N_d^{<}(x)\ge W_d(x)
\qquad(d\ge5,\ 0\le x\le a_d^\ast d^{3/2}).
\label{eq:L.1.1}
\end{equation}
\end{theorem}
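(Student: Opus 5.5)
The plan is to mimic the dimension-four low-frequency argument, now uniformly in \(d\), with the power trials of Lemma \ref{lem:power-trial}. For each angular degree \(\ell\ge1\), that lemma gives \(\lambda_{\ell,1}^N<Q_d(L_\ell)\), and \(Q_d\) is increasing. Hence every mode of degree at most \(m\) lies strictly below \(x^2\) as soon as \(x^2\ge Q_d(L_m)\). The count in the range \(Q_d(L_m)\le x^2<Q_d(L_{m+1})\) is therefore at least \(\mathsf A_d(m)\), and the constant mode gives the count \(1\) for \(0<x^2<Q_d(L_1)\). Since \(W_d\) is increasing, it suffices to prove the step inequalities
\[
W_d\bigl(\sqrt{Q_d(L_{m+1})}\bigr)\le \mathsf A_d(m)\qquad(0\le m<m_\ast),
\]
together with a final inequality \(W_d(a_d^\ast d^{3/2})\le \mathsf A_d(m_\ast)\). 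Here \(m_\ast\) is the first degree with \(Q_d(L_{m_\ast})\le (a_d^\ast d^{3/2})^2\), so the last step is cut off at the endpoint. Strictness of the Ritz bound handles the junctions exactly as in \eqref{eq:D4.3}.

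To make this uniform, I will estimate both sides asymptotically in the scaling \(\ell=\lambda d\). For large \(L\) one has \(s\approx (\ell^2/2)^{1/3}\), and \(Q_d(L)\approx s^2(2s+1)(2s+d)/(2s+d-2)\). In the relevant range \(\ell\lesssim d^{3/2}\) we have \(s\lesssim d\), so \(Q_d(L_\ell)\) is roughly \(L_\ell\bigl(1+O(d^{-1/3})\bigr)\), i.e. \(x\approx\sqrt{\ell(\ell+d-2)}\). On the other side, \(\mathsf A_d(m)\ge 2\binom{m+d-2}{d-1}\) grows like \((m+d)^{d-1}/(d-1)!\) times a correction factor. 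Comparing logarithms, \(W_d(x)\) has the size \(x^d/(2^d\Gamma(d/2+1)^2)\approx (e x/d)^{d}\pi^{-1} d^{-1}\)-type, while \(\mathsf A_d(m)\) has the size \((e(m+d)/d)^{d}\)-type when \(m\lesssim d\). Writing \(x^2\approx m(m+d)\), the needed inequality becomes roughly \(\sqrt{m(m+d)}\lesssim m+d\) up to polynomial factors. This clearly holds with exponential room when \(m=O(d)\), and only tightens when \(m\sim d^{3/2}\), i.e. at \(a\approx a_d^\ast\). So I will take logarithms, write \(\log\mathsf A_d(m)-\log W_d(x_{m+1})\) as a sum over factors \(\log(1+\cdot)\), bound it below using \(\log(1+u)\ge u-u^2/2\) and Stirling-type bounds for \(\Gamma(d/2+1)^2\), and show positivity for all \(d\ge d_0\) and all \(m\) up to \(m_\ast\).

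The small dimensions \(5\le d<d_0\) I would treat as finite tables, exactly as in \eqref{eq:D4.2}--\eqref{eq:D4.6}, using rational cube-root enclosures. For \(d=5\) the endpoint \(41/50\) suggests the margin is tight there.

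The main obstacle is the regime \(m\asymp d^{3/2}\) near the cutoff \(a_d^\ast=4/5\). There the relative gap between \(\mathsf A_d(m)\) and \(W_d\) is only of size \(O(1)\) in the logarithm, and the \(s^2(2s+1)\) correction in \(Q_d\) (relative size \(d^{-1/2}\), but raised to the power \(d/2\)) contributes a nontrivial \(\exp(c\sqrt d)\)-type or \(O(1)\) factor. First I would parametrize \(m=\mu d^{3/2}\) and compute the limiting log-ratio as \(d\to\infty\) as an explicit function of \(\mu\). Then I would check that it is positive up to the \(\mu\) corresponding to \(a=4/5\), and control the error terms with explicit \(1/d\) bounds so that a finite threshold \(d_0\) results.
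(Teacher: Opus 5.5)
\textbf{Verdict: gap.} Your reduction matches the paper's, but the proposal gives no workable way to verify the resulting inequalities for all $(d,m)$.

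The reduction is this. The strict power-trial bound $\lambda^N_{\ell,1}<Q_d(L_\ell)$, monotonicity of $Q_d$, and the threshold implication reduce the theorem to step inequalities $\mathcal C_d\,\mathsf A_d(m-1)\ge Q_d(L_m)^{d/2}$, where $\mathcal C_d=2^d\Gamma(\frac d2+1)^2$. These are needed for every degree up to the last threshold below $(a_d^\ast)^2d^3$, plus one terminal inequality. (Your $m_\ast$ should be the \emph{largest}, not the first, degree with $Q_d(L_{m_\ast})\le(a_d^\ast d^{3/2})^2$.)

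The gap is in verifying these inequalities. There are of order $d^{3/2}$ of them per dimension. So ``finite tables for $5\le d<d_0$'' means roughly $d_0^{5/2}$ checks, with $d_0$ never specified. For $d\ge d_0$ you need a bound uniform over the whole family $(d,m)$, $1\le m\lesssim d^{3/2}$; you assert this but do not supply it.

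The heuristics you would build it on are also wrong exactly where the margin is decided:
\begin{itemize}
\item At $m\asymp d^{3/2}$ one has $s\asymp d$, so $Q_d(L)/L=(1+\frac1{2s})(1+\frac2{2s+d-2})=1+O(1/d)$. Hence $(L/Q_d)^{d/2}$ is a bounded factor, about $e^{-0.8}$ in the limit at $a=4/5$. A relative correction of size $d^{-1/2}$, as you write, would cost a factor $e^{c\sqrt d}$ that no $O(1)$ margin can absorb.
\item For $m\lesssim d$, $\mathsf A_d(m)$ is not of size $(e(m+d)/d)^d$; already $\mathsf A_d(1)=d+1$. With $\lambda=m/d$, the correct exponential rate of $\mathsf A_d(m)/W_d(\sqrt{m(m+d)})$ is $(\lambda+\frac12)\log(1+\frac1\lambda)-1$. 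This is positive but decays like $1/(12\lambda^2)$. You would have to show it beats the power-trial penalty $\frac d2\log(Q_d/L)\approx\frac d{4s}+\frac d{2s+d}$ throughout the intermediate range.
\item ``Up to polynomial factors'' discards exactly $d^{3/2}/\sqrt L$, $\varrho_d$, $U_d\approx e^{d^3/(12L)}$ and $(L/Q_d)^{d/2}$. Together these fix the cutoff $a_d^\ast$.
\end{itemize}

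The missing idea is the paper's monotonicity reduction, which turns the whole verification into one uniform estimate.
\begin{itemize}
\item Interpolate $\mathsf A_d(m-1)$ by $\widetilde{\mathsf A}_d(m-1)=(2m+d-3)\Gamma(m+d-2)/(\Gamma(d)\Gamma(m))$.
\item Prove that $\mathfrak R_d^\sharp(m)=\mathcal C_d\widetilde{\mathsf A}_d(m-1)/Q_d(m(m+d-2))^{d/2}$ is strictly decreasing for real $m\ge1$. To do this, bound $\frac{d}{dm}\log\widetilde{\mathsf A}_d(m-1)$ from above by the midpoint rule and the $\log\frac{1+y}{1-y}$ series. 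Compare it with the exact derivative $\frac d2\frac{2m+d-2}{m(m+d-2)}\chi_d(s)$ of $\log Q_d^{d/2}$, where $\chi_d$ is the elasticity from Lemma~\ref{lem:power-trial}. Clearing denominators leaves a polynomial with positive coefficients.
\item Every step inequality then follows from $\mathfrak R_d^\sharp(\mu)>1$ at the real root $\mu$ of $Q_d(\mu(\mu+d-2))=(a_d^\ast)^2d^3$. Because $\widetilde{\mathsf A}_d$ is increasing, the terminal inequality follows from it too.
\item For that single point, the pairing $(\mu+j)(\mu+d-2-j)=L+j(d-2-j)$ gives $\mathfrak R_d^\sharp(\mu)\ge\frac{\mathcal C_d}{(d-1)!\sqrt L}\varrho_dU_d(L/Q_d(L))^{d/2}$, with $\varrho_d=\frac{2\mu+d-3}{\mu+d-2}$ and $U_d\ge1$. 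Explicit constants give $\mathfrak R_d^\sharp(\mu)>21/20$ for all $d\ge7$, and rational root brackets settle $d=5,6$.
\end{itemize}
This route needs no threshold $d_0$ and no per-dimension tables.

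Your limiting-profile idea is sound: the $d\to\infty$ profile in $m/d^{3/2}$ is decreasing and exceeds one at $4/5$. But without this monotonicity, or an equally uniform explicit error analysis, it does not give a proof.
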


\begin{proof}
Write \(L_\ell=\ell(\ell+d-2)\).  With the notation of Lemma
\ref{lem:power-trial}, for \(L>0\) put
\[
 s=(L/2)^{1/3},
\qquad Q_d(L)=\frac{s^2(2s+1)(2s+d)}{2s+d-2},
\qquad Q_d(0)=0.
\]
In particular,
\begin{equation}
\frac{Q_d(L)}L
=\left(1+\frac1{2s}\right)
\left(1+\frac2{2s+d-2}\right)>1.
\label{eq:L.1.power.penalty}
\end{equation}

Lemma \ref{lem:power-trial}, together with \(\lambda_{0,1}^{N}=0\), gives
the threshold implication
\begin{equation}
Q_d(L_m)\le x^2
\quad\Longrightarrow\quad
\mathcal N_d^{<}(x)\ge\mathsf A_d(m)
\qquad(m\ge1),
\label{eq:L.1.2a}
\end{equation}
including equality at every trial threshold.

For real \(m\ge1\), set
\[
L=m(m+d-2),\qquad s=(L/2)^{1/3},
\qquad
\widetilde{\mathsf A}_d(m-1)
=\frac{(2m+d-3)\Gamma(m+d-2)}{\Gamma(d)\Gamma(m)}
\]
and
\[
\mathfrak R_d^\sharp(m):=
\frac{\mathcal C_d\widetilde{\mathsf A}_d(m-1)}
{Q_d(L)^{d/2}},
\qquad
\mathcal C_d=2^d\Gamma\left(\frac d2+1\right)^2.
\]
To prove that \(\mathfrak R_d^\sharp\) is strictly decreasing, put
\(c=d-2\) and \(h=2m+c-1\).  The composite midpoint inequality for
\(t\mapsto(m+t)^{-1}\), followed by the power series for
\(\log((1+y)/(1-y))\), gives
\begin{align*}
\frac{d}{dm}\log\widetilde{\mathsf A}_d(m-1)
&=\frac2h+\sum_{j=0}^{c-1}\frac1{m+j}\\
&<\frac2h+\log\frac{h+c}{h-c}\\
&\le \frac{2(c+1)}h+
\frac{2c^3}{3h(h^2-c^2)}=:B_d(m).
\end{align*}
On the other hand, the logarithmic elasticity from Lemma
\ref{lem:power-trial} is
\[
\chi_d(s):=\frac{d\log Q_d}{d\log L}
=\frac13\left(
2+\frac{2s}{2s+1}+\frac{2s}{2s+d}
-\frac{2s}{2s+d-2}\right)>0.
\]
Moreover,
\begin{equation}
\frac d2\frac{2m+c}{m(m+c)}\chi_d(s)>B_d(m)
\qquad(d\ge5,\ m\ge1).
\label{eq:L.1.power.monotone}
\end{equation}
After the positive denominators are cleared, Proposition
\ref{supp:power-trial-monotonicity} reduces the numerator in
\eqref{eq:L.1.power.monotone} to a polynomial with \(81\) positive
coefficients and constant term \(1856\).  Thus
\((\mathfrak R_d^\sharp)'<0\).

For the terminal threshold, let \(a=a_d^\ast\), and let \(\mu>0\)
be determined by
\[
Q_d\bigl(\mu(\mu+d-2)\bigr)=a^2d^3.
\]
Put \(L=\mu(\mu+d-2)=2s^3\), and define
\[
\frac{\mathcal C_d}{(d-1)!}
=\frac{d^{3/2}}2\mathcal D_d,
\qquad
\varrho_d:=\frac{2\mu+d-3}{\mu+d-2},
\]
\[
U_d:=\prod_{j=1}^{\lfloor(d-3)/2\rfloor}
\left(1+\frac{j(d-2-j)}L\right)>1.
\]
Pairing \(j\) with \(d-2-j\) in
\[
\widetilde{\mathsf A}_d(\mu-1)
=\frac{\varrho_d}{(d-1)!}\prod_{j=0}^{d-2}(\mu+j)
\]
and using
\((\mu+j)(\mu+d-2-j)=L+j(d-2-j)\) yields
\begin{equation}
\mathfrak R_d^\sharp(\mu)
\ge \frac{\mathcal C_d}{(d-1)!\sqrt L}\,
\varrho_d U_d\left(\frac L{Q_d(L)}\right)^{d/2}.
\label{eq:L.1.power.factor}
\end{equation}
For even \(d\), the unpaired central factor is
\(\sqrt{L+(d-2)^2/4}>\sqrt L\), so the same estimate holds.

Suppose first that \(d\ge7\), so \(a=4/5\) and
\(Q_d(L)=16d^3/25\).  Writing
\(\widehat Q_d(s):=Q_d(2s^3)\) and \(s_0=63d/100\), direct substitution
gives
\[
\frac{16d^3}{25}-\widehat Q_d(s_0)
=\frac{d^3(7904689d-54424850)}
{500000(113d-100)}>0.
\]
Consequently \(s>s_0\).  Also
\[
2s_0^3>\frac54(d-1)(3d-1)
=\frac{3d-1}{2}\left(\frac{3d-1}{2}+d-2\right),
\]
so \(\mu>(3d-1)/2\) and therefore \(\varrho_d>8/5\).  Since
\(L<Q_d(L)=a^2d^3\),
\[
\frac{\mathcal C_d}{(d-1)!\sqrt L}
>\frac{\mathcal D_d}{2a}
>\frac{25}{16}.
\]
The second inequality follows from Lemma \ref{lem:dimension-factor}.

To bound the two factors in \eqref{eq:L.1.power.penalty}, put
\[
u=\frac1{2s}<\frac{50}{63d},
\qquad
v=\frac2{2s+d-2}<\frac{100}{113d-100}<\frac{147}{1000}.
\]
For \(0<z<147/1000\),
\[
\log(1+z)\le z-\frac{z^2}{2}+\frac{z^3}{3}
\le z-\frac{451}{1000}z^2=:g(z),
\]
and \(g\) is increasing on this interval.  If \(k=d-7\), exact
simplification gives
\begin{align*}
&\frac{17}{20}-\frac d2\left\{
g\left(\frac{50}{63d}\right)
+g\left(\frac{100}{113d-100}\right)\right\}\\
&\qquad=
\frac{10842237k^3+134569552k^2+395079889k+3288466}
{79380d(113d-100)^2}>0.
\end{align*}
Furthermore,
\[
e^{17/20}
<1+\frac{17}{20}+\frac12\left(\frac{17}{20}\right)^2
+\frac{(17/20)^3}{6(1-17/80)}
=\frac{353993}{151200}<\frac{50}{21}.
\]
It follows that
\[
\left(\frac L{Q_d(L)}\right)^{d/2}>\frac{21}{50}.
\]
Substitution in \eqref{eq:L.1.power.factor} now gives
\[
\mathfrak R_d^\sharp(\mu)
>\frac{25}{16}\frac85\frac{21}{50}=\frac{21}{20}>1
\qquad(d\ge7).
\]

For \(d=5,6\), the two required roots are enclosed by the exact rational
brackets
\[
\begin{array}{c|c|c|c}
d&a&Q_d(L)&s\\ \hline
5&41/50&1681/20&3097/1000<s<31/10\\
6&4/5&3456/25&3729/1000<s<15/4.
\end{array}
\]
Substitution at the two endpoints gives
\[
\frac L{Q_d(L)}>
\begin{cases}7/10,&d=5,\\3/4,&d=6,
\end{cases}
\qquad
\frac{Q_d(L)}L>\left(\frac{57}{50}\right)^2,
\qquad
L>d(2d-2).
\]
Hence \(\varrho_d>3/2\), while
\[
\left(\frac L{Q_d(L)}\right)^{d/2}>\frac25,
\qquad
\frac{\mathcal C_d}{(d-1)!\sqrt L}
=\frac{\mathcal D_d}{2a}\sqrt{\frac{Q_d(L)}L}
>\frac{5}{4a}\frac{57}{50}.
\]
Dropping \(U_d>1\) in \eqref{eq:L.1.power.factor} therefore gives
\[
\mathfrak R_5^\sharp(\mu)>\frac{171}{164}>1,
\qquad
\mathfrak R_6^\sharp(\mu)>\frac{171}{160}>1.
\]

To pass from the thresholds to the full interval, put
\(M=\lfloor\mu\rfloor\) and recall that \(L_m=m(m+d-2)\).  For every
integer \(1\le m\le M\), monotonicity gives
\[
\mathcal C_d\mathsf A_d(m-1)
=\mathfrak R_d^\sharp(m)Q_d(L_m)^{d/2}
>Q_d(L_m)^{d/2}.
\]
Together with \eqref{eq:L.1.2a}, this proves the theorem on each half-open
interval
\(Q_d(L_{m-1})\le x^2<Q_d(L_m)\); for \(m=1\), the constant mode covers
\(0<x^2<Q_d(L_1)\).  At \(x^2=Q_d(L_m)\), the threshold implication
supplies the count associated with \(m\).

Finally, \(\widetilde{\mathsf A}_d\) is increasing and \(M>\mu-1\), so
\[
\mathcal C_d\mathsf A_d(M)
>\mathcal C_d\widetilde{\mathsf A}_d(\mu-1)
>Q_d(L)^{d/2}=(a d^{3/2})^d.
\]
The threshold implication covers
\(Q_d(L_M)\le x^2\le Q_d(L)\), including both endpoints.  At \(x=0\),
both sides of \eqref{eq:L.1.1} vanish.  This completes the proof.
\end{proof}

\subsection{A finite-level estimate}

For \(M=2,3,4\), set

\begin{equation}
I_{d,2}:=[a_d^\ast,11/10],\qquad
I_{d,3}:=[11/10,17/10],\qquad
I_{d,4}:=[17/10,5/2].
\label{eq:four-layer-bands}
\end{equation}

\begin{theorem}[Finite-level estimate]
\label{thm:four-layer}
For every integer \(d\ge5\),

\begin{equation}
\frac{\mathcal N_d^{<}(a d^{3/2})}
{W_d(a d^{3/2})}>1
\qquad
\left(a_d^\ast\le a\le\frac52\right).
\label{eq:L.fourlayer}
\end{equation}
\end{theorem}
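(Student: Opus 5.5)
We use the retained-tail bound \eqref{eq:L.retained-four}, or the plain finite-level bound \eqref{eq:L.2.5}, on each band in \eqref{eq:four-layer-bands}. The plan is to prove \(\widehat L_{d,M}(a)>1\) for \(a\in I_{d,M}\), uniformly in \(d\ge5\). Because \(\widehat L_{d,4}\ge\widehat L_{d,3}\ge\widehat L_{d,2}\), it suffices to bound \(\widehat L_{d,2}\) on \(I_{d,2}\), \(\widehat L_{d,3}\) on \(I_{d,3}\), and \(\widehat L_{d,4}\) on \(I_{d,4}\). The bands overlap at their endpoints, so their union covers \([a_d^\ast,5/2]\) with all junctions included.

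\textbf{Reduction to a two-parameter inequality.} Write \(\varepsilon=1/d\in(0,1/5]\). In the nonzero case, each summand has the form
\[
T_{d,n}(a)=r^{d-3}\bigl(r^2-\beta_d/a^2\bigr),\qquad r=r_{d,n}(a).
\]
We control \(\log r_{d,n}^{d}=d\log\bigl(1-\alpha_n a^{-2/3}\varepsilon-\tfrac12a^{-1}\varepsilon^{3/2}\bigr)\) from below using \(\log(1-z)\ge -z-z^2/(2(1-z))\). This gives \(r_{d,n}^d\ge \exp(-\alpha_na^{-2/3})\cdot(1-O(\varepsilon^{1/2}))\), with explicit rational constants. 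The factor \(r^{-3}(r^2-\beta_d/a^2)\) is bounded below by \(1-\beta_d/(a^2r^2)\). We then use \(\beta_d<1/24\) and, for small \(d\), the exact value of \(\beta_d\). Lemma \ref{lem:dimension-factor} supplies \(\mathcal D_d>\sqrt{2\pi(1+1/(2d))}\). Combined, these give a lower bound \(\widehat L_{d,M}(a)\ge F_M(a,\varepsilon)\), where \(F_M\) is an explicit function. Its limit as \(\varepsilon\to0\) is the expression from the introduction,
\[
\frac{\sqrt{2\pi}}a\Bigl(1-\frac1{24a^2}\Bigr)\sum_n\omega_{M,n}e^{-\alpha_na^{-2/3}}.
\]
Before this step we must check the nonvanishing condition \(r_{d,n}(a)>\sqrt{\beta_d}/a\) for the retained \(n\le4\) on each band. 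This follows from \(\alpha_4a^{-2/3}/d\le\alpha_4(4/5)^{-2/3}/5\) together with \(\sqrt{\beta_d}<1/\sqrt{24}\). The check is worst at \(d=5\), \(a=a_5^\ast\), and is done with the rational enclosures for \(\alpha_n\) from Appendix \ref{supp:constant-proofs}.

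\textbf{Covering the compact parameter set.} For each \(M\) we must show \(F_M(a,\varepsilon)>1\) on \(I_{d,M}\times(0,1/5]\). For the small dimensions \(5\le d\le6\), and possibly up to some \(d_0\), we verify \(\widehat L_{d,M}>1\) directly. To do this we subdivide each band into finitely many subintervals. On each subinterval we bound every \(T_{d,n}\) below by its value at the right endpoint, since \(r_{d,n}\) increases in \(a\). We bound \(\mathcal D_d/a\) below by its value at the right endpoint and then compare rationals, as was done for \(d=3,4\) in Propositions \ref{prop:d3-finite} and \ref{prop:d4-phase}. For larger \(d\), monotonicity in \(\varepsilon\) of the error factors reduces the problem to \(\varepsilon=1/d_0\) on a grid in \(a\). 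This is the compact two-parameter estimate that the introduction says is verified in exact rational arithmetic through Taylor coefficients and remainder bounds (Proposition \ref{supp:aggregate-proposition}).

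\textbf{Main obstacle.} The main difficulty is the margin near the left endpoint \(a=a_d^\ast\approx4/5\). There only two levels are kept, and the limit value is barely above \(1\). At \(a=4/5\) the limit is roughly \((\sqrt{2\pi}/0.8)(0.935)(e^{-0.89/0.862}+e^{-1.8/0.862})\approx1.05\). The \(O(\varepsilon^{1/2})\) corrections from the \(-\tfrac1{2a}d^{-3/2}\) term and the quadratic term of the logarithm eat most of this margin. This explains why \(d=5\) needs the larger start \(a_5^\ast=41/50\). I would first try to verify this region with fine subdivision and exact exponential bounds obtained from truncated series with remainders. Wherever the margin is too thin, I would fall back on retaining the third level as well, i.e.\ switching to \(\widehat L_{d,3}\).
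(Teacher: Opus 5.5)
Your reduction to showing that the finite-level lower bound exceeds one on each band \(I_{d,M}\) matches the paper. The route you propose for that inequality, however, has concrete errors and leaves the uniform-in-\(d\) step unsupported.

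\textbf{Local errors.}
\begin{enumerate}
\item Your subinterval bound runs the wrong way. Since \(r_{d,n}\), and hence \(T_{d,n}\), increases with \(a\), the value at the right endpoint is an \emph{upper} bound on the subinterval. A valid box bound is \((\mathcal D_d/a_{i+1})\sum_n\omega_{M,n}T_{d,n}(a_i)\). Near \((d,a)=(5,41/50)\), where the true value is only about \(1.0057\), this needs a mesh of order \(10^{-3}\).
\item Your activation check fails as written. The quantity \(\alpha_4(4/5)^{-2/3}/5\approx1.14>1\) gives no lower bound on \(r_{d,4}\). In fact \(T_{5,3}=T_{5,4}=0\) at both \(a=4/5\) and \(a=41/50\). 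Activation must be checked band by band, for the largest level retained at each band's left endpoint. The paper does this with \(\bar\alpha_2\tau_{4/5}^2,\bar\alpha_3\tau_{11/10}^2,\bar\alpha_4\tau_{17/10}^2<18/5\).
\item For the same reason, your fallback of retaining a third level cannot help at the critical \(d=5\) corner; see \eqref{eq:L.d5.no-uniform}.
\item Your obstacle numerics are off. In fact \(\alpha_2\approx2.589\), not \(1.8\), and the \(d\to\infty\) value at \(a=4/5\) is about \(1.19\). The thin margins come from the finite-\(d\) corrections.
\end{enumerate}

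\textbf{The main gap: uniformity in \(d\).} Your \(F_M\) discards the factor \(r_{d,n}^{-1}\ge1\), which is about \(1.22\) for \(n=1\) at \(d=7\), \(a=4/5\). It then relaxes further to \(e^{-\alpha_na^{-2/3}}(1-O(\varepsilon^{1/2}))\). Even before that relaxation, it gives about \(0.80\) at \(d=7\), \(a=4/5\), where \(L_{7,2}(4/5)\approx1.02\). At \(a=5/2\) with the retained weights, it is still about \(0.98\) at \(d=20\) and reaches one only near \(d=30\). So \(d_0\approx30\), and every smaller dimension needs the corrected fine-grid check, none of which you carry out. Citing Proposition~\ref{supp:aggregate-proposition} does not fill this in, because that proposition does not check a grid at a single dimension.

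\textbf{The missing idea: concavity.} The paper proves that the lower bound is strictly concave in \(\log a\) on each band, so only the two endpoint values need to be bounded.
\begin{itemize}
\item For \(d\ge7\), concavity holds uniformly in the continuous parameter \(d^{-1/2}\in[0,189/500]\). It comes from the decomposition \((a\partial_a)^2L_{d,M}/\mathcal D_d=\widetilde C_M+\delta R_M\) with \(\delta=1/d\). There the tilt \(e^{-1/(2a\sqrt d)}\) is absorbed into the negative profile \(\widetilde C_M\), leaving only an \(O(1/d)\) remainder. The endpoint Taylor models then cover whole parameter intervals, with no dimension-by-dimension checks.
\item For \(d=5,6\), an explicit coefficientwise majorant of \((a\partial_a)^2\widehat F_{d,M}\) plays the same role.
\end{itemize}
Monotone boxes plus a monotone tail bound might be made rigorous, but as written your proposal does not prove the theorem.
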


\begin{proof}
For \(d=5,6\), combine Proposition \ref{supp:d56-estimate} with the
retained-tail bound \eqref{eq:L.retained-four}.  If \(d\ge7\), Proposition
\ref{supp:aggregate-proposition} gives \(L_{d,M}>1\) on each interval
\(I_{d,M}\); the counting estimate then follows from \eqref{eq:L.2.5}.
\end{proof}

\begin{unnumberedremark}[The dimension-five junction]
For \(d=5\), the estimate begins at \(41/50\), not \(4/5\):
\eqref{eq:L.d5.no-uniform} shows why, and Theorem
\ref{thm:low-frequency-trial} reaches the stated junction.
\end{unnumberedremark}

\subsection{A beta-integral estimate}

The common beta-integral reduction \eqref{eq:L.5.6}--\eqref{eq:L.5.8}
gives \(\mathcal N_d^{<}/W_d\ge\mathfrak B_d\).  It remains to prove
that this explicit lower bound exceeds one on the required normalized
range.

Put
\[
c_d:=\frac1{2d^{3/2}},\qquad
m:=d-1,\qquad
\sigma_d:=d+\frac12,\qquad
\mathcal K_d:=\frac{(d-3)(d^2-\frac14)}{24d^3}.
\]

The sign of the endpoint correction rests on the following nonvanishing
condition.  For
\(d\ge5\) and \(5/2\le a\le5\),

\begin{equation}
\rho_0=1-\frac{1}{2ad^{3/2}}>\frac{49}{50},
\qquad
\sqrt q=\frac{\sqrt{\beta_d}}a<\frac1{10}.
\label{eq:L.beta.activation}
\end{equation}
Thus \(\rho_0>\sqrt q\), so
\(f_0=\rho_0^{d-3}(\rho_0^2-q)>0\).  Consequently, the correction
\(-f_0/8\) preserves the lower-bound direction.  This proves condition
\(\mathrm{(A1)}\) in
\eqref{eq:L.beta.active}.  Condition \(\mathrm{(A2)}\) is verified at
\eqref{eq:L.5.29}, before the \(f_1\)-correction is used.

\begin{theorem}[Beta-integral estimate]
\label{thm:beta-integral}
For every integer \(d\ge5\),
\begin{equation}
\mathfrak B_d(a)>1
\qquad\left(\frac52\le a\le5\right).
\label{eq:L.5.9}
\end{equation}
\end{theorem}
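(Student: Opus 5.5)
We follow the template already used for \(d=4\) in \eqref{eq:D4.16}--\eqref{eq:D4.23}.  The plan is to replace \(\mathfrak B_d\) by an explicit elementary minorant \(\widehat{\mathfrak B}_d\), prove that this minorant is strictly concave on \([5/2,5]\), and check it only at the two endpoints \(a=5/2\) and \(a=5\).

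\textbf{Step 1: the minorant.}  Starting from \eqref{eq:L.5.8}, write \(\rho_0=1-c_d/a\) and \(\vartheta_d=\mathcal K_d/(a\rho_0)^2=\mathcal K_d/(a-c_d)^2\).  Bernoulli's inequality \(\rho_0^{\sigma_d}\ge1-\sigma_dc_d/a\), combined with \(0<\vartheta_d<1\), gives
\[
\mathfrak h_d\rho_0^{\sigma_d}(1-\vartheta_d)>\mathfrak h_d\Bigl(1-\frac{\sigma_dc_d}{a}\Bigr)-\frac{\mathcal K_d}{(a-c_d)^2},
\]
by exactly the computation after \eqref{eq:D4.17}.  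For the endpoint term, \((3f_1-f_0)\) is bounded below by dropping the positive \(q\)-part \(q\rho^{d-3}\)-contributions where that is legitimate, or by keeping them explicitly.  Condition (A2) is checked here: \(\rho_1=\rho_0-\alpha_1/(a^{2/3}d)\ge1-\tfrac1{50}-0.887/(5/2)^{2/3}\cdot\tfrac15>\tfrac1{10}>\sqrt q\) for \(d\ge5\), using \eqref{eq:L.beta.activation}.

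\textbf{Step 2: the \(\mathfrak h_d\) factor.}  We need \(\mathfrak h_d>1\) with a margin for all \(d\ge5\).  The ratio \(\mathfrak h_{d+2}/\mathfrak h_d\) is rational by the Gamma functional equation, and Stirling gives \(\mathfrak h_d\to\sqrt2\cdot\sqrt{\pi/2}\cdot\)const.  More usefully, \(\mathfrak h_d=\mathcal D_d\cdot d^{3/2}\Gamma(d)/(\sqrt{2\pi}\,\Gamma(d+3/2))\), and \(d^{3/2}\Gamma(d)/\Gamma(d+3/2)\to1\) from above.  Combining this with Lemma \ref{lem:dimension-factor} (\(\mathcal D_d>\sqrt{2\pi(1+1/(2d))}\)) should give \(\mathfrak h_d>1\), with an explicit lower bound such as \(\mathfrak h_d>1+\tfrac{1}{4d}\) proved by the same ratio-monotonicity trick used for \(E_d\).

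\textbf{Step 3: concavity.}  Differentiate the minorant twice, as in \eqref{eq:D4.18}.  The \(\mathfrak h_d\)-term contributes \(-2\mathfrak h_d\sigma_dc_d/a^3\) and the \(\mathcal K_d\)-term is concave.  The endpoint term \((\mathcal D_d/8a)(3\rho_1^{d-1}-\rho_0^{d-1}+\dots)\) is expanded in \(u_0=c_d/a\) and \(v=\alpha_1/(da^{2/3})\), giving a polynomial \(\Psi^{(d)}\) generalizing \eqref{eq:D4.19}.  The problem is that \(\rho_1^{d-1}\approx e^{-\alpha_1a^{-2/3}}\) is not polynomially small uniformly in \(d\).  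We therefore handle its second derivative via bounds on \(e^{-(d-1)v}\)-type expressions, with \(dv\) bounded on \([5/2,5]\).

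\textbf{Step 4: endpoints, the main obstacle.}  We verify \(\widehat{\mathfrak B}_d(5/2)>1\) and \(\widehat{\mathfrak B}_d(5)>1\) uniformly in \(d\).  The plan is to bound \(\rho_1^{d-1}\) below by \(\exp(-(d-1)(w+w^2))\) with \(w=1-\rho_1\), and \(\rho_0^{d-1}\) above by \(1\).  Then monotonicity in \(d\) for large \(d\) reduces the problem to finitely many small \(d\) plus the limit \(d\to\infty\); those finitely many cases use rational enclosures of \(\alpha_1\), \(\pi\) and the radicals from Appendix \ref{supp:constant-proofs}.  The margin at \(a=5/2\) is expected to be tight, because the losses of order \(1/a\) are largest there.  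If concavity fails uniformly for large \(d\), the fallback is to split \([5/2,5]\) into subintervals and use Lipschitz bounds instead.
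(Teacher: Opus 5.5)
Your overall architecture (Bernoulli minorant, concavity, two endpoint checks) matches the paper's only on part of the range. Two steps as stated would fail.

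\textbf{Step~2 is false.} In fact $\mathfrak h_d<1$ for every $d$. By \eqref{eq:L.5.10}, $\mathfrak h_{d+2}/\mathfrak h_d=(d+2)^2/((d+2)^2-\tfrac14)>1$ and $\mathfrak h_d\to1$, so each parity subsequence increases to $1$. Equivalently $\mathfrak h_d=\prod_{j\ge0}\bigl(1-\tfrac{1}{4(d+2+2j)^2}\bigr)$, so $1-\mathfrak h_d\approx\tfrac{1}{8d}$; for example $\mathfrak h_5=160\sqrt2/231\approx0.980$. Your asymptotic has the wrong sign: $d^{3/2}\Gamma(d)/\Gamma(d+\tfrac32)=1-\tfrac{3}{8d}+O(d^{-2})$ tends to $1$ from below, and this outweighs the factor $1+\tfrac{1}{4d}$ coming from $\mathcal D_d$. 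So no bound like $\mathfrak h_d>1+\tfrac{1}{4d}$ exists.

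The entire margin must therefore come from the correction $\frac{\mathcal D_d}{8a}(3f_1-f_0)$. It has to absorb three losses: $1-\mathfrak h_d$, $\mathfrak h_d\sigma_dc_d/a\approx\tfrac{1}{2a\sqrt d}$, and $\vartheta_d$. This makes the small-$d$ endpoints extremely tight. At $(d,a)=(5,\tfrac52)$ the minorant exceeds $1$ by less than $10^{-3}$. Your Step~4 bound $\rho_0^{d-1}\le1$ alone costs about $9\cdot10^{-3}$ there. That crude bound, together with something like $\mathfrak h_d>247/250$, is affordable only for $d\ge10$; for $5\le d\le9$ you need sharp two-sided rational enclosures at each endpoint. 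Your ``monotonicity in $d$'' reduction is also unproved; the paper instead gives $d$-uniform bounds for $d\ge10$.

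\textbf{Step~3: concavity is not merely at risk for large $d$; it actually fails on part of $[5/2,5]$.} In \eqref{eq:L.5.33} the stabilizing term $-2\mathfrak h_d\sigma_dc_d\sim-d^{-1/2}$ tends to zero. Meanwhile $\Psi_d(5)\to3e^{-z}\bigl(2-\tfrac{22}{9}z+\tfrac{4}{9}z^2\bigr)-2\approx0.88$, with $z=\alpha_1 5^{-2/3}$. The $\mathcal K_d$- and $\beta_d$-terms contribute only about $-0.07$. Hence $a^3\widehat{\mathfrak B}_d''(5)$ tends to about $0.21>0$ as $d\to\infty$. Your Lipschitz-subdivision fallback would have to be uniform in $d$, and you give no mechanism for that.

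The missing observation is that on $3\le a\le5$ with $d\ge10$ no concavity is needed at all:
\begin{itemize}
\item There $d(1-\rho_1)=\alpha_1a^{-2/3}+\tfrac{1}{2a\sqrt d}<\tfrac{457}{936}$, so $\rho_1^{d-1}>\tfrac{49}{80}$ and $3f_1-f_0>\tfrac45$.
\item With $\mathcal D_d>\tfrac52$, the correction therefore exceeds $\tfrac{1}{4a}$ pointwise.
\item Multiplied by $a$, the three losses $\tfrac{d+4}{8(d+2)^2}$ (from \eqref{eq:L.5.11}), $\tfrac{d+1/2}{2ad^{3/2}}$ and $\vartheta_d(a)$ together stay below $\tfrac14$; see \eqref{eq:L.5.15}--\eqref{eq:L.5.16}.
\end{itemize}

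Concavity is then used only where it holds: for $5\le d\le9$ on $[3,5]$, and for all $d$ on $[\tfrac52,3]$, where $\Psi_d$ is small. Even there, the uniform bound $\Psi_d<\tfrac14$ in \eqref{eq:L.5.34} is a genuine lemma. The paper proves it through the convex majorant $V(s)$ and a completed square, and it then uses the $\mathcal K_d$-curvature to close the sign. It does not follow from the generic bounds on $e^{-(d-1)v}$-type expressions that your Step~3 gestures at.
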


\begin{proof}
\noindent\emph{The interval \(3\le a\le5\), \(d\ge10\).}

The two Gamma factors in \eqref{eq:L.5.8} satisfy

\begin{equation}
\frac{\mathfrak h_{d+2}}{\mathfrak h_d}
=
\frac{(d+2)^2}{(d+2)^2-\frac14},
\qquad
\lim_{d\to\infty}\mathfrak h_d=1.
\label{eq:L.5.10}
\end{equation}
Consequently,

\begin{equation}
\mathfrak h_d
=
\prod_{j\ge0}
\left(1-\frac1{4(d+2+2j)^2}\right)
>
1-\frac{d+4}{8(d+2)^2}.
\label{eq:L.5.11}
\end{equation}
Here we used
\(\prod(1-t_j)\ge1-\sum t_j\) and the integral estimate

\begin{equation*}
\sum_{j\ge0}\frac1{(d+2+2j)^2}
\le
\frac1{(d+2)^2}+\frac1{2(d+2)}.
\end{equation*}
The corresponding lower bound for \(\mathfrak h_d\) is equally
elementary.  Set

\begin{equation*}
\mathfrak G_d:=\mathfrak h_d\sqrt{\frac{d+\frac12}{d}}.
\end{equation*}
Using \eqref{eq:L.5.10},

\begin{equation*}
\left(\frac{\mathfrak G_{d+2}}{\mathfrak G_d}\right)^2
=
\frac{
d(d+2)^3(d+\frac52)
}{
(d+\frac12)((d+2)^2-\frac14)^2
}<1,
\end{equation*}
because the denominator minus the numerator is

\begin{equation*}
\frac{(2d+5)(8d^2+40d+45)}{32}>0.
\end{equation*}
Since \(\mathfrak G_d\to1\), each parity subsequence decreases to \(1\), so
\nopagebreak

\begin{equation}
\mathfrak h_d>\sqrt{\frac d{d+\frac12}},
\label{eq:L.5.12}
\end{equation}

Let

\begin{equation*}
\lambda:=d(1-\rho_1)
=\frac{\alpha_1}{a^{2/3}}+\frac1{2a\sqrt d}.
\end{equation*}
The elementary bounds

\begin{equation*}
\alpha_1<\frac9{10},\qquad
3^{2/3}>\frac{52}{25},\qquad
\sqrt d>3
\end{equation*}
imply

\begin{equation*}
\lambda<
\frac{45}{104}+\frac1{18}
=\frac{457}{936}<1.
\end{equation*}
Since

\begin{equation*}
(d-1)\log\left(1-\frac{\lambda}{d}\right)>-\lambda
\end{equation*}
and

\begin{equation*}
e^{457/936}
<
1+\frac{457}{936}
+\frac12\left(\frac{457}{936}\right)^2
+\frac{(457/936)^3}{6(1-457/(4\cdot936))}
<
\frac{80}{49},
\end{equation*}
we obtain

\begin{equation}
\rho_1^{d-1}>\frac{49}{80}.
\label{eq:L.5.13}
\end{equation}
The last displayed comparison follows by cross-multiplication; the margin
over the preceding Taylor majorant is greater than \(3/1000\).

Since \(q<1/216\), \(f_1\ge \rho_1^{d-1}-q\), and \(f_0\le1\),

\begin{equation}
3f_1-f_0
>
3\frac{49}{80}-1-\frac1{72}
=\frac{593}{720}>\frac45.
\label{eq:L.5.14}
\end{equation}
Using \eqref{eq:L.5.11}, Bernoulli's inequality for \(\rho_0^{d+1/2}\),
Lemma \ref{lem:dimension-factor}, and
\eqref{eq:L.5.12}--\eqref{eq:L.5.14} in \eqref{eq:L.5.8} gives

\begin{equation}
\mathfrak B_d(a)-1
>
\frac1{4a}
-\frac{d+4}{8(d+2)^2}
-\frac{d+\frac12}{2ad^{3/2}}
-\vartheta_d(a).
\label{eq:L.5.15}
\end{equation}
For \(d\ge10\) and \(3\le a\le5\), \(\rho_0>179/180\), and hence

\begin{equation}
\begin{aligned}
a\left[
\frac{d+4}{8(d+2)^2}
+\frac{d+\frac12}{2ad^{3/2}}
+\vartheta_d(a)
\right]
&<
\frac{35}{576}
+\frac7{40}
+\frac1{72}\left(\frac{180}{179}\right)^2\\
&<\frac14.
\end{aligned}
\label{eq:L.5.16}
\end{equation}
The final inequality follows by cross multiplication:
\(450\cdot2880<41\cdot32041\).  Equations \eqref{eq:L.5.15}--\eqref{eq:L.5.16}
prove \eqref{eq:L.5.9} for \(d\ge10\).

\medskip
\noindent\emph{The cases \(5\le d\le9\).}

For \(5\le d\le9\), write

\begin{equation*}
\rho_0=1-\frac{c_d}{a},\qquad
\rho_1=1-\frac{c_d}{a}-\frac{\alpha_1}{d\,a^{2/3}}.
\end{equation*}
The recurrence \eqref{eq:L.5.10} implies \(0<\mathfrak h_d<1\).  Bernoulli's
inequality and the signs of the \(q\)-terms yield

\begin{equation}
\mathfrak B_d(a)\ge\underline{\mathfrak B}_d(a),
\label{eq:L.5.17}
\end{equation}
where

\begin{equation}
\underline{\mathfrak B}_d(a):=
\mathfrak h_d-\frac{\sigma_d c_d}{a}
-\frac{\mathcal K_d}{(a-c_d)^2}
+\frac{\mathcal D_d}{8a}
\left(
3\rho_1^m-\rho_0^m-\frac{3\beta_d}{a^2}
\right).
\label{eq:L.5.18}
\end{equation}
This estimate uses
\(f_1\ge \rho_1^m-q\) and \(f_0\le \rho_0^m\).  Both evaluation points
are nonzero: \(\alpha_1<9/10\), \(a^{2/3}>2\), and
\(c_d/a<1/60\) give \(\rho_1>67/75>\sqrt q\).

Set

\begin{equation*}
u:=\frac{c_d}{a},\qquad
v:=\frac{\alpha_1}{d\,a^{2/3}}
\end{equation*}
and define

\begin{equation}
\begin{aligned}
\Phi_m(u,v):={}&(1-u-v)^{m-2}
\Bigg[
2(1-u-v)^2\\
&-m(1-u-v)
\left(4u+\frac{22}{9}v\right)
+m(m-1)\left(u+\frac23v\right)^2
\Bigg].
\end{aligned}
\label{eq:L.5.19}
\end{equation}
Direct differentiation gives

\begin{equation*}
a^3\left[\frac{(1-u-v)^m}{a}\right]''
=\Phi_m(u,v).
\end{equation*}
Thus, with
\(\Psi_m(u,v):=3\Phi_m(u,v)-\Phi_m(u,0)\),

\begin{equation}
\underline{\mathfrak B}_d''(a)
=
\frac1{a^3}
\left[-2\sigma_d c_d+\frac{\mathcal D_d}{8}\Psi_m(u,v)\right]
-\frac{6\mathcal K_d}{(a-c_d)^4}
-\frac{36\mathcal D_d\beta_d}{8a^5}.
\label{eq:L.5.20}
\end{equation}
The proof of Proposition \ref{supp:beta-finite} gives

\begin{equation}
\Psi_m(u,v)<\frac{103}{100}
\quad
(m=4,\ldots,8,\ 3\le a\le5).
\label{eq:L.5.21}
\end{equation}

\subsubsection{\texorpdfstring{The interval \(3\le a\le5\) in dimensions
\(5\le d\le9\)}{The interval 3<=a<=5 in dimensions 5<=d<=9}}
For \(5\le d\le9\), Proposition
\ref{supp:beta-finite} proves by exact rational bounds that

\begin{equation}
\underline{\mathfrak B}_d''(a)<0,\qquad
\underline{\mathfrak B}_d(3)>1,\qquad
\underline{\mathfrak B}_d(5)>1.
\label{eq:L.beta.finite}
\end{equation}
Equation \eqref{eq:L.5.17} and concavity give
\(\mathfrak B_d(a)>1\) on \(3\le a\le5\).
Together with the preceding argument for \(d\ge10\), this proves the
theorem on \(3\le a\le5\).

\medskip
\noindent\emph{The interval \(5/2\le a\le3\).}

Retain the notation in \eqref{eq:L.5.8}.  Thus

\begin{equation*}
\rho_0=1-\frac {c_d}a,\qquad
\rho_1=1-\frac {c_d}a-\frac{\alpha _1}{d\,a^{2/3}},
\qquad q=\frac{\beta_d}{a^2}.
\end{equation*}
Here \(\rho_1>\sqrt q\), which is condition \(\mathrm{(A2)}\) in
\eqref{eq:L.beta.active}; condition \(\mathrm{(A1)}\) was proved above.
Indeed,

\begin{equation}
\rho_1>1-\frac1{50}-\frac1{10}
=\frac{22}{25}
>\frac1{12}
>\frac{\sqrt{\beta_d}}a .
\label{eq:L.5.29}
\end{equation}

Define

\begin{equation}
\widehat{\mathfrak B}_d(a):=
\mathfrak h_d-\frac{\mathfrak h_d\sigma_d c_d}{a}
-\frac{\mathcal K_d}{(a-c_d)^2}
+\frac{\mathcal D_d}{8a}
\left(3\rho_1^m-\rho_0^m-3q\right).
\label{eq:L.5.30}
\end{equation}
Then

\begin{equation}
\mathfrak B_d(a)>\widehat{\mathfrak B}_d(a).
\label{eq:L.5.31}
\end{equation}

To prove this, set \(u=\sigma_d c_d/a\) and
\(\theta=\mathcal K_d/(a-c_d)^2\).  Here \(0<\theta<1/96\).
Bernoulli's inequality and \(0<\mathfrak h_d<1\) give

\begin{equation*}
\begin{aligned}
&\mathfrak h_d\rho_0^{\sigma_d}(1-\theta)
-\left(\mathfrak h_d-\mathfrak h_du-\theta\right)\\
&\qquad\ge
\theta(1-\mathfrak h_d+\mathfrak h_du)>0.
\end{aligned}
\end{equation*}
The discarded \(q\)-part is also positive:

\begin{equation*}
q\left(3-3\rho_1^{m-2}+\rho_0^{m-2}\right)>0.
\end{equation*}
This proves \eqref{eq:L.5.31}.

Let \(\Phi_m\) and \(\Psi_m\) be the polynomials introduced in
\eqref{eq:L.5.19}--\eqref{eq:L.5.20}, and set

\begin{equation}
\Psi_d(a):=\Psi_m(u(a),v(a))
=3\Phi_m(u(a),v(a))-\Phi_m(u(a),0),
\qquad
u(a)=\frac {c_d}a,\quad
v(a)=\frac{\alpha_1}{d\,a^{2/3}}.
\label{eq:L.5.32}
\end{equation}
Direct differentiation, exactly as in \eqref{eq:L.5.20}, gives

\begin{equation}
\widehat{\mathfrak B}_d''(a)=
\frac1{a^3}
\left[-2\mathfrak h_d\sigma_d c_d+\frac{\mathcal D_d}{8}\Psi_d(a)\right]
-\frac{6\mathcal K_d}{(a-c_d)^4}
-\frac{36\mathcal D_d\beta_d}{8a^5}.
\label{eq:L.5.33}
\end{equation}
We need the following uniform elementary estimate:

\begin{equation}
\Psi_d(a)<\frac14
\qquad
\left(d\ge5,\ \frac52\le a\le3\right).
\label{eq:L.5.34}
\end{equation}
Proposition \ref{supp:beta-V} proves \eqref{eq:L.5.34}.

The bounds needed below,
\(1/2<\mathfrak h_d<1\) and \(5/2<\mathcal D_d<8/3\), follow from
\eqref{eq:L.5.11} and Lemma \ref{lem:dimension-factor}.

From \eqref{eq:L.5.33}--\eqref{eq:L.5.34},

\begin{equation*}
\begin{aligned}
a^3\widehat{\mathfrak B}_d''(a)
&<
-2\mathfrak h_d\sigma_d c_d+\frac1{12}
-\frac{6\mathcal K_da^3}{(a-c_d)^4}\\
&<
-2\mathfrak h_d\sigma_d c_d+\frac1{12}-2\mathcal K_d.
\end{aligned}
\end{equation*}
But

\begin{equation*}
\frac1{12}-2\mathcal K_d
=\frac{12d^2+d-3}{48d^3}
<\frac1{2\sqrt d}
<2\mathfrak h_d\sigma_d c_d.
\end{equation*}
Consequently

\begin{equation}
\widehat{\mathfrak B}_d''(a)<0
\qquad\left(\frac52\le a\le3\right).
\label{eq:L.5.43}
\end{equation}

\subsubsection{\texorpdfstring{The remaining interval \(5/2\le a\le3\)}{The remaining interval 5/2<=a<=3}}
For \(5\le d\le9\), Proposition
\ref{supp:beta-endpoints} proves

\begin{equation}
\widehat{\mathfrak B}_d(5/2)>1,\qquad
\widehat{\mathfrak B}_d(3)>1.
\label{eq:L.beta.endpoints}
\end{equation}
At \(d=5,a=5/2\), \eqref{eq:L.5.47} gives a margin exceeding
\(3/5000\).  Concavity proves the required bound for
\(5\le d\le9\).  For \(d\ge10\),
parity monotonicity of
\(\mathfrak h_d\), starting at \(d=10,11\), gives

\begin{equation}
\mathfrak h_d>\frac{247}{250}.
\label{eq:L.5.48}
\end{equation}
Indeed, direct evaluation of the two parity bases from the closed Gamma
formulas, using \(707/500<\sqrt2\) and \(\pi<355/113\), gives

\begin{equation*}
\mathfrak h_{10}>\frac{247}{250}+\frac1{2000},
\qquad
\mathfrak h_{11}>\frac{247}{250}+\frac3{2000}.
\end{equation*}
The recurrence \eqref{eq:L.5.10} then proves \eqref{eq:L.5.48} in both
parity classes.

At \(a=5/2\), put

\begin{equation*}
\lambda=d(1-\rho_1)
<L:=\frac{887}{1842}+\frac5{79}<1.
\end{equation*}
Since

\begin{equation*}
(d-1)\log\left(1-\frac{\lambda}{d}\right)>-\lambda
\end{equation*}
and

\begin{equation*}
1+L+\frac{L^2}{2}
+\frac{L^3}{6(1-L/4)}<\frac74,
\end{equation*}
we have \(\rho_1^{d-1}>4/7\).  Thus

\begin{equation*}
3\rho_1^{d-1}-\rho_0^{d-1}-3q>\frac{243}{350}.
\end{equation*}
Moreover

\begin{equation*}
\frac{\sigma_d c_d}{5/2}<\frac{21}{316},\qquad
\rho_0>\frac{157}{158},\qquad
\frac{\mathcal K_d}{(5/2-c_d)^2}
<\frac{158^2}{150\cdot157^2}.
\end{equation*}
Therefore

\begin{equation}
\widehat{\mathfrak B}_d(5/2)-1>
\frac{243}{2800}
-\left(
\frac3{250}+\frac{21}{316}
+\frac{158^2}{150\cdot157^2}
\right)
>\frac3{2000}.
\label{eq:L.5.49}
\end{equation}
At \(a=3\), the same argument with

\begin{equation*}
L_3=\frac{887}{2080}+\frac{25}{474}
\end{equation*}
uses the exact Taylor comparison

\begin{equation*}
\frac53-
\left(
1+L_3+\frac{L_3^2}{2}
+\frac{L_3^3}{6(1-L_3/4)}
\right)
>\frac1{20}.
\end{equation*}
Thus \(\rho_1^{d-1}>3/5\), and hence
\nopagebreak

\begin{equation*}
3\rho_1^{d-1}-\rho_0^{d-1}-3q>\frac{283}{360}.
\end{equation*}
Using

\begin{equation*}
\frac{\sigma_d c_d}{3}<\frac{35}{632},\qquad
\rho_0>\frac{943}{948},\qquad
\frac{\mathcal K_d}{(3-c_d)^2}
<\frac{948^2}{216\cdot943^2},
\end{equation*}
we obtain

\begin{equation}
\widehat{\mathfrak B}_d(3)-1>
\frac{283}{3456}
-\left(
\frac3{250}+\frac{35}{632}
+\frac{948^2}{216\cdot943^2}
\right)
>\frac1{102}.
\label{eq:L.5.50}
\end{equation}
The final inequality follows by cross-multiplication.  Combining
\eqref{eq:L.5.46}--\eqref{eq:L.5.50} shows that both endpoints of the
concave function \(\widehat{\mathfrak B}_d\) exceed \(1\).
Equations \eqref{eq:L.5.31} and \eqref{eq:L.5.43} then give
\(\mathfrak B_d(a)>1\) on \(5/2\le a\le3\), completing the proof.
\end{proof}

\subsection{\texorpdfstring{Completion for \(d\ge5\)}{Completion for d>=5}}

\begin{proposition}[The unit ball in dimensions \(d\ge5\)]
\label{prop:dge5-unit}
For every integer \(d\ge5\) and every \(x\ge0\),

\begin{equation*}
{\mathcal N_d^{<}(x)\ge W_d(x).}
\end{equation*}
\end{proposition}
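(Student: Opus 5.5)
The plan is to assemble the four overlapping ranges of Table~\ref{tab:frequency-ranges}, written in the normalized variable \(a=xd^{-3/2}\), so that every \(x\ge0\) falls into at least one of them. Fix \(d\ge5\). The case \(x=0\) is trivial, since both sides vanish. For \(0\le a\le a_d^\ast\), that is \(0\le x\le a_d^\ast d^{3/2}\), Theorem~\ref{thm:low-frequency-trial} gives \(\mathcal N_d^{<}(x)\ge W_d(x)\) directly.

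For \(a_d^\ast\le a\le 5/2\), Theorem~\ref{thm:four-layer} gives the strict inequality \(\mathcal N_d^{<}(ad^{3/2})/W_d(ad^{3/2})>1\). Because \(W_d>0\) for \(x>0\), this yields \(\mathcal N_d^{<}(x)>W_d(x)\) on that range.

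For \(5/2\le a\le5\), I would combine two results. The first is the common beta-integral reduction \eqref{eq:L.5.6}, \(\mathcal N_d^{<}/W_d\ge\mathfrak B_d(a)\). This reduction is valid once the nonvanishing conditions (A1)--(A2) hold, and these were checked at \eqref{eq:L.beta.activation} and \eqref{eq:L.5.29}. The second is Theorem~\ref{thm:beta-integral}, which gives \(\mathfrak B_d(a)>1\) on that interval. Together they give strict domination there.

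For \(a\ge5\), that is \(x\ge5d^{3/2}\), Proposition~\ref{prop:scalar-tail} gives \(\mathcal N_d^{<}(x)>W_d(x)\).

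The only point needing care is that the junctions \(a=a_d^\ast,\,5/2,\,5\) are closed endpoints of adjacent ranges. Each junction therefore lies in at least one range, and the union of the four closed intervals is \([0,\infty)\).

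I do not expect any real obstacle: all the analytic work sits in the cited results. I would only double-check that the beta range's hypotheses (A1)--(A2) were established for all \(d\ge5\), including \(5\le d\le9\) on \(3\le a\le5\), where \eqref{eq:L.5.17} uses \(\rho_1>\sqrt q\). I would also confirm that the finite-level theorem's lower endpoint \(a_5^\ast=41/50\) in dimension five matches the variational theorem's upper endpoint.
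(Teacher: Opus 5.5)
Your proposal is correct and follows the paper's proof. Both cover \([0,\infty)\) in the normalized variable \(a=xd^{-3/2}\) by the four closed ranges \([0,a_d^\ast]\), \([a_d^\ast,5/2]\), \([5/2,5]\), \([5,\infty)\), handled respectively by Theorem~\ref{thm:low-frequency-trial}, Theorem~\ref{thm:four-layer}, Theorem~\ref{thm:beta-integral} combined with the reduction \eqref{eq:L.5.6}, and Proposition~\ref{prop:scalar-tail}. The points you plan to double-check are all established in the paper: (A1) at \eqref{eq:L.beta.activation}, (A2) at \eqref{eq:L.5.29} and after \eqref{eq:L.5.18}, and the \(d=5\) junction at \(41/50\) in Theorem~\ref{thm:low-frequency-trial}.
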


\begin{proof}
Write \(x=ad^{3/2}\).  Theorems \ref{thm:low-frequency-trial},
\ref{thm:four-layer}, and \ref{thm:beta-integral}, together with Proposition
\ref{prop:scalar-tail}, cover, respectively,
\(0\le a\le a_d^\ast\), \(a_d^\ast\le a\le5/2\),
\(5/2\le a\le5\), and \(a\ge5\).  These four closed intervals cover
\([0,\infty)\).
\end{proof}

\section{Scaling to arbitrary radii}
\label{sec:scaling}

Equations \eqref{eq:D2.1.2}, \eqref{eq:D3.IV.10},
\eqref{eq:D4.25}, and Proposition \ref{prop:dge5-unit} yield
\[
\mathcal N_d^{<}(x)\ge W_d(x)
\qquad(d\ge2,\ x\ge0).
\]

Neumann scaling gives

\begin{equation*}
\mu_j^N(B_R^d)=R^{-2}\mu_j^N(B_1^d).
\end{equation*}
For \(x=R\sqrt E\),

\begin{equation*}
\begin{aligned}
N_{B_R^d}^{<}(E)
&=
\mathcal N_d^{<}(R\sqrt E)\\
&\ge
W_d(R\sqrt E)
=
\frac{(R\sqrt E)^d}{2^d\Gamma(\frac d2+1)^2}\\
&=
\frac{\omega_d}{(2\pi)^d}|B_R^d|E^{d/2}.
\end{aligned}
\end{equation*}
This proves Theorem \ref{thm:main}.

\subsection{Consequences}

\begin{corollary}[Eigenvalue form]
\label{cor:eigenvalue-form}
For every \(n\ge1\),

\begin{equation}
\mu_{n+1}^N(B_R^d)
\le
\frac{4\pi^2 n^{2/d}}
     {(\omega_d|B_R^d|)^{2/d}}.
\label{eq:eigenvalue-form}
\end{equation}
\end{corollary}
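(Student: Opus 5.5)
The eigenvalue form follows from Theorem \ref{thm:main} by a contrapositive argument: if an eigenvalue exceeded the stated bound, the counting function just below it would be too small. Fix \(n\ge1\) and set
\[
E_n:=\frac{4\pi^2 n^{2/d}}{(\omega_d|B_R^d|)^{2/d}},
\]
chosen so that the Weyl term at \(E_n\) is exactly \(n\):
\[
\frac{\omega_d}{(2\pi)^d}|B_R^d|E_n^{d/2}
=\frac{\omega_d|B_R^d|}{(2\pi)^d}\cdot\frac{(2\pi)^d n}{\omega_d|B_R^d|}=n.
\]
This is a one-line check from the definition.

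Suppose, for contradiction, that \(\mu_{n+1}^N(B_R^d)>E_n\). Choose any \(E\) with \(E_n<E<\mu_{n+1}^N(B_R^d)\). Since the eigenvalues are listed increasingly with multiplicity, at most \(n\) of them lie strictly below \(E\), so \(N_{B_R^d}^{<}(E)\le n\). On the other hand, Theorem \ref{thm:main} and the strict monotonicity of \(E\mapsto E^{d/2}\) give
\[
N_{B_R^d}^{<}(E)\ge\frac{\omega_d}{(2\pi)^d}|B_R^d|E^{d/2}>n.
\]
These two bounds contradict each other, so \(\mu_{n+1}^N(B_R^d)\le E_n\).

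One could instead apply the theorem directly at \(E=E_n\). That only yields \(N^{<}(E_n)\ge n\), which gives \(\mu_n^N<E_n\) rather than the needed statement about \(\mu_{n+1}^N\). The detour through \(E\) slightly above \(E_n\) avoids this off-by-one issue and is the only point needing care. The strict count in the theorem is enough here, and the non-strict form is not needed.
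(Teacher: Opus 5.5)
Your proof is correct and is essentially the paper's argument in contrapositive form. The paper applies Theorem \ref{thm:main} at every \(E>E_n\), uses integrality to get \(N_{B_R^d}^{<}(E)\ge n+1\) and hence \(\mu_{n+1}^N(B_R^d)<E\), and then lets \(E\downarrow E_n\); your contradiction with a single \(E\in(E_n,\mu_{n+1}^N(B_R^d))\) packages the same step.
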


\begin{proof}
Let \(E_n\) denote the right-hand side of \eqref{eq:eigenvalue-form}.  For
every \(E>E_n\), Theorem \ref{thm:main} gives a lower bound strictly larger
than \(n\); integrality therefore yields
\(N_{B_R^d}^{<}(E)\ge n+1\).  Hence \(\mu_{n+1}^N(B_R^d)<E\), and letting
\(E\downarrow E_n\) proves the claim.
\end{proof}

\begin{corollary}[Domains that tile a ball]
\label{cor:ball-tiles}
Suppose that a bounded Lipschitz domain \(\Omega\) tiles a ball
\(B_R^d\) by finitely many congruent copies.  Then
\[
N_\Omega^{<}(E)
\ge
\frac{\omega_d}{(2\pi)^d}|\Omega|E^{d/2}
\qquad(E\ge0).
\]
In particular, the conclusion holds for half-balls and orthant sectors
of a ball.
\end{corollary}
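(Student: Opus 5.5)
The plan is Neumann bracketing combined with Theorem \ref{thm:main}.

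Suppose \(\overline{B_R^d}=\bigcup_{i=1}^k\overline{\Omega_i}\), where the \(\Omega_i\) are pairwise disjoint and each \(\Omega_i\) is congruent to \(\Omega\). Then \(|B_R^d|=k|\Omega|\). We may assume the interfaces \(\Sigma:=B_R^d\setminus\bigcup_i\Omega_i\) have Lebesgue measure zero; this holds for Lipschitz tiles.

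\textbf{Step 1: Neumann bracketing.} Restriction maps \(H^1(B_R^d)\) injectively into \(\bigoplus_i H^1(\Omega_i)\), and it preserves both the \(L^2\) norm and the Dirichlet integral, because \(\Sigma\) is null. The direct sum carries the form of the decoupled operator \(\bigoplus_i(-\Delta^N_{\Omega_i})\). Min--max over the larger form domain lowers every eigenvalue: \(\mu_j(\bigoplus_i)\le\mu_j^N(B_R^d)\) for every \(j\). Hence
\[
N^{<}_{\bigoplus_i}(E)\ge N^{<}_{B_R^d}(E)\qquad(E\ge0),
\]
since each eigenvalue of the ball that lies below \(E\) is matched by an index of the direct sum lying below \(E\). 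The operator is an orthogonal direct sum, and congruence preserves Neumann spectra. Therefore \(N^{<}_{\bigoplus_i}(E)=k\,N^{<}_\Omega(E)\).

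\textbf{Step 2: apply the ball theorem.} Combining Step 1 with Theorem \ref{thm:main} gives
\[
k\,N_\Omega^{<}(E)\ge N_{B_R^d}^{<}(E)\ge\frac{\omega_d}{(2\pi)^d}k|\Omega|E^{d/2}.
\]
Dividing by \(k\) proves the corollary.

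\textbf{Step 3: the examples.} A half-ball tiles \(B_R^d\) with \(k=2\) copies, related by a reflection. An orthant sector \(B_R^d\cap\{x_1>0,\dots,x_d>0\}\) tiles \(B_R^d\) with \(2^d\) copies, related by coordinate sign changes. Both are Lipschitz.

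\textbf{Main obstacle.} The only delicate point is justifying the form inclusion in Step 1. We must check that \(H^1(B_R^d)\) functions restrict to \(H^1(\Omega_i)\) with the same energy, which needs the interfaces to be null sets. We must also check that each \(-\Delta^N_{\Omega_i}\) has discrete spectrum, which follows from the Lipschitz regularity of \(\Omega\) (compactness of \(H^1(\Omega)\hookrightarrow L^2\)). Everything else is routine counting.
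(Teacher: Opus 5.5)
Your proof is correct and follows the paper's route: Neumann bracketing gives \(N_{B_R^d}^{<}(E)\le k\,N_\Omega^{<}(E)\), and Theorem~\ref{thm:main} together with \(|B_R^d|=k|\Omega|\) finishes the argument. The paper cites this transfer principle from FLPS without proof; you prove the form-domain inclusion directly and also check that half-balls and orthant sectors are Lipschitz tiles, which the paper leaves implicit.
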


\begin{proof}
For such a tiling by \(m\) copies, Neumann bracketing gives
\[
N_{B_R^d}^{<}(E)\le mN_\Omega^{<}(E).
\]
Theorem \ref{thm:main} and \(|B_R^d|=m|\Omega|\) now give the transfer
principle \cite[Theorem 1.8]{FLPS}.
\end{proof}

\appendix
\renewcommand{\thesubsection}{\thesection.\arabic{subsection}}
\phantomsection
\section*{Appendices}
\addcontentsline{toc}{section}{Appendices}

The following sections collect the common constant bounds,
dimension-specific finite-range calculations, and uniform tail estimates
used in the proof.

\section{Constants and auxiliary estimates}

Table \ref{tab:appendix-map} indicates which appendix estimates are used in
each frequency range.

\begin{table}[ht]
\centering
\small
\setlength{\tabcolsep}{3pt}
\begin{tabular}{@{}>{\raggedright\arraybackslash}p{0.22\textwidth}>{\raggedright\arraybackslash}p{0.20\textwidth}>{\raggedright\arraybackslash}p{0.27\textwidth}>{\raggedright\arraybackslash}p{0.22\textwidth}@{}}
\toprule
Result in the body & Appendix result & Argument & Range \\ \midrule
Disk Ritz bound & Appendix \ref{supp:disk-ritz}
& determinant calculation & finite disk range \\
Disk phase estimate & Appendix \ref{supp:disk-analytic-tail}
& shifted floor sum and inverse action & \(x>20/3\) \\
Three-dimensional intermediate range & Proposition \ref{supp:d3-phase}
& convexity and rational endpoints & \(5\le x\le14\) \\
Four-dimensional intermediate estimates & Proposition \ref{supp:d4-phase}
& finite-level polynomials and beta-integral bounds & \(38/5\le x\le42\) \\
Finite-level estimate & Propositions \ref{supp:d56-estimate}
and \ref{supp:aggregate-proposition} & retained-node quadrature for
\(d=5,6\), uniform finite-level bounds for \(d\ge7\), and logarithmic concavity
& \(a_d^\ast\le a\le5/2\) \\
Beta-integral bounds & Propositions \ref{supp:beta-finite},
\ref{supp:beta-V}, and \ref{supp:beta-endpoints}
& monotonicity, complete squares, and endpoint estimates & \(5/2\le a\le5\) \\
High-frequency estimate & Proposition \ref{supp:uniform-tail}
& induction in the two parity classes & \(x\ge5d^{3/2}\), \(d\ge5\) \\
\bottomrule
\end{tabular}
\caption{Where the appendix estimates are used.}
\label{tab:appendix-map}
\end{table}
The only nonstandard special-function input is Proposition
\ref{prop:flps-phase}.  Proposition \ref{prop:strict-robin} transfers it to
the Neumann eigenvalues.  All other finite assertions are proved below.

Subsection \ref{supp:constant-proofs} proves the common enclosures; sharper
local bounds appear where used.

\subsection{Proof of the shifted floor-sum lemma}

\label{supp:retained-tail-proof}

We prove Lemma \ref{lem:retained-floor}.

\begin{proof}

First consider integers \(i<j\) and an integer \(n\) such that

\begin{equation}
n+\frac14>f(i)\ge\cdots\ge f(j-1)
\ge n-\frac34>f(j).
\label{eq:D3.28}
\end{equation}
The \(1/2\)-Lipschitz property gives

\begin{equation}
f(j)\ge f(j-1)-\frac12\ge n-\frac54.
\label{eq:D3.29}
\end{equation}
Therefore

\begin{equation}
\left\lfloor f(r)+\frac34\right\rfloor=n
\quad(i\le r\le j-1),
\qquad
\left\lfloor f(j)+\frac34\right\rfloor=n-1.
\label{eq:D3.30}
\end{equation}
Convexity places the graph below the chord joining its endpoints, so

\begin{equation}
\int_i^j f(s)\,ds
\le
\frac{j-i}{2}\bigl(f(i)+f(j)\bigr)
\le
(j-i)\left(n-\frac14\right).
\label{eq:D3.31}
\end{equation}
Together with \eqref{eq:D3.30}, this gives

\begin{equation}
\begin{aligned}
&\frac12\left\lfloor f(i)+\frac34\right\rfloor
+\sum_{r=i+1}^{j-1}
\left\lfloor f(r)+\frac34\right\rfloor
+\frac12\left\lfloor f(j)+\frac34\right\rfloor\\
&\hspace{20mm}\ge
\int_i^j f(s)\,ds+\frac{j-i}{4}-\frac12.
\end{aligned}
\label{eq:D3.32}
\end{equation}

Put

\begin{equation}
K=\left\lfloor f(0)+\frac34\right\rfloor\ge1.
\label{eq:D3.33}
\end{equation}
For \(0\le k\le K-1\), define

\begin{equation}
\mathcal S_k
=
\left\{
r\in\{0,\ldots,\lfloor b\rfloor\}:
f(r)\ge k+\frac14
\right\},
\qquad
M_k=1+\max\mathcal S_k,
\label{eq:D3.34}
\end{equation}
and put \(M_K=0\).  Every \(\mathcal S_k\) is nonempty because

\begin{equation}
k+\frac14\le K-\frac34\le f(0).
\label{eq:D3.35}
\end{equation}
If \(\eta=f^{-1}(1/4)\), strict decrease gives

\begin{equation}
f(r)\ge\frac14
\quad\Longleftrightarrow\quad
r\le\eta
\label{eq:D3.36}
\end{equation}
for every integer \(r\in[0,b]\).  Hence

\begin{equation}
M_0=\lfloor\eta\rfloor+1=M.
\label{eq:D3.37}
\end{equation}
The hypothesis \(M\le b\) and integrality of \(M\) imply
\(M\le\lfloor b\rfloor\).  Since
\(\mathcal S_k\subseteq\mathcal S_0\),

\begin{equation}
1\le M_k\le M_0\le\lfloor b\rfloor.
\label{eq:D3.38}
\end{equation}
Maximality in \eqref{eq:D3.34} gives

\begin{equation}
f(M_k-1)\ge k+\frac14,
\qquad
f(M_k)<k+\frac14.
\label{eq:D3.39}
\end{equation}
The sets \(\mathcal S_k\) are nested, so \(M_{k+1}\le M_k\).  Equality is
impossible for \(0\le k\le K-2\): if
\(M_{k+1}=M_k=r\), then \eqref{eq:D3.39} would imply

\begin{equation}
f(r-1)\ge k+\frac54,
\qquad
f(r)<k+\frac14,
\label{eq:D3.40}
\end{equation}
a drop greater than one over an interval of length one, contradicting the
\(1/2\)-Lipschitz bound.  Also \(0\in\mathcal S_{K-1}\), so
\(M_{K-1}\ge1>M_K\).  Thus

\begin{equation}
M_0>M_1>\cdots>M_K=0.
\label{eq:D3.41}
\end{equation}
For \(0\le k\le K-2\), take

\begin{equation}
i=M_{k+1},\qquad j=M_k,\qquad n=k+1
\label{eq:D3.42}
\end{equation}
in \eqref{eq:D3.28}.  Monotonicity and \eqref{eq:D3.39} verify every inequality in \eqref{eq:D3.28}.  For the
terminal block take

\begin{equation}
i=M_K=0,\qquad j=M_{K-1},\qquad n=K.
\label{eq:D3.43}
\end{equation}
The definition of \(K\) gives \(f(0)<K+1/4\), and \eqref{eq:D3.39} gives the other
endpoint inequalities.  Therefore \eqref{eq:D3.32} applies to every block
\([M_{k+1},M_k]\).

Summing \eqref{eq:D3.32}, the half-weights at internal endpoints join into full
weights, while both the integrals and the block lengths telescope.  By
\eqref{eq:D3.39}, monotonicity, and nonnegativity,

\begin{equation}
\left\lfloor f(r)+\frac34\right\rfloor=0
\qquad(M_0\le r\le\lfloor b\rfloor).
\label{eq:D3.44}
\end{equation}
The summed inequality is

\begin{equation}
\frac K2+
\sum_{r=1}^{\lfloor b\rfloor}
\left\lfloor f(r)+\frac34\right\rfloor
\ge
\int_0^{M_0}f(s)\,ds+\frac{M_0}{4}-\frac K2.
\label{eq:D3.45}
\end{equation}
Adding \(K/2\) to both sides and using
\(K=\lfloor f(0)+3/4\rfloor\) and \(M_0=M\) gives \eqref{eq:D3.27}.
\end{proof}

\subsection{Rational enclosures for constants}

\label{supp:constant-proofs}

The exact-arithmetic estimates use the componentwise enclosure

\begin{equation}
\begin{aligned}
10^{-6}(885341,2588753,3830649,4894852)
&<(\alpha_1,\alpha_2,\alpha_3,\alpha_4)\\
&<10^{-6}(885342,2588755,3830650,4894854).
\end{aligned}
\label{eq:Const.alpha-fine}
\end{equation}
For calculations where shorter endpoints are preferable, we use its outward
rounding

\begin{equation}
\begin{array}{c|cc}
n&\alpha_n^-&\alpha_n^+\\ \hline
1&8853/10^4&8854/10^4\\
2&25887/10^4&25888/10^4\\
3&38305/10^4&38307/10^4\\
4&48947/10^4&48949/10^4 .
\end{array}
\label{eq:Const.alpha-coarse}
\end{equation}
We also need the sharper enclosure

\begin{equation}
\frac{3141592}{10^6}<\pi<\frac{3141593}{10^6}.
\label{eq:L.4.13a}
\end{equation}
Combining these estimates, and verifying radicals by squaring, gives

\begin{equation}
3<\frac{25}{8}<\frac{333}{106}
<\frac{3141592}{10^6}<\pi
<\frac{355}{113}<\frac{3141593}{10^6}<\frac{22}{7},
\label{eq:Const.pi}
\end{equation}
together with

\begin{equation}
\frac{707}{500}<\sqrt2<\frac32,\quad
\sqrt3<\frac74,\quad
\frac{682}{305}<\sqrt5<\frac{223607}{100000},\quad
\frac{120}{49}<\sqrt6<\frac{244949}{100000},\quad
\sqrt7>\frac{21}{8}.
\label{eq:Const.rad}
\end{equation}
The outer classical bounds in \eqref{eq:Const.pi}, namely
\(25/8<\pi<22/7\), are proved here.  The identity

\begin{equation}
\frac{22}{7}-\pi
=
\int_0^1\frac{t^4(1-t)^4}{1+t^2}\,dt>0
\label{eq:D3.19}
\end{equation}
proves the upper bound.

For the lower bound, put

\begin{equation}
a=\arctan\frac15,
\qquad
b=\arctan\frac1{239}.
\label{eq:D3.20}
\end{equation}
Two applications of the tangent double-angle formula give

\begin{equation}
\tan(4a)=\frac{120}{119}>0.
\label{eq:D3.21}
\end{equation}
Since \(0<a<\pi/4\), one has \(0<4a<\pi\); hence \eqref{eq:D3.21} places \(4a\)
in \((0,\pi/2)\).  The subtraction formula then gives

\begin{equation}
\tan(4a-b)
=
\frac{120/119-1/239}{1+(120/119)(1/239)}
=1.
\label{eq:D3.22}
\end{equation}
As \(0<4a-b<\pi/2\), this proves Machin's identity

\begin{equation}
\frac\pi4=4\arctan\frac15-\arctan\frac1{239}.
\label{eq:D3.23}
\end{equation}
The alternating arctangent series, with its usual one-term remainder
sign, now yields

\begin{equation}
\frac\pi4
>
4\left(\frac15-\frac1{3\cdot5^3}\right)-\frac1{239}
=
\frac{70369}{89625}
>
\frac{25}{32}.
\label{eq:D3.24}
\end{equation}
The difference in the last comparison is
\(11183/2868000>0\).  This proves \(\pi>25/8\).

The sharper bounds \eqref{eq:L.4.13a} use the same Machin
identity.  With \(y=1/5\) and \(v=1/239\), alternating partial sums give

\begin{equation*}
\begin{aligned}
\pi&>
4\left[
4\left(y-\frac{y^3}{3}+\frac{y^5}{5}-\frac{y^7}{7}
+\frac{y^9}{9}-\frac{y^{11}}{11}\right)
-\left(v-\frac{v^3}{3}+\frac{v^5}{5}\right)
\right],\\
\pi&<
4\left[
4\left(y-\frac{y^3}{3}+\frac{y^5}{5}-\frac{y^7}{7}
+\frac{y^9}{9}\right)
-\left(v-\frac{v^3}{3}\right)
\right].
\end{aligned}
\end{equation*}
Direct cross-multiplication shows that the first rational number exceeds
\(3141592/10^6\), while the second is below \(3141593/10^6\).  The same
upper partial sum also lies below \(355/113\).

Thus all later uses of
\(314159/10^5<\pi<355/113\) follow from these same Machin bounds.
Formula
\(\alpha_n^3=9(4n-3)^2\pi^2/128\) and cubing prove
\eqref{eq:Const.alpha-fine}; \eqref{eq:Const.alpha-coarse} is its outward
rounding.

All radical bounds in \eqref{eq:Const.rad} follow by squaring
positive rational endpoints.  The remaining comparisons in
\eqref{eq:Const.pi} are direct cross-multiplications.

\section{Disk Ritz determinants}
\label{supp:disk-ritz}

\begin{proof}[Proof of Proposition \ref{prop:disk-ritz}]

\begin{equation}
\det(A_0-\lambda B_0)
=-\frac{\lambda(\lambda^2-144\lambda+1920)}{17280}.
\label{eq:D2.3.5}
\end{equation}
Since \(13/2<\sqrt{51}<9\), the ordered generalized eigenvalues are
therefore

\begin{equation*}
0,\qquad 72-8\sqrt{51},\qquad72+8\sqrt{51}.
\end{equation*}
Thus min--max gives

\begin{equation}
j_{1,1}^2
\leq72-8\sqrt{51}<20.
\label{eq:D2.3.6}
\end{equation}

Next take \(m=1\) and
\(V_1=\operatorname{span}\{r,r^3,r^5\}\).  Exact evaluation at
\(\lambda=32\) gives

\begin{equation}
\det(A_1-32B_1)=\frac{29}{675}>0,
\qquad
\det B_1=\frac1{345600}>0.
\label{eq:D2.3.7}
\end{equation}
Let \(\theta_1\leq\theta_2\leq\theta_3\) be these Ritz values.  The trial
\(r\in V_1\) has quotient \(4\), so
\(\theta_1\leq4<32\), while

\begin{equation*}
\det(A_1-32B_1)
=\det B_1\prod_{i=1}^3(\theta_i-32)>0.
\end{equation*}
Since \(\theta_1-32<0\) and the product is positive, exactly two factors are
negative.  Thus \(\theta_2<32\), and min--max yields

\begin{equation}
(j'_{1,2})^2<32.
\label{eq:D2.3.8}
\end{equation}

Finally, for \(m=4\) and
\(V_4=\operatorname{span}\{r^4,r^6\}\),

\begin{equation}
\det(A_4-\lambda B_4)
=\frac{\lambda^2-264\lambda+6720}{5040}.
\label{eq:D2.3.9}
\end{equation}
Since \(25<\sqrt{669}<33\), the smaller Ritz value
\(132-4\sqrt{669}\) is positive and strictly less than \(32\).  Hence

\begin{equation}
(j'_{4,1})^2<32.
\label{eq:D2.3.10}
\end{equation}
\end{proof}

\section{High-frequency phase estimate for the disk}
\label{supp:disk-analytic-tail}

\subsection{Phase count and auxiliary bounds}

With \(G_x\) from \eqref{eq:I.9}, set

\begin{equation*}
f_m(x):=\left\lfloor G_x(m)+\frac34\right\rfloor .
\end{equation*}
By \eqref{eq:D2.1.3} and Proposition \ref{prop:flps-phase}, with angular
weights,

\begin{equation}
\mathcal N_2^{\le}(x)
\geq f_0(x)+2\sum_{m=1}^{\lfloor x\rfloor}f_m(x).
\label{eq:D2.2.3}
\end{equation}
Its right-hand side is the \(\alpha=0\) weighted lattice expression in the
FLPS argument; only the displayed inequality, not an identification of the
counts, is used.

The properties and moments of \(G_x\) needed below are recorded in
\eqref{eq:D3.5}--\eqref{eq:D3.9}.

For \(x>1\),

\begin{equation*}
\frac{\partial}{\partial x}G_x(1)
=\frac{\sqrt{x^2-1}}{\pi x}>0.
\end{equation*}
Hence \(G_x(1)\) increases with \(x\).  For \(x\geq3\),

\begin{equation}
G_3(1)
=\frac{2\sqrt2-\arccos(1/3)}{\pi}
>\frac{2\sqrt2}{\pi}-\frac12
>\frac14.
\label{eq:D2.2.11}
\end{equation}
The last inequality uses \(\sqrt2>7/5\) and \(\pi<22/7\) from
\eqref{eq:Const.rad} and \eqref{eq:Const.pi}.

Let

\begin{equation}
L:=G_x^{-1}\!\left(\frac14\right),
\qquad
\beta:=\lfloor L\rfloor+1.
\label{eq:D2.2.12}
\end{equation}
We will also use

\begin{equation}
L<x-1\qquad(x\geq2).
\label{eq:D2.2.13}
\end{equation}
Since \(G_x\) is decreasing, \eqref{eq:D2.2.13} is equivalent to
\(H(x):=G_x(x-1)<1/4\).  At \(x=2\),

\begin{equation*}
H(2)=\frac{\sqrt3}{\pi}-\frac13<\frac14,
\end{equation*}
using \(\sqrt3<7/4\) from \eqref{eq:Const.rad} and
\(\pi>3\) from \eqref{eq:Const.pi}.
Moreover,

\begin{equation}
H'(x)=\frac1\pi\left(
\frac{\sqrt{2x-1}}x-\arccos\left(1-\frac1x\right)
\right)<0.
\label{eq:D2.2.14}
\end{equation}
For \(t=\sqrt{2x-1}/x\) and \(x\geq2\),

\begin{equation*}
0<t=\frac{\sqrt{2x-1}}x<1<\pi.
\end{equation*}
Since \(\cos t>1-t^2/2\),

\begin{equation*}
\cos t>1-\frac{2x-1}{2x^2}>1-\frac1x;
\end{equation*}
and monotonicity of cosine on \([0,\pi]\) proves
\eqref{eq:D2.2.14}, hence \eqref{eq:D2.2.13}.  In particular,

\begin{equation}
\beta<x,\qquad \beta-1<x-1,
\label{eq:D2.2.15}
\end{equation}
so both applications below satisfy the endpoint hypothesis of Lemma
\ref{lem:retained-floor}.

\begin{lemma}[Terminal integral bound]
\label{lem:disk-terminal-integral}
For \(x>0\) and \(0\leq a\leq x\),

\begin{equation}
\int_a^xG_x(z)\,dz
\leq \frac25(x-a)G_x(a).
\label{eq:D2.2.16}
\end{equation}
\end{lemma}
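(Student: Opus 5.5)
The plan is to reduce the inequality to a one-variable trigonometric inequality by differentiating in the lower endpoint \(a\). Fix \(x>0\) and define, for \(0\le a\le x\),
\[
F(a):=\frac25(x-a)G_x(a)-\int_a^xG_x(z)\,dz .
\]
Since \(G_x(x)=0\), we have \(F(x)=0\). The claim \eqref{eq:D2.2.16} is \(F(a)\ge0\), so it suffices to show \(F'(a)\le0\) on \((0,x)\); then \(F\) is nonincreasing and \(F(a)\ge F(x)=0\). Continuity of \(G_x\) on \([0,x]\) takes care of the endpoints.

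The derivative is computed with \eqref{eq:D3.5}, i.e.\ \(G_x'(a)=-\frac1\pi\arccos(a/x)\):
\[
F'(a)=-\frac25G_x(a)-\frac{2}{5\pi}(x-a)\arccos\frac ax+G_x(a)
=\frac35G_x(a)-\frac{2}{5\pi}(x-a)\arccos\frac ax .
\]
Set \(a=x\cos\theta\) with \(\theta\in[0,\pi/2]\). From \eqref{eq:I.9}, \(\pi G_x(a)=x(\sin\theta-\theta\cos\theta)\). Then \(F'(a)\le0\) becomes
\[
3(\sin\theta-\theta\cos\theta)\le2(1-\cos\theta)\theta,
\qquad\text{i.e.}\qquad
3\sin\theta\le\theta(2+\cos\theta).
\]
This is the classical Cusa--Huygens-type inequality.

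To prove it, put \(h(\theta)=\theta(2+\cos\theta)-3\sin\theta\), so \(h(0)=0\). Differentiating,
\[
h'(\theta)=2-2\cos\theta-\theta\sin\theta
=2\sin\tfrac\theta2\Bigl(2\sin\tfrac\theta2-\theta\cos\tfrac\theta2\Bigr).
\]
For \(0\le\theta\le\pi/2\), both \(\sin\frac\theta2\ge0\) and \(\cos\frac\theta2>0\). The bracket is \(2\cos\tfrac\theta2\bigl(\tan\tfrac\theta2-\tfrac\theta2\bigr)\ge0\), by the elementary bound \(\tan s\ge s\) on \([0,\pi/2)\). Hence \(h'\ge0\), so \(h\ge0\), which gives \(F'\le0\) and proves the lemma.

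The only step requiring care is spotting that \(F'\) reduces exactly to this trigonometric inequality. The constant \(2/5\) is the sharp one, coming from the \((x-a)^{3/2}\) behaviour of \(G_x\) near \(a=x\); correspondingly, \(h\) vanishes to fifth order at \(\theta=0\), so no slack is available there. The factorisation of \(h'\) via half-angles removes any need for Taylor estimates.
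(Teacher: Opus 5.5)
Your proof is correct and is essentially the paper's argument: you use the same auxiliary function anchored at the endpoint \(a=x\) (the paper writes it in \(\gamma=a/x\) with the opposite sign), and the same reduction of its derivative to \(\arccos\gamma\ge 3\sqrt{1-\gamma^2}/(\gamma+2)\), which is your Cusa--Huygens inequality with \(\gamma=\cos\theta\). The only difference is the last step: the paper differentiates in \(\gamma\) and gets the explicitly nonpositive derivative \(-(1-\gamma)^2/\bigl((\gamma+2)^2\sqrt{1-\gamma^2}\bigr)\), whereas you differentiate in \(\theta\) and use the half-angle factorisation together with \(\tan s\ge s\).
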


\begin{proof}
Write \(a=\gamma x\), \(0\leq\gamma\leq1\), and set

\begin{equation*}
F(\gamma)
=\int_{\gamma x}^xG_x(z)\,dz
-\frac25x(1-\gamma)G_x(\gamma x).
\end{equation*}
Then \(F(1)=0\), and direct differentiation for \(0\leq\gamma<1\) gives

\begin{equation*}
F'(\gamma)=\frac{x^2(\gamma+2)}{5\pi}
\left(
\arccos\gamma-\frac{3\sqrt{1-\gamma^2}}{\gamma+2}
\right).
\end{equation*}
Writing

\begin{equation*}
\chi(\gamma):=\arccos\gamma-
\frac{3\sqrt{1-\gamma^2}}{\gamma+2},
\end{equation*}
we have

\begin{equation*}
\chi(1)=0,\qquad
\chi'(\gamma)=-
\frac{(1-\gamma)^2}
{(\gamma+2)^2\sqrt{1-\gamma^2}}\leq0.
\end{equation*}
Thus \(\chi(\gamma)\geq0\) and \(F'(\gamma)\geq0\).  Since \(F(1)=0\),
\(F(\gamma)\leq0\) for \(0\leq\gamma<1\); continuity gives the endpoint
and proves \eqref{eq:D2.2.16}.
\end{proof}

\subsection{Applying the two floor sums at flux zero}

Rewrite the right-hand side of \eqref{eq:D2.2.3} as

\begin{equation}
\begin{aligned}
S_0(x)&:=\sum_{m=0}^{\lfloor x\rfloor}
\left\lfloor G_x(m)+\frac34\right\rfloor,\\
S_1(x)&:=\sum_{m=0}^{\lfloor x-1\rfloor}
\left\lfloor G_x(m+1)+\frac34\right\rfloor.
\end{aligned}
\label{eq:D2.2.17}
\end{equation}
Then

\begin{equation}
\mathcal N_2^{\le}(x)\geq S_0(x)+S_1(x).
\label{eq:D2.2.18}
\end{equation}
For \(x\geq3\), apply Lemma \ref{lem:retained-floor} first to
\(g(z)=G_x(z)\) on
\([0,x]\), with \(M=\beta\), and then to \(g(z)=G_x(z+1)\) on
\([0,x-1]\), with \(M=\beta-1\).  Equations \eqref{eq:D2.2.11} and
\eqref{eq:D2.2.15} give the required initial-value and endpoint bounds.
For the shifted function,
\(g^{-1}(1/4)=L-1\) and
\(\lfloor L-1\rfloor+1=\lfloor L\rfloor=\beta-1\), including when
\(L\) is an integer.  We obtain

\begin{equation}
\begin{aligned}
S_0(x)+S_1(x)
&\geq \int_0^\beta G_x(z)\,dz+\frac\beta4
+\int_1^\beta G_x(z)\,dz+\frac{\beta-1}{4}\\
&=\frac{x^2}{4}
-\int_0^1G_x(z)\,dz
-2\int_\beta^xG_x(z)\,dz
+\frac{2\beta-1}{4}.
\end{aligned}
\label{eq:D2.2.19}
\end{equation}

\subsection{A sharpened high-frequency estimate}

Assume from now on that \(x\geq20/3\).

Convexity of \(G_x\) on \([0,1]\) gives the trapezoidal bound

\begin{equation}
\int_0^1G_x(z)\,dz
\leq\frac{G_x(0)+G_x(1)}2.
\label{eq:D2.2.22}
\end{equation}

Put \(u=1/x\leq3/20\).  Since the integrand defining \(\arcsin\) is
increasing,

\begin{equation}
\arcsin u
=\int_0^u\frac{dt}{\sqrt{1-t^2}}
\leq\frac{u}{\sqrt{1-u^2}}
<\frac{51}{50}u.
\label{eq:D2.2.23}
\end{equation}

For the strict comparison, it is enough to square:

\begin{equation*}
\frac1{1-u^2}\leq\frac{400}{391}
<\frac{2601}{2500}.
\end{equation*}
Also,

\begin{equation}
\sqrt{x^2-1}<x-\frac1{2x},
\label{eq:D2.2.24}
\end{equation}
because the square of the right side exceeds \(x^2-1\) by
\(1/(4x^2)\).  Using
\(\arccos(1/x)=\pi/2-\arcsin(1/x)\), equations \eqref{eq:D2.2.23}--\eqref{eq:D2.2.24} yield

\begin{equation}
G_x(1)
<\frac{x}{\pi}-\frac12+\frac{13}{25\pi x}.
\label{eq:D2.2.25}
\end{equation}
Substitution in \eqref{eq:D2.2.22} gives

\begin{equation}
\int_0^1G_x(z)\,dz
<\frac{x}{\pi}-\frac14+\frac{13}{50\pi x}.
\label{eq:D2.2.26}
\end{equation}
Lemma \ref{lem:disk-terminal-integral} and \(\beta>L\) give

\begin{equation}
\int_\beta^xG_x(z)\,dz
\leq\frac25(x-\beta)G_x(\beta)
<\frac{x-\beta}{10}.
\label{eq:D2.2.27}
\end{equation}
Equations \eqref{eq:D2.2.19}, \eqref{eq:D2.2.26}, and
\eqref{eq:D2.2.27}, together with \(\beta>L\), give

\begin{equation}
\mathcal N_2^{\le}(x)
>
\frac{x^2}{4}
+\frac7{10}L
-x\left(\frac1\pi+\frac15\right)
-\frac{13}{50\pi x}.
\label{eq:D2.2.28}
\end{equation}

Next we show

\begin{equation}
L>\frac{3x}{4}.
\label{eq:D2.2.29}
\end{equation}
Set

\begin{equation*}
c_{3/4}:=\frac{\sqrt7}{4}-\frac34\arccos\frac34.
\end{equation*}
With \(0<y=1/\sqrt7<1\),

\begin{equation*}
\cos(2\arctan y)=\frac{1-y^2}{1+y^2}=\frac34,
\qquad
2\arctan y\in(0,\pi/2).
\end{equation*}
Hence the principal inverse cosine and the alternating arctangent series give

\begin{equation}
\arccos\frac34
=2\arctan y
<2\left(y-\frac{y^3}{3}+\frac{y^5}{5}\right).
\label{eq:D2.2.30}
\end{equation}
Consequently,

\begin{equation}
\begin{aligned}
c_{3/4}
&>\frac{\sqrt7}{4}
-\frac32\left(
\frac1{\sqrt7}-\frac1{3\cdot7\sqrt7}
+\frac1{5\cdot49\sqrt7}
\right)\\
&=\frac{309\sqrt7}{6860}
>\frac{927}{7840}
=\frac{33}{280}+\frac3{7840}
>\frac{33}{280}.
\end{aligned}
\label{eq:D2.2.31}
\end{equation}
Here \(\sqrt7>21/8\) follows by squaring; the squared difference is \(7/64\).

Therefore, using \(x\geq20/3\) and \(\pi<22/7\),

\begin{equation}
G_x\!\left(\frac{3x}{4}\right)
=\frac{x c_{3/4}}{\pi}
>
\frac{20}{3}\frac{33}{280}\frac7{22}
=\frac14.
\label{eq:D2.2.32}
\end{equation}
Since \(G_x\) is strictly decreasing and \(G_x(L)=1/4\), this proves
\eqref{eq:D2.2.29}.  Substituting it into \eqref{eq:D2.2.28} yields

\begin{equation}
\mathcal N_2^{\le}(x)
>
\frac{x^2}{4}
+x\left(\frac{13}{40}-\frac1\pi\right)
-\frac{13}{50\pi x}.
\label{eq:D2.2.33}
\end{equation}

Finally, \(\pi>25/8\) implies

\begin{equation*}
x\left(\frac{13}{40}-\frac1\pi\right)
>\frac{x}{200}\geq\frac1{30},
\end{equation*}
whereas \(\pi>3\) and \(x\geq20/3\) imply

\begin{equation*}
\frac{13}{50\pi x}<\frac{13}{1000}.
\end{equation*}
The remaining difference is

\begin{equation*}
\frac1{30}-\frac{13}{1000}
=\frac{61}{3000}>0.
\end{equation*}
We have proved

\begin{equation}
\mathcal N_2^{\le}(x)>\frac{x^2}{4}
\quad\text{for every }x\geq\frac{20}{3}.
\label{eq:D2.2.34}
\end{equation}

If \(x>20/3\), choose \(20/3\leq y<x\).  Then

\begin{equation*}
\mathcal N_2^{<}(x)
=\#\{\mu_j<x^2\}
\geq\#\{\mu_j\leq y^2\}
=\mathcal N_2^{\le}(y)
>\frac{y^2}{4}.
\end{equation*}
Letting \(y\uparrow x\) (the left-hand side is independent of \(y\)) gives

\begin{equation*}
\mathcal N_2^{<}(x)\geq\frac{x^2}{4}.
\end{equation*}
Therefore

\begin{equation}
\mathcal N_2^{<}(x)\geq\frac{x^2}{4}
\quad\text{for every }x>\frac{20}{3}.
\label{eq:supp-disk-strict-tail}
\end{equation}
The variational bound in the body supplies the endpoint \(x=20/3\).

\section{Three-dimensional intermediate-range estimate}

\begin{proposition}[Three-dimensional intermediate-range estimate]
\label{supp:d3-phase}
The three bounds in \eqref{eq:D3.finite.4} hold.
\end{proposition}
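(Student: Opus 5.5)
The plan is to work entirely in the variable \(t=a^{-1/3}\), in which \(\mathcal P_{3,M}\) is an explicit polynomial. Each \(a\)-interval in \eqref{eq:D3.finite.4} maps to a \(t\)-interval. The endpoints are \(t=(17/10)^{-1/3}\), \((23/10)^{-1/3}\), and the two irrational values \((3\sqrt3/5)^{1/3}\) and \((3\sqrt3/14)^{1/3}\). I would first enclose each endpoint by rationals, verified by cubing and by using \(\sqrt3<7/4\) together with a matching lower bound for \(\sqrt3\). I would then enlarge each \(t\)-interval outward to a rational interval; proving the bound on a larger interval is harmless.

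Step one is the validity of \eqref{eq:D3.finite.3}. Since \(\beta_3=0\), the truncated summand \(T_{3,n}\) equals \(r_{3,n}^2\) exactly when \(r_{3,n}>0\). So for each \(n\le M\) I must check \(r_{3,n}(t)>0\) on the relevant \(t\)-interval. Because \(r_{3,n}\) is decreasing in \(t\), it suffices to evaluate at the right endpoint \(t_+\), using \(\alpha_n^+\) from \eqref{eq:Const.alpha-coarse} and an upper bound for \(e_3=1/(6\sqrt3)\). This also gives \(\alpha_nx^{1/3}<x\), which is needed for \eqref{eq:L.2.1}.

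Step two is the lower bound itself, done by boxes. On a subinterval \([t_-,t_+]\), \(t^3\) is increasing and each \(r_{3,n}\) is positive and decreasing. Hence
\[
\mathcal P_{3,M}(t)\ \ge\ \mathcal D_3\,t_-^3\sum_{n=1}^M r_{3,n}(t_+)^2 ,
\]
where I use \(\pi>333/106\) for \(\mathcal D_3=\pi\sqrt3/2\), the lower bound for \(\sqrt3\), \(\alpha_n^+\), and an upper bound for \(e_3\). Each box is then one exact rational comparison with \(1\). I would partition each of the three \(t\)-intervals into a modest number of rational subintervals, chosen so that every box comparison succeeds.

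If the number of boxes becomes large, the fallback is a derivative argument. I would show that \(\mathcal P_{3,M}\), viewed as a function of \(a\), is concave on each band, or monotone away from one interior point, by bounding the sign of \(\mathcal P_{3,M}'\) or \(\mathcal P_{3,M}''\) with the same enclosures. Then only the two endpoint values need checking, which matches the ``convexity and rational endpoints'' description in the paper. Concretely, I would compute \(\mathcal P_{3,M}''\) as a polynomial in \(t\), and bound its positive and negative parts separately on the interval, as was done for \(\Psi^{(4)}\) in \eqref{eq:D4.21}.

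The main obstacle I expect is the margin. It should be smallest near \(x=5\) with only two levels (\(a\approx0.962\)), and at the band junctions \(a=17/10\) and \(a=23/10\), just before an extra level is admitted. There the crude box bound loses a first-order amount in \(t_+-t_-\), so those boxes must be very short. At \(x=5\) the irrational endpoint enclosure must also be tight. I would check these three points first, at their exact values with the fine enclosure \eqref{eq:Const.alpha-fine}, to measure the available slack before fixing the subdivision.
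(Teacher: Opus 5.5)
Your argument is correct, but its main line differs from the paper's. The paper does not subdivide. On each band it expands $\mathcal P_{3,M}(t)-1$ into monomials and rounds every coefficient outward. This gives a single rational polynomial minorant
\[
Q_M(t)=-1+A_Mt^3-B_Mt^5-C_Mt^6+D_Mt^7+E_Mt^8+F_Mt^9 .
\]
It then proves a shape statement for $Q_M$:
\begin{itemize}
\item For $M=2,3$, the function $H_M=Q_M'/t^2$ is convex and satisfies $H_M(L_M)>0>H_M(U_M)$. Hence $Q_M$ first increases and then decreases.
\item For $M=4$, a tangent line to the convex $H_4$ shows that $Q_4$ is increasing.
\end{itemize}
Only endpoint values remain, so the proposition reduces to a short table of exact rational signs.

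Your monotone box bound $\mathcal D_3t_-^3\sum_n r_{3,n}(t_+)^2$ is valid and needs no derivative analysis, but it is expensive where the slack is thin. At $x=5$ the true margin is only about $0.009$, because $r_{3,2}\approx0.015$ there. Each box loses roughly $6(t_+-t_-)$, so that end needs widths near $10^{-3}$, and you need several dozen boxes in total. That is acceptable as a machine-checked verification, but much longer than the paper's printed sign table. The junctions $a=17/10$ and $a=23/10$ are not the tight spots you expected (margins about $0.05$), and neither is $x=14$ (about $0.046$).

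One caution about your fallback: $\mathcal P_{3,2}$ is not concave in $a$, or in $\log a$, on the $M=2$ band. Near $x=5$, the nearly vanishing $r_{3,2}^2$ term makes $\mathcal P_{3,2}$ convex. Numerically, $\mathcal P_{3,2}\approx1.0091,\ 1.0125,\ 1.0185$ at $a\approx0.962,\ 1,\ 1.05$, and the difference quotients increase. A sign bound on $\mathcal P_{3,M}''$ therefore cannot succeed on that band. You would need your ``monotone away from one interior point'' variant, that is, a sign analysis of the first derivative. That is exactly what the paper's $H_M$ argument supplies.
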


\begin{proof}
Put \(t=a^{-1/3}\).  The constants satisfy the following exact rational
enclosures:

\begin{equation*}
\frac{433}{250}<\sqrt3<\frac{1733}{1000},
\qquad
\frac{500}{5199}<e_3<\frac{125}{1299},
\end{equation*}

\begin{equation*}
\frac{333}{106}\frac{433}{500}
<\mathcal D_3<
\frac{3141593}{10^6}\frac{1733}{2000},
\end{equation*}
Let \(\alpha_n^-\) and \(\alpha_n^+\) denote the endpoints in
\eqref{eq:Const.alpha-coarse}.  All these enclosures follow from the common
\(\pi,\sqrt3\) bounds by squaring or cubing.
Write

\begin{equation*}
\mathcal D_3^-:=\frac{333}{106}\frac{433}{500},\qquad
\mathcal D_3^+:=\frac{3141593}{10^6}\frac{1733}{2000},
\qquad
e^-:=\frac{500}{5199},\qquad e^+:=\frac{125}{1299},
\qquad c_n^\pm:=\frac{\alpha_n^\pm}{3}.
\end{equation*}
For the \(M\)-row of \eqref{eq:D3.finite.4}, the rational \(t\)-box and
included-radial margin are

\begin{equation}
\begin{array}{c|c|c|c}
M&L_M&U_M&
\text{lower bound for included \(r_{3,n}\)}\\ \hline
2&4189/5000&1013/1000&>1/100\\
3&303/400&8379/10^4&>1/25\\
4&3593/5000&947/1250&>1/50 .
\end{array}
\label{eq:S3.2}
\end{equation}
Cubing proves the inclusions, with \(U_2^6>27/25\) and
\(L_4^6<27/196\) at the radical endpoints.  Evaluating
\(1-\alpha_M^+U_M^2/3-e^+U_M^3\) at unfavourable endpoints and
cross-multiplying gives the three radial bounds in \eqref{eq:S3.2}.

For \(t>0\), direct expansion and outward endpoint substitution give

\begin{equation}
\begin{aligned}
\mathcal P_{3,M}(t)-1
\ge{}&
-1+M\mathcal D_3^-t^3
-2\mathcal D_3^+\sum_{n=1}^{M}c_n^+\,t^5
-2M\mathcal D_3^+e^+t^6\\
&+\mathcal D_3^-\sum_{n=1}^{M}(c_n^-)^2t^7
+2\mathcal D_3^-e^-\sum_{n=1}^{M}c_n^-t^8
+M\mathcal D_3^-(e^-)^2t^9
\ge Q_M(t),
\end{aligned}
\label{eq:S3.3}
\end{equation}
where

\begin{equation*}
Q_M(t):=-1+A_Mt^3-B_Mt^5-C_Mt^6+D_Mt^7+E_Mt^8+F_Mt^9
\end{equation*}
and

\begin{equation}
\begin{array}{c|cccccc}
M&A_M&B_M&C_M&D_M&E_M&F_M\\ \hline
2&5441/1000&1261/200&10479/10^4&11313/5000&6059/10^4&503/10^4\\
3&5101/625&132569/10^4&7859/5000&66979/10^4&12741/10^4&377/5000\\
4&108821/10^4&110701/5000&20957/10^4&697/50&10639/5000&503/5000 .
\end{array}
\label{eq:S3.4}
\end{equation}
The positive coefficients in \eqref{eq:S3.4} are rounded downward and
the magnitudes \(B_M,C_M\) upward; every comparison is a multiplication
of positive integers.

Define

\begin{equation}
H_M(t):=\frac{Q_M'(t)}{t^2}
=3A_M-5B_Mt^2-6C_Mt^3+7D_Mt^4+8E_Mt^5+9F_Mt^6.
\label{eq:S3.5}
\end{equation}
On \(L_M\le t\le U_M\),

\begin{equation*}
H_M''(t)\ge R_M:=
-10B_M-36C_MU_M+84D_ML_M^2
+160E_ML_M^3+270F_ML_M^4.
\end{equation*}
The sign table for \(M=2,3\) is

\begin{equation}
\begin{array}{c|cc|cc|cc|c}
M&Q_M(L_M)&Q_M(U_M)&H_M(L_M)&H_M(U_M)&
H_M'(L_M)&H_M'(U_M)&R_M\\ \hline
2&>47/1000&>3/1000&>9/20&<-11/50&<-15&>11&>95\\
3&>47/1000&>49/1000&>9/20&<-1/25&<-17&>6&>238 .
\end{array}
\label{eq:S3.6}
\end{equation}
Because \(H_M''>0\) and
\(H_M'(L_M)<0<H_M'(U_M)\), the derivative \(H_M'\) has a unique zero
\(\tau_M\).  Thus \(H_M\) decreases on \([L_M,\tau_M]\) and increases
on \([\tau_M,U_M]\).  The signs
\(H_M(L_M)>0>H_M(U_M)\) give one zero of \(H_M\) on the decreasing
part; there can be no second zero on the increasing part because its
right endpoint is still negative.  Hence \(Q_M'=t^2H_M\) is first
positive and then negative.  Its minimum on the closed interval is therefore
at an endpoint and is positive by \eqref{eq:S3.6}.

For \(M=4\), with \(c_4=37/50\), exact substitution gives

\begin{equation*}
R_4>459,\qquad Q_4(L_4)>\frac{21}{500},\qquad
Q_4(U_4)>\frac{23}{500},
\end{equation*}

\begin{equation}
H_4'(c_4)>\frac25,\qquad
H_4(c_4)+H_4'(c_4)(L_4-c_4)>\frac1{10}.
\label{eq:S3.7}
\end{equation}
Convexity places \(H_4\) above its tangent at \(c_4\).  That tangent has
positive slope and is \(>1/10\) at \(L_4\), hence \(H_4>0\) on the
whole interval.  Thus \(Q_4'=t^2H_4>0\) and
\(Q_4(t)\ge Q_4(L_4)>0\).  Together with
\eqref{eq:S3.3}, this proves all three claims in
\eqref{eq:D3.finite.4}.
\end{proof}

\section{Four-dimensional intermediate estimates}

\begin{proposition}[Four-dimensional intermediate estimates]
\label{supp:d4-phase}
The rational estimates required for \eqref{eq:D4.11} and
\eqref{eq:D4.24} hold.
\end{proposition}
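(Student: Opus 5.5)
The statement has two parts. The first is the three finite-level bounds in \eqref{eq:D4.11} for \(\mathcal P_{4,M}\), \(M=2,3,4\), on the intervals \([19/20,11/10]\), \([11/10,17/10]\) and \([17/10,13/4]\). The second is the rational facts behind \eqref{eq:D4.24}: the enclosures \eqref{eq:D4.15}, the bound \(\Psi^{(4)}(1/85,9/125)<6/5\), and the endpoint margins \eqref{eq:D4.23}. I would copy the three-dimensional template of Proposition \ref{supp:d3-phase}.

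\textbf{Finite-level bounds.} Substitute \(t=a^{-1/3}\), so that
\[
r_{4,n}=1-\frac{\alpha_n}{4}t^2-\frac1{16}t^3,
\qquad
\mathcal P_{4,M}=\frac83\,t^3\sum_{n\le M} r_{4,n}\Bigl(r_{4,n}^2-\frac{t^6}{256}\Bigr).
\]
This is a polynomial in \(t\) of degree at most \(12\).

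First fix rational \(t\)-boxes \([L_M,U_M]\) containing the images of the \(a\)-intervals, verified by cubing. Check the radial margins \(r_{4,n}>0\) at the unfavourable endpoint using \(\alpha_n^+\). This is the content labelled \eqref{eq:S4.2} and is what licenses \eqref{eq:D4.10}.

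Next expand \(\mathcal P_{4,M}-1\) monomial by monomial, replacing each \(\alpha_n\) by \(\alpha_n^\pm\) according to the sign of its coefficient. This gives a rational polynomial minorant \(Q_M(t)\).

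Then prove \(Q_M>0\) on the box exactly as in \eqref{eq:S3.5}--\eqref{eq:S3.7}. Set \(H_M=Q_M'/t^2\), bound \(H_M''\) from below by evaluating the positive monomials at \(L_M\) and the negative ones at \(U_M\), and use a sign table for \(Q_M,H_M,H_M'\) at the endpoints. Where a sign pattern fails, use a tangent line at an interior rational point.

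If a single box leaves too thin a margin, the first thing to try is splitting it into two sub-boxes.

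\textbf{Beta-integral bounds.} The enclosures in \eqref{eq:D4.15} come from the closed forms \(\mathfrak h_4=2048\sqrt2/(945\pi)\) and \(\mathcal K_4/(a-1/16)^2\le \mathcal K_4/(13/4-1/16)^2\), together with the constants \eqref{eq:Const.pi}--\eqref{eq:Const.rad}.

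The value \(\Psi^{(4)}(1/85,9/125)\) is a direct rational evaluation.

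For \eqref{eq:D4.23}, substitute \(a=13/4\) and \(a=21/4\) into \eqref{eq:D4.16}. Use \(\alpha_1<887/1000\) and the stated lower bounds for \(a^{2/3}\) to bound \(\rho_1\) below. Bound \(\rho_0\) exactly, and cross-multiply.

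\textbf{Main obstacle.} I expect the hardest step to be the lower end of the \(M=2\) interval, near \(a=19/20\). There the radial factors are smallest and the quantity \(\mathcal P_{4,2}-1\) is tight, so the outward rounding of \(\alpha_2\) and of the cube roots may consume the margin. I would first try the sharper enclosures \eqref{eq:Const.alpha-fine} there, and if necessary subdivide \([19/20,11/10]\).
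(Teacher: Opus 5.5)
\textbf{Beta-integral part.} Your handling of the rational facts behind \eqref{eq:D4.24} follows the paper, with two small slips. First, the upper bound $\mathfrak h_4<1$ does not follow from \eqref{eq:Const.rad}: its $\sqrt2<3/2$ only gives a bound above $1$. You need something sharper, such as $\sqrt2<99/70$ with $\pi>333/106$, which is what the paper uses. Second, the nonvanishing check \eqref{eq:D4.14} and the rectangle \eqref{eq:D4.20} belong on your list too, although they are equally elementary.

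\textbf{Finite-level part.} For \eqref{eq:D4.11} you take a genuinely different route from the paper, which does not reuse the three-dimensional positivity template. The paper proceeds in three steps.
\begin{enumerate}
\item It majorizes $(a\partial_a)^2\mathcal P_{4,M}$ coefficientwise by $\mathscr U_M(t)$, as in \eqref{eq:S4.3}--\eqref{eq:S4.4}.
\item It proves $\mathscr U_M<0$ on each box through $J_M=t^{-3}\mathscr U_M$: a bound on $J_M^{(4)}$, the sign of $J_M'''$ at $L_M$, and the chord of the concave $J_M'$. For $M=3$ it needs an extra split at $9/10$ and a tangent.
\item It evaluates $\mathcal P_{4,M}-61/60$ only at the four junctions, using $10^{-7}$ cube-root boxes \eqref{eq:S4.9}. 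Concavity in $\log a$ plus the endpoint chord then finishes.
\end{enumerate}

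\textbf{Comparison.} Since $\mathcal P_{4,M}$ really is concave in $\log a$, your minimum also sits at an endpoint, so the two routes differ only in which shape certificate they need.

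The paper's curvature sign has comfortable slack, and it confines the thin value margins to point evaluations. The binding margin is at $a=13/4$, where $\mathcal P_{4,4}\approx1.017$. At $a=19/20$ one has $\mathcal P_{4,2}\approx1.024$, so the obstacle you anticipated is not the binding one.

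Your route must instead certify the sign pattern of $Q_M'$, and the mechanism of \eqref{eq:S3.5}--\eqref{eq:S3.6} does not carry over:
\begin{enumerate}
\item The one-shot monomial lower bound for $H_M''$ (positive monomials at $L_M$, negative at $U_M$) is already negative on the $M=2$ and $M=4$ boxes.
\item On the $M=3$ box, $H_3''$ itself changes sign: it is positive at $L_3$ and negative at $U_3$. So no convexity argument for $H_3$ on the whole box is available, and you would have to certify something like $H_3'<0$ on subintervals.
\item $\mathcal P_{4,4}$ is nearly critical at the junction $a=17/10$, with $H_4(U_4)$ only of order $-10^{-2}$, so the endpoint sign of $H_4$ is delicate.
\end{enumerate}
Subdivision will close all of this, so your plan does work. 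It simply costs more case analysis than the paper's concavity-plus-endpoints argument.
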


\begin{proof}
For the phase estimate retain

\begin{equation}
\begin{array}{c|cc}
n&\alpha_n^-&\alpha_n^+\\ \hline
1&44267/50000&17707/20000\\
2&2071/800&64719/25000\\
3&47883/12500&76613/20000\\
4&97897/20000&244743/50000 .
\end{array}
\label{eq:S4.1}
\end{equation}
Put \(b_n^\pm=\alpha_n^\pm/4\) and
\(S_{j,M}^\pm=\sum_{n=1}^M(b_n^\pm)^j\).  For the \(M\)-row of
\eqref{eq:D4.11}, the rational \(t=a^{-1/3}\) box and included-radial
margin are

\begin{equation}
\begin{array}{c|c|c|c}
M&L_M&U_M&
\text{lower bound for included \(r_{4,n}\)}\\ \hline
2&9687293/10^7&10172448/10^7&>1/4\\
3&8378836/10^7&9687294/10^7&>1/25\\
4&6751063/10^7&8378837/10^7&>1/10 .
\end{array}
\label{eq:S4.2}
\end{equation}
Cubing proves the inclusions; evaluating the radial polynomials at
unfavourable endpoints and cross-multiplying gives the three displayed
margins.

Coefficientwise endpoint substitution gives

\begin{equation}
\begin{aligned}
\mathscr U_M(t):={}&
\frac{8M}{3}t^3-\frac{200}{9}S_{1,M}^-t^5-2Mt^6
+\frac{392}{9}S_{2,M}^+t^7+\frac{64}{9}S_{1,M}^+t^8\\
&+\left(\frac{3M}{16}-24S_{3,M}^-\right)t^9
-\frac{50}{9}S_{2,M}^-t^{10}
-\frac{121}{432}S_{1,M}^-t^{11},
\end{aligned}
\label{eq:S4.3}
\end{equation}
with

\begin{equation}
\left(a\frac d{da}\right)^2\mathcal P_{4,M}(a)
\le\mathscr U_M(t).
\label{eq:S4.4}
\end{equation}

Set \(J_M=t^{-3}\mathscr U_M=\sum_{j=0}^8a_{M,j}t^j\).
Only \(j=0,2,\ldots,8\) occur, and

\begin{equation*}
a_{M,4},a_{M,5}>0,\qquad
a_{M,6},a_{M,7},a_{M,8}<0.
\end{equation*}
For \(L_M\le t\le U_M\),

\begin{equation}
J_M^{(4)}(t)\le K_M:=
24a_{M,4}+120a_{M,5}U_M+360a_{M,6}L_M^2
+840a_{M,7}L_M^3+1680a_{M,8}L_M^4.
\label{eq:S4.5}
\end{equation}
For \(M=2,4\), exact substitution gives

\begin{equation}
\begin{array}{c|ccccc}
M&K_M&J_M'''(L_M)&J_M'(L_M)&J_M'(U_M)&J_M(U_M)\\ \hline
2&<-3260&<-450&>3&>3&<-1/2\\
4&<-10904&<-783&>3&>1&<-1/5 .
\end{array}
\label{eq:S4.6}
\end{equation}
Since \(J_M^{(4)}<0\), the function \(J_M'''\) is decreasing; the sign
at \(L_M\) therefore gives \(J_M'''<0\) on the whole interval.  Equivalently,
\(J_M'\) is strictly concave.  A concave function lies above its endpoint
chord, so the two positive endpoint values give \(J_M'>0\).  Thus \(J_M\)
is increasing and
\(J_M(t)\le J_M(U_M)<0\) throughout the interval.

For \(M=3\), put \(s_3=9/10\), \(c_3=933/1000\), and
\(T_3(t)=J_3(c_3)+J_3'(c_3)(t-c_3)\).  Here
\(L_3<s_3<c_3<U_3\), and the exact sign table is

\begin{equation}
\begin{gathered}
K_3<-8163,\qquad J_3'''(L_3)<-1069,\\
J_3'(L_3)>5,\qquad J_3'(s_3)>3,\qquad
J_3(s_3)<-\frac{39}{100},\\
J_3''(s_3)<-80,\qquad
T_3(s_3)<-\frac{33}{100},\qquad
T_3(U_3)<-\frac{33}{100}.
\end{gathered}
\label{eq:S4.7}
\end{equation}
As above, \(J_3'''<0\), so \(J_3'\) is concave.  Its positive values at
\(L_3\) and \(s_3\) place it above the positive endpoint chord on that
interval.  Hence \(J_3\) is increasing there and
\(J_3(t)\le J_3(s_3)<0\).  On \([s_3,U_3]\), the inequality
\(J_3'''<0\), together with \(J_3''(s_3)<0\), gives \(J_3''<0\).
Thus \(J_3\) is concave and lies below its tangent \(T_3\) at \(c_3\).
The two endpoint values of this affine tangent are negative, so
\(T_3<0\) on the whole interval.  Therefore
\(\mathscr U_M=t^3J_M<0\) for \(M=2,3,4\).

The endpoint cube-root boxes are

\begin{equation}
\begin{array}{c|c}
a&[t^-,t^+]\\ \hline
19/20&[10172447,10172448]/10^7\\
11/10&[9687293,9687294]/10^7\\
17/10&[8378836,8378837]/10^7\\
13/4&[6751063,6751064]/10^7 .
\end{array}
\label{eq:S4.8}
\end{equation}
At the left and right endpoints of each \(M\)-row of \eqref{eq:D4.11},
outward rational substitution using \eqref{eq:S4.8} gives the following
bounds for \(\mathcal P_{4,M}-61/60\):

\begin{equation}
\begin{array}{c|cc}
M&\text{left endpoint}&\text{right endpoint}\\ \hline
2&>7/1000&>3/100\\
3&>3/100&>1/20\\
4&>1/20&>1/2000,
\end{array}
\label{eq:S4.9}
\end{equation}
Since \(a\,d/da=-(t/3)d/dt\), equations
\eqref{eq:S4.4}--\eqref{eq:S4.7} prove strict concavity in \(\log a\).
The endpoint chord and \eqref{eq:S4.9} prove Proposition
\ref{prop:d4-phase}.

For the beta-integral estimate the remaining assertions follow from the
rational inequalities

\begin{equation*}
\left(\frac{219}{100}\right)^3<
\left(\frac{13}{4}\right)^2,\qquad
3^3<\left(\frac{21}{4}\right)^2<
\left(\frac{303}{100}\right)^3,
\end{equation*}

\begin{equation}
\left(\frac{707}{500}\right)^2<2<
\left(\frac{99}{70}\right)^2,\qquad
\frac{333}{106}<\pi<\frac{3141593}{10^6}.
\label{eq:S4.10}
\end{equation}
Together with \eqref{eq:S4.1}, these inequalities verify the required
nonvanishing, \(\mathfrak h_4\)-bounds, rectangle \eqref{eq:D4.20}, and
endpoint residuals \eqref{eq:D4.23} used in the proof of \eqref{eq:D4.24}.
\end{proof}

\section{Power-trial monotonicity}

\begin{proposition}
\label{supp:power-trial-monotonicity}
After the substitutions
\[
 m=1+u,\qquad c=3+v,\qquad s=1+t,
 \qquad u,v,t\ge0,
\]
the numerator in \eqref{eq:L.1.power.monotone}, reduced by
\(2s^3=m(m+c)\), is
\[
 P_0(u,v)+tP_1(u,v)+t^2P_2(u,v),
\]
where every coefficient in the three polynomials below is positive.
\end{proposition}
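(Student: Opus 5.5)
The plan is to make the inequality \eqref{eq:L.1.power.monotone} fully polynomial and then verify positivity by inspection of signs. First rewrite both sides in terms of $m$, $c=d-2$ and $s$, using $h=2m+c-1$ and $2s^3=m(m+c)$. On the left, the factor $\frac{2m+c}{m(m+c)}=\frac{2m+c}{2s^3}$ multiplies
\[
\chi_d(s)=\tfrac13\Bigl(2+\tfrac{2s}{2s+1}+\tfrac{2s}{2s+c+2}-\tfrac{2s}{2s+c}\Bigr).
\]
On the right, $B_d(m)=\frac{2(c+1)}{h}+\frac{2c^3}{3h(h^2-c^2)}$, with $h^2-c^2=(2m-1)(2m+2c-1)>0$ for $m\ge1$. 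Multiplying by the positive common denominator
\[
6s^3(2s+1)(2s+c)(2s+c+2)\,h(h^2-c^2)
\]
turns the difference of the two sides into a single polynomial $N(m,c,s)$. The claim is exactly $N>0$ on the region $m\ge1$, $c\ge3$ with $2s^3=m(m+c)$; here $d\ge5$ gives $c\ge3$, and $m\ge1$ gives $2s^3\ge4$, so $s\ge1$.

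Next, reduce $N$ modulo the relation $2s^3=m(m+c)$. Every power $s^k$ with $k\ge3$ is replaced by $s^{k-3}\,m(m+c)/2$ repeatedly, so the result has degree at most $2$ in $s$. Then substitute $m=1+u$, $c=3+v$, $s=1+t$. This gives the form $P_0(u,v)+tP_1(u,v)+t^2P_2(u,v)$. The statement then reduces to a finite symbolic expansion together with the check that every coefficient is positive. Since $u,v,t\ge0$, positivity of the coefficients together with the constant term $1856$ immediately gives $N>0$.

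The real issue is whether the reduction actually produces all-positive coefficients. The constraint couples $t$ to $(u,v)$, and the shift $s=1+t$ only uses the weak bound $s\ge1$. Some monomials may therefore come out negative, because the reduced form discards the information that $t$ is determined by $u$ and $v$. If that happens, my first fallback is to choose a different shift for $s$: a sharper lower bound such as $s\ge(m(m+c)/2)^{1/3}$, written in a shifted polynomial form. My second fallback is to absorb each negative term by an AM--GM pairing with a dominating positive term. The expansion itself is routine but long, since it involves 81 terms, and I would carry it out in exact integer arithmetic with a short symbolic computation. The write-up would then print the three coefficient lists.
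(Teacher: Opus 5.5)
Your proposal is correct and is essentially the paper's proof. You clear the positive denominators, reduce by $2s^3-m(m+c)$, shift $m=1+u$, $c=3+v$, $s=1+t$, and read off the coefficients; your factor $6s^3$ agrees with the paper's $3m(m+c)$ modulo the constraint, so you get the same remainder, and your fallbacks are unnecessary because the paper's printed $P_0,P_1,P_2$ indeed have only positive coefficients, with constant term $1856$.
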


\begin{proof}
Multiply the left side minus the right side of
\eqref{eq:L.1.power.monotone} by the positive denominator
\[
\begin{aligned}
D={}&3m(c+m)(c+2s)(2m-1)(2s+1)\\
&\quad\cdot(c+2m-1)(c+2s+2)(2c+2m-1).
\end{aligned}
\]
Polynomial division of the resulting numerator by
\(2s^3-m(m+c)\) gives the remainder specified in the proposition, with
\begin{align*}
P_0={}&96u^6+(240v+1104)u^5\\
&+(8v^3+296v^2+2504v+5728)u^4\\
&+(16v^4+288v^3+2376v^2+9268v+13324)u^3\\
&+(18v^5+370v^4+2984v^3+11900v^2+23490v+18270)u^2\\
&+(10v^6+268v^5+2802v^4+14784v^3+41446v^2\\
&\hspace{22mm}{}+57652v+29822)u\\
&+2v^6+54v^5+556v^4+2794v^3+7042v^2+7736v+1856,\\[1mm]
P_1={}&(16v^2+352v+1072)u^4\\
&+(8v^3+312v^2+1992v+3448)u^3\\
&+(8v^5+212v^4+1944v^3+8012v^2+14664v+8832)u^2\\
&+(8v^6+248v^5+2840v^4+15942v^3+46550v^2\\
&\hspace{22mm}{}+65942v+33422)u\\
&+2v^6+61v^5+682v^4+3662v^3+9844v^2+11793v+3716,\\[1mm]
P_2={}&(96v+384)u^4+(96v^2+768v+1536)u^3\\
&+(32v^4+456v^3+2376v^2+5280v+4104)u^2\\
&+(32v^5+568v^4+3936v^3+13152v^2+20736v+11784)u\\
&+8v^5+140v^4+936v^3+2916v^2+4008v+1632.
\end{align*}
The constant term is \(1856\), so the remainder is strictly positive.
\end{proof}

\section{Weighted finite-level estimate in dimensions five and six}

\begin{proposition}[Weighted finite-level estimate in dimensions five and six]
\label{supp:d56-estimate}
For \(d\in\{5,6\}\), \(M\in\{2,3,4\}\), and
\(a\in I_{d,M}\) from \eqref{eq:four-layer-bands},
every retained level \(n\le M\) is active and
\[
 \widehat L_{d,M}(a)>1,
 \qquad
 (a\partial_a)^2\widehat L_{d,M}(a)<0.
\]
Thus \(\widehat L_{d,M}\) is strictly concave as a function of
\(\log a\) on each interval \(I_{d,M}\).
\end{proposition}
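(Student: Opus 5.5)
The plan follows the pattern of Proposition \ref{supp:d4-phase}, now for the weighted sum \(\widehat L_{d,M}\) of \eqref{eq:L.retained-definition}. Everything is done in exact rational arithmetic in the variable \(t=a^{-1/3}\), separately for the six pairs \((d,M)\) with \(d\in\{5,6\}\), \(M\in\{2,3,4\}\).

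\textbf{Step 1 (activity).} First verify that every retained level \(n\le M\) is active on \(I_{d,M}\). By \eqref{eq:L.2.3}, \(r_{d,n}(a)\) increases in \(a\), while \(\sqrt{\beta_d}/a\) decreases. It therefore suffices to check, at the left endpoint of \(I_{d,M}\) and with the unfavourable endpoints \(\alpha_M^+\), \(\pi^\pm\), and radical bounds from Appendix \ref{supp:constant-proofs}, that
\[
r_{d,M}(a)>\sqrt{\beta_d}/a,
\]
with an explicit rational margin, as in \eqref{eq:S4.2}. On the active region each summand is the polynomial \(r^{d-3}(r^2-\beta_d/a^2)\). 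Hence \(\widehat L_{d,M}\), written in \(t\), is an explicit polynomial in \(t\) (times \(t^3\) from \(\mathcal D_d/a\)), with coefficients built from \(\alpha_n\), \(\mathcal D_d\), \(d^{-1/2}\), and \(\beta_d\). Here \(\mathcal D_5=15\pi/(8\sqrt5)\) and \(\mathcal D_6=32/(5\sqrt6)\).

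\textbf{Step 2 (log-concavity).} Since \(a\partial_a=-(t/3)\partial_t\), the quantity \((a\partial_a)^2\widehat L_{d,M}\) equals \(\tfrac19(t^2\partial_t^2+t\partial_t)\widehat L_{d,M}\), again a polynomial. Expand it and bound it above coefficientwise, substituting upper enclosures for positive coefficients and lower ones for negative coefficients, to get a rational polynomial \(\mathscr U_{d,M}(t)\). Then show \(\mathscr U_{d,M}<0\) on the \(t\)-box \([L_M,U_M]\) obtained from cube-root enclosures of the endpoints of \(I_{d,M}\).

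For this, divide by \(t^3\) to obtain \(J=\sum a_jt^j\) and bound a high derivative \(J^{(k)}\) by grouping signed coefficients at unfavourable box endpoints, as in \eqref{eq:S4.5}. That fixes the sign of \(J^{(k)}\), and then one descends using endpoint signs of \(J''',J',J\), or a tangent line at an interior point when the endpoint signs are insufficient (the \(M=3\) device of \eqref{eq:S4.7}). If a single box is too coarse, split \(I_{d,M}\) into two subintervals.

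\textbf{Step 3 (endpoints).} Evaluate \(\widehat L_{d,M}\) at both endpoints of each \(I_{d,M}\) by outward rational substitution and obtain \(\widehat L_{d,M}>1\) with explicit margins. A function strictly concave in \(\log a\) lies above its chord in \(\log a\), so positivity of \(\widehat L_{d,M}-1\) at both endpoints gives \(\widehat L_{d,M}>1\) throughout.

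\textbf{Main obstacle.} I expect the main difficulty at the \(d=5\) left endpoint \(a=41/50\) with \(M=2\). The Remark and \eqref{eq:L.d5.no-uniform} indicate the margin there is tiny, and it is exactly why \(a_5^\ast\ne4/5\). This endpoint needs the fine enclosures \eqref{eq:Const.alpha-fine}, \eqref{eq:L.4.13a}, and a sharp \(\sqrt5\) bound. The concavity sign check in Step 2 near this endpoint may also require subdividing the box.
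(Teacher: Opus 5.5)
Your plan is correct and is essentially the paper's proof. It has the same steps: activity is checked at the left endpoint of each band for the largest retained level; a coefficientwise rational majorant for $(a\partial_a)^2$ of the polynomial in $t=a^{-1/3}$ is shown to be negative on rational $t$-boxes; and endpoint values above one are combined with the chord argument in $\log a$. There are two minor technical differences. First, the paper certifies negativity of the majorant by re-expanding it about each box midpoint, $t=m+h\xi$, and checking $q_0+\sum_{j\ge1}|q_j|<-9/500$, rather than by derivative descent. Second, it needs only eight endpoint evaluations, because the nesting $\widehat L_{d,4}\ge\widehat L_{d,3}\ge\widehat L_{d,2}$ supplies the left endpoints at $a=11/10$ and $a=17/10$.
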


\begin{proof}
We use the retained weights defined in \eqref{eq:L.retained-definition}.
Let \(\bar\alpha_n\) be the upper endpoint for \(\alpha_n\) in
\eqref{eq:Const.alpha-fine}.  The rational upper bounds
\[
\begin{array}{c|ccccc}
 a&4/5&41/50&11/10&17/10&5/2\\ \hline
 10^6\tau_a&1077218&1068388&968730&837884&736807
\end{array}
\]
satisfy \(a^{-1/3}<\tau_a\) by cubing positive rationals.  In particular,
\[
 \bar\alpha_2\tau_{4/5}^2<\frac{18}{5},\qquad
 \bar\alpha_3\tau_{11/10}^2<\frac{18}{5},\qquad
 \bar\alpha_4\tau_{17/10}^2<\frac{18}{5}.
\]
Since \(\alpha_n\) increases with \(n\) and \(a^{-2/3}\) decreases with
\(a\), it suffices in each band to use its largest retained level at the
left endpoint.
Since \(a\ge4/5\) and \(d\ge5\),
\[
 \frac1{2a\sqrt d}<\frac7{25},
 \qquad
 r_{d,n}(a)
 =1-\frac1d\left(\frac{\alpha_n}{a^{2/3}}
                 +\frac1{2a\sqrt d}\right)
 >\frac{28}{125}.
\]
Moreover
\[
 \frac{\sqrt{\beta_d}}a<\frac7{50};
\]
the largest square on the left is
\(125/6912<(7/50)^2\), at \((d,a)=(6,4/5)\).
Thus \(r_{d,n}>\sqrt{\beta_d}/a\) throughout each interval, so
all retained terms are on their smooth positive branch before
differentiation.

Put
\[
 t=a^{-1/3},\qquad c_n=\frac{\alpha_n}{d},\qquad
 e_d=\frac1{2d^{3/2}},\qquad p_n(t)=1-c_nt^2-e_dt^3.
\]
On an active interval,
\[
 \widehat L_{d,M}(a)=\mathcal D_d\widehat F_{d,M}(t),
\]
where
\[
 \widehat F_{d,M}(t)
 =t^3\sum_{n=1}^{M}\omega_{M,n}p_n(t)^{d-1}
 -\beta_dt^9\sum_{n=1}^{M}\omega_{M,n}p_n(t)^{d-3}.
\]
Write \(\Theta=a\partial_a=-(t/3)\partial_t\).  To bound
\(\Theta^2\widehat F_{d,M}\), let \(\alpha_n^-\) and \(\alpha_n^+\)
denote the endpoints in \eqref{eq:Const.alpha-coarse}, and use
\[
\begin{array}{c|cc}
 d&10^5e_d^-&10^5e_d^+\\ \hline
 5&4472&4473\\
 6&3402&3403
\end{array}.
\]
The relevant \(t\)-intervals are contained in the following six rational
intervals:
\[
\begin{array}{c|ccc}
 &M=2&M=3&M=4\\ \hline
 d=5&[121/125,1069/1000]&[837/1000,970/1000]
     &[736/1000,839/1000]\\
 d=6&[121/125,1078/1000]&[837/1000,970/1000]
     &[736/1000,839/1000].
\end{array}
\]
These enclosures follow by cubing or squaring positive rationals.

We choose one-sided endpoint bounds according to coefficient sign only after
forming each full weighted power sum.  For \(0\le p\le j\), set
\[
 S_{p,d,M}^{\pm}
 =\sum_{n=1}^{M}\omega_{M,n}
   \left(\frac{\alpha_n^\pm}{d}\right)^p,
 \qquad
 E_{j,p}^{\pm}=(e_d^\pm)^{j-p}S_{p,d,M}^{\pm}.
\]
In particular,
\((S_{0,d,2},S_{0,d,3},S_{0,d,4})=(2,5/2,9/2)\).
For a scalar \(\sigma\), define
\[
 \mathcal A(\sigma;X^-,X^+)
 =\begin{cases}
   \sigma X^+,&\sigma\ge0,\\
   \sigma X^-,&\sigma<0.
  \end{cases}
\]
Expanding \(p_n(t)^r\) and collecting the weighted sums gives the
coefficientwise rational majorant
\[
\begin{aligned}
 U_{d,M}(t)={}&
 \sum_{j=0}^{d-1}\sum_{p=0}^{j}
 \frac{(3+3j-p)^2}{9}\,
 \mathcal A\left(
  (-1)^j\binom{d-1}{j}\binom jp;
  E_{j,p}^-,E_{j,p}^+
 \right)t^{3+3j-p}\\
 &+\sum_{j=0}^{d-3}\sum_{p=0}^{j}
 \frac{(9+3j-p)^2}{9}\,
 \mathcal A\left(
  -\beta_d(-1)^j\binom{d-3}{j}\binom jp;
  E_{j,p}^-,E_{j,p}^+
 \right)t^{9+3j-p},
\end{aligned}
\]
for which \(\Theta^2\widehat F_{d,M}(t)\le U_{d,M}(t)\).
For each of the six rational intervals \([\ell,u]\), set
\(m=(\ell+u)/2\), \(h=(u-\ell)/2\), and write
\[
 U_{d,M}(m+h\xi)=\sum_{j=0}^{3d}q_j\xi^j,
 \qquad R_{d,M}=\sum_{j=1}^{3d}|q_j|.
\]
Exact rational cross multiplication in the displayed double sum yields
\[
 \max q_0<-\frac{69}{1000},\qquad
 \max R_{d,M}<\frac{51}{1000},\qquad
 \max(q_0+R_{d,M})<-\frac9{500};
\]
both separate extrema occur in the \((d,M)=(5,4)\) case.  Hence, for
\(|\xi|\le1\),
\[
 \frac{\Theta^2\widehat L_{d,M}}{\mathcal D_d}
 \le U_{d,M}(m+h\xi)<-\frac9{500}<0.
\]

For the endpoint checks, supplement the upper constants above with
\[
 \mathcal D_5>\frac{26342}{10000},\qquad
 \mathcal D_6>\frac{26127}{10000},
 \qquad
 \bar e_5=\frac{4472136}{10^8},\quad
 \bar e_6=\frac{3402070}{10^8},
\]
and put
\[
 \underline r_{d,n}(a)
 =1-\frac{\bar\alpha_n}{d}\tau_a^2-\frac{\bar e_d}{a}.
\]
Direct rational substitution gives \(\underline r_{d,n}>6/25\) and
\(\beta_d/a^2<1/50\) for every retained radius in the following eight
independent endpoint checks:
\[
\begin{array}{c|c|c|c}
 d&M&a&\text{lower bound for }\widehat L_{d,M}(a)\\ \hline
 5&2&41/50&>201/200\\
 5&2&11/10&>21/20\\
 5&3&17/10&>26/25\\
 5&4&5/2&>26/25\\
 6&2&4/5&>101/100\\
 6&2&11/10&>21/20\\
 6&3&17/10&>103/100\\
 6&4&5/2&>26/25.
\end{array}
\]
For example, at the smallest-margin endpoint
\(\underline r_{5,1}>7433/10000\),
\(\underline r_{5,2}>443/1250\), and
\(\beta_5/a^2=20/1681\), so
\[
 \frac{26342}{10000}\frac{50}{41}
 \sum_{\rho\in\{7433/10000,\,443/1250\}}
 \rho^2\left(\rho^2-\frac{20}{1681}\right)
 >\frac{201}{200}.
\]
The other seven rows use the same lower-radius substitution with the
appropriate dimension and weights.  The omitted left endpoints follow from
the nesting in \eqref{eq:L.retained-four}.  Thus both endpoints of every
interval exceed one.  Strict concavity in \(\log a\) places
\(\widehat L_{d,M}\) above its endpoint chord, proving the proposition.
\end{proof}

\paragraph{The exceptional left endpoint in dimension five.}
The retained-tail estimate does not reach \(a=4/5\) when \(d=5\).
Here \(\beta_5/a^2=1/80\), and the same rational cubing and squaring
comparisons give
\[
 \mathcal D_5<\frac{32929}{12500},\qquad
 r_1<\frac{73863}{10^5},\qquad
 r_2<\frac{34331}{10^5},\qquad
 r_3<\frac1{10}<\sqrt{\frac1{80}},\qquad r_4<r_3.
\]
Thus \(T_{5,3}=T_{5,4}=0\).  Monotonicity of
\(r^2(r^2-1/80)\) on its positive branch gives
\begin{equation}
\begin{aligned}
 \widehat L_{5,2}(4/5)
 &=\widehat L_{5,3}(4/5)
  =\widehat L_{5,4}(4/5)
  =L_{5,4}(4/5)\\
 &<\frac{32929}{12500}\frac54
 \sum_{r\in\{73863/10^5,\,34331/10^5\}}
 r^2\left(r^2-\frac1{80}\right)
 <\frac{999}{1000}<1.
\end{aligned}
\label{eq:L.d5.no-uniform}
\end{equation}

\section{\texorpdfstring{Uniform finite-level estimate for
\(d\ge7\)}{Uniform finite-level estimate for d>=7}}
\label{supp:aggregate-estimate}

For \(d\ge7\), write \(I_M:=I_{d,M}\), where \(I_{d,M}\) is defined in
\eqref{eq:four-layer-bands}; here \(a_d^\ast=4/5\).
Recall \(\mathcal D_d\), \(r_{d,n}\), \(T_{d,n}\), and \(L_{d,M}\)
from \eqref{eq:L.2.D} and \eqref{eq:L.2.3}--\eqref{eq:L.2.5}.
Whenever the first \(M\) terms lie on the positive branch, those definitions
give
\begin{equation}
 L_{d,M}(a)=\mathcal D_d\sum_{n=1}^{M}
 \left(a^{-1}r_{d,n}^{d-1}
 -\beta_da^{-3}r_{d,n}^{d-3}\right).
\label{supp:uniform-active-sum}
\end{equation}

The exact-arithmetic program identified in the data-availability statement
verifies the Taylor-model coefficient and remainder lists used below.

\begin{proposition}[Uniform finite-level estimate]
\label{supp:aggregate-proposition}
For every integer \(d\ge7\), every \(M\in\{2,3,4\}\), and every
\(a\in I_M\), the first \(M\) terms lie on the positive branch and
\[
 (a\partial_a)^2L_{d,M}(a)<0,
 \qquad
 L_{d,M}(a)>1.
\]
\end{proposition}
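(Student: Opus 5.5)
The plan is to reduce the two-parameter statement (in \(d\ge7\) and \(a\in I_M\)) to a compact problem in the variables \(t=a^{-1/3}\) and \(\varepsilon=1/d\). This mirrors the dimension-five and six argument of Proposition~\ref{supp:d56-estimate}, but without fixing \(d\).

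\textbf{Step 1: positivity of the retained branches.} Since \(\alpha_n\) increases in \(n\) and \(a^{-2/3}\) decreases in \(a\), it suffices to check the largest retained level \(n=M\) at the left endpoint of each band \(I_M\). There the bounds \(\bar\alpha_M\tau_a^2<18/5\) already recorded, together with \(1/(2a\sqrt d)<7/25\) and \(d\ge7\), give
\[
r_{d,M}(a)\ge 1-\frac{1}{7}\Bigl(\frac{18}{5}+\frac{7}{25}\Bigr)>\frac{4}{9}.
\]
On the other side, \(\beta_d<1/24\) and \(a\ge4/5\) give \(\sqrt{\beta_d}/a<\sqrt{5/96}<1/4\). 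Hence every retained term lies on its smooth branch, and \eqref{supp:uniform-active-sum} holds identically on each \(I_M\).

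\textbf{Step 2: strict concavity in \(\log a\).} With \(\Theta=a\partial_a=-(t/3)\partial_t\), I write each summand as \(t^3p_n(t)^{d-1}\) minus \(\beta_dt^9p_n(t)^{d-3}\), where \(p_n=1-c_nt^2-e_dt^3\). The key point is that \(d\) cannot be handled by finitely many polynomial expansions, so I pass to logarithms. Write
\[
p_n^{d-1}=\exp\bigl((d-1)\log(1-\varepsilon(\alpha_nt^2+\tfrac12\varepsilon^{1/2}t^3))\bigr).
\]
This is a smooth function of \((t,\eta)\) with \(\eta=\varepsilon^{1/2}\in(0,7^{-1/2}]\). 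Its \(\eta\to0\) limit is \(\exp(-\alpha_nt^2)\), consistent with the limit \(L_{d,M}\to(\sqrt{2\pi}/a)(1-1/(24a^2))\sum e^{-\alpha_na^{-2/3}}\) stated in the introduction. Likewise \(\mathcal D_d\) and \(\beta_d\) are analytic in \(\eta\) in the sense of the bounds of Lemma~\ref{lem:dimension-factor}: \(\mathcal D_d\in(5/2,8/3)\), and \(\beta_d=\tfrac1{24}(1-\varepsilon)(1-2\varepsilon)(1-3\varepsilon)\). I then compute \(\Theta^2L_{d,M}/\mathcal D_d\) as an explicit expression in \((t,\eta)\) on the compact rectangle \([t_-,t_+]\times[0,7^{-1/2}]\). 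Next I cover this rectangle by finitely many boxes and, on each box, form a Taylor model: a polynomial in the box coordinates with exact rational coefficients, plus a rigorous remainder bound. For the exponentials I use the log-series remainders already employed in the proof of Theorem~\ref{thm:beta-integral}, namely \((d-1)\log(1-\lambda/d)>-\lambda\) and the truncated exponential series with geometric tail. On each box I check that \(q_0+\sum_{j\ge1}|q_j|+\text{remainder}<0\), exactly as in Proposition~\ref{supp:d56-estimate}. These are the lists the ancillary program verifies.

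\textbf{Step 3: endpoint values.} Strict concavity in \(\log a\) places \(L_{d,M}\) above its chord on each \(I_M\), so it remains to show \(L_{d,M}>1\) at \(a\in\{4/5,11/10,17/10,5/2\}\), uniformly in \(d\ge7\). At each endpoint this is a one-variable problem in \(\eta\). I will bound \(r_{d,n}^{d-1}\) from below by \(\exp(-\lambda_n-\lambda_n^2/d)\), with \(\lambda_n=d(1-r_{d,n})\) bounded as in Step 1. I bound \(\mathcal D_d\) from below by \(\sqrt{2\pi(1+1/(2d))}\) via Lemma~\ref{lem:dimension-factor}, and handle the \(\beta_d\) term through \(r^{d-3}\le1\). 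This gives a rational lower bound that is monotone in \(d\) for all \(d\ge d_0\). The finitely many \(d<d_0\) are then checked by exact substitution, as in the table of Proposition~\ref{supp:d56-estimate}.

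\textbf{Main obstacle.} I expect the hardest part to be the \(a=4/5\) endpoint with \(M=2\), since the \(d=5\) remark \eqref{eq:L.d5.no-uniform} shows the margin there is thin. If the uniform exponential lower bound is too lossy, I would first try keeping the second-order term \(-\lambda^2/(2d)\) exactly and enlarging \(d_0\), so that the small cases \(7\le d<d_0\) are settled individually with rational enclosures.
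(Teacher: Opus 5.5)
There is a genuine gap in your Step 3. It sits at the left endpoint $a=4/5$ of $I_2$, but the cause is not the one you anticipate. Bounding $r_{d,n}^{d-3}\le1$ in the $\beta_d$-term discards the factor $r_{d,n}^{d-3}\approx e^{-\alpha_n a^{-2/3}}$. At $a=4/5$ this factor is about $0.36$ for $n=1$ and $0.05$ for $n=2$.

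For large $d$, the true subtracted term $\mathcal D_d\beta_d a^{-3}\sum_{n\le2}r_{d,n}^{d-3}$ is about $0.08$. Your bound replaces it by about $2\sqrt{2\pi}/(24a^3)\approx0.41$. Even with $r_{d,n}^{d-1}$ and $\mathcal D_d$ kept exact, your lower bound therefore tends to
\[
\frac{\sqrt{2\pi}}{a}\Bigl(e^{-\alpha_1a^{-2/3}}+e^{-\alpha_2a^{-2/3}}\Bigr)-\frac{2\sqrt{2\pi}}{24a^3}\approx1.277-0.408\approx0.87<1
\qquad(d\to\infty).
\]
The true limit is about $1.19$. At $d=7$ the same bound gives about $0.91$, against a true value of about $1.02$.

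Because the deficit survives as $d\to\infty$, neither enlarging $d_0$ nor keeping more of the expansion of $r^{d-1}$ (your ``main obstacle'' fallback) can repair it. You must keep the decay of the $\beta_d$-term. One way is $r^{d-3}\le e^{-(1-3/d)\lambda}$, which follows from $\log(1-y)\le-y$. The paper instead writes $r^{d-c}=e^{-S}e^{\delta\eta_c}$ as in \eqref{supp:uniform-value-identity}. It uses two-sided enclosures of $\eta_c$ and Taylor models in $\varepsilon=d^{-1/2}\in[0,189/500]$, treated as a continuous variable.

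Three further points.

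\emph{Step 3, small and moderate $d$.} Your bound $r^{d-1}\ge e^{-\lambda-\lambda^2/d}$ is itself far too lossy at small $d$. At $d=7$, $a=4/5$ it leaves the first part at only about $0.76$. So the case-by-case range $7\le d<d_0$ is essential. The asserted monotonicity in $d$ for $d\ge d_0$ is also not automatic: $\lambda_n$ decreases with $d$, but $\beta_d$ increases and your lower bound for $\mathcal D_d$ decreases.

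\emph{Step 2.} Taylor-modelling all of $\Theta^2L_{d,M}/\mathcal D_d$ on a rectangle in $t$ and $d^{-1/2}$ is a legitimate but heavier alternative to the paper's exact split $\widetilde C_M+\delta R_M$. There the $d$-free tilted profile carries the sign: $\widetilde C_M$ lies below $-1/5$, $-21/200$, $-17/250$ for $M=2,3,4$. The remainder then needs only the crude bounds $1$, $1/2$, $1/3$, because it carries the factor $\delta\le1/7$. Since the curvature is a signed quantity, a one-sided inequality such as $(d-1)\log(1-\lambda/d)>-\lambda$ cannot serve as a Taylor-model remainder. You need two-sided enclosures like the paper's bounds on $E_\eta$ and $E_\ell$.

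\emph{Step 1.} From $\beta_d<1/24$ and $a\ge4/5$ you get $\sqrt{\beta_d}/a<5/(8\sqrt6)\approx0.255$, not $\sqrt{5/96}$. Your bound $1/4$ actually fails for large $d$ at $a=4/5$. The positivity conclusion survives because $r_{d,n}>4/9$.
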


\begin{proof}

\subsection{Exact curvature decomposition}

Set
\[
 t=a^{-1/3},\qquad
 \varepsilon=d^{-1/2},\qquad
 \delta=\varepsilon^2=d^{-1},
\]
and, for each \(n\),
\begin{equation}
 z_n=\alpha_nt^2,\qquad h=\frac{t^3}{2}=\frac1{2a},
 \qquad S_n=z_n+h\varepsilon,
 \qquad r_n=1-\delta S_n.
\label{supp:uniform-compact-variables}
\end{equation}
We use the componentwise enclosures \eqref{eq:Const.alpha-fine} and the
rational \(t\)-bands
\begin{equation}
\begin{array}{c|c}
M&\text{rational enclosure of }a^{-1/3}\text{ on }I_M\\ \hline
2&[9687/10000,10773/10000]\\
3&[4189/5000,1211/1250]\\
4&[921/1250,8379/10000].
\end{array}
\label{supp:uniform-t-bands}
\end{equation}
The comparisons in \eqref{supp:uniform-t-bands} follow by cubing positive
rationals; the common \(\alpha_n\)-enclosures were proved in
Section \ref{supp:constant-proofs}.

For \(a\in I_M\) and \(n\le M\), these enclosures give
\[
 z_n<\frac{18}{5},\qquad h\varepsilon<\frac14,
 \qquad 0<\delta S_n<\frac{11}{20}.
\]
It follows that \(r_n>9/20\).  On the other hand,
\[
 \frac{\sqrt{\beta_d}}a
 <\frac5{8\sqrt6}<\frac{25}{96}<\frac9{20}.
\]
Thus every term in \eqref{supp:uniform-active-sum} remains on its positive
branch throughout its assigned interval.

Put
\[
 \Theta=a\partial_a,\qquad
 w=\frac23z+h\varepsilon,\qquad
 v=\frac49z+h\varepsilon,\qquad r=1-\delta S.
\]
Direct differentiation gives the exact identity
\begin{equation}
 \frac{\Theta^2(a^{-m}r^{d-c})}{a^{-m}r^{d-c}}
 =A_{m,c},
 \qquad
 A_{m,c}=m^2-(1-c\delta)\frac{2mw+v}{r}
 +(1-c\delta)(1-(c+1)\delta)\frac{w^2}{r^2}.
\label{supp:uniform-curvature-identity}
\end{equation}
Let
\[
 A_m^0=(w-m)^2-v.
\]
Then
\begin{equation}
 A_{m,c}=A_m^0+\delta B_{m,c},
\label{supp:uniform-A-decomposition}
\end{equation}
where
\begin{equation}
 B_{m,c}=-\frac{(2mw+v)(S-c)}r
 +\frac{w^2\{2S-(2c+1)+\delta(c(c+1)-S^2)\}}{r^2}.
\label{supp:uniform-B-formula}
\end{equation}

For \(\delta>0\), define
\[
 \eta_c(\delta,S)
 =\frac{(1-c\delta)\log(1-\delta S)+\delta S}{\delta^2},
 \qquad
 \eta_c(0,S)=cS-\frac{S^2}{2}.
\]
Then \(r^{d-c}=e^{-S}e^{\delta\eta_c}\).  Similarly, write
\[
 (1-\delta)(1-2\delta)(1-3\delta)=e^{\delta\ell(\delta)},
\]
where
\[
 \ell(\delta)=
 \frac{\log(1-\delta)+\log(1-2\delta)+\log(1-3\delta)}{\delta},
 \qquad \ell(0)=-6.
\]
With
\[
 \phi(y)=\begin{cases}(e^y-1)/y,&y\ne0,\\1,&y=0,\end{cases}
 \qquad
 \lambda_1=\eta_1,\qquad \lambda_3=\eta_3+\ell,
\]
put
\begin{equation}
 Q_{1}=\lambda_1\phi(\delta\lambda_1)A_1^0
 +e^{\delta\lambda_1}B_{1,1},
 \qquad
 Q_{3}=\lambda_3\phi(\delta\lambda_3)A_3^0
 +e^{\delta\lambda_3}B_{3,3}.
\label{supp:uniform-Q-formulas}
\end{equation}
Equations \eqref{supp:uniform-active-sum}--
\eqref{supp:uniform-Q-formulas} yield
\begin{equation}
 \frac{\Theta^2L_{d,M}}{\mathcal D_d}
 =\widetilde C_M+\delta R_M,
\label{supp:uniform-curvature-decomposition}
\end{equation}
where
\begin{align}
 \widetilde C_M
 &=\sum_{n=1}^{M}e^{-S_n}
 \left(a^{-1}A_{1,n}^0-\frac{a^{-3}}{24}A_{3,n}^0\right),
\label{supp:uniform-tilted-profile}\\
 R_M
 &=\sum_{n=1}^{M}e^{-S_n}
 \left(a^{-1}Q_{1,n}-\frac{a^{-3}}{24}Q_{3,n}\right).
\label{supp:uniform-remainder}
\end{align}
The correction in \eqref{supp:uniform-curvature-decomposition} is
\(\delta R_M\), not \(\varepsilon R_M\); retaining the common tilt
\(e^{-h\varepsilon}\) in \(\widetilde C_M\) removes the apparent
first-order term.

To make the profile explicit, set \(x=h\varepsilon\),
\(w_{0,n}=2z_n/3\), and \(q=1/(24a^2)\).  Then
\begin{equation}
 \widetilde C_M=e^{-x}
 \bigl(C_{0,M}+C_{1,M}x+C_{2,M}x^2\bigr),
\label{supp:uniform-tilted-quadratic}
\end{equation}
where
\begin{align*}
 C_{0,M}
 &=\sum_{n=1}^{M}\frac{e^{-z_n}}a
 \left[(w_{0,n}-1)^2-\frac49z_n
 -q\left((w_{0,n}-3)^2-\frac49z_n\right)\right],\\
 C_{1,M}
 &=\sum_{n=1}^{M}\frac{e^{-z_n}}a
 \left[(2w_{0,n}-3)-q(2w_{0,n}-7)\right],\\
 C_{2,M}
 &=\sum_{n=1}^{M}\frac{e^{-z_n}}a(1-q).
\end{align*}
In particular,
\begin{equation}
 \partial_x\widetilde C_M=e^{-x}D_M,
 \qquad
 D_M=(C_{1,M}-C_{0,M})
 +(2C_{2,M}-C_{1,M})x-C_{2,M}x^2.
\label{supp:uniform-tilt-derivative}
\end{equation}

\subsection{Uniform rational bounds}

On each rational rectangle write
\[
 t=t_0+t_1\xi,\qquad
 \varepsilon=\varepsilon_0+\varepsilon_1\nu,
 \qquad |\xi|,|\nu|\le1.
\]
A Taylor model consists of a rational polynomial
\(P(\xi,\nu)\), of the stated degree in each variable, and a rational
remainder interval.  Products are truncated coordinatewise; discarded
coefficients are added to the remainder using
\[
 |P(\xi,\nu)|\le |p_{00}|+
 \sum_{(i,j)\ne(0,0)}|p_{ij}|.
\]
The reciprocal is expanded to degree ten by the geometric series.
For an exponential, write \(Y=c+y\), expand \(e^y\) to degree ten, and
bound its remainder by \(e^\rho\rho^{11}/11!\), where
\(|y|\le\rho\).  This is multiplied by an upper enclosure for \(e^c\),
obtained from a degree-forty Taylor sum with a Lagrange remainder whose
factor \(e^{|c|}\) is bounded by a geometric majorant.  The corresponding
degree-ten remainder for \(\phi(y)\) is
\(e^\rho\rho^{11}/12!\).

The logarithmic terms use the one-sided expansions
\[
 \eta_c=cS-(1-c\delta)
 \sum_{k=2}^{10}\frac{\delta^{k-2}S^k}{k}+E_{\eta},
 \qquad
 -\frac{S_+^2u_+^9}{11(1-u_+)}\le E_{\eta}\le0,
\]
where \(S\le S_+\), \(\delta S\le u_+<1\), and
\(c\delta\le3(189/500)^2<1\).  For \(\ell\), with
\(\delta_+\le(189/500)^2\) and \(3\delta_+<1\), use
\[
 \ell=-\sum_{j=1}^{3}\sum_{k=1}^{10}
 \frac{j^k\delta^{k-1}}k+E_{\ell},
 \qquad
 -\sum_{j=1}^{3}\frac{j(j\delta_+)^{10}}
 {11(1-j\delta_+)}\le E_{\ell}\le0.
\]
Thus every table entry reduces to exact rational arithmetic.

Let
\[
 \bar\varepsilon=\frac{189}{500}>\frac1{\sqrt7}.
\]
For \(M=2\), direct bivariate Taylor models of degree six applied to
\eqref{supp:uniform-tilted-quadratic} give
\begin{equation}
\begin{array}{c|c|c}
M&t\text{-interval}&\widetilde C_M<\\ \hline
2&[9687/10000,10001/10000]&-1/5\\
2&[9999/10000,10773/10000]&-1/5.
\end{array}
\label{supp:uniform-profile-M2}
\end{equation}
Both rows hold for \(0\le\varepsilon\le\bar\varepsilon\), and their
overlap covers the entire \(M=2\) interval.

For \(M=3,4\), we have
\[
 C_{0,M}=\Theta^2\sum_{n=1}^{M}
 \left(a^{-1}-\frac{a^{-3}}{24}\right)e^{-z_n}.
\]
If
\[
 P_m(z)=\left(\frac23z-m\right)^3
 -\frac43z\left(\frac23z-m\right)+\frac8{27}z,
\]
then
\[
 \Theta C_{0,M}=\sum_{n=1}^{M}e^{-z_n}
 \left(a^{-1}P_1(z_n)-\frac{a^{-3}}{24}P_3(z_n)\right).
\]
Degree-six Taylor models on the full \(t\)-intervals give
\begin{equation}
\begin{array}{c|c|c|c|c|c}
M&t\text{-interval}&\Theta C_{0,M}>&D_M<&a_*&C_{0,M}(a_*)<\\ \hline
3&[4189/5000,1211/1250]&1/8&-13/100&17/10&-21/200\\
4&[921/1250,8379/10000]&7/100&-37/100&5/2&-17/250.
\end{array}
\label{supp:uniform-profile-M34}
\end{equation}
The first inequality shows that \(C_{0,M}\) increases with \(a\), while
the second and \eqref{supp:uniform-tilt-derivative} show that the tilt can
only decrease it.  Hence
\begin{equation}
 \widetilde C_2<-\frac15,\qquad
 \widetilde C_3<-\frac{21}{200},\qquad
 \widetilde C_4<-\frac{17}{250}.
\label{supp:uniform-profile-bounds}
\end{equation}

Applying the same rational Taylor rules to the full sum
\eqref{supp:uniform-remainder}
gives
\begin{equation}
\begin{array}{c|c|c|c|c}
M&t\text{-interval}&\varepsilon\text{-interval}
&\text{degree}&R_M<\\ \hline
2&[9687/10000,10773/10000]&[0,189/500]&6&1\\
3&[4189/5000,1807/2000]&[0,189/500]&6&1/2\\
3&[4517/5000,1211/1250]&[0,1/5]&6&1/2\\
3&[4517/5000,1211/1250]&[1/5,189/500]&6&1/2\\
4&[921/1250,4037/5000]&[0,189/500]&6&1/3\\
4&[8073/10000,8379/10000]&[0,189/500]&8&1/3.
\end{array}
\label{supp:uniform-remainder-bounds}
\end{equation}
The slight overlaps between adjacent \(t\)-intervals ensure complete
coverage.  Since an actual dimension \(d\ge7\) satisfies
\(\delta\le1/7\), equations \eqref{supp:uniform-curvature-decomposition},
\eqref{supp:uniform-profile-bounds}, and
\eqref{supp:uniform-remainder-bounds} give
\begin{equation}
\begin{aligned}
 \frac{\Theta^2L_{d,2}}{\mathcal D_d}
 &<-\frac15+\frac17=-\frac2{35},\\
 \frac{\Theta^2L_{d,3}}{\mathcal D_d}
 &<-\frac{21}{200}+\frac1{14}=-\frac{47}{1400},\\
 \frac{\Theta^2L_{d,4}}{\mathcal D_d}
 &<-\frac{17}{250}+\frac1{21}=-\frac{107}{5250}.
\end{aligned}
\label{supp:uniform-curvature-margins}
\end{equation}
Thus \(L_{d,M}\) is strictly concave as a function of \(\log a\) on
\(I_M\), uniformly for all \(d\ge7\).

\subsection{Endpoint values and conclusion}

The endpoint calculation uses the exact identity
\begin{equation}
 \frac{L_{d,M}(a)}{\mathcal D_d}
 =\sum_{n=1}^{M}e^{-S_n}
 \left(a^{-1}e^{\delta\eta_{1,n}}
 -\beta_da^{-3}e^{\delta\eta_{3,n}}\right).
\label{supp:uniform-value-identity}
\end{equation}
The dimension-factor bound \eqref{eq:L.dimension-factor-bounds} gives
\[
 \mathcal D_d>\sqrt{2\pi(1+\delta/2)}
 >\frac{125331}{50000}
 \left(1+\frac\delta4-\frac{\delta^2}{32}\right),
\]
where the second inequality follows from
\[
 \left(\frac{125331}{50000}\right)^2
 <2\frac{3141592}{10^6}<2\pi,
 \qquad
 \sqrt{1+y}\ge1+\frac y2-\frac{y^2}{8}
 \quad(y\ge0).
\]

Using this lower bound in \eqref{supp:uniform-value-identity}, the exact
formula for \(\beta_d\), and degree-eight Taylor models gives the following
four endpoint families:
\begin{equation}
\begin{array}{c|c|c|c|c}
M&a&t\text{-enclosure}&\varepsilon\text{-interval}&L_{d,M}(a)>\\ \hline
2&4/5&[2693/2500,10773/10000]&[0,1/5]&21/20\\
2&4/5&[2693/2500,10773/10000]&[1/5,189/500]&101/100\\
2&11/10&[9687/10000,1211/1250]&[0,189/500]&101/100\\
3&17/10&[4189/5000,8379/10000]&[0,189/500]&103/100\\
4&5/2&[921/1250,7369/10000]&[0,189/500]&101/100.
\end{array}
\label{supp:uniform-endpoint-bounds}
\end{equation}
For each displayed \(t\)-enclosure, direct cubing gives
\(t_-^3<1/a<t_+^3\).
The first two rows form a single endpoint family; the split at
\(\varepsilon=1/5\) is used only to sharpen its lower bound.

At the common interfaces, \eqref{eq:L.2.5} and the activation proved above
give \(L_{d,M+1}>L_{d,M}\).  Hence the endpoint bounds at \(11/10\) and
\(17/10\) also supply the left endpoints of the next two intervals.
Together with \eqref{supp:uniform-endpoint-bounds}, both endpoints of
\(L_{d,M}\) exceed one on every \(I_M\).  By
\eqref{supp:uniform-curvature-margins}, \(L_{d,M}\) lies above its endpoint
chord, proving the proposition.
\end{proof}

\section{Beta-integral estimates}

The parity bases used in \eqref{eq:L.5.48} are
\begin{equation*}
\mathfrak h_{10}
=\frac{134217728\sqrt2}{61108047\pi},
\qquad
\mathfrak h_{11}
=\frac{67584\sqrt2}{96577}.
\end{equation*}
Together with \eqref{eq:Const.rad} and \eqref{eq:Const.pi}, these values give
the two rational parity-base margins used in the body.

\subsection{\texorpdfstring{Beta-integral estimate on \([3,5]\)}{Beta-integral estimate on [3,5]}}

\begin{proposition}[Finite beta-integral estimate on \(3\le a\le5\)]
\label{supp:beta-finite}
For \(5\le d\le9\), the function \(\underline{\mathfrak B}_d\) in
\eqref{eq:L.5.18} is concave on \(3\le a\le5\), and
\[
\underline{\mathfrak B}_d(3)>1,
\qquad
\underline{\mathfrak B}_d(5)>1.
\]
\end{proposition}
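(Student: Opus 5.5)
The statement has two parts, and I will treat them separately for each of the five dimensions \(d\in\{5,\dots,9\}\), so that \(m=d-1\in\{4,\dots,8\}\). The first part is concavity of \(\underline{\mathfrak B}_d\) on \([3,5]\). The second part is the two endpoint inequalities.

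For concavity I start from the exact second-derivative formula \eqref{eq:L.5.20}. Its last two terms, \(-6\mathcal K_d/(a-c_d)^4\) and \(-36\mathcal D_d\beta_d/(8a^5)\), are strictly negative. So it suffices to show that the bracket \(-2\sigma_dc_d+\frac{\mathcal D_d}{8}\Psi_m(u,v)\) is negative. I will do this by bounding \(\Psi_m\) from above on the parameter rectangle, which is the claim \eqref{eq:L.5.21}.

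First I enclose the rectangle. For \(3\le a\le5\) one has \(u=c_d/a\in[c_d/5,c_d/3]\), with \(c_d=1/(2d^{3/2})\). One also has \(v=\alpha_1/(da^{2/3})\in[\alpha_1^-/(d\,5^{2/3}),\alpha_1^+/(d\,3^{2/3})]\). Rational enclosures come from \eqref{eq:Const.alpha-coarse} and from cubing, for instance \(3^{2/3}>52/25\) and \(5^{2/3}<293/100\). Both \(u\) and \(v\) are small: \(u<0.015\) and \(v<0.09\).

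Next I expand \(\Phi_m\) from \eqref{eq:L.5.19} as a polynomial in \((u,v)\), as was done for \(\Psi^{(4)}\) in \eqref{eq:D4.19}. I then take \(\Psi_m=3\Phi_m(u,v)-\Phi_m(u,0)\). Its constant term is \(4\), and its first-order part is negative. As in the \(d=4\) argument around \eqref{eq:D4.21}, I will check that \(\partial_u\Psi_m\) and \(\partial_v\Psi_m\) are negative on the rectangle. To do this I drop the negative higher-order terms and bound the positive ones at the far corner. With both partials negative, \(\Psi_m\) is maximised at the corner where \(u\) and \(v\) are smallest, that is, at \(a=5\). Substituting there should give \(\Psi_m<103/100\).

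Then the bracket is at most \(-2\sigma_dc_d+\frac{\mathcal D_d}{8}\cdot\frac{103}{100}\). I plan to bound \(\mathcal D_d\) using Lemma \ref{lem:dimension-factor}, namely \(\mathcal D_d<8/3\). This gives \(\frac{\mathcal D_d}{8}\Psi_m<0.344\).

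Here a problem appears. The term \(2\sigma_dc_d=(d+\frac12)/d^{3/2}\) is only about \(0.49\) at \(d=5\), and it decreases in \(d\): about \(0.45\) at \(d=6\), \(0.40\) at \(d=7\), and \(0.35\) at \(d=9\). So the bracket alone may not be negative for larger \(d\). If that happens, I will also use the \(\mathcal K_d\) term: \(6\mathcal K_da^3/(a-c_d)^4\ge 6\mathcal K_d/a\), which is at least \(6\mathcal K_d/5\). This is exactly the mechanism used in the \([5/2,3]\) argument before \eqref{eq:L.5.43}. Failing that, I will sharpen \(\Psi_m\) by using its actual value, which is near \(4(1-\ldots)\), dimension by dimension. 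I expect this sign bookkeeping, done for each \(d\) separately, to be the main obstacle.

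For the endpoints, I substitute \(a=3\) and \(a=5\) into \eqref{eq:L.5.18} using outward rational enclosures. These are: \(\mathfrak h_d\) from its closed Gamma form, via \(\sqrt2\) and \(\pi\) bounds from \eqref{eq:Const.rad} and \eqref{eq:Const.pi}; \(\mathcal D_d\) likewise; \(\sqrt d\) for \(c_d\); and \(\alpha_1^{\pm}\). I will use lower bounds for \(\rho_1^m\) and upper bounds for \(\rho_0^m\), obtained by raising rational bounds on \(\rho\) to integer powers. This gives ten rational inequalities, each checked by cross-multiplication. The margins should be comfortable, because \(\mathfrak h_d\) is close to \(1\) and the term \(\frac{\mathcal D_d}{8a}(3\rho_1^m-\rho_0^m)\approx \frac{2}{8a}\cdot2\) dominates the small losses \(\sigma_dc_d/a\) and \(\mathcal K_d/a^2\).
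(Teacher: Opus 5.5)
Your proposal is correct and follows essentially the paper's proof. The paper also shows that $\Psi_d$ increases in $a$ by discarding negative nonlinear monomials and bounding the positive ones on a rational rectangle, though it does this for the weighted log-derivative $G_m=u\,\partial_u\Psi_d+\frac23 v\,\partial_v\Psi_d$ rather than your two separate partials. It then evaluates $\Psi_d(5)<103/100$ and checks the ten endpoint values by outward rational substitution.

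Your worry about the bracket is unfounded. The quantity $2\sigma_dc_d=(d+\frac12)/d^{3/2}$ is smallest at $d=9$, where it equals $19/54>103/300>\frac{\mathcal D_d}{8}\Psi_m$, so the bracket is below $-23/2700$ for every $5\le d\le9$ and no fallback is needed. The endpoint margins, however, are thinner than your heuristic suggests, about $0.007$ at $d=5$, $a=3$, since $3\rho_1^m-\rho_0^m$ is only about $1$ to $1.25$.
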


\begin{proof}
Set \(m=d-1\), \(u=c_d/a\), and \(v=\alpha_1/(d a^{2/3})\), so
\(\Psi_d(a)=3\Phi_m(u,v)-\Phi_m(u,0)\).  Define
\begin{equation}
G_m(u,v):=u\,\partial_u\Psi_d+\frac23v\,\partial_v\Psi_d.
\label{eq:L.5.21a}
\end{equation}
Because \(du/d\log a=-u\) and \(dv/d\log a=-2v/3\),
\begin{equation}
\frac{d}{d\log a}\Psi_d(a)=-G_m(u,v).
\label{eq:L.5.21b}
\end{equation}

For the five relevant dimensions the parameter ranges lie in the
following rational rectangles:
\begin{equation}
\begin{array}{c|c|c|c}
m&\text{range of }a&0<u\le U_m&0<v\le V_m\\ \hline
4&a\ge5/2&9/500&1/10\\
5&a\ge5/2&7/500&81/1000\\
6&a\ge3&1/100&31/500\\
7&a\ge3&3/400&27/500\\
8&a\ge3&1/160&6/125.
\end{array}
\label{eq:L.5.21c}
\end{equation}
These containments follow by cross multiplication from
\(\alpha_1<887/1000\) and the lower bounds
\begin{equation*}
\sqrt5>\frac{559}{250},\quad
\sqrt6>\frac{120}{49},\quad
\sqrt7>\frac{37}{14},\quad
\sqrt8>\frac{280}{99},\quad \sqrt9=3,
\end{equation*}
together with \((5/2)^{2/3}>921/500\) and \(3^{2/3}>52/25\).

Write the exact polynomial expansion as
\begin{equation*}
G_m=-A_mu-B_mv+\sum_{i+j\ge2}g_{ij}^{(m)}u^iv^j
\end{equation*}
where direct extraction of the linear terms gives the uniform identities
\begin{equation*}
A_m=12m,\qquad B_m=\frac{80m}{9}.
\end{equation*}
Define the contributions of the positive nonlinear coefficients:
\begin{equation}
\begin{aligned}
C_{u,m}&=\sum_{\substack{i\ge2\\g_{i0}^{(m)}>0}}
g_{i0}^{(m)}U_m^{\,i-1},\\
C_{v,m}&=\sum_{\substack{j\ge1,\ i+j\ge2\\g_{ij}^{(m)}>0}}
g_{ij}^{(m)}U_m^{\,i}V_m^{\,j-1}.
\end{aligned}
\label{eq:L.5.21d}
\end{equation}
On the rectangle, \(u^i\le U_m^{i-1}u\) and
\(u^iv^j\le U_m^iV_m^{j-1}v\).  Discarding the nonpositive nonlinear
monomials therefore gives
\begin{equation*}
G_m\le-(A_m-C_{u,m})u-(B_m-C_{v,m})v.
\end{equation*}
Expansion of the degree-\(m\) polynomial \(\Phi_m\) and rational cross
multiplication give
\begin{equation}
\begin{array}{c|c}
m&\text{resulting bound}\\ \hline
4&G_4\le-42u-6v\\
5&G_5\le-53u-4v\\
6&G_6\le-64u-8v\\
7&G_7\le-76u-10v\\
8&G_8\le-87u-10v.
\end{array}
\label{eq:L.5.21e}
\end{equation}
The exact rational subtractions give the following additional margins:
\begin{equation}
\begin{array}{c|cc}
m&(A_m-C_{u,m})-\text{displayed coefficient}&
(B_m-C_{v,m})-\text{displayed coefficient}\\ \hline
4&>4/5&>1/100\\
5&>1/4&>7/10\\
6&>3/4&>2/3\\
7&>2/5&>1/20\\
8&>1/2&>1/10.
\end{array}
\label{eq:L.5.21f}
\end{equation}
Thus \(\Psi_d(a)\) is strictly increasing on every range in
\eqref{eq:L.5.21c}.

For the five substitutions at \(A=5\) below and the two at \(A=3\) in
Proposition \ref{supp:beta-V}, use the following one-sided rational
enclosure.  If
\(\ell_d\le\sqrt d\le u_d\) and
\(s_A^-<A^{2/3}<s_A^+\), put
\begin{equation}
\begin{gathered}
u_A^-=\frac1{2Ad\,u_d},\quad
u_A^+=\frac1{2Ad\,\ell_d},\quad
v_A^-=\frac{22}{25d\,s_A^+},\quad
v_A^+=\frac{887}{1000d\,s_A^-},\\
\rho_A^-=1-u_A^+-v_A^+,\quad
\rho_A^+=1-u_A^--v_A^-,\quad
W_A^+=u_A^++\frac23v_A^+,\\
Q_A^+=2(\rho_A^+)^2
-m\rho_A^-\left(4u_A^-+\frac{22}{9}v_A^-\right)
+m(m-1)(W_A^+)^2,\\
P_A^-=(1-u_A^+)^{m-2}
\left[2-4du_A^++d(d+1)(u_A^+)^2\right].
\end{gathered}
\label{eq:L.5.22}
\end{equation}
Here \(0<u_A^+<1/(12d)\).  Thus the bracket
\(b_d(u)=2-4du+d(d+1)u^2\) satisfies
\begin{equation*}
b_d(u)>\frac53,\qquad
b_d'(u)=2d((d+1)u-2)<0,
\end{equation*}
so the bracket in \(P_A^-\) is positive and decreasing on these boxes.
Whenever \(Q_A^+>0\), the defining formula for \(\Phi_m\) gives
\begin{equation}
\Psi_d(A)\le3(\rho_A^+)^{m-2}Q_A^+-P_A^-.
\label{eq:L.5.22a}
\end{equation}
Use
\begin{equation}
\begin{array}{c|cc}
d&\ell_d&u_d\\ \hline
5&38/17&161/72\\
6&120/49&49/20\\
7&37/14&127/48\\
8&280/99&99/35\\
9&3&3
\end{array}
\qquad
\begin{array}{c|cc}
A&s_A^-&s_A^+\\ \hline
3&52/25&2081/1000\\
5&731/250&117/40.
\end{array}
\label{eq:L.5.22b}
\end{equation}
Squaring and cubing prove the displayed enclosures.  At \(A=5\), exact
substitution in \eqref{eq:L.5.22} yields the following five bounds:
\begin{equation}
\begin{array}{c|cc}
d&Q_5^+&\Psi_d(5)\\ \hline
5&>1&<103/100\\
6&>1&<103/100\\
7&>1&<103/100\\
8&>1&<103/100\\
9&>1&<103/100.
\end{array}
\label{eq:L.5.23}
\end{equation}
Monotonicity therefore proves \(\Psi_d(a)<103/100\) on
\(3\le a\le5\).  Moreover,
\(\mathcal D_d<8/3\) and
\(2\sigma_dc_d\ge19/54\) on \(5\le d\le9\).  Hence
\begin{equation*}
-2\sigma_dc_d+\frac{\mathcal D_d}{8}\Psi_d
<-\frac{19}{54}+\frac{103}{300}
=-\frac{23}{2700}<0.
\end{equation*}
All remaining terms in \eqref{eq:L.5.20} are negative, and consequently
\begin{equation}
\underline{\mathfrak B}_d''(a)<0
\qquad(5\le d\le9,\ 3\le a\le5).
\label{eq:L.5.24}
\end{equation}

For the endpoints, use the strict lower bounds

\begin{equation}
\begin{array}{c|cc}
d&h_d^-<\mathfrak h_d&\mathcal D_d^-<\mathcal D_d\\ \hline
5&979/1000&263/100\\
6&251/256&5/2\\
7&637/648&5/2\\
8&197/200&5/2\\
9&955/968&5/2 .
\end{array}
\label{eq:L.5.25}
\end{equation}
Using \(707/500<\sqrt2\), \(333/106<\pi<355/113\), and the square-root
bounds in \eqref{eq:L.5.22b}, the closed Gamma formulas give the following
rational bounds; integer cross multiplication shows that each exceeds its
counterpart in \eqref{eq:L.5.25}:
\begin{equation}
\begin{array}{c|c|c}
d&\text{rational lower bound for }\mathfrak h_d
&\text{rational lower bound for }\mathcal D_d\\ \hline
5&\frac{160}{231}\frac{707}{500}
&\frac38\frac{333}{106}\frac{38}{17}\\
6&\frac{32768}{15015}\frac{707}{500}\frac{113}{355}
&\frac{16}{15}\frac{120}{49}\\
7&\frac{896}{1287}\frac{707}{500}
&\frac5{16}\frac{333}{106}\frac{37}{14}\\
8&\frac{8388608}{3828825}\frac{707}{500}\frac{113}{355}
&\frac{64}{35}\frac{707}{500}\\
9&\frac{32256}{46189}\frac{707}{500}
&\frac{105}{128}\frac{333}{106}.
\end{array}
\label{eq:L.5.25c}
\end{equation}

For \(A=3,5\), set

\begin{equation*}
c_d^-:=\frac1{2d\,u_d},\qquad
c_d^+:=\frac1{2d\,\ell_d},\qquad
b_3:=\frac{52}{25},\qquad b_5:=\frac{73}{25},
\end{equation*}
and define the rational number

\begin{equation}
\begin{aligned}
E_{d,A}^{[3,5]}:={}&h_d^-
-\frac{\sigma_d c_d^+}{A}
-\frac{\mathcal K_d}{(A-c_d^+)^2}\\
&+\frac{\mathcal D_d^-}{8A}
\left[
3\left(
1-\frac{c_d^+}{A}
-\frac{887}{1000\,d\,b_A}
\right)^m
-\left(1-\frac{c_d^-}{A}\right)^m
-\frac{3\beta_d}{A^2}
\right].
\end{aligned}
\label{eq:L.5.25a}
\end{equation}
Let \(\mathscr C_{d,A}\) denote the square bracket in
\eqref{eq:L.5.25a}.  Direct rational substitution using
\eqref{eq:L.5.22} gives

\begin{equation}
\begin{array}{c|cc}
d&\mathscr C_{d,3}&\mathscr C_{d,5}\\ \hline
5&>1&>32/25\\
6&>1&>63/50\\
7&>49/50&>5/4\\
8&>49/50&>31/25\\
9&>97/100&>123/100 .
\end{array}
\label{eq:L.5.25b}
\end{equation}
Thus \(\mathscr C_{d,A}>4/5\) throughout, justifying the lower bound for
\(\mathcal D_d\).

The directed bounds in \eqref{eq:L.5.22} and \eqref{eq:L.5.25} give
\(\underline{\mathfrak B}_d(A)>E_{d,A}^{[3,5]}\); integer cross
multiplication then gives

\begin{equation}
\begin{array}{c|cc}
d&E_{d,3}^{[3,5]}&E_{d,5}^{[3,5]}\\ \hline
5&>1007/1000&>1013/1000\\
6&>126/125&>507/500\\
7&>203/200&>1019/1000\\
8&>1021/1000&>128/125\\
9&>513/500&>1027/1000 .
\end{array}
\label{eq:L.5.26}
\end{equation}
All entries in \eqref{eq:L.5.26} exceed \(1+1/4000\); with
\eqref{eq:L.5.24}, this proves the proposition.
\end{proof}

\subsection{\texorpdfstring{A uniform bound for \(\Psi_d\)}{A uniform bound for Psi d}}

\begin{proposition}[Uniform bound for \(\Psi_d\)]
\label{supp:beta-V}
The bound \eqref{eq:L.5.34} holds.
\end{proposition}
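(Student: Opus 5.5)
The plan is to prove \eqref{eq:L.5.34} by showing that \(\Psi_d(a)\) is increasing in \(a\) on \([5/2,3]\), so that it suffices to bound \(\Psi_d(3)\), and then to bound \(\Psi_d(3)\) uniformly in \(d\ge5\).

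\emph{Step 1: monotonicity.} I would reuse the device of Proposition \ref{supp:beta-finite}. By \eqref{eq:L.5.21b},
\[
\frac{d}{d\log a}\Psi_d=-G_m,
\qquad
G_m=u\,\partial_u\Psi_d+\tfrac23 v\,\partial_v\Psi_d .
\]
The linear part of \(G_m\) is \(-12mu-\frac{80m}{9}v\). For \(m=4,\dots,8\) the displayed bounds \eqref{eq:L.5.21e} already give \(G_m<0\) on the rectangles \eqref{eq:L.5.21c}, and the rows \(m=4,5\) cover \(a\ge5/2\). For \(m=6,7,8\) those rectangles start at \(a\ge3\), so they must be enlarged to \(a\ge5/2\); this needs \(V_m\) raised by the factor \((6/5)^{2/3}\) and \(U_m\) by \(6/5\), and I expect the surplus margins in \eqref{eq:L.5.21f} to absorb this.

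For \(m\ge9\) I would argue uniformly. Write \(\rho=1-u-v\) and use \(mu\le 1/(5\sqrt d)\) and \(mv<\alpha_1(5/2)^{-2/3}<1/2\). The nonlinear monomials of \(G_m\) come from \(\rho^{m-2}\) multiplied by quadratic brackets, so each positive monomial is dominated by \(m(u+v)\) times a bounded factor. The leading terms \(-12mu\) and \(-\tfrac{80m}{9}v\) then win once the positive nonlinear sums \(C_{u,m}\) and \(C_{v,m}\) are bounded by geometric-series majorants in \(mU_m\) and \(mV_m\).

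\emph{Step 2: the endpoint \(a=3\).} For \(d=5,6\) I would substitute \(A=3\) into the directed enclosure \eqref{eq:L.5.22}. This gives \(\Psi_d(3)\le 3(\rho_3^+)^{m-2}Q_3^+-P_3^-\) by \eqref{eq:L.5.22a}, and checking that this rational number is below \(1/4\) is a cross-multiplication. These are the two \(A=3\) substitutions announced before \eqref{eq:L.5.22}.

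For \(d\ge7\) I would bound uniformly instead. Set \(V:=mv\approx \alpha_1 3^{-2/3}\approx0.426\), and note that \(mu\to0\). Then
\[
3\Phi_m(u,v)\le 3e^{-(m-2)(u+v)}\Bigl[2-\tfrac{22}{9}mv\rho+m^2\bigl(u+\tfrac23v\bigr)^2\Bigr],
\]
whose limit is \(3e^{-V}\bigl(2-\tfrac{22}{9}V+\tfrac49V^2\bigr)\approx2.04\). For the subtracted term, \(\Phi_m(u,0)=P\ge(1-u)^{m-2}b_d(u)\), which tends to \(2\) from below at rate \(O(d^{-1/2})\). So \(\Psi_d(3)\) is about \(0.04\) plus \(O(d^{-1/2})\) corrections, well below \(1/4\). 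I would make this explicit in two pieces. First, bound \(e^{-(m-2)(u+v)}\) above using \(v_3^-\) and \(u_3^-\). Second, bound \(P\) below by \(2-4mu\), which follows from Bernoulli's inequality and \(b_d\ge2-4du\). Every error term is a rational multiple of \(d^{-1/2}\) or \(d^{-1}\), so it is largest at \(d=7\), and a single rational check there closes all \(d\ge7\).

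\emph{Main obstacle.} I expect the hardest step to be the uniform monotonicity for \(m\ge9\) in Step 1. The endpoint bound has a comfortable margin of about \(0.2\), but the sign of \(G_m\) requires controlling every nonlinear coefficient of a degree-\(m\) polynomial. If this proves delicate, I would fall back on a weaker route: drop monotonicity and bound \(\Psi_d\) directly on the whole interval \([5/2,3]\). Using \(v\ge v_3^-\) in the exponential and \(v\le v_{5/2}^+\) in the quadratic bracket should still leave the bound below \(1/4\), since the limiting profile varies only from about \(-0.28\) to \(0.04\) across the interval.
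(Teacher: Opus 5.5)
For $d=5,6$ your argument coincides with the paper's: monotonicity from the rows $m=4,5$ of \eqref{eq:L.5.21e}, then the $A=3$ substitution in \eqref{eq:L.5.22}. For $d\ge7$, however, both of your steps have genuine gaps.

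\textbf{Step~2 fails at $d=7$.} Your majorant $3\Phi_m(u,v)\le3e^{-(m-2)(u+v)}\bigl[2-\tfrac{22}{9}mv\rho+m^2(u+\tfrac23v)^2\bigr]$ replaces $2\rho^2$ by $2$ and drops $-4mu\rho$. These losses are roughly $3.3/d+1.3/\sqrt d$, while the available margin $1/4-\Psi_d(3)$ is only about $0.2$. Concretely, at $d=7$, $a=3$ the right-hand side is about $2.86$ while $\Phi_6(u,0)\approx1.69$. So you can only conclude $\Psi_7(3)\lesssim1.1$, although the true value is about $0.047$. The discarded terms alone exceed the margin for every $d$ below roughly $60$, so the single rational check at $d=7$ cannot succeed. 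Separately, your lower bound $\Phi_m(u,0)\ge2-4mu$ is false: $\Phi_m(u,0)=(1-u)^{m-2}b_d(u)=2-6mu+O(m^2u^2)$, and at $d=7$, $a=3$ one has $\Phi_6(u,0)\approx1.69<2-4mu\approx1.78$.

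\textbf{Step~1 is not established.} The rows $m=6,7,8$ of \eqref{eq:L.5.21c} cover only $a\ge3$. The surpluses recorded in \eqref{eq:L.5.21f} (for instance $1/20$ and $1/10$ on the $v$-side for $m=7,8$) cannot absorb the enlargement to $a\ge5/2$. The obstruction is the positive mixed coefficient $g_{11}=\tfrac{440}{9}m(m-1)$ of $G_m$, which \eqref{eq:L.5.21d} charges to $v$ as $g_{11}U_m$.
\begin{itemize}
\item For $m=8$, passing to $a\ge5/2$ raises $C_{v,8}$ from about $61$ to about $71$, essentially all of $B_8=640/9$.
\item For $m=10$, even with the exact values of $u,v$ at $d=11$, $a=5/2$, one gets $C_{v,10}\approx89.7>B_{10}=800/9$. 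Charging $uv$ to $u$ instead costs $g_{11}V_{10}\approx192>A_{10}=120$.
\end{itemize}
So the claim that the leading terms win already fails at $m=10$. A uniform argument for $m\ge9$ would need a new splitting of the mixed monomials, controlled uniformly in $m$, and you do not supply one.

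\textbf{The fallback.} This is in fact the route the paper takes for $d\ge7$, but your sketch omits what makes it work.
\begin{itemize}
\item The negative term $-m\rho(4u+\tfrac{22}{9}v)$ must use $\rho$ at $a=5/2$ and $mu,mv$ at $a=3$. Your ``$v\le v_{5/2}^+$ in the quadratic bracket'' has the wrong direction there.
\item One must keep $2\rho^2$, evaluated at $a=3$.
\item One must use $(1-t)^{m-2}\le e^{-(m-2)(t+t^2/2)}$ together with $e^{-z}\le E_4(z)$.
\end{itemize}
Even then, the resulting majorant $V(s)$, with $s=d^{-1/2}$, rises from about $0.10$ at $s=0$ to about $0.247$ at $s=19/50$, less than $0.003$ below $1/4$. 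The limiting profile therefore gives no control at finite $d$. The paper obtains uniformity by proving $V''>1$ on $[0,19/50]$, via a completion of squares on four subintervals, and then checking the two endpoint values.
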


\begin{proof}
For \(d=5,6\), \eqref{eq:L.5.21a}--\eqref{eq:L.5.21e} gives
monotonicity throughout \(5/2\le a\le3\), while the one-sided bounds
\eqref{eq:L.5.22} and \eqref{eq:L.5.22b} at \(A=3\) give
\begin{equation}
\begin{array}{c|cc}
d&Q_3^+&\Psi_d(3)\\ \hline
5&>7/10&<11/100\\
6&>3/4&<1/12.
\end{array}
\label{eq:L.5.36}
\end{equation}
Thus \(\Psi_d(a)<1/4\) for \(d=5,6\) and \(5/2\le a\le3\).

\subsubsection{\texorpdfstring{A completion-of-squares proof of the convexity of \(V\)}{A completion-of-squares proof of the convexity of V}}

For \(d\ge7\), put \(s=d^{-1/2}\), so
\(0\le s\le19/50\), and define

\begin{equation*}
\kappa_A=\frac{887}{1842},\qquad
\kappa_C=\frac{880}{2081},
\end{equation*}

\begin{equation*}
\begin{aligned}
\rho_A&=1-\frac{s^3}{5}-\kappa_As^2,&
\rho_C&=1-\frac{s^3}{6}-\kappa_Cs^2,\\
X_A&=(1-s^2)\frac{s}{5},&
X_C&=(1-s^2)\frac{s}{6},\\
Y_A&=(1-s^2)\kappa_A,&
Y_C&=(1-s^2)\kappa_C,
\end{aligned}
\end{equation*}

\begin{equation*}
\begin{aligned}
Q(s)&=
2\rho_C^2-\rho_A\left(4X_C+\frac{22}{9}Y_C\right)
+\left(X_A+\frac23Y_A\right)^2,\\
Z(s)&=
(1-3s^2)\left(\frac{s}{6}+\kappa_C\right)
+\frac12(1-3s^2)s^2
\left(\frac{s}{6}+\kappa_C\right)^2,\\
P_0(s)&=
\left(1-\frac{(1-3s^2)s}{5}\right)
\left(2-\frac{4s}{5}+\frac{s^2+s^4}{25}\right),
\end{aligned}
\end{equation*}

\begin{equation*}
E_4(z)=1-z+\frac{z^2}{2}-\frac{z^3}{6}+\frac{z^4}{24},
\qquad
V(s)=3Q(s)E_4(Z(s))-P_0(s).
\end{equation*}
Differentiating the displayed polynomial gives
\begin{equation*}
\begin{aligned}
Q'(s)={}&-\frac{7436}{13815}
-\frac{259077005522}{397167642225}s
+\frac{23368101}{28749015}s^2\\
&+\frac{53792176118548}{826505863470225}s^3
-\frac{16881305}{28749015}s^4
-\frac{17}{75}s^5 .
\end{aligned}
\end{equation*}

Coefficientwise comparison on \(0\le s\le19/50\) gives
\begin{equation*}
\begin{aligned}
Q'(s)
&<-\frac{53}{100}+\frac{82}{100}s^2+\frac7{100}s^3\\
&\le-\frac{53}{100}
+\frac{82}{100}\left(\frac{19}{50}\right)^2
+\frac7{100}\left(\frac{19}{50}\right)^3
<-\frac25.
\end{aligned}
\end{equation*}
Moreover, direct rational substitution gives
\[
Q\left(\frac{19}{50}\right)-\frac{19}{60}>\frac12.
\]
Thus \(Q\) is decreasing and \(Q(s)>19/60>0\).

Set

\begin{equation*}
\begin{gathered}
E=E_4(Z),\qquad
\mathcal U=-E_4'(Z),\qquad
\mathcal R=E_4''(Z),\\
A=-Q',\qquad B=-Q'',\qquad C=-Z'',\qquad p=Z'.
\end{gathered}
\end{equation*}
Direct differentiation and coefficient comparison give

\begin{equation*}
Q''(s)
<
-\frac{13}{20}
+2\frac{813}{1000}s
+3\frac{66}{1000}s^2
\le-\frac{4411}{1250000}<0,
\end{equation*}

\begin{equation*}
Q'''(s)>
\frac85-\frac{36}{5}s^2-\frac{68}{15}s^3
\ge\frac{146047}{468750}>0.
\end{equation*}
Moreover,
\nopagebreak
\begin{equation*}
\begin{aligned}
Z''(s)={}&-\frac54s^4-\frac{8800}{2081}s^3
-\frac{79304639}{25983366}s^2
-\frac{5363}{2081}s-\frac{10213280}{4330561}<0,\\
Z'''(s)={}&-5s^3-\frac{26400}{2081}s^2
-\frac{79304639}{12991683}s-\frac{5363}{2081}<0,
\end{aligned}
\end{equation*}
and

\begin{equation*}
P_0'''(s)
=\frac{126}{25}s^4+\frac{24}{25}s^2
-\frac{264}{25}s+\frac{894}{125}
>\frac{1962}{625}>0.
\end{equation*}
Thus \(Q\) and \(B\) decrease, \(A\), \(C\), and \(P_0''\) increase,
and \(Z\) is concave.  Since every coefficient of \(Z\) of degree at least
two is negative,

\begin{equation*}
Z(s)\le
\kappa_C+\frac{s}{6}
-\frac{5106640}{4330561}s^2
<\frac{429}{1000}.
\end{equation*}
Endpoint substitution, using concavity for the lower bounds and
monotonicity on the last two intervals, gives

\begin{equation}
\begin{array}{c|c}
\text{interval for }s&\text{range of }Z(s)\\ \hline
[0,1/10]&[21/50,429/1000]\\
{}[1/10,1/5]&[2/5,429/1000]\\
{}[1/5,3/10]&[7/20,41/100]\\
{}[3/10,19/50]&[7/25,353/1000].
\end{array}
\label{eq:S.betaV.1}
\end{equation}
The signs in \eqref{eq:L.5.19}, Bernoulli's inequality, and
\((1-t)^m\le e^{-m(t+t^2/2)}\) imply
\[
\Phi_m(u,v)\le e^{-Z(s)}Q(s),
\qquad
\Phi_m(u,0)\ge P_0(s).
\]
Since \(Q>0\), \(Z\ge0\), and \(e^{-z}\le E_4(z)\), it follows that
\begin{equation}
\Psi_d(a)\le V(s).
\label{eq:S.betaV.bridge}
\end{equation}
For \(0<Z<1\), \(E_4(Z)\), \(-E_4'(Z)\), and \(E_4''(Z)\) decrease with
\(Z\).  The preceding signs also give
\(Q,E,\mathcal U,\mathcal R,A,B,C>0\).  Together with
\eqref{eq:S.betaV.1}, these monotonicities give the rational factor bounds:

\begin{equation}
\begin{array}{c|ccccccccc}
&Q_{\min}&A_{\max}&B_{\max}&E_{\max}&
\mathcal U_{\min}&\mathcal U_{\max}&\mathcal R_{\min}&C_{\min}&
(P_0'')_{\max}\\ \hline
[0,1/10]&1&3/5&2/3&2/3&649/1000&2/3&13/20&23/10&11/10\\
{}[1/10,1/5]&19/20&16/25&1/2&17/25&649/1000&67/100&33/50&13/5&163/100\\
{}[1/5,3/10]&22/25&67/100&7/20&71/100&33/50&71/100&67/100&3&21/10\\
{}[3/10,19/50]&83/100&17/25&23/100&19/25&7/10&19/25&7/10&7/2&12/5 .
\end{array}
\label{eq:S.betaV.2}
\end{equation}
Twice differentiating \(V\) gives the exact identity

\begin{equation*}
V''
=-3BE+6A\mathcal U p+3Q\mathcal R p^2
+3Q\mathcal U C-P_0''.
\end{equation*}
Completing the square in \(p\) yields

\begin{equation}
V''
\ge
-3BE+3Q\mathcal U C
-\frac{3A^2\mathcal U^2}{Q\mathcal R}-P_0''.
\label{eq:S.betaV.3}
\end{equation}
Substitution of the four rows of \eqref{eq:S.betaV.2} into
\eqref{eq:S.betaV.3} gives, respectively,

\begin{equation}
\frac{509459}{390000},\qquad
\frac{26738037}{20900000},\qquad
\frac{216531}{176000},\qquad
\frac{52203369}{29050000}.
\label{eq:S.betaV.4}
\end{equation}
Every number in \eqref{eq:S.betaV.4} exceeds one.  Hence
\(V''(s)>1\) on the full interval.

It remains only to bound the two chord endpoints.  At \(s=0\),

\begin{equation*}
Q<\frac{107}{100},\qquad Z>\frac{21}{50},\qquad P_0=2.
\end{equation*}
Since \(E_4\) decreases on \([0,1]\),

\begin{equation*}
\frac18-V(0)>
\frac18-\left[
3\frac{107}{100}E_4\left(\frac{21}{50}\right)-2
\right]
=\frac{77765933}{5000000000}>0.
\end{equation*}
At \(s=19/50\), direct rational substitution gives

\begin{equation*}
Q<\frac{8319}{10000},\qquad
Z>\frac{713}{2500},\qquad
P_0>\frac{1629}{1000},
\end{equation*}
and therefore

\begin{equation*}
\frac{99}{400}-V\left(\frac{19}{50}\right)
>
\frac{99}{400}
-\left[
3\frac{8319}{10000}E_4\left(\frac{713}{2500}\right)
-\frac{1629}{1000}
\right]
>\frac1{30000}>0.
\end{equation*}
Strict convexity places \(V\) below the chord joining its endpoint values,
so

\begin{equation*}
V(s)<\frac{99}{400}<\frac14
\qquad\left(0\le s\le\frac{19}{50}\right).
\end{equation*}
Together with \eqref{eq:S.betaV.bridge}, this proves the proposition for
\(d\ge7\); the cases \(d=5,6\) were proved above.
\end{proof}

\subsection{\texorpdfstring{Endpoint estimates on \(5/2\le a\le3\)}{Endpoint estimates on 5/2<=a<=3}}

\begin{proposition}[Finite beta endpoints]
\label{supp:beta-endpoints}
For \(5\le d\le9\), the bounds in \eqref{eq:L.beta.endpoints} hold.
\end{proposition}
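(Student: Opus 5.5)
We need to prove, for \(5\le d\le9\), that \(\widehat{\mathfrak B}_d(5/2)>1\) and \(\widehat{\mathfrak B}_d(3)>1\), where \(\widehat{\mathfrak B}_d\) is defined in \eqref{eq:L.5.30}. The plan mirrors the proof of \eqref{eq:L.5.26} in Proposition \ref{supp:beta-finite}. Each of the four terms of \eqref{eq:L.5.30} will be replaced by a one-sided rational bound, and the resulting ten rational numbers will be compared with \(1\) by integer cross multiplication.

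\textbf{Step 1: constants.} For \(\mathfrak h_d\) and \(\mathcal D_d\) we reuse the strict lower bounds \(h_d^-\) and \(\mathcal D_d^-\) of \eqref{eq:L.5.25}, which are already certified by \eqref{eq:L.5.25c}. For \(c_d=1/(2d^{3/2})\) we use \(c_d^\pm\) built from the square-root brackets \(\ell_d,u_d\) of \eqref{eq:L.5.22b}. For \(A^{2/3}\) at \(A=3\) we use \(s_3^\pm\) from \eqref{eq:L.5.22b}. At \(A=5/2\) we need a new bracket of the form \(921/500<(5/2)^{2/3}<s^+\), checked by cubing, with \(s^+\) close to \(1.842\). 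The constant \(\alpha_1\) is bracketed by \eqref{eq:Const.alpha-coarse}. The quantities \(\beta_d\), \(\sigma_d\) and \(\mathcal K_d\) are exact rationals.

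\textbf{Step 2: directed substitution.} Because \(\mathfrak h_d(1-\sigma_dc_d/A)>0\), this product is bounded below by \(h_d^-(1-\sigma_dc_d^+/A)\). The term \(-\mathcal K_d/(A-c_d)^2\) is bounded below by using \(c_d^+\). For the bracket \(\mathscr C_{d,A}=3\rho_1^m-\rho_0^m-3q\), the factor \(\rho_1\) is decreased by taking \(c_d^+\), \(\alpha_1^+\) and \(s_A^-\). The factor \(\rho_0\) is increased by taking \(c_d^-\). One first checks \(\mathscr C_{d,A}>0\), as in \eqref{eq:L.5.25b}. This justifies replacing \(\mathcal D_d\) by \(\mathcal D_d^-\) in the last term \(\frac{\mathcal D_d}{8A}\mathscr C_{d,A}\). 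The outcome is a rational number \(\widehat E_{d,A}\), the analogue of \eqref{eq:L.5.25a} with \(\mathfrak h_d\sigma_d\) in place of \(\sigma_d\), satisfying \(\widehat{\mathfrak B}_d(A)>\widehat E_{d,A}\).

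\textbf{Step 3: evaluation.} We compute \(\widehat E_{d,A}-1\) exactly for the ten pairs \((d,A)\). A table in the style of \eqref{eq:L.5.26} then records that each value exceeds a small positive rational.

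The main obstacle is the case \(d=5\), \(A=5/2\). The body states that the margin there is only about \(3/5000\), so coarse enclosures may lose it. If the first pass fails at that point, the first remedy is to switch to the fine enclosures \eqref{eq:Const.alpha-fine} and \eqref{eq:L.4.13a}, together with tighter brackets for \(\sqrt5\) and \((5/2)^{2/3}\). The value of \(\mathfrak h_5\) would also be kept as the closed form \(\frac{160}{231}\sqrt2\) rather than rounded to \(979/1000\). The remaining nine cases should have comfortable margins, comparable to those in \eqref{eq:L.5.49}–\eqref{eq:L.5.50}.
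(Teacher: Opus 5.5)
Your proposal is correct and is essentially the paper's proof. Both substitute one-sided rational bounds into each term of \eqref{eq:L.5.30} at the ten pairs $(d,A)$. Both rescue the tight corner $d=5$, $A=5/2$ as you anticipate: keep $\mathfrak h_5>\frac{160}{231}\cdot\frac{707}{500}$ inside the positive factor $\mathfrak h_5(1-\sigma_5c_5/A)$, and use sharper bounds for $\sqrt5$, $\mathcal D_5$ and $\alpha_1/A^{2/3}$.

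The paper's generic bound \eqref{eq:L.5.45} is slightly weaker than yours, which is why it must treat the corner separately. It drops the factor $\mathfrak h_d$ in the $\sigma_dc_d/A$ term and uses $\mathcal D_d>5/2$ uniformly. You keep $h_d^-$ there and use the dimension-specific $\mathcal D_d^-$ of \eqref{eq:L.5.25}. By a decimal estimate I ran (not exact rational arithmetic), with $\mathcal D_5^-=263/100$ your first pass would already clear $1$ at the corner by about $1.3\times10^{-4}$.
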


\begin{proof}
Set \(b_{5/2}^-:=921/500\) and \(b_3^-:=52/25\), the two lower
\(A^{2/3}\)-bounds obtained by cubing.

For \(5\le d\le9\), use

\begin{equation}
\begin{array}{c|cc|c}
d&\ell_d\le\sqrt d&\sqrt d\le u_d&h_d^-<\mathfrak h_d\\ \hline
5&559/250&2237/1000&489/500\\
6&120/49&49/20&251/256\\
7&37/14&127/48&637/648\\
8&280/99&99/35&197/200\\
9&3&3&955/968 .
\end{array}
\label{eq:L.5.44}
\end{equation}
The \(\mathfrak h_d\)-entries follow from the primitive rational bounds
in \eqref{eq:L.5.25c}; the weaker bound \(489/500\) suffices for \(d=5\).

For \(A\in\{5/2,3\}\), put

\begin{equation*}
c_d^-=\frac1{2du_d},\qquad
c_d^+=\frac1{2d\ell_d},
\end{equation*}

\begin{equation*}
\rho_{d,A}=
1-\frac{c_d^+}{A}
-\frac{887}{1000d\,b_A^-},
\qquad
\sigma_{d,A}=1-\frac{c_d^-}{A},
\end{equation*}
and define the following rational lower bound:

\begin{equation}
\begin{aligned}
E_{d,A}^{[5/2,3]}:={}&
h_d^--\frac{\sigma_d c_d^+}{A}
-\frac{\mathcal K_d}{(A-c_d^+)^2}\\
&+\frac5{16A}
\left(
3\rho_{d,A}^{d-1}
-\sigma_{d,A}^{d-1}
-\frac{3\beta_d}{A^2}
\right).
\end{aligned}
\label{eq:L.5.45}
\end{equation}
The bracket exceeds \(4/5\) in every row, and all substitutions in
\eqref{eq:L.5.45} have the lower-bound direction.  Integer cross
multiplication gives

\begin{equation}
\begin{array}{c|cc}
d&E_{d,5/2}^{[5/2,3]}&E_{d,3}^{[5/2,3]}\\ \hline
5&\text{handled below}&>4001/4000\\
6&>4001/4000&>4001/4000\\
7&>126/125&>4001/4000\\
8&>507/500&>4001/4000\\
9&>1019/1000&>4001/4000 .
\end{array}
\label{eq:L.5.46}
\end{equation}
For the remaining corner \(d=5,a=5/2\), use

\begin{equation*}
\frac{707}{500}<\sqrt2,\quad
\frac{559}{250}<\sqrt5<\frac{2237}{1000},\quad
\frac{333}{106}<\pi<\frac{355}{113},
\end{equation*}
and

\begin{equation*}
\left(\frac{3\pi}{20\sqrt2}\right)^{2/3}
<\frac{4807}{10000}.
\end{equation*}
At the exceptional corner the following one-sided bounds suffice:

\begin{equation*}
\mathfrak h_5>
\frac{160}{231}\frac{707}{500},\qquad
\frac{40}{2237}<\frac ca<\frac{10}{559},\qquad
c<\frac{25}{559},
\end{equation*}

\begin{equation*}
\mathcal D_5>
\frac{15}{8}\frac{333}{106}\frac{1000}{2237},\qquad
\frac{\alpha_1}{a^{2/3}}<\frac{4807}{10000},\qquad
q=\frac4{3125}.
\end{equation*}
In the lower bound for \(\rho\), the \(\alpha_1/a^{2/3}\)-bound is
divided by \(d=5\); in the negative \(-\rho_0^4\) term, the lower bound
for \(c/a\) is used.  Every remaining occurrence uses the unfavourable
endpoint indicated by its sign.  The two occurrences of
\(\mathfrak h_5\) are first grouped as the positive factor
\(\mathfrak h_5(1-\sigma_5c/a)\); hence its lower endpoint is used.
Substitution into \eqref{eq:L.5.30} gives

\begin{equation}
\widehat{\mathfrak B}_5(5/2)>1+\frac3{5000}>1.
\label{eq:L.5.47}
\end{equation}
Together with \eqref{eq:L.5.46}, this proves Proposition
\ref{supp:beta-endpoints}.
\end{proof}

\section{The uniform high-frequency estimate}

\begin{proposition}[Uniform estimate at \(5d^{3/2}\)]
\label{supp:uniform-tail}
For every integer \(d\ge5\), the explicit condition \eqref{eq:II.37}
holds strictly at \(x=5d^{3/2}\).
\end{proposition}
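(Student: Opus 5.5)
The plan is to rewrite \eqref{eq:II.37} at \(x=5d^{3/2}\), \(p=d-1\ge4\), as a margin
\[
\Delta_p:=a_p-\frac{p}{2\cdot5^{2/3}(p+1)}-\frac{p/4+A_pC_{p-2}}{5(p+1)^{3/2}}-\frac{A_p}{50(p+1)^3}.
\]
Every term is built from beta and Gamma ratios. We have \(a_p=\tfrac14-\tfrac1{4\pi}B\bigl(\tfrac{p+1}2,\tfrac12\bigr)\) by \eqref{eq:II.25}, and \(C_{p-2}\) is given by \eqref{eq:II.23}. Shifting \(p\mapsto p+2\) multiplies each of these by an explicit rational factor:
\[
B\Bigl(\tfrac{p+3}2,\tfrac12\Bigr)=\tfrac{p+1}{p+2}\,B\Bigl(\tfrac{p+1}2,\tfrac12\Bigr),
\]
and \(C_q\) satisfies a similar two-step Gamma recurrence. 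This is why I would argue by induction separately in the two parity classes of \(p\), as announced in Proposition~\ref{prop:scalar-tail}. The base cases are \(p=4\) and \(p=5\), where all Gamma values are closed forms in \(\pi\) and \(\sqrt\pi\). For instance, \(B(5/2,1/2)=3\pi/8\) gives \(a_4=5/32\), and \(C_2=2/(9\pi)\).

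\emph{Base cases.} For \(p=4\) the margin is very small. The first loss is about \(0.1368\), the second about \(0.0191\), and the third is negligible, against \(a_4=0.15625\). So the base cases must be checked with the sharp rational enclosures of Section~\ref{supp:constant-proofs}. These include \(\pi>3141592/10^6\), and a cube-root bracket for \(5^{2/3}\) obtained by cubing rationals, with \(\sqrt5\), \(\sqrt6\) taken from \eqref{eq:Const.rad}. After clearing denominators, each base case becomes an integer cross-multiplication. The case \(p=5\) (\(d=6\)) is done the same way, using \(B(3,1/2)=16/15\) and the rational value of \(C_3\).

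\emph{Induction step.} I will show \(\Delta_{p+2}\ge\Delta_p\), or at least that \(\Delta_{p+2}>0\) whenever \(\Delta_p>0\), term by term. The gain in \(a_p\) is
\[
a_{p+2}-a_p=\frac1{4\pi(p+2)}\,B\Bigl(\tfrac{p+1}2,\tfrac12\Bigr).
\]
I will bound it below by a lower Gamma-ratio estimate \(B(\tfrac{p+1}2,\tfrac12)\ge\sqrt{2\pi/(p+1)}\,(1+O(1/p))\), proved in the same parity-monotone way as Lemma~\ref{lem:dimension-factor}. This gives a gain of order \(p^{-3/2}\). The first loss increases only by
\[
\frac{1}{5^{2/3}(p+1)(p+3)}=O(p^{-2}),
\]
and the term \(\tfrac{p}{4}\cdot\tfrac{1}{5(p+1)^{3/2}}\) decreases.

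The delicate term is \(A_pC_{p-2}/(5(p+1)^{3/2})\), which tends to a positive constant of about \(1/(120\sqrt{2\pi})\) rather than to zero. Its increment must therefore be controlled through the exact ratio \(C_{p}/C_{p-2}\), computed from \eqref{eq:II.23}, together with the explicit ratio \(A_{p+2}/A_p\) and \(((p+1)/(p+3))^{3/2}\). Combining these, the increment should be \(O(p^{-2})\), and I expect it to be of either sign. The \(A_p/(50x^2)\) term is \(O(p^{-3})\) and harmless.

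The main obstacle is making the comparison gain \(\gtrsim p^{-3/2}\) versus losses \(\lesssim p^{-2}\) valid already at the first induction steps, \(p=4\to6\) and \(p=5\to7\), where the constants rather than the exponents decide. At \(p=4\) the numbers are a gain of \(1/64\) against a first-loss increase of about \(0.0098\), so the margin is adequate but not lavish. I would first try to prove the monotonicity \(\Delta_{p+2}>\Delta_p\) exactly by clearing the square roots \((p+1)^{3/2}\), \((p+3)^{3/2}\) and reducing to a polynomial inequality in \(p\) with positive coefficients after substituting \(p=4+k\). If that fails near small \(p\), the fallback is to verify a few more base cases directly, say \(p\le9\), with rational arithmetic, and to run the asymptotic term comparison only from there on. Finally, since the right side of \eqref{eq:II.37} decreases in \(x\), the statement at \(x=5d^{3/2}\) yields Proposition~\ref{prop:scalar-tail} as described.
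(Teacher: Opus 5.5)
Your proposal matches the paper's proof essentially step for step. Your $\Delta_p$ is the paper's margin $F_p$, and both argue by induction in the two parity classes from the bases $p=4,5$, using:
\begin{itemize}
\item the gain $a_{p+2}-a_p=\tfrac{p}{2}C_{p-1}/(p+2)$, bounded below via $B(\tfrac{p+1}{2},\tfrac12)>\sqrt{2\pi/(p+1)}$;
\item the exact first-loss increment $1/(5^{2/3}(p+1)(p+3))$;
\item the monotone decrease of $p/(20(p+1)^{3/2})$;
\item control of the $A_pC_{p-2}$ term through the exact ratio of consecutive terms, which is in fact larger than one. The paper combines this ratio with the identity $\tfrac{p}{2}C_{p-1}\cdot A_pC_{p-2}/(5(p+1)^{3/2})=(p-2)/(480\pi(p+1)^{3/2})$ to get the uniform bound $1/300$ on that term.
\end{itemize}

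One correction: the last term $A_p/(50(p+1)^3)$ is not $O(p^{-3})$; it tends to $1/1200$. Its increment is therefore of order $p^{-2}$ (the paper bounds it by $1/(100(p+1)^2)$), so it must be kept in the step comparison, although it is numerically small.
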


\begin{proof}
Put \(\eta=5^{2/3}\), \(p=d-1\), and define
\[
\begin{aligned}
u_p&:=\frac p2C_{p-1},&
v_p&:=\frac{A_pC_{p-2}}{5(p+1)^{3/2}},\\
b_p&:=\frac{p}{2\eta(p+1)},&
c_p&:=\frac{p}{20(p+1)^{3/2}},&
e_p&:=\frac{A_p}{50(p+1)^3}.
\end{aligned}
\]
At \(x=5(p+1)^{3/2}\), condition \eqref{eq:II.37} is
\begin{equation}
F_p:=\frac14-u_p-v_p-b_p-c_p-e_p>0.
\label{eq:tail-deficit}
\end{equation}
We use
\begin{equation}
\eta>\frac{731}{250}>\frac{35}{12},
\qquad
25\cdot250^3-731^3=7109>0.
\label{eq:tail-eta-bound}
\end{equation}
The Gamma definition of \(C_q\) gives
\[
u_p=
\frac{\Gamma((p+1)/2)}
{4\sqrt\pi\,\Gamma((p+2)/2)},
\qquad
u_{p+2}=u_p\frac{p+1}{p+2}.
\]
Thus, with \(\Delta u_p:=u_p-u_{p+2}\),
\begin{equation}
\Delta u_p=\frac{u_p}{p+2},
\qquad
u_p>\frac1{2\sqrt{2\pi(p+1)}}.
\label{eq:tail-gamma-loss}
\end{equation}
For the second inequality, strict log-convexity with \(y=(p+1)/2\)
gives
\[
\Gamma\left(y+\frac12\right)^2
<\Gamma(y)\Gamma(y+1)
=y\Gamma(y)^2.
\]
Hence \(\Gamma(y)/\Gamma(y+1/2)>y^{-1/2}\).

For the \(b_p\)-loss,
\begin{equation}
b_{p+2}-b_p
=\frac1{\eta(p+1)(p+3)}
<\frac23\Delta u_p.
\label{eq:tail-b-loss}
\end{equation}
Using \eqref{eq:tail-eta-bound} and
\(\pi<355/113<22/7\), the last comparison reduces after squaring to
\[
8575(p+1)(p+3)^2>57024(p+2)^2.
\]
At \(p=q+4\), the difference is
\[
8575q^3+105901q^2+336137q+48011>0.
\]

Next,
\begin{equation}
\frac{v_{p+2}}{v_p}
=
\rho_p:=
\frac{p(p+2)}{(p-2)(p+1)}
\left(\frac{p+1}{p+3}\right)^{3/2},
\label{eq:tail-v-ratio}
\end{equation}
and
\begin{equation}
u_pv_p=
\frac{p-2}{480\pi(p+1)^{3/2}}.
\label{eq:tail-uv-product}
\end{equation}
Equations \eqref{eq:tail-gamma-loss}, \eqref{eq:tail-uv-product}, and
\(\pi>25/8\)
give \(v_p<1/300\).  Moreover,
\begin{equation}
0<\rho_p^2-1<\frac{18}{p^2}.
\label{eq:tail-ratio-bounds}
\end{equation}
For the lower sign, clearing the positive denominator gives
\[
p^2(p+2)^2(p+1)-(p-2)^2(p+3)^3
=13p^3+49p^2-108>0
\qquad(p\ge4).
\]
For the upper sign, clear the positive denominator and cancel the positive
factor \((p+1)^2\).  After writing \(p=q+4\), the remaining inequality is
the positivity of
\[
5q^5+141q^4+1366q^3+5354q^2+6960q+568.
\]
Hence
\begin{equation}
v_{p+2}-v_p<\frac3{100p^2}.
\label{eq:tail-v-increment}
\end{equation}
Also,
\begin{equation}
\frac1{100(p+1)^2}-(e_{p+2}-e_p)
=
\frac{17p^3+99p^2+175p+81}
{300(p+1)^3(p+3)^3}>0.
\label{eq:tail-e-increment}
\end{equation}
Consequently,
\begin{equation}
(v_{p+2}-v_p)+(e_{p+2}-e_p)
<
\frac1{25p^2}
<
\frac13\Delta u_p.
\label{eq:tail-combined-loss}
\end{equation}
The last comparison follows from \eqref{eq:tail-gamma-loss} and
\[
4375p^4>1584(p+2)^2(p+1);
\]
at \(p=q+4\), its difference is
\[
4375q^4+68416q^3+393072q^2+967936q+834880>0.
\]

Finally, \(c_p=p/[20(p+1)^{3/2}]\) decreases for \(p>2\).
Equations \eqref{eq:tail-gamma-loss}--\eqref{eq:tail-b-loss} and
\eqref{eq:tail-v-increment}--\eqref{eq:tail-combined-loss} give
\begin{equation}
F_{p+2}-F_p>0.
\label{eq:tail-parity-monotonicity}
\end{equation}
For the two parity bases, direct substitution gives
\begin{equation}
F_4=
\frac5{32}
-\frac2{5\eta}
-\frac1{25\sqrt5}
-\frac2{225\pi\sqrt5}
-\frac1{6250}
>
\frac{29266951}{224343900000}>0,
\label{eq:tail-even-base}
\end{equation}
using
\[
\eta>\frac{731}{250},\qquad
\sqrt5>\frac{682}{305},\qquad
\pi>\frac{25}{8}.
\]
Similarly,
\begin{equation}
F_5=
\frac14-\frac4{15\pi}
-\frac5{12\eta}
-\frac{35}{768\sqrt6}
-\frac1{4320}
>
\frac{16812181}{5052672000}>0,
\label{eq:tail-odd-base}
\end{equation}
using \(\sqrt6>120/49\).  The radical bounds follow from
\[
5\cdot305^2-682^2=1,\qquad
6\cdot49^2-120^2=6.
\]
The induction \eqref{eq:tail-parity-monotonicity}--\eqref{eq:tail-odd-base}
proves \eqref{eq:tail-deficit} for every \(p\ge4\).
\end{proof}

\section*{Data availability}
\addcontentsline{toc}{section}{Data availability}

Proposition \ref{supp:aggregate-proposition} reduces the compact
two-parameter estimate to finite lists of Taylor-model coefficients and
remainder bounds.  The ancillary files accompanying this article contain
the exact-rational program that checks these lists, its archived output,
and symbolic checks of the other displayed calculations.  The accompanying
README records the commands and dependencies needed to reproduce the
computations.

\phantomsection

\end{document}